\DeclareMathAlphabet{\mathpzc}{OT1}{pzc}{m}{it}
\newtheorem{theorem}{Theorem}
\crefname{theorem}{theorem}{Theorems}
\Crefname{Theorem}{Theorem}{Theorems}
\newaliascnt{lemma}{theorem}
\newtheorem{lemma}[lemma]{Lemma}
\crefname{lemma}{lemma}{lemmas}
\Crefname{Lemma}{Lemma}{Lemmas}
\newaliascnt{corollary}{theorem}
\newtheorem{corollary}[corollary]{Corollary}
\crefname{corollary}{corollary}{corollaries}
\Crefname{Corollary}{Corollary}{Corollaries}
\newaliascnt{proposition}{theorem}
\newtheorem{proposition}[proposition]{Proposition}
\crefname{proposition}{proposition}{propositions}
\Crefname{Proposition}{Proposition}{Propositions}
\newaliascnt{definition}{theorem}
\crefname{definition}{definition}{definitions}
\Crefname{Definition}{Definition}{Definitions}
\newaliascnt{definitionProposition}{theorem}
\crefname{Proposition and Definition}{Proposition and Definition}{Proposition and Definition}
\Crefname{Proposition and Definition}{Proposition and Definition}{Proposition and Definition}
\newaliascnt{remark}{theorem}
\newtheorem{remark}[remark]{Remark}
\crefname{remark}{remark}{remarks}
\Crefname{Remark}{Remark}{Remarks}
\newtheorem{example}[theorem]{Example}
\crefname{example}{example}{examples}
\Crefname{Example}{Example}{Examples}
\crefname{figure}{figure}{figures}
\Crefname{Figure}{Figure}{Figures}
\newtheorem{assumption}{\textbf{H}\hspace{-3pt}}
\Crefname{assumption}{\textbf{H}\hspace{-3pt}}{\textbf{H}\hspace{-3pt}}
\crefname{assumption}{\textbf{H}}{\textbf{H}}
\newtheorem{assumptionAO}{\textbf{AO}\hspace{-3pt}}
\Crefname{assumptionAO}{\textbf{AO}\hspace{-3pt}}{\textbf{AO}\hspace{-3pt}}
\crefname{assumptionAO}{\textbf{AO}}{\textbf{AO}}
\Crefname{assumptionL}{\textbf{L}\hspace{-3pt}}{\textbf{L}\hspace{-3pt}}
\crefname{assumptionL}{\textbf{L}}{\textbf{L}}
\newtheorem{assumptionA}{\textbf{A}\hspace{-3pt}}
\Crefname{assumptionA}{\textbf{A}\hspace{-3pt}}{\textbf{A}\hspace{-3pt}}
\crefname{assumptionA}{\textbf{A}}{\textbf{A}}
\Crefname{assumptionG}{\textbf{G}\hspace{-3pt}}{\textbf{G}\hspace{-3pt}}
\crefname{assumptionG}{\textbf{G}}{\textbf{G}}
\def\msa{\mathsf{A}}
\def\msk{\mathsf{K}}
\def\msc{\mathsf{C}}
\def\msf{\mathsf{F}}
\def\msu{\mathsf{U}}
\def\mcbb{\mathcal{B}}  %%% \mcb est déjà pris
\newcommand{\mcb}[1]{\mathcal{B}(#1)}
\def\mcf{\mathcal{F}}
\def\mcg{\mathcal{G}}
\def\rset{\mathbb{R}}
\def\nset{\mathbb{N}}
\def\nsets{\mathbb{N}^*}
\def\mrl{\mathrm{L}}
\def\rmd{\mathrm{d}}
\def\rme{\mathrm{e}}
\def\rmC{\mathrm{C}}
\def\rmF{\mathrm{F}}
\def\rmE{\mathrm{E}}
\newcommand{\cco}{\llbracket}
\newcommand{\ccf}{\rrbracket}
\newcommand{\po}{\left(}
\newcommand{\pf}{\right)}
\newcommand{\R}{\mathbb R}
\newcommand{\dd}{\mathrm{d}}
\newcommand{\na}{\nabla}
\newcommandx{\functionspace}[2][1=+]{\mathbb{F}_{#1}(#2)}
\newcommandx{\VarDeux}[3][3=]{\operatorname{Var}^{#3}_{#1}\left\{#2 \right\}}
\newcommand{\1}{\mathbbm{1}}
\newcommand{\LeftEqNo}{\let\veqno\@@leqno}
\newcommand{\floor}[1]{\left\lfloor #1 \right\rfloor}
\newcommand{\ceil}[1]{\left\lceil #1 \right\rceil}
\newcommand{\floorLigne}[1]{\lfloor #1 \rfloor}
\newcommand{\N}{\ensuremath{\mathbb{N}}}
\newcommand{\PE}{\mathbb{E}}
\newcommand{\PP}{\mathbb{P}}
\newcommand{\abs}[1]{\left\vert #1 \right\vert}
\newcommand{\absLigne}[1]{\vert #1 \vert}
\newcommand{\tvnorm}[1]{\| #1 \|_{\mathrm{TV}}}
\newcommandx{\Vnorm}[2][1=V]{\| #2 \|_{#1}}
\newcommandx{\VnormEq}[2][1=V]{\left\| #2 \right\|_{#1}}
\newcommandx{\norm}[2][1=]{\ifthenelse{\equal{#1}{}}{\left\Vert #2 \right\Vert}{\left\Vert #2 \right\Vert^{#1}}}
\newcommandx{\normLigne}[2][1=]{\ifthenelse{\equal{#1}{}}{\Vert #2 \Vert}{\Vert #2\Vert^{#1}}}
\newcommand{\parenthese}[1]{\left(#1 \right)}
\newcommand{\parentheseLigne}[1]{(#1 )}
\newcommand{\parentheseDeux}[1]{\left[ #1 \right]}
\newcommand{\defEns}[1]{\left\lbrace #1 \right\rbrace }
\newcommand{\defEnsLigne}[1]{\lbrace #1 \rbrace }
\newcommand{\ps}[2]{\left\langle#1,#2 \right\rangle}
\newcommand{\psLigne}[2]{\langle#1,#2 \rangle}
\newcommand{\proba}[1]{\mathbb{P}\left( #1 \right)}
\newcommand{\probaLigne}[1]{\mathbb{P}( #1 )}
\newcommandx\probaMarkovTilde[2][2=]
\newcommand{\probaMarkov}[2]{\mathbb{P}_{#1}\left( #2\right)}
\newcommand{\expe}[1]{\PE \left[ #1 \right]}
\newcommand{\expeW}[2]{\PE^{#1} \left[ #2 \right]}
\newcommand{\expeLigne}[1]{\PE [ #1 ]}
\newcommand{\expeMarkov}[2]{\PE_{#1} \left[ #2 \right]}
\newcommand{\plusinfty}{+\infty}
\newcommand\limn{\lim_{n \to \infty}}
\newcommand{\limsupn}{\limsup_{n\to \plusinfty}}
\newcommand{\liminfn}{\liminf_{n\to \plusinfty}}
\def\ie{\textit{i.e.}}
\def\eqsp{\;}
\newcommand{\coint}[1]{\left[#1\right)}
\newcommand{\ocint}[1]{\left(#1\right]}
\newcommand{\ooint}[1]{\left(#1\right)}
\newcommand{\ccint}[1]{\left[#1\right]}
\newcommand{\cointLigne}[1]{[#1)}
\newcommand{\ccintLigne}[1]{[#1]}
\newcommandx{\weight}[2][2=n]{\omega_{#1,#2}^N}
 \newcommand{\tcr}[1]{{#1}}
  \newcommand{\tcwc}[1]{{#1}}
  \newcommand{\tcrw}[1]{{#1}}
\newcommandx\sequence[3][2=,3=]
\newcommandx\sequenceD[3][2=,3=]
\newcommandx{\sequencen}[2][2=n\in\N]{\ensuremath{\{ #1_n, \eqsp #2 \}}}
\newcommandx\sequenceDouble[4][3=,4=]
\newcommandx{\sequencenDouble}[3][3=n\in\N]{\ensuremath{\{ (#1_{n},#2_{n}), \eqsp #3 \}}}
\def\iid{\text{i.i.d.}}
\def\eg{e.g.}
\newcommand{\opnorm}[1]{{\left\vert\kern-0.25ex\left\vert\kern-0.25ex\left\vert #1 
    \right\vert\kern-0.25ex\right\vert\kern-0.25ex\right\vert}}
\def\generator{\mathcal{A}}
\def\bfe{\mathbf{e}}
\def\Id{\operatorname{Id}}
\newcommandx{\CPE}[3][1=]{{\mathbb E}_{#1}\left[\left. #2 \, \middle \vert \, #3 \right. \right]} %%%% esperance conditionnelle
\newcommandx{\CPELigne}[3][1=]{{\mathbb E}_{#1}[\left. #2 \,  \vert \, #3 \right. ]} %%%% esperance conditionnelle
\newcommandx{\CPVar}[3][1=]{\mathrm{Var}^{#3}_{#1}\left\{ #2 \right\}}
\newcommand{\CPP}[3][]
{\ifthenelse{\equal{#1}{}}{{\mathbb P}\left(\left. #2 \, \right| #3 \right)}{{\mathbb P}_{#1}\left(\left. #2 \, \right | #3 \right)}}
\newcommandx{\osc}[2][1=]{\mathrm{osc}_{#1}(#2)}
\def\Id{\operatorname{Id}}
\def\transpose{\operatorname{T}}
\def\btau{\bar{\tau}}
\def\tW{\tilde{W}}
\def\yt{\tilde{y}}
\def\ty{\yt}
\def\tx{\tilde{x}}
\def\tX{\tilde{X}}
\def\tY{\tilde{Y}}
\def\btau{\bar{\tau}}
\def\btau{\bar{\tau}}
\newcommand\coupling[2]{\Gamma(\mu,\nu)}
\def\tpi{\tilde{\pi}}
\newcommand{\comp}{\mathrm{c}}
\renewcommand{\geq}{\geqslant}
\renewcommand{\leq}{\leqslant}
\def\Leb{\mathrm{Leb}}
\def\Phibf{\mathbf{\Phi}}
\def\bgamma{\bar{\gamma}}
\def\bdelta{\bar{\delta}}
\def\tZ{\tilde{Z}}
\def\varphibf{\boldsymbol{\varphi}}
\def\InftyBound{c_\infty}
\def\bfs{\mathbf{s}}
\def\sbf{\bfs}
\def\bfm{\mathbf{m}}
\def\Ltt{\mathtt{L}}
\def\Ctt{\mathtt{C}}
\def\mtt{\mathtt{m}}
\def\tR{\tilde{R}}
\def\bpg{\overline{p}_{\sigma^2\gamma}}
\def\tw{\tilde{w}}
\def\rmT{\mathrm{T}}
\def\trmT{\tilde{\rmT}}
\def\deltaInf{\bdelta}
\newcommand{\half}{{\nicefrac{1}{2}}}
\def\tpi{\tilde{\pi}}
\newcommandx{\wasserstein}[3][1=\distance,3=]{\mathscr{W}_{#1}^{#3}\left(#2\right)}
\newcommandx{\wassersteinLigne}[3][1=\distance,3=]{\mathscr{W}_{#1}^{#3}(#2)}
\newcommandx{\wassersteinD}[1][1=\distance]{\mathscr{W}_{#1}}
\newcommandx{\wassersteinDLigne}[1][1=\distance]{\mathscr{W}_{#1}}
\def\cbf{\mathbf{c}}
\def\tcbf{\tilde{\cbf}}
\def\lyapD{\mathpzc{V}}
\def\LyapD{\mathpzc{V}}
\def\VlyapD{\mathpzc{V}}
\def\VlyapDs{\mathpzc{V}^*}
\def\lyapDexp{\mathpzc{W}}
\def\VlyapDexp{\mathpzc{W}}
\def\LyapDsexp{\mathpzc{W}^*}
\def\VlyapDsexp{\mathpzc{W}^*}
\def\constMoment{c}
\def\Wn{W^{(n)}}
\def\Wnbf{\mathbf{W}^{(n)}}
\def\wiener{\mathbb{W}}
\def\wienersigma{\mathcal{W}}
\def\Wbf{\mathbf{W}}
\def\mubf{\boldsymbol{\mu}}
\def\Wbfn{\Wbf^{(n)}}
\newcommand{\txts}{\textstyle}
\def\Mrm{\mathrm{M}}
\def\rmM{\mathrm{M}}
\def\rmL{\mathrm{L}}
\def\Wrm{\mathrm{W}}
\def\rmW{\mathrm{W}}
\def\Nrm{\mathrm{N}}
\def\rmN{\mathrm{N}}
\newcommand{\qvar}[1]{\left\langle#1 \right\rangle}
\def\loiGauss{\mathbf{N}}
\def\ybf{\mathbf{y}}
\def\bfy{\mathbf{y}}
\title{Discrete sticky couplings of functional autoregressive processes}
\author[1]{Alain Durmus}
\author[2]{Andreas Eberle}
\author[3]{Aurélien Enfroy}
\author[4]{\\Arnaud Guillin}
\author[5]{Pierre Monmarch{\'e}}
\affil[1]{\small{Université Paris-Saclay, ENS Paris-Saclay, CNRS, Centre Borelli, F-91190 Gif-sur-Yvette, France.}}
\affil[2]{\small{Institute for Applied Mathematics - University of Bonn, Germany.}}
\affil[3]{\small{Samovar, Télécom SudParis, département CITI, TIPIC, Institut Polytechnique de Paris, Palaiseau.}}
\affil[4]{\small{Laboratoire de Math\'ematiques Blaise Pascal - Universit\'e Clermont-Auvergne, France.}}
\affil[5]{\small{LJLL - Sorbonne Universit\'e, France.}}
\begin{document}
\footnotetext[1]{Email: alain.durmus@ens-paris-saclay.fr}
\footnotetext[2]{Email: eberle@uni-bonn.de}
\footnotetext[3]{Email: aurelien.enfroy@ens-paris-saclay.fr}
\footnotetext[4]{Email: arnaud.guillin@uca.fr}
\footnotetext[5]{Email: pierre.monmarche@sorbonne-universite.fr}

\maketitle

\begin{abstract}~
  In this paper, we provide bounds in Wasserstein and total variation distances between the distributions of
   the successive iterates of two functional autoregressive processes
   with isotropic Gaussian noise of the form
   $Y_{k+1} = \rmT_\gamma(Y_k) + \sqrt{\gamma\sigma^2} Z_{k+1}$ and
   $\tilde{Y}_{k+1} = \tilde{\rmT}_\gamma(\tilde{Y}_k) +
   \sqrt{\gamma\sigma^2} \tilde{Z}_{k+1}$.
   % in the limit where the
   % parameter $\gamma \to 0$.
   More precisely, we give   non-asymptotic bounds on
   $\rho(\mathcal{L}(Y_{k}),\mathcal{L}(\tilde{Y}_k))$, where $\rho$ is
   an appropriate weighted Wasserstein distance or a $V$-distance,
   uniformly in the parameter $\gamma$, and on $\rho(\pi_{\gamma},\tilde{\pi}_{\gamma})$,
   where  $\pi_{\gamma}$ and $\tilde{\pi}_{\gamma}$  are the respective
    stationary   measures of the two processes. The class of considered processes encompasses the
   Euler-Maruyama discretization of Langevin diffusions and its
   variants.  The bounds we derive are of order $\gamma$ as $\gamma \to 0$. To obtain
   our results, we  rely on the construction of a discrete sticky Markov chain
   $(W_k^{(\gamma)})_{k \in \nset}$ which bounds  the distance between an appropriate coupling of the two processes.  We then establish stability and quantitative convergence results for this process uniformly on
   $\gamma$. In addition, we show that it converges in
   distribution to the continuous sticky process studied in
   \cite{howitt2007stochastic,eberle:zimmer:2016}. Finally, we
   apply our result to Bayesian inference of ODE parameters and  numerically illustrate them on two particular problems.
 \end{abstract}

% !TeX root = main_imsart.tex

\section{Introduction}
We are interested in this paper in  Markov chains
$(Y_k)_{k \in \nset}$ starting from $y \in \rset^d$ and defined by
recursions of the form
\begin{equation}
  \label{eq:intro_def_Y_k}
  Y_{k+1} = \rmT_{\gamma}(Y_k)  + \sigma \sqrt{\gamma} Z_{k+1} \eqsp,
\end{equation}
where $\sigma >0$, $\gamma \in \ocint{0,\bgamma}$, for some $\bgamma >0$,
$\{\rmT_{\gamma} \,: \, \gamma \in \ocint{0,\bgamma}\}$ is a family of
continuous functions from $\rset^d$ to $\rset^d$ and
$(Z_k)_{k \geq 1}$ is a sequence of \iid~$d$-dimensional standard
Gaussian random variables. Note that the Euler-Maruyama discretization
of overdamped Langevin diffusions or of general Komolgorov processes and its variants belong to this class of
processes and in that setting $\gamma$ corresponds to the
discretization step size. Indeed, the Euler scheme consists in taking
for any $\gamma \in \ocint{0,\bgamma}$,
$\rmT_{\gamma}(y) = y +\gamma b(y)$ for some   $b : \rset^d \to \rset^d$. When $b=-\nabla U$ for some potential $U$ \tcrw{and $\sigma^2=2$}, these methods are
now popular Markov Chain Monte Carlo algorithms to sample from the
target density
$ x \mapsto \rme^{-U(x)}/ \int_{\rset^{d}} \rme^{-U(y)} \rmd y$.
However, in some applications, explictly computing $\nabla U$ is not
an option and further numerical methods must be implemented which come
with additional bias since only approximations of $\nabla U$ can be
used in \eqref{eq:intro_def_Y_k}. In this paper, we precisely study
this additional source of error. In particular, based on
a chain defined by \eqref{eq:intro_def_Y_k}, we consider a second Markov chain
$(\tY_k)_{k \in \nset}$ defined by the recursion
\begin{equation}
    \label{eq:intro_def_tY_k}
  \tY_{k+1} = \tilde{\rmT}_{\gamma}(\tY_k) + \sigma \sqrt{\gamma} \tZ_{k+1} \eqsp,
\end{equation}
where $\{\tilde{\rmT}_{\gamma} \,: \, \gamma \in \ocint{0,\bgamma}\}$
is a family of functions from $\rset^d$ to $\rset^d$ such that for any
$\gamma$, $\tilde{\rmT}_{\gamma}$ is an approximation of
$\rmT_{\gamma}$ in a sense specified below, and $(\tZ_k)_{k \geq 1}$ is a sequence of
\iid~$d$-dimensional standard Gaussian random variables potentially
correlated with $(Z_k)_{k \geq 1}$. % Denote by $R_{\gamma}$ and $\tR_{\gamma}$ the Markov kernel associated with $(Y_k)_{k \in \nset}$ and $(\tY_k)_{k \in\nset}$ respectively. 

We will enforce below conditions that ensure that both 
$(Y_k)_{k\in\nset}$ and $(\tY_k)_{k \geq 1}$ are
geometrically ergodic, and denote by $\pi_\gamma$ and $\tilde \pi_\gamma$ their
 invariant probability measures respectively. If
for any $\gamma >0$, $\tilde{\rmT}_{\gamma}$ is close in some sense to
$\rmT_{\gamma}$, the overall process $(\tY_k)_{k \in\nset}$ can be
seen as a perturbed version of $(Y_k)_{k \in\nset}$, and $\tilde \pi_\gamma$
is expected to be close to $\pi_\gamma$. The main goal of this paper is 
to establish quantitative bounds on the Wasserstein and total variation distance between the finite-time laws of the two processes and
between their equilibria. The study of perturbation of Markov processes has been the subject of many existing works; see \eg~\cite{shardlow:stuart:2000,mitrophanov:2005,johndrow:et:al:2015,rudolf:schweizer:2018,medina:rudolf:schweizer:2020} and the references therein. However, it turns out that these existing results do not apply as such. 
 We pay particular attention to the dependency of these estimates
on $\gamma$. Indeed, in the case of the Euler scheme of a continuous-time diffusion, $\pi_\gamma$ and the law of $Y_{\lfloor t/\gamma\rfloor}$ for some $t>0$ converge to the invariant measure and law at time $t$ of the continuous-time process, and similarly for the perturbed chain. Hence, as $\gamma\rightarrow 0$, our estimates should not degenerate, but rather yield quantitative estimates for the continuous time process. More precisely, the present paper is the discrete-time counterpart of the study conducted by \cite{eberle:zimmer:2016} in the continuous-time case, and as $\gamma$ vanishes we recover estimates that are consistent with those of \cite{eberle:zimmer:2016}.

As in \cite{eberle:zimmer:2016}, our results are based on the
construction of a suitable coupling of the processes, i.e. a
simultaneous construction of a pair $(Y_k,\tilde Y_k)_{k\in\N}$ of
non-independent chains that marginally follow 
\eqref{eq:intro_def_Y_k} and \eqref{eq:intro_def_tY_k} respectively and are
designed to get and stay close to each  other. We use the maximal
reflection coupling for Gaussian laws, namely at each step the two chains are
coupled to merge with maximal probability and, otherwise, we use a reflection (see \Cref{subsec:coupling} below).
 Estimates on the laws of the chains then follow
from the study of $(\|Y_k-\tY_k\|)_{k\in\N}$, which is itself based on
the analysis of a Markov chain $(W_k)_{k\in\N}$ on
$\coint{0,\plusinfty}$ that is such that, by design of the coupling,
$\|Y_k-\tY_k\|_k \leqslant W_k$ for all $k\in\N$.  Thus, the question
of establishing bounds between the laws of two $d$-dimensional Markov
chains is reduced to the study of a single one-dimensional
chain. Besides, together with the Markov property, the auxiliary chain
has some nice features. At first, it is stochastically monotonous,
\ie~if $(W_k')_{k\in\N}$ is a Markov chain associated to the same
Markov kernel as $(W_k)_{k\in\N}$ and such that $W_0\leq W_0'$, then
for any $k \in \nset$,  $W_k'$ is stochastically dominated by  $W_k$,
\ie~for any $t \geq 0$, $\PP(W_k \leq t) \geq \PP(W_k'\leq t)$. Secondly,
$(W_k)_{k\in\N}$ has an atom at
$0$. %The analysis of this auxiliary Markov chain constitutes most of the article.

The main results and main steps of this study are the following. First, we prove that $(W_k)_{k\in\N}$ admits a unique invariant measure and that, independently of $\gamma$, the moments and mass on $\ooint{0,\plusinfty}$ of this equilibrium are small when the difference between $\rmT_\gamma$ and $\tilde{\rmT}_{\gamma}$ is small. Secondly, we establish the geometric convergence of the chain towards its equilibrium, at an explicit rate (stable as $\gamma \to 0$).  Finally, we prove that, as $\gamma\rightarrow 0$, the chain $(W_k)_{k\in\N}$ converges in law to the continuous-time sticky diffusion that played the same role in \cite{eberle:zimmer:2016}. This last part is not necessary to get  estimates on the finite-time and equilibrium laws of \eqref{eq:intro_def_Y_k} and \eqref{eq:intro_def_tY_k} for a given $\gamma>0$, but it sheds some new light on the limit sticky process which, in \cite{eberle:zimmer:2016}, is constructed as the limit of continuous-time diffusions with  
diffusion coefficients that vanish at zero, rather than  discrete-time chains. In some sense, $(W_k)_{k\in\N}$ can be seen as a discretization scheme for the sticky process, see also \cite{bou2020sticky} on this topic. % In particular, the sticky behaviour, namely the fact that the process spends a non-zero fraction of time at zero, while being identically equal to $0$ on no open time interval, can be related to the fact that, for the discrete time chain, the number of consecutive steps at zero is of order $1/\sqrt{\gamma}$, and such sequences are separated by excursions on $\ooint{0,\plusinfty}$ that,  with a probability [of order $1-\mathcal O(\sqrt\gamma)$ ?],  are themselves   of order $1/\sqrt{\gamma}$.

Besides the obvious continuous/discrete time difference between \cite{eberle:zimmer:2016} and the present work, let us emphasize a few other distinctions. First, in \cite{eberle:zimmer:2016}, the one-dimensional sticky process has an explicit invariant measure. This is not the case in our framework, which makes the derivation of the bounds on the moments of the equilibrium a bit more involved.  Secondly, in \cite{eberle:zimmer:2016}, although it is proven that the mass at zero and the first moment of the law of the sticky diffusion converge to their value at equilibrium (which is sufficient to get estimates on the laws of the two initial $d$-dimensional processes),  the question of long-time convergence is not addressed for the sticky diffusion, whereas our long-time convergence results for $(W_k)_{k\geqslant 0}$ together with its convergence as $\gamma\rightarrow \tcr{0}$ furnish an explicit convergence rate for  the sticky diffusion. The proof of the stability of the mass at zero and of the first moment  in \cite{eberle:zimmer:2016} relies on a concave modification of the distance (such as used e.g. in \cite{eberle:2015}), which is contracted by the chain before it hits zero. This method does not apply to, say, the second moment of the process. As a consequence, the results of \cite{eberle:zimmer:2016} only concern the total variation and $\wassersteinD[1]$ Wasserstein distances, while we consider a broader class of distances.

Finally, our theoretical results are illustrated through numerical experiments. In particular, we study the influence of the discretization scheme generally needed to perform 
  Bayesian inference for parameters of Ordinary Differential Equations (ODEs).

\section*{Outline of the work}
\tcr{The present document is organized as follows. We present the main results we obtain in \Cref{subsec:main_result_coupling}. The maximal reflection coupling is used in \Cref{subsec:coupling} to give a coupling of $Y_{k+1}$ and $\tilde{Y}_{k+1}$ whose difference is bounded by a one-dimensional Markov chain. The properties of this chain are described in \Cref{sebsec:auxiliary_Markov_chain}. The convergence of this chain to a continuous process when $\gamma\to 0$ is demonstrated in \Cref{sec:contin_limit}. The numerical illustrations are presented in \Cref{sec:application}. The postponed proofs can be read in \Cref{sec:Postponed_proofs}.}

% pitch:
% \begin{enumerate}
% \item bound for $\rho(R_\gamma^k(z,\cdot), \tR_\gamma^k(z,\cdot))$
% \item implies bounds for stationary distributions
% \item Main difficulties : $\gamma \to 0$;
% \item relies on the construction of discrete sticky
% \item study convergence/ergodicity
% \item main step is to show that it admits an atom 
% \item convergence of the family of discrete sticky
% \item Application to bayesian inference and biased force
% \item 
% \item 
% \item 
% \item 
% \item 
% \item 
% \item 
% \item 
% \item 
% \item 
% \item 
% \end{enumerate}
  \section*{Notation and convention}

We denote by $\mcbb(\rset^d)$, the Borel $\sigma$-field of $\rset^d$
endowed with the Euclidean distance and  by
$\varphibf_{\upsigma^2}$ the density of the one-dimensional Gaussian
distribution with zero-mean and variance $\upsigma^2>0$. In the case
$\upsigma =1$, we simply denote this density by $\varphibf$.  $\Phibf$ denotes the cumulative distribution of the one-dimensional Gaussian distribution with mean $0$ and variance $1$.
 $\Delta_{\rset^d}$ stands for the subset  $  \{(x,x) \in \rset^{2d} \, : \, x \in\rset^d\}$ of $\rset^d$
and for any $\msa \subset \rset^d$, $\msa^{\comp}$  for its
complement.  Let $\mu$ and $\nu$ be two $\sigma$-finite measures on
$(\rset^d,\mcbb(\rset^d))$. If $\nu$ is absolutely continuous with
respect to $\mu$, we write $\nu \ll \mu$. We say that $\nu$ and $\mu$ are equivalent if and only if $\nu \ll \mu$ and $\mu \ll \nu$. 
We denote by $\ceil{\cdot}$ and $\floor{\cdot}$ the floor and ceiling function respectively. For $d,n\in\N^*$, $\mathcal M_{d,n}(\R)$ stands for the set of $d\times n$ real matrices.  We denote by $\rmC^k(\msu,\msa)$ the set of $k$ times continuously differentiable functions from an open set $\msu \subset \rset^m$ to $\msa \subset \rset^p$. We use the convention $\sum_{k=n}^p=0$ and $\prod_{k=n}^p=1$ for $p<n$, $n,p \in \nset$, and $a/0 = \plusinfty$ for $a>0$. 
%We take the convention that $a/0 = \infty$ if $a >0$.

%%% Local Variables:
%%% mode: latex
%%% TeX-master: "main_imsart"
%%% End:

%%% Local Variables:
%%% mode: latex
%%% TeX-master: "main_imsart"
%%% End:

% !TeX root = main_imsart.tex
\section{Sticky reflection coupling}
\label{sec:main_result_coupling}

\subsection{Main result}
\label{subsec:main_result_coupling}
% This section introduces discret sticky reflection coupling. Let
% $\gamma,\sigma>0$ and $\rmT_\gamma, \tilde{\rmT}_\gamma\in
% \rmC(\rset^d,\rset^d)$. We are interested in the long-time behaviour of the
% Markov chain defined by induction on $k\in \mathbb{N}$,
% \begin{align}
% \begin{aligned}
% \underline{X}_{k+1} & =  \rmT_\gamma (\underline{X}_k) + (\sigma^2\gamma)^{\half} Z_{k+1}\\
% \underline{\tilde{X}}_{k+1} & =  \tilde{\rmT}_\gamma (\underline{\tilde{X}}_k) + (\sigma^2\gamma)^{\half} Z_{k+1} \eqsp,
% \end{aligned}
% \end{align}
% where $(Z_k)_{k\in\N}$ is $\iid$ sequences of random variables with standard Gaussian distribution.
The Markov kernels $R_{\gamma}$  associated
with $(Y_k)_{k \in \nset}$ defined in \eqref{eq:intro_def_Y_k} are given for any $y \in \rset^{d}$, $\msa \in \mathcal{B}(\rset^d)$ by
\begin{equation+}
  \label{eq:def_R_gamma}
  R_\gamma(y,\msa) = (2\uppi \sigma^2 \gamma)^{-\nicefrac{d}{2}}\int_{\rset^d} \1_{\msa}(y') \exp\defEns{-\norm[2]{y'-\rmT_{\gamma}(y)}/(2\sigma^2 \gamma)} \rmd y' \eqsp.
\end{equation+}
Note that $\tR_{\gamma}$ associated with $(\tY_k)_{k \in\nset}$ is
given by the same expression upon replacing $\rmT_{\gamma}$ by $\tilde{\rmT}_{\gamma}$.  
We consider the following assumption on the family
$\{\rmT_{\gamma} \, : \, \gamma \in\ocint{0,\bgamma}\}$. This
condition will ensure that $R_{\gamma}$ is geometrically ergodic (see
\Cref{theo:convergence_markov_chain_rsetd}) and it will be important to
derive our main results regarding the distance of $R_{\gamma}^k$ and $\tR_{\gamma}^k$, for $k \in\nset$.
\tcrw{\begin{assumption}
  \label{ass:LipsAnd}
  \begin{enumerate}[label=(\roman*)]
    Assume that $\sup_{\gamma\in \ocint{0,\bgamma}}\gamma^{-1}\normLigne{\rmT_{\gamma}(0)} < \plusinfty$ and for any
    $\gamma \in \ocint{0,\bgamma}$, there exists a non-decreasing function
    $\tau_\gamma :[0,+\infty) \to[0,+\infty)$ satisfying
\item  $  \norm{\rmT_\gamma(x)-\rmT_\gamma(\tilde{x})}\leq \tau_{\gamma}(\norm{x-\tilde{x}})$ for any
    $x, \tilde{x} \in \rset^d$ ;
    \label{ass:LipsAnd_1}
    \item $\tau_{\gamma}(0)= 0$ and there exist $R_1,\Ltt \geq 0$ and $\mtt >0$ such that for any
    $\gamma \in \ocint{0,\bgamma}$,
    \label{ass:LipsAnd_2}
    \begin{equation}
      \label{ass:LipsAnd_eq}
    \sup_{r \in \ooint{0,\plusinfty}} \{\tau_{\gamma}(r)/r \} \leq 1+ \gamma \Ltt \eqsp, \qquad \sup_{r \in \ooint{R_1,\plusinfty}}\{\tau_{\gamma}(r)/r\} \leq 1-\gamma \mtt \eqsp .
\end{equation}
  \end{enumerate}
%   There exist $R_1,\Ltt \geq 0$ and $\mtt >0$ such that for any
%   $\gamma \in \ocint{0,\bgamma}$, there exists a non-decreasing function
%   $\tau_\gamma :[0,+\infty) \to[0,+\infty)$ satisfying $\tau_{\gamma}(0)= 0$, $  \norm{\rmT_\gamma(x)-\rmT_\gamma(\tilde{x})}\leq \tau_{\gamma}(\norm{x-\tilde{x}})$ for any
%   $x, \tilde{x} \in \rset^d$, and
%   \begin{equation}
%       \label{ass:LipsAnd_eq}
%  \sup_{r \in \ooint{0,\plusinfty}} \{\tau_{\gamma}(r)/r \} \leq 1+ \gamma \Ltt \eqsp, \qquad \sup_{r \in \ooint{R_1,\plusinfty}}\{\tau_{\gamma}(r)/r\} \leq 1-\gamma \mtt \eqsp .
% \end{equation}
%  In addition, $\sup_{\gamma\in \ocint{0,\bgamma}}\normLigne{\rmT_{\gamma}(0)} < \plusinfty$. 
% % \begin{equation}
% %  \eqsp.
% % \end{equation}
\end{assumption}}
% \tcr{We use the notation \Cref{ass:LipsAnd}-\ref{ass:LipsAnd_2} to say that we only need the assumptions about $\tau_\gamma$.}
\tcr{Part of our results only deals with objects that exclusively depend on a family of functions $\{\tau_\gamma\,:\,\gamma \in \ocint{0,\bgamma}\}$ and not $\rmT_{\gamma}$, and these results only necessitate conditions on $\{\tau_\gamma\,:\,\gamma \in \ocint{0,\bgamma}\}$ specified by H1-(ii) (note that H1-(i) only concerns $\rmT_{\gamma}$).  That is why, when stating these specific results, we only assume H1-(ii).}
Note that \Cref{ass:LipsAnd} implies that $\bgamma \leq 1/\mtt$. Further, the condition that for any $\gamma \in \ocint{0,\bgamma}$,
$\tau_{\gamma}$ is non-decreasing can be omitted upon replacing in our
study $\tau_{\gamma}$ by the affine majorant
\begin{equation}
  \label{eq:def_tilde_tau}
  \bar{\tau}_{\gamma} : r\mapsto 
  \begin{cases}
    (1+\Ltt \gamma) r & \text{ if $r \in \ccint{0,R_1} $} \eqsp,\\
    (1+\Ltt \gamma) R_1  +  (1-\mtt \gamma)(r-R_1) & \text{ otherwise}   \eqsp.
  \end{cases}
\end{equation}
 Indeed, by definition and \eqref{ass:LipsAnd_eq}, for any $r \in \coint{0,\plusinfty}$,
$\tau_{\gamma}(r) \leq \bar{\tau}_{\gamma}(r)$, therefore for any
$x, \tilde{x} \in \rset^d$,
$ \norm{\rmT_\gamma(x)-\rmT_\gamma(\tilde{x})}\leq
\bar{\tau}_{\gamma}(\norm{x-\tilde{x}})$. In addition, an easy computation leads to setting $R_2= 2R_1(\Ltt +\mtt)/\mtt$,
  \begin{equation}
      \label{ass:LipsAnd_eq_v2}
 \sup_{r \in \ooint{0,\plusinfty}} \{\bar{\tau}_{\gamma}(r)/r  \} \leq 1+\gamma \Ltt \eqsp, \qquad \sup_{r \in \ooint{R_2,\plusinfty}}\{\bar{\tau}_{\gamma}(r)/r\} \leq 1-\gamma \mtt/2 \eqsp.
\end{equation}
Then, $\bar{\tau}_{\gamma}$ satisfies \Cref{ass:LipsAnd} and is non-decreasing.

Note that \Cref{ass:LipsAnd} implies that for any $r \in \coint{0,\plusinfty}$ and $\gamma \in \ocint{0,\bgamma}$, $\tau_{\gamma}(r) \leq (1+\gamma\Ltt)r$, therefore  $\rmT_{\gamma}$ is $(1+\gamma \Ltt)$-Lipschitz. The second condition in \eqref{ass:LipsAnd_eq} ensures that for any $\gamma \in \ocint{0,\bgamma}$, $\rmT_{\gamma}$ is a \textit{contraction at large distances}, \ie~for any $x,\tx \in \rset^d$, $\norm{\rmT_\gamma(x) -
 \rmT_\gamma(\tx)} \leq (1-\gamma \mtt) \norm{x-\tx}$, if $\norm{x -\tx} \geq R_1$.

The assumption
\Cref{ass:LipsAnd} holds  for the Euler scheme applied to diffusions
with scalar covariance matrices, \ie~\eqref{eq:intro_def_Y_k} with
$\rmT_\gamma(x)=x+\gamma b(x)$
% \begin{equation}
%   \label{eq:form_T_Euler_One}
% \text{$\rmT_\gamma(x)=x+\gamma b(x)$} \eqsp,  
% \end{equation}
and a drift function  $b: \rset^d \to \rset^d$, if, for some $\Ltt_b,\mtt_b,R_b>0$, $b$ is $\Ltt_b$-Lipschitz continuous and satisfies
\begin{align}
\ps{x-y}{b(x)-b(y)}\leq -\mtt_b \normLigne{x-y}^2 \eqsp ,
\end{align}
for all $x,y\in \rset^d$ with $\norm{x-y}\geqslant R_b$. Indeed, this implies that for any $x,y \in \rset^d$, \sloppy$\norm{\rmT_{\gamma}(x) -\rmT_{\gamma}(y)} \leq
(1+\Ltt_b\gamma){\norm{x-y}}$ and, provided $\gamma \in \ooint{0,\mtt_b/\Ltt_b^2}$ and $\norm{x-y}\geqslant R_b$, \sloppy
$\norm[2]{\rmT_{\gamma}(x) -\rmT_{\gamma}(y)} \leq
(1-\mtt_b\gamma){\norm[2]{x-y}}$. Therefore, it suffices to consider
$\tau_{\gamma}$ defined by \eqref{eq:def_tilde_tau} with $\Ltt = \Ltt_b$,
$\mtt =\mtt_b/2$ and $R_1=R_b$.

\tcr{Note that other discretization methods have been developed to handle drift functions $b$ that are not necessarily Lipschitz. Some examples include the split-step Euler-Maruyama discretization \cite{MATTINGLY2002185,talay2002stochastic}, tamed Euler-Maruyama scheme  \cite{hutzenthaler2015numerical,BROSSE20193638}, and Markov jump process approximations  \cite{bou2018continuous}.  However, addressing these alternative discretization schemes is beyond the scope of this paper, and we leave them for future work.}

\bigskip

Our results will be stated in term of Wasserstein distances and $V$-norms, whose definitions are the following. Consider a measurable cost function $\cbf : \rset^{2d} \to [0,\infty)$. Then the associated  Wasserstein distance  $\wassersteinD[c]$ is given for two probability measures $\mu,\nu$ on $\R^d$ by
\[\wassersteinD[c](\nu,\mu) \  = \ \inf_{\pi\in\Pi(\nu,\mu)} \int_{\R^{2d}} \cbf(x,y) \pi(\dd x,\dd y)\,, \]
where $\Pi(\nu,\mu)$ is the set of transference plans or couplings between $\nu$ and $\mu$, namely the set of probability measures on $\R^d$ whose first and second $d$-dimensional marginals are  $\nu$ and $\mu$ respectively. In the particular case where $\cbf(x,y) = \1_{\Delta_{\rset^d}^{\comp}}(x,y)$, $\wassersteinD[c]$ is simply the total variation distance $\tvnorm{\cdot}$. For $V: \rset^d \to \coint{1,\plusinfty}$, the choice $\cbf(x,y) = \1_{\Delta_{\rset^d}^{\comp}}(x,y)\{V(x)+V(y)\}$ yields the $V$-norm (see \cite[Theorem 19.1.7]{douc:moulines:priouret:soulier:2018}), i.e. $\wassersteinD[c](\nu,\mu) =\normLigne{\nu-\mu}_{V}$. Finally, for $\cbf(x,y) = \norm[p]{x-y}$ with $p \in \coint{1,\plusinfty}$, $\wassersteinD[c]$  is the $p$-th power of the usual Wasserstein distance of order $p$.
 
 \bigskip

We first show that \Cref{ass:LipsAnd} implies that the Markov kernel
$R_{\gamma}$ is $V_c$-uniformly geometrically
ergodic where for any $c >0$ and $x \in \rset^d$, $  V_c(x) = \exp(c \normLigne[2]{x}) $,
% \begin{equation}
%   \label{eq:def_V_c_exp_ergo_R_gamma}
% \eqsp,
% \end{equation}
with a convergence rate that scales linearly with the step size
$\gamma$. 
\begin{proposition}
  \label{theo:convergence_markov_chain_rsetd}
  Assume \Cref{ass:LipsAnd}. Then, for any
  $\gamma \in \ocint{0,\tcrw{\bgamma}}$, $R_{\gamma}$ admits a unique stationary distribution
  $\pi_{\gamma}$. In addition, there
  exist $c >0$, $\rho \in \coint{0,1}$ and
  $C \geq 0$ such that for any $x \in\rset^d$ and $\gamma \in\ocint{0,\tcrw{\bgamma}}$, $    \normLigne{\updelta_x R_{\gamma}^k  - \pi_{\gamma}}_{V_{c}}  \leq C \rho^{k \gamma} V_c(x)$.
  % \begin{equation}
  %   \normLigne{\updelta_x R_{\gamma}^k  - \pi_{\gamma}}_{V_{c}}  \leq C \rho^{k \gamma} V_c(x) \eqsp.
  % \end{equation}
\end{proposition}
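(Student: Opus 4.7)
The plan is to verify the hypotheses of a discrete Harris-type theorem for the kernel $R_\gamma$, paying careful attention to the dependence on $\gamma$ so that the final rate scales like $\rho^{k\gamma}$ rather than $\rho^{k}$.

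\emph{Step 1: Geometric drift.} Because the innovations are Gaussian, $R_\gamma V_c$ is available in closed form: the moment generating function of a Gaussian quadratic form yields, for every $c \in \ooint{0,1/(2\sigma^2 \bgamma_1)}$,
\begin{equation*}
R_\gamma V_c(y) \;=\; (1 - 2c\sigma^2\gamma)^{-d/2}\exp\!\left(\frac{c\,\norm[2]{\rmT_\gamma(y)}}{1 - 2c\sigma^2\gamma}\right).
\end{equation*}
By \Cref{ass:LipsAnd} and the boundedness of $M := \sup_{\gamma}\normLigne{\rmT_{\gamma}(0)}$, for $\norm{y} \geq R_1$ one has $\norm[2]{\rmT_\gamma(y)} \leq (1-\gamma\mtt)^2 \norm[2]{y} + 2M(1-\gamma\mtt)\norm{y}+M^2$. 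A Taylor expansion in $\gamma$ of the closed-form expression above, combined with the choice of $c$ small enough that $c < \mtt/\sigma^2$, produces $\lambda\in\ooint{0,1}$ and $R>0$, independent of $\gamma$, such that $R_\gamma V_c(y) \leq \lambda^{\gamma} V_c(y)$ whenever $\norm{y} \geq R$, while on $\set{y}{\norm{y}\leq R}$ the closed-form bound shows $R_\gamma V_c$ is majorised by a constant $K$. This gives the geometric drift $R_\gamma V_c \leq \lambda^{\gamma} V_c + K\,\1_{\{\norm{\cdot}\leq R\}}$, uniformly in $\gamma \in \ocint{0,\bgamma_1}$.

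\emph{Step 2: Uniform minorisation on a sampled kernel.} A one-step minorisation of $R_\gamma$ on $\set{y}{\norm{y}\leq R}$ carries a constant of order $\exp(-\mathrm{const}/\gamma)$, since the Gaussian density concentrates as $\gamma\to 0$, and would therefore destroy the rate. I would instead fix some $t_0 > 0$ and work with the iterated kernel $R_\gamma^{n_\gamma}$ for $n_\gamma := \ceil{t_0/\gamma}$. \Cref{ass:LipsAnd} implies that, starting from $\set{y}{\norm{y}\leq R}$, the deterministic iterates of $\rmT_\gamma$ stay in a fixed bounded set over the horizon $t_0$, while the accumulated noise has covariance of order $n_\gamma\gamma\sigma^2\approx t_0\sigma^2$. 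Writing the $n_\gamma$-step transition as the conditional law of the last innovation composed with the (bounded) nonlinear map of the previous ones, one obtains a uniform minorisation $R_\gamma^{n_\gamma}(y,\cdot) \geq \eta\,\nu$ for every $y$ in the small ball, with $\eta > 0$ and the probability measure $\nu$ independent of $\gamma$.

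\emph{Step 3: Harris' theorem and conversion to the $k\gamma$ rate.} Iterating the drift of Step 1 gives $R_\gamma^{n_\gamma} V_c \leq \lambda^{n_\gamma\gamma} V_c + K' \1_{\{\norm{\cdot}\leq R\}}$ with constants uniform in $\gamma$. Combined with the minorisation of Step 2, the Hairer--Mattingly form of Harris' theorem yields a unique invariant probability measure $\pi_\gamma$ for $R_\gamma^{n_\gamma}$ (hence for $R_\gamma$ by uniqueness), together with $\tilde\rho\in\coint{0,1}$ and $\tilde C\geq 0$ independent of $\gamma$ such that $\normLigne{\updelta_x R_\gamma^{n_\gamma j} - \pi_\gamma}_{V_c} \leq \tilde C \tilde\rho^{\,j} V_c(x)$. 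Writing any $k\in\nset$ as $k = n_\gamma j + r$ with $0\leq r < n_\gamma$ and using the drift to absorb the residual $r$ steps translates this into $\normLigne{\updelta_x R_\gamma^{k} - \pi_\gamma}_{V_c} \leq C \rho^{k\gamma} V_c(x)$ with $\rho = \tilde\rho^{1/t_0}$.

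The main obstacle is Step 2: one needs a minorisation constant that does not depend on $\gamma$, which is impossible for one-step transitions. One therefore has to pass to $n_\gamma \approx t_0/\gamma$ iterations and control, uniformly in $\gamma$, how the accumulated nonlinear contribution of $\rmT_\gamma$ perturbs a Gaussian kernel of effective variance $t_0 \sigma^2$.
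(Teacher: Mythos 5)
Your overall architecture (a drift inequality uniform in $\gamma$, a minorisation of the kernel sampled at $n_\gamma\approx t_0/\gamma$ steps, Harris' theorem, then conversion to a $\rho^{k\gamma}$ rate) is exactly the machinery behind the result the paper invokes: the paper itself only verifies the drift inequality, for $\overline V_{c,\gamma}(x)=\exp(c\|x-\rmT_\gamma(0)\|^2)$ and in the sharper form $R_\gamma \overline V_{c,\gamma}\le \lambda^\gamma \overline V_{c,\gamma}+\gamma A$, and then cites \cite[Corollary 11]{debortoli2019convergence} for everything else. The trouble is that the two steps you would have to supply yourself are not actually carried out. In Step 2, the mechanism you sketch --- conditioning on the last innovation, whose conditional law is Gaussian with covariance $\sigma^2\gamma \operatorname{Id}$ --- cannot produce a minorisation $R_\gamma^{n_\gamma}(y,\cdot)\ge \eta\,\nu$ with $\eta>0$ and $\nu$ independent of $\gamma$: the $n_\gamma$-step law is then a mixture of Gaussians of variance $\sigma^2\gamma$ with means in a bounded set, and such a mixture admits no $\gamma$-uniform lower bound against a fixed measure unless one already controls the law of the penultimate state, which is circular. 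What is really needed is a comparison of the full $n_\gamma$-step transition with a Gaussian of variance of order $t_0\sigma^2$ (a discrete Girsanov-type change of measure along the trajectory, or a reflection-coupling Doeblin estimate as in the cited reference). You correctly identify this as ``the main obstacle'', but you do not resolve it, and it is precisely the nontrivial content that the paper outsources.

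Step 3 also does not follow from your Step 1 as stated. Iterating $R_\gamma V_c\le \lambda^\gamma V_c+K\1_{\{\norm{\cdot}\le R\}}$ over $n_\gamma=\lceil t_0/\gamma\rceil$ steps yields a remainder bounded by $K\sum_{j=0}^{n_\gamma-1}\lambda^{j\gamma}\le K/(1-\lambda^\gamma)$, which is of order $K/(\gamma\log(1/\lambda))$ (and is no longer supported on the ball), so the sampled-kernel drift constant blows up like $1/\gamma$. Harris' theorem then requires the minorisation on a sublevel set of $V_c$ of level of order $1/\gamma$, i.e.\ on a ball of radius of order $\sqrt{\log(1/\gamma)}$, on which any Gaussian-type minorisation constant over a horizon $t_0$ degrades polynomially in $\gamma$; the resulting contraction factor per time $t_0$ is then $1-o(1)$ as $\gamma\to 0$ and you do not obtain $\rho<1$ independent of $\gamma$. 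The fix is the one the paper implements: on the bounded region the one-step estimate must be sharpened to $R_\gamma V\le \lambda^\gamma V+\gamma A$ (the closed-form Gaussian computation shows $R_\gamma \overline V_{c,\gamma}/\overline V_{c,\gamma}=1+O(\gamma)$ uniformly on compacts), so that the $n_\gamma$-step remainder is $O(A t_0)$, uniformly in $\gamma$. With that sharpening, and with an actual proof (or citation) of the $\gamma$-uniform Doeblin/minorisation condition for $R_\gamma^{n_\gamma}$, your plan would go through.
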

\begin{proof}
The proof of this result follows the same strategy as \cite{debortoli2019convergence} but since we are not interested in sharp constants a more direct proof is postponed to \Cref{sec:theo:convergence_markov_chain_rsetd}.
\end{proof}
% Note that this result can be made quantitative, and other convergence
% results in total variation and Wasserstein distance of order
% $p \in \coint{1,\plusinfty}$ can also be established following the same
% lines as the proof of \cite[Corollary 14]{debortoli2019convergence}. However, these results are out of the scope of the present paper and
% would be simple adaptations of those in \cite{debortoli2019convergence} or \cite{eberle2018quantitative}.

We now consider an assumption which quantifies the
perturbation associated with $\trmT_{\gamma}$ relatively to
$\rmT_{\gamma}$, for $\gamma \in \ocint{0,\bgamma}$.

\begin{assumption}
\label{ass:bound}
There exists $\InftyBound >0$ such that 
$\sup_{x \in \rset^d}\normLigne{\rmT_\gamma(x)-\tilde{\rmT}_\gamma(x)} \leq \gamma
\InftyBound$ for all
$\gamma \in \ocint{0,\bgamma}$. 
\end{assumption}
\begin{example}
  \label{ex:euler}
The assumption \Cref{ass:bound} holds for the Euler scheme applied to diffusions with scalar covariance matrices, \ie~\eqref{eq:intro_def_Y_k} and \eqref{eq:intro_def_tY_k} with
\begin{equation}
  \label{eq:form_T_Euler}
\text{$\rmT_\gamma(x)=x+\gamma b(x)\ $ and $\ \tilde{\rmT}_\gamma(x)=x+\gamma \tilde{b}(x)$} \eqsp,  
\end{equation}
under the condition that
$\sup_{x \in \rset^d}\normLigne{b(x)-\tilde{b}(x)}\leq
\InftyBound$. This setting is exactly the one we introduced to
motivate our study. In particular, in the case where $b = -\nabla U$
for some potential $U$, $\tilde{b}$ may correspond to a numerical
approximation of this gradient. 
\end{example}

Note that compared to $\rmT_{\gamma}$, $\gamma \in \ocint{0,\bgamma}$,
we do not assume any smoothness condition on
$\tilde{\rmT}_{\gamma}$. More precisely, we do not assume that
$\tilde{\rmT}_{\gamma}$ satisfies \Cref{ass:LipsAnd}. Regarding the
ergodicity properties of $\tR_{\gamma}$ associated with $\tilde{\rmT}_{\gamma}$, 
$\gamma \in \ocint{0,\bgamma}$, we have the following result. 
\begin{proposition}
  \label{theo:ergo_tilde_R}
  Assume \Cref{ass:LipsAnd} and \Cref{ass:bound}. Then, for any
  $\gamma \in \ocint{0,\tcrw{\bgamma}}$, $\tR_{\gamma}$ admits a unique
  stationary distribution $\tpi_{\gamma}$. In addition, there exists
  $c >0$ such that for any $\gamma \in \ocint{0,\tcrw{\bgamma}}$, there
  exist $\rho_{\gamma} \in \coint{0,1}$ and $C_{\gamma} \geq 0$ such
  that for any $x \in\rset^d$
  $ \normLigne{\updelta_x \tR_{\gamma}^k - \tpi_{\gamma}}_{V_{c}} \leq
  C_{\gamma} \rho_{\gamma}^{k} V_c(x)$, where $V_c(x) = \exp(c \normLigne[2]{x})$.
\end{proposition}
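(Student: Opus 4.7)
The plan is to reduce the statement to a standard Foster–Lyapunov argument: exhibit a geometric drift condition for $\tR_\gamma$ with the Lyapunov function $V_c(x)=\exp(c\|x\|^2)$, establish that closed balls are small sets for $\tR_\gamma$, and conclude by the Meyn–Tweedie geometric ergodicity theorem. Because the statement permits the constants $\rho_\gamma,C_\gamma$ to depend on $\gamma$, we do not need a $\gamma$-uniform minorization, only a drift condition whose parameters can be controlled for each fixed $\gamma\in\ocint{0,\bgamma_1}$.

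First I would compute $\tR_\gamma V_c$ explicitly. Using the isotropic Gaussian transition and the identity
\begin{equation*}
\int_{\rset^d}\rme^{c\|y+\sigma\sqrt{\gamma}z\|^{2}}\varphibf(z)^{d}\rmd z
=(1-2c\sigma^{2}\gamma)^{-d/2}\exp\!\bigl(c\|y\|^{2}/(1-2c\sigma^{2}\gamma)\bigr),
\end{equation*}
valid for $c\sigma^{2}\gamma<1/2$, I obtain
\begin{equation*}
\tR_\gamma V_c(x)=(1-2c\sigma^{2}\gamma)^{-d/2}\exp\!\bigl(c\|\trmT_\gamma(x)\|^{2}/(1-2c\sigma^{2}\gamma)\bigr).
\end{equation*}
To bound $\|\trmT_\gamma(x)\|$, I combine \Cref{ass:bound} with \Cref{ass:LipsAnd}: for $\|x\|\geq R_{1}$,
\begin{equation*}
\|\trmT_\gamma(x)\|\leq\|\rmT_\gamma(x)-\rmT_\gamma(0)\|+\|\rmT_\gamma(0)\|+\gamma\InftyBound
\leq(1-\gamma\mtt)\|x\|+C_{0},
\end{equation*}
with $C_{0}:=\sup_{\gamma\in\ocint{0,\bgamma}}\|\rmT_\gamma(0)\|+\bgamma\InftyBound<\plusinfty$ by \Cref{ass:LipsAnd} and \Cref{ass:bound}; for $\|x\|<R_{1}$ we just have $\|\trmT_\gamma(x)\|\leq(1+\gamma\Ltt)R_{1}+C_{0}$. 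Squaring and choosing $c$ small enough (depending on $\gamma$, or uniformly in $\gamma$ as in the proof of \Cref{theo:convergence_markov_chain_rsetd}) so that $2c\sigma^{2}\gamma$ is small relative to the gain $\gamma\mtt$, the exponent $c\|\trmT_\gamma(x)\|^{2}/(1-2c\sigma^{2}\gamma)$ is bounded by $c(1-\gamma\mtt')\|x\|^{2}+\text{const}$ for some $\mtt'>0$ once $\|x\|$ is large enough. This yields a drift inequality of the form
\begin{equation*}
\tR_\gamma V_c(x)\leq\lambda_\gamma V_c(x)+b_\gamma\,\1_{K_\gamma}(x),
\end{equation*}
for some $\lambda_\gamma\in\coint{0,1}$, $b_\gamma<\plusinfty$, and a closed ball $K_\gamma$.

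Next, I verify the minorization/small set condition on $K_\gamma$. The transition density of $\tR_\gamma$ is
\begin{equation*}
\tilde r_\gamma(x,y)=(2\uppi\sigma^{2}\gamma)^{-d/2}\exp\!\bigl(-\|y-\trmT_\gamma(x)\|^{2}/(2\sigma^{2}\gamma)\bigr),
\end{equation*}
which is strictly positive on $\rset^{d}\times\rset^{d}$, so $\tR_\gamma$ is $\Leb$-irreducible and strongly aperiodic. For $x\in K_\gamma$, $\trmT_\gamma(x)$ stays bounded (from the estimate above, $\sup_{x\in K_\gamma}\|\trmT_\gamma(x)\|<\infty$), so $\tilde r_\gamma$ is uniformly bounded below by a positive constant on $K_\gamma\times B(0,r)$ for any $r>0$. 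This gives a standard Doeblin-type minorization on $K_\gamma$, so $K_\gamma$ is a small set.

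Having both the geometric drift toward $K_\gamma$ and the minorization on $K_\gamma$, I invoke the standard $V$-geometric ergodicity theorem (\eg~\cite[Theorem~15.0.1]{meyn:tweedie:1993} or \cite[Theorem~19.4.1]{douc:moulines:priouret:soulier:2018}, the form used in the cited \cite[Corollary~11]{debortoli2019convergence}) to obtain existence and uniqueness of $\tpi_\gamma$ together with
$\|\updelta_x\tR_\gamma^{k}-\tpi_\gamma\|_{V_c}\leq C_\gamma\rho_\gamma^{k}V_c(x)$. The main technical point is the careful tuning of the constant $c$ in the Lyapunov function: it must be small enough that $2c\sigma^{2}\gamma<1$ and that the quadratic expansion of $\|\trmT_\gamma(x)\|^{2}$ does not destroy the contractive factor $(1-\gamma\mtt)^{2}$; exactly as in the proof of \Cref{theo:convergence_markov_chain_rsetd}, a single $c>0$ can be chosen uniformly in $\gamma\in\ocint{0,\bgamma_1}$, which is why the statement asserts a common $c$ even though $\rho_\gamma,C_\gamma$ are $\gamma$-dependent.
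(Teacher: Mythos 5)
Your proposal is correct and follows essentially the same route as the paper: a Foster–Lyapunov drift bound for $\tR_\gamma$ with the exponential Lyapunov function (the paper uses the centered variant $\overline V_{c,\gamma}(x)=\exp(c\|x-\rmT_\gamma(0)\|^2)$ with $c=\mtt/(16\sigma^2)$ and concludes from $\liminf_{\|x\|\to\infty}\tR_\gamma\overline V_{c,\gamma}(x)/\overline V_{c,\gamma}(x)=0$), combined with irreducibility, aperiodicity and smallness of compact sets from the everywhere-positive Gaussian transition density, and then the Meyn–Tweedie $V$-geometric ergodicity theorem with $\gamma$-dependent constants. The only cosmetic difference is your explicit absorption of the cross terms in $\|\trmT_\gamma(x)\|^2$ versus the paper's centering trick; both yield the same drift condition.
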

\begin{proof}
  The proof is postponed to \Cref{sec:proof-theo:ergo_tilde_R}.
\end{proof}

Similarly to \Cref{theo:convergence_markov_chain_rsetd} with respect
to $R_{\gamma}$, \Cref{theo:ergo_tilde_R} implies that $\tR_{\gamma}$
is \break $V_c$-uniformly geometrically ergodic. However in contrast to
\Cref{theo:convergence_markov_chain_rsetd}, the dependency of the rate
of convergence with respect to the step size $\gamma$ is not explicit
anymore since the results and the method employed in
\cite{debortoli2019convergence} or \cite{eberle2018quantitative}
cannot be applied anymore.

Note that \Cref{theo:convergence_markov_chain_rsetd} and
\Cref{theo:ergo_tilde_R} imply that $R_{\gamma}$ and $\tR_{\gamma}$
converge to $\pi_{\gamma}$ and $\tpi_{\gamma}$ respectively in total
variation and Wasserstein metric of any order
$p \in \coint{1,\plusinfty}$. 

%\subsection{The coupling}

%We can now introduce one of the main results the main object and results of this paper.
Based on the two assumptions above, we can now state one of our main results.  Our goal is to quantify the distance between the laws of the iterates of the two chains $(Y_k)_{k\in \nset}$
and $(\tY_k)$, in particular starting from the same initial point $x \in \rset^d$ or at equilibrium. Indeed, remark that, in view of Propositions~\ref{theo:convergence_markov_chain_rsetd}  and \ref{theo:ergo_tilde_R}, letting $k\rightarrow +\infty$ in the next statement yields  quantitative
bounds on $\wassersteinD[\cbf](\pi_{\gamma}, \tpi_{\gamma})$ for any
$\gamma \in \ocint{0,\tcrw{\bgamma}}$.

\begin{theorem}
  \label{theo:main_results}
  Assume \Cref{ass:LipsAnd} and \Cref{ass:bound}  hold and let
\[(\tcbf,\lyapD) \in \{(\1_{(0,+\infty)}, \abs{\cdot}\tcrw{+1}), (\abs{\cdot}, \abs{\cdot}\tcrw{+1}), (\tcrw{\exp(\abs{\cdot})-1},\tcrw{\exp(\abs{\cdot})})\} \eqsp.\]
  % set for any $x,\tx \in \rset^d$, $t \geq 0$, $\cbf(x,\tx) = \1_{\Delta^{\comp}_{\rset^d}}(x,\tx)$ and  $V(t) = t$ or $\cbf(x,\tx) = \norm{x-\tx}$ and  $V(t) = t$ or $\cbf(x,\tx) = \1_{\Delta^{\comp}_{\rset^d}}(x,\tx) \exp(\norm{x-\tx})$ and  $V(t) = \1_{\rset_+^*}(t) \exp(t)$.
  Then, there exist some
explicit constants $C,c \geq 0$, $\rho\in \coint{0,1}$ \tcrw{and $\bgamma_1\in \ocint{0,\bgamma}$} such that for any $k \in\nset$, $\gamma \in \ocint{0,\bgamma_1}$  and  $x,\tx \in \rset^d$,
\begin{equation}
  \wassersteinD[\cbf](\delta_x R_{\gamma}^k, \delta_{\tx}\tR_{\gamma}^k) \leq C \rho^{\gamma k} \lyapD( \norm{x-\tx}) + c \InftyBound \eqsp,
\end{equation}
where $\cbf(x,\tx) = \tcbf(\norm{x-\tx})$.
\end{theorem}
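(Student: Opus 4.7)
The plan is to construct a Markov coupling $(Y_k,\tY_k)_{k \in \nset}$ of the two processes with $Y_0 = x$ and $\tY_0 = \tx$, and to dominate the resulting distance by a one-dimensional Markov chain $(W_k)$ whose properties are then analysed uniformly in $\gamma$ in the remainder of the paper. The coupling is the maximal reflection coupling of the Gaussian increments: conditional on $(Y_k,\tY_k)$, one draws $(Z_{k+1},\tZ_{k+1})$ so that $Y_{k+1}$ and $\tY_{k+1}$ coalesce with probability equal to the total variation distance between the two shifted Gaussians $\loiGauss(\rmT_\gamma(Y_k),\sigma^2\gamma \IdM)$ and $\loiGauss(\tilde{\rmT}_\gamma(\tY_k),\sigma^2\gamma \IdM)$, and are otherwise obtained by reflection through the hyperplane orthogonal to the drift difference. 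Since the Wasserstein distance is an infimum over couplings and each $\tcbf$ appearing in the statement is non-decreasing on $\coint{0,\plusinfty}$, we obtain
\[
\wassersteinD[\cbf](\updelta_x R_\gamma^k,\updelta_{\tx}\tR_\gamma^k) \leq \PE\bigl[\tcbf(\|Y_k - \tY_k\|)\bigr].
\]

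Under \Cref{ass:LipsAnd} and \Cref{ass:bound}, the drift difference is controlled by the previous increment:
\[
\|\rmT_\gamma(Y_k) - \tilde{\rmT}_\gamma(\tY_k)\| \leq \tau_\gamma(\|Y_k - \tY_k\|) + \gamma\InftyBound,
\]
so the conditional distribution of $\|Y_{k+1}-\tY_{k+1}\|$ given $(Y_k,\tY_k)$ depends on this pair only through $\|Y_k-\tY_k\|$, up to the additive $\gamma\InftyBound$ perturbation. This allows us to introduce a one-dimensional Markov chain $(W_k)$ on $\coint{0,\plusinfty}$, driven by the same randomness, with $W_0 = \|x-\tx\|$, that is stochastically monotone, has an atom at $0$, and satisfies $\|Y_k-\tY_k\| \leq W_k$ almost surely for all $k$. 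Concretely, $W_{k+1}$ equals $0$ with the maximal coalescence probability in one dimension corresponding to a shift of size $\tau_\gamma(W_k)+\gamma \InftyBound$, and otherwise takes the form $|\tau_\gamma(W_k) + \gamma \InftyBound + 2\sigma\sqrt{\gamma}\, G|$ with $G \sim \loiGauss(0,1)$, the reflection step always dominating the radial coordinate of the $d$-dimensional coupling.

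To conclude, one invokes the two main properties of $(W_k)$ established in the following sections: the existence of a unique invariant probability measure $\bar\pi_\gamma$ with equilibrium bound $\int \tcbf\, \rmd\bar\pi_\gamma \leq c\,\InftyBound$, and the $\gamma$-uniform geometric convergence
\[
\bigl| \PE_w[\tcbf(W_k)] - \textstyle{\int} \tcbf\, \rmd\bar\pi_\gamma \bigr| \leq C\rho^{\gamma k}\lyapD(w),
\]
for each of the three pairs $(\tcbf,\lyapD)$. Plugging $w = \|x-\tx\|$ and combining yields $\PE[\tcbf(W_k)] \leq c\,\InftyBound + C\rho^{\gamma k}\lyapD(\|x-\tx\|)$, which via the displayed coupling bound gives the theorem.

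The nontrivial work lies in the analysis of $(W_k)$. Unlike the continuous-time sticky diffusion of \cite{eberle:zimmer:2016}, here $\bar\pi_\gamma$ has no explicit form, so the equilibrium estimates must be extracted from a direct Lyapunov analysis of the discrete dynamics, carried out with constants stable as $\gamma \to 0$. In addition, the third choice $(\tcbf,\lyapD) = (\1_{(0,\plusinfty)}\exp(\abs{\cdot}),\1_{(0,\plusinfty)}\exp(\abs{\cdot}))$ is beyond the reach of the concave-modification-of-the-distance device used in \cite{eberle:zimmer:2016,eberle:2015}: controlling exponential moments of $W_k$ uniformly in the step size requires a genuinely new argument, which is the main technical hurdle of the proof.
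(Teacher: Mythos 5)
Your proposal is correct and follows essentially the paper's own route: the maximal reflection coupling, the almost-sure domination $\|Y_k-\tY_k\|\leq W_k$ by the one-dimensional sticky chain with kernel $Q_\gamma$ (Proposition~\ref{lem:G_inequality}, Corollary~\ref{coro:bound_sticky}), and then the combination of the equilibrium bounds $\mu_\gamma(\tcbf)\leq c\,\InftyBound$ (Theorems~\ref{theo:bound_moment_rset_star_invariant_mes} and~\ref{coro:exp_moment}) with the $\gamma$-uniform geometric convergence of $Q_\gamma$ in the corresponding $\VlyapD$-norm (Theorem~\ref{theo:convergence_Q_gamma}), evaluated at $w=\norm{x-\tx}$ as in Corollary~\ref{coro_ergo_sticky_control_wasser}. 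The only cosmetic imprecision is the description of the non-coalescence branch of $W_{k+1}$ with $G\sim\loiGauss(0,1)$ (the Gaussian is tilted on that event), which does not affect the argument.
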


\begin{remark}
It is also possible to treat the case of cost functions $\cbf $ of the form $\cbf(x,y) = \tcbf(\norm{x-y}) \po V(x)+V(y)\pf$ with $\tcbf$ as in \Cref{theo:main_results} and $V$  a positive function, simply by using H\"older's inequality. Indeed, for $p,q>1$ with $1/p+1/q=1$, we can bound
\[ \wassersteinD[\cbf](\nu,\mu) \ \leqslant  \ \po  \wassersteinD[\cbf_p](\nu,\mu)\pf^{1/p}   \po \po \nu\po V^q\pf\pf^{1/q}  + \po \mu\po V^q\pf\pf^{1/q}\pf \eqsp , \]
with $\cbf_p(x,y) = \tcbf^p(\|x-y\|)$. Bounds on the $\wassersteinD[\cbf_p]$ distance can then be established as in \Cref{theo:main_results}, while bounds on expected values of $V^q$, independent of $\gamma$, are classically obtained through Lyapunov arguments (see e.g. the proof of \Cref{theo:convergence_markov_chain_rsetd} in \Cref{sec:theo:convergence_markov_chain_rsetd}). 
\end{remark}

The rest of this section is devoted to the proof of \Cref{theo:main_results}. In particular, we define in the following the main object of this paper.

%\alain{independent}
\subsection{The discrete sticky kernel}
\label{subsec:coupling}

We define a Markovian coupling of the two chains $(Y_k)_{k \in\nset}$ and $(\tilde{Y}_k)_{k \in\nset}$ defined in \eqref{eq:intro_def_Y_k} and \eqref{eq:intro_def_tY_k} by using, at each step, the maximal reflection coupling of the two Gaussian proposals, which is optimal for the total variation distance (i.e that maximizes the probability of coalescence). \tcr{This coupling is widely recognized and possesses numerous favorable properties. For instance, it can be advantageous in the context of the Metropolis-Hastings Algorithm (see \cite{wang2021maximal}) and for remove bias in Markov Chain Monte Carlo methods (see \cite{jacob2020unbiased}).}

Let $(U_k)_{k\geq 1}$ be a sequence of $\iid$ uniform random variables on $\ccint{0,1}$ independent of $(Z_k)_{k \geq 1}$ which we recall is a sequence of \iid~$d$-dimensional standard
Gaussian random variables. We define
the discrete sticky Markov coupling $K_{\gamma}$ of
$R_{\gamma}$ and $\tR_{\gamma}$ as the Markov kernel associated with
the Markov chain on $\rset^{2d}$ given for $k\in \nset$ by 
\begin{equation}
  \label{eq:def_refl_discret}
\begin{aligned}
X_{k+1} & =  \rmT_\gamma(X_k) + (\sigma^2\gamma)^{\half} Z_{k+1}\\
\tilde{X}_{k+1} & =  X_{k+1} B_{k+1}  +(1-B_{k+1}) \rmF_\gamma (X_k,\tilde{X}_k,Z_{k+1}) \eqsp ,
\end{aligned}
\end{equation}
where  $B_{k+1}  =  \1_{[0,+\infty)}(p_\gamma(X_k,\tilde{X}_k, Z_{k+1}) - U_{k+1})$ and
\begin{equation}
%  \label{eq:39}
   \rmF_\gamma (x,\tilde{x},z) = \tilde{\rmT}_\gamma(\tilde{x}) + (\sigma^2\gamma)^{\half} \defEns{ \Id - 2\bfe(x,\tilde{x})\bfe(x,\tilde{x})^{\transpose} } z \eqsp,
\end{equation}
\begin{equation}
\label{eq:def_E_e}
  \rmE(x,\tilde{x})  =  \tilde{\rmT}_\gamma(\tx)-\rmT_\gamma(x) \eqsp, \quad  \bfe(x,\tilde{x}) =
\begin{cases}
\frac{\rmE(x,\tilde{x})}{\norm{\rmE(x,\tilde{x})}} & \text{ if } \rmE(x,\tilde{x}) \not = 0 \\
\bfe_0 & \text{otherwise} \eqsp,
\end{cases}
\end{equation}
\begin{equation}
  p_\gamma(x,\tilde{x},z)  =  \ 1\wedge \parentheseDeux{\frac{\varphibf_{\sigma^2\gamma}\defEns{ \|\rmE(x,\tilde{x})\| - (\sigma^2\gamma)^{\half}\langle \bfe(x,\tilde{x}),z\rangle } }{\varphibf_{\sigma^2\gamma}\defEns{ (\sigma^2\gamma)^{\half}\langle \bfe(x,\tilde{x}),z\rangle}}}\eqsp,
\end{equation}
 where $\bfe_0 \in \rset^d$ is an arbitrary unit-vector, \ie~$\norm{\bfe_0} = 1$,  and  $\varphibf_{\sigma^2\gamma}$ is the density of the one-dimensional Gaussian distribution with mean $0$ and variance $\sigma^2 \gamma$. 
In other words, $K_{\gamma}$  is   given for any $\gamma\in\ocint{0,\bgamma}$, $(x,y)\in \rset^{2d}$ and
$\msa \in \mathcal{B}(\rset^{2d})$ by
\begin{align}
\begin{aligned}
K_\gamma ((x,\tilde{x}),\msa)&=\int_{\rset^{d}} \1_\msa (\rmT_\gamma(x)+(\sigma^2\gamma)^{\half} z, \rmT_\gamma(x)+(\sigma^2\gamma)^{\half} z) p_\gamma(x,\tilde{x}, z)\tcr{\frac{\varphibf(\norm{z})}{(2\uppi)^{(d-1)/2}}} \rmd z\\
& +\int_{\rset^{d}} \1_\msa \parenthese{ \rmT_\gamma(x)+(\sigma^2\gamma)^{\half} z,\rmF_\gamma (x,\tilde{x},z)} (1-p_\gamma(x,\tilde{x},z))\tcr{\frac{\varphibf(\norm{z})}{(2\uppi)^{(d-1)/2}}} \rmd z\eqsp ,
\end{aligned}
\end{align}
\tcr{where $\varphibf=\varphibf_1$ is the density of the one-dimensional Gaussian distribution with mean $0$ and variance $1$.} In words, from the initial conditions $(x,\tx)$, this coupling works as follows: first, a Gaussian variable $Z_{k+1}$ is drawn for the fluctuations of $X_{k+1}$. Then, $\tX_{k+1}$ is made equal to $X_{k+1}$ with probability $p_{\tcr{\gamma}}(x,\tx,Z_{k+1})$ and, otherwise, the random variable $\tZ_{k+1}$ which determines the fluctuations of $\tX_{k+1}$ with respect to its average $ \tilde{\rmT}_\gamma(\tx)$ is given by the orthogonal reflection of $Z_{k+1}$ in the direction $ \tilde{\rmT}_\gamma(\tx)-\rmT_\gamma(x)$.
It is well known that for any $(x,\tx) \in \rset^d$,
$K_{\gamma}((x,\tx),\msa\times \rset^d) = R_{\gamma}(x,\msa)$ and
$K_{\gamma}((x,\tx), \rset^d\times \msa) = \tR_{\gamma}(\tcr{\tilde{x}},\msa)$, see \eg~\cite[Section 3.3]{bubley:dyer:jerrum:1998}, \cite[Section 4.1]{durmus:moulines:2019:supp}, \cite{eberle2018quantitative} or \cite{debortoli2019convergence}.
We coin the term \emph{sticky} for the Markov coupling  $K_{\gamma}$ after \tcrw{\cite{eberle:zimmer:2016,watanabe1971_II} and since it aims at each stage $k+1\in\nset$ to merge $X_{k}$ and $\tX_{k}$ with maximal probability under the constraint that these two processes must be Markov chains associated with $R_{\gamma}$  and $\tR_{\gamma}$ respectively, with prescribed initial conditions.  However, unlike other works dealing with couplings, which also use  the term sticky, $K_{\gamma}((x,x),\Delta_{\rset^d}) < 1$ for any $x \in\rset^d$ if $\InftyBound >0$ and therefore $\PP(X_{k+1} \neq \tX_{k+1},X_k = \tX_k|(X_k,\tX_k)) = \1_{\Delta_{\rset^d}}(X_k,\tX_k)K_{\gamma}((X_k,X_k),\Delta_{\rset^d}^{\complement}) >0$. }

The starting point of our analysis is the next result, which will enable to compare the coupling difference process $\norm{X_{k+1}-\tilde{X}_{k+1}}$ with a Markov chain on $[0,+\infty)$. Define $(G_k)_{k \geq 1}$ for any $k \geq 1$ by
\begin{equation}
  \label{eq:def_z_k}
  G_{k} = \psLigne{\bfe(X_{k-1},\tX_{k-1})}{Z_{k}} \eqsp,
\end{equation}
where $\bfe$ is given by \eqref{eq:def_E_e}. 
  For any $a \geq 0$, $g \in \rset$, $u\in\ccint{0,1}$ and $\gamma \in \ocint{0,\bgamma}$ define 
\begin{align}
\label{eq:def_H}
  \mathscr{H}_\gamma\parentheseLigne{a,g,u}= \1_{[0,+\infty)}(u-\bpg(a,g))\parenthese{a-2(\sigma^2\gamma)^{\half}g}\eqsp,
\end{align}
where
\begin{equation}
  \label{eq:def_bpg}
\bpg(a,g)=  \ 1\wedge \frac{\varphibf_{\sigma^2\gamma}\parenthese{ a- (\sigma^2\gamma)^{\half}g } }{\varphibf_{\sigma^2\gamma}\parenthese{ (\sigma^2\gamma)^{\half}g}} \eqsp .
\end{equation}
\tcrw{Note that $\mathscr{H}_\gamma$ is non-negative function as proven in \Cref{lem:H_increase}.}

\begin{proposition}
\label{lem:G_inequality}
Assume \Cref{ass:LipsAnd}-\ref{ass:LipsAnd_1} and \Cref{ass:bound}  hold. Then for any $\gamma \in \ocint{0,\bgamma}$,  $k\in \mathbb{N}$, almost surely, we have 
\begin{equation}
  \label{eq:lemma_comparison_discrete}
\normLigne{X_{k+1}-\tilde{X}_{k+1}}\leq \mathscr{G}_\gamma\parentheseLigne{\normLigne{X_k-\tilde{X}_k},G_{k+1},U_{k+1}} \eqsp ,
\end{equation} 
where $(X_k,\tilde{X}_k)_{k \in\nset}$ are defined by  \eqref{eq:def_refl_discret}, and for any $w\in \coint{0,\plusinfty}$, $g\in \rset$ and $u\in \ccint{0,1}$, 
\begin{align}
\mathscr{G}_\gamma\parentheseLigne{w,g,u}= \mathscr{H}_{\gamma}(\tau_{\gamma}(w)+\gamma \InftyBound,g,u) \eqsp.
\end{align}
In addition, for any $g \in \rset^d$ and $u\in \ccint{0,1}$, $w\mapsto \mathscr{G}_\gamma(w,g,u)$ is non-decreasing.
\end{proposition}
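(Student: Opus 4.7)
The strategy is to analyze the coupling \eqref{eq:def_refl_discret} case by case according to whether the Bernoulli variable $B_{k+1}$ equals $1$ (coalescence) or $0$ (reflection), and to compare the resulting norm $\|X_{k+1}-\tilde{X}_{k+1}\|$ with $\mathscr{G}_\gamma(\|X_k-\tilde{X}_k\|,G_{k+1},U_{k+1})$. The key preliminary observation is the inequality
\[
\|\rmE(X_k,\tilde{X}_k)\| \leq \|\tilde{\rmT}_\gamma(\tilde{X}_k)-\rmT_\gamma(\tilde{X}_k)\| + \|\rmT_\gamma(\tilde{X}_k)-\rmT_\gamma(X_k)\| \leq \gamma\InftyBound + \tau_\gamma(\|X_k-\tilde{X}_k\|)\eqsp ,
\]
which follows from the triangle inequality combined with \Cref{ass:LipsAnd} and \Cref{ass:bound}. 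Setting $a_k := \|\rmE(X_k,\tilde{X}_k)\|$ and $a_k^\star := \tau_\gamma(\|X_k-\tilde{X}_k\|)+\gamma\InftyBound$, one also reads directly from the definitions \eqref{eq:def_refl_discret} and \eqref{eq:def_bpg} the identification $p_\gamma(X_k,\tilde{X}_k,Z_{k+1}) = \bpg(a_k, G_{k+1})$.

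On the event $\{B_{k+1}=1\}$ the two chains coalesce, so the left-hand side of \eqref{eq:lemma_comparison_discrete} vanishes and the bound reduces to $\mathscr{H}_\gamma \geq 0$. This non-negativity is a by-product of the analysis of $\bpg$: using the explicit Gaussian ratio $\varphibf_{\sigma^2\gamma}(a-c)/\varphibf_{\sigma^2\gamma}(c)=\exp\{a(2c-a)/(2\sigma^2\gamma)\}$ with $c=(\sigma^2\gamma)^{\half}g$, one verifies that $\bpg(a,g)<1$ is equivalent to $a>2(\sigma^2\gamma)^{\half}g$, so whenever the indicator inside $\mathscr{H}_\gamma$ is nonzero (for $u<1$) the affine factor $a-2(\sigma^2\gamma)^{\half}g$ is positive. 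On the event $\{B_{k+1}=0\}$ a direct computation starting from \eqref{eq:def_refl_discret} yields
\[
X_{k+1}-\tilde{X}_{k+1} = -\rmE(X_k,\tilde{X}_k) + 2(\sigma^2\gamma)^{\half}G_{k+1}\,\bfe(X_k,\tilde{X}_k)\eqsp ,
\]
which, when $\rmE(X_k,\tilde{X}_k)\neq 0$, is collinear with $\bfe$ and has norm $|a_k-2(\sigma^2\gamma)^{\half}G_{k+1}|$; the degenerate case $\rmE=0$ does not require separate treatment since the evenness of $\varphibf_{\sigma^2\gamma}$ forces $p_\gamma=1$, hence $B_{k+1}=1$ almost surely. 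Moreover $B_{k+1}=0$ forces $U_{k+1}>p_\gamma=\bpg(a_k,G_{k+1})$, so $\bpg(a_k,G_{k+1})<1$ and therefore $a_k>2(\sigma^2\gamma)^{\half}G_{k+1}$; the absolute value collapses and $\|X_{k+1}-\tilde{X}_{k+1}\| = a_k - 2(\sigma^2\gamma)^{\half}G_{k+1} \leq a_k^\star - 2(\sigma^2\gamma)^{\half}G_{k+1}$.

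To identify this upper bound with $\mathscr{G}_\gamma$ on $\{B_{k+1}=0\}$, it remains to check that the indicator inside $\mathscr{H}_\gamma(a_k^\star,G_{k+1},U_{k+1})$ equals one. The key lemma I would establish is that for every fixed $g\in\rset$ the map $a\mapsto\bpg(a,g)$ is non-increasing on $\coint{0,+\infty}$; this is obtained by distinguishing the cases $c\leq 0$ and $c>0$ in the Gaussian ratio and combining with the truncation to $1$. Together with $a_k\leq a_k^\star$ it yields $\bpg(a_k^\star,G_{k+1})\leq \bpg(a_k,G_{k+1})<U_{k+1}$, which closes \eqref{eq:lemma_comparison_discrete}. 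The final monotonicity of $w\mapsto\mathscr{G}_\gamma(w,g,u)$ then follows by composition: $w\mapsto\tau_\gamma(w)+\gamma\InftyBound$ is non-decreasing by \Cref{ass:LipsAnd}, and $a\mapsto\mathscr{H}_\gamma(a,g,u)$ is the product of the non-decreasing indicator $\1_{[0,+\infty)}(u-\bpg(a,g))$ with the strictly increasing affine factor $a-2(\sigma^2\gamma)^{\half}g$, which is nonnegative wherever the indicator is one. The main technical subtlety throughout is arranging the signs so that the absolute value collapses precisely when the coupling fails to coalesce; this is built in by the design of the maximal reflection coupling through the equivalence $\bpg(a,g)<1 \Leftrightarrow a>2(\sigma^2\gamma)^{\half}g$.
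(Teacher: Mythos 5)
Your proposal is correct and follows essentially the same route as the paper: the triangle-inequality bound $\normLigne{\rmE(X_k,\tilde X_k)}\leq \tau_\gamma(\normLigne{X_k-\tilde X_k})+\gamma\InftyBound$, the identification of $\normLigne{X_{k+1}-\tilde X_{k+1}}$ with $\mathscr{H}_\gamma(\normLigne{\rmE(X_k,\tilde X_k)},G_{k+1},U_{k+1})$ on the reflection event, and the monotonicity of $a\mapsto\bpg(a,g)$ together with the sign analysis of $a-2(\sigma^2\gamma)^{\half}g$ are exactly the ingredients of the paper's \Cref{lem:H_increase}, which you merely re-derive inside a case split on $B_{k+1}$ instead of stating as a separate lemma. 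The only cosmetic caveat is the boundary value $u=1$ (where the indicator is identically one and your phrase ``nonnegative wherever the indicator is one'' fails), but there $\mathscr{H}_\gamma(\cdot,g,1)$ is affine increasing so the monotonicity conclusion still holds, a point the paper's own argument glosses over in the same way.
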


\begin{proof}
  The proof is postponed to \Cref{sec:proof:lem:G_inequality}.
\end{proof}

Consider now the stochastic process $(W_k)_{k\in \mathbb{N}}$ starting from $\normLigne{X_0-\tilde{X}_0}$ and defined by induction on $k$ as follows,
\begin{align}
W_{k+1} &=\mathscr{G}_\gamma (W_k,G_{k+1},U_{k+1}) \nonumber\\
& =
\begin{cases}
  \tau_{\gamma}(W_k)+\gamma \InftyBound - 2 \sigma \sqrt\gamma G_{k\tcr{+1}} & \text{ if }U_{k+1} \geqslant \bpg(\tau_{\gamma}(W_k)+\gamma \InftyBound,G_{k\tcr{+1}})\\
0 & \text{ otherwise.} 
\end{cases}\label{eq:def_r}
\end{align}
By definition \eqref{eq:def_z_k} and \eqref{eq:def_E_e}, an easy induction implies that $(G_k)_{k \geq 1}$ and $(U_k)_{k \geq 1}$
are independent, $(G_k)_{k \geq 1}$ are \iid~standard Gaussian random variables and $(U_k)_{k \geq 1}$ are \iid~uniform random variables on $\ccint{0,1}$. Therefore, $(W_k)_{k \in\nset}$ is a Markov chain with Markov kernel  $Q_\gamma$  defined for $w \in \coint{0,\plusinfty}$ and
$\msa \in \mcb{[0,+\infty)}$ by
\begin{align}
\label{def_q_gamma}
  & Q_{\gamma}(w,\msa)   \ =\  \updelta_{0}(\msa) \int_{\rset} \bpg(\tau_{\gamma}(w) + \gamma \InftyBound , g)\varphibf(g)  \rmd g  
\\
 \quad  +\int_{\rset} & \1_{\msa}\parenthese{\tau_{\gamma}(w)   +\gamma \InftyBound-2 \sigma \gamma^{1/2} g} \{1- \bpg\parenthese{\tau_{\gamma}(w) + \gamma \InftyBound , g} \} \varphibf(g)  \rmd g \eqsp, 
\end{align}
where $\varphibf$ is the density of the standard Gaussian distribution on $\rset$.

By \Cref{lem:G_inequality}, we have almost surely for any $k\in \mathbb{N}$, 
\begin{equation}
\label{eq:r_inequality}
\normLigne{X_k-\tilde{X}_k}\leq W_k \eqsp .
\end{equation}
Another consequence of \Cref{lem:G_inequality} is that $Q_\gamma$ is stochastically monotonous (see
  \eg~\cite{lund:meyn:tweedie:96} or
  \cite{roberts:tweedie:2000}), more precisely if $(W_k)_{k\in\N}$ and $(\tilde W_k)_{k\in\N}$ are two chains given by \eqref{eq:def_r} with the same variables $(G_k,U_k)_{k\in\nset}$ with $W_0 \leqslant \tilde W_0$, then almost surely $W_k\leqslant \tilde W_k$ for all $k\in\N$. This nice property will be used several times in the analysis of this chain.

The main consequence of \eqref{eq:r_inequality} is the following result. 
\begin{corollary}
  \label{coro:bound_sticky}
Assume \Cref{ass:LipsAnd}-\ref{ass:LipsAnd_1} and \Cref{ass:bound}  hold.  Let $\cbf : \rset^{2d} \to [0,+\infty)$ of the form
  $\cbf(x,y) = \tcbf(\norm{x-y})$ for some non-decreasing function
  $\tcbf : [0,+\infty) \to [0,+\infty)$, $\tcbf(0)=0$. For any 
  $x,\tx \in \rset^d$ and $k\in \mathbb{N}$,
\begin{equation}
  \wassersteinD[\cbf](\updelta_x R_{\gamma}^k,\updelta_{\tx}\tR_{\gamma}^k) \leq \int_{\rset^{2d}} \cbf(y,\ty) K_{\gamma}^k((x,\tx), \rmd (y,\ty)) \leq
 \int_0^{+\infty}\tcbf(\tw) Q_{\gamma}^k(\normLigne{x-\tx}, \rmd \tw)   \eqsp.
\end{equation} 
\end{corollary}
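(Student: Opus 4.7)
The proof decomposes naturally into two inequalities, both of which follow by bookkeeping from objects already constructed in the paper; no new analytic input should be required.

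For the first inequality, the plan is to observe that, by the marginal property recalled right after the definition of $K_\gamma$, namely $K_\gamma((x,\tx),\msa\times \rset^d) = R_\gamma(x,\msa)$ and $K_\gamma((x,\tx), \rset^d\times \msa) = \tR_\gamma(\tx,\msa)$, a straightforward induction on $k$ shows that $K_\gamma^k((x,\tx), \cdot)$ is a coupling (i.e.\ a transference plan) between $\updelta_x R_\gamma^k$ and $\updelta_{\tx} \tR_\gamma^k$. Since $\wassersteinD[\cbf]$ is defined as an infimum over all couplings, plugging in this particular one gives
\[ \wassersteinD[\cbf](\updelta_x R_\gamma^k, \updelta_{\tx}\tR_\gamma^k) \leq \int_{\rset^{2d}} \cbf(y,\ty)\, K_\gamma^k((x,\tx),\rmd(y,\ty)) \eqsp. \]

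For the second inequality, I would realize both sides probabilistically. Let $(X_k,\tilde X_k)_{k\in\nset}$ be the Markov chain defined by \eqref{eq:def_refl_discret} started from $(X_0,\tilde X_0) = (x,\tx)$, so that its law at time $k$ is exactly $K_\gamma^k((x,\tx),\cdot)$. Let $(W_k)_{k\in\nset}$ be the chain on $[0,+\infty)$ defined by \eqref{eq:def_r}, driven by the same sequences $(G_k,U_k)_{k\geq 1}$ and started from $W_0 = \|x-\tx\|$. Its law at time $k$ is $Q_\gamma^k(\|x-\tx\|,\cdot)$. By \Cref{lem:G_inequality} (through \eqref{eq:r_inequality}), we have $\|X_k-\tilde X_k\| \leq W_k$ almost surely, and by assumption $\tcbf$ is non-decreasing, so $\cbf(X_k,\tilde X_k) = \tcbf(\|X_k-\tilde X_k\|) \leq \tcbf(W_k)$ almost surely. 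Taking expectations,
\[ \int_{\rset^{2d}} \cbf(y,\ty)\, K_\gamma^k((x,\tx),\rmd(y,\ty)) = \expeLigne{\tcbf(\|X_k-\tilde X_k\|)} \leq \expeLigne{\tcbf(W_k)} = \int_0^{+\infty} \tcbf(\tw)\, Q_\gamma^k(\|x-\tx\|,\rmd \tw) \eqsp. \]
Chaining these two inequalities concludes the argument.

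There is no real obstacle here: the coupling construction, the pathwise domination \eqref{eq:r_inequality}, and the monotonicity of $\tcbf$ have all been arranged upstream precisely so that this comparison follows immediately. The only subtle point worth emphasizing in the write-up is that $(X_k,\tilde X_k)_{k\in\nset}$ and $(W_k)_{k\in\nset}$ must be built on the same probability space with the same driving noise $(Z_{k+1},U_{k+1})$ (hence the same $G_{k+1}$ via \eqref{eq:def_z_k}) in order for \Cref{lem:G_inequality} to yield the almost-sure inequality between the processes at every time $k$, as opposed to a merely distributional statement.
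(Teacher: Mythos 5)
Your proposal is correct and follows essentially the same route as the paper: the first inequality comes from recognizing $K_\gamma^k((x,\tx),\cdot)$ as a coupling of $\updelta_x R_\gamma^k$ and $\updelta_{\tx}\tR_\gamma^k$, and the second from the pathwise domination \eqref{eq:r_inequality} together with the monotonicity of $\tcbf$, followed by taking expectations. Your added remark that $(X_k,\tilde X_k)_{k\in\nset}$ and $(W_k)_{k\in\nset}$ must be driven by the same noise $(G_k,U_k)_{k\geq 1}$ is exactly the point implicitly used in the paper's argument.
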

\begin{proof}
Let $k\in \mathbb{N}$. By  \eqref{eq:r_inequality} and since $\tcbf$ is non-decreasing, we get almost surely $\tcbf(\normLigne{X_k- \tilde{X}_k}) \leq \tcbf(W_k)$. Taking the expectation concludes the proof. 
\end{proof}

From \Cref{coro:bound_sticky}, the question to get bounds on
$ \wassersteinD[\cbf](\updelta_x
R_{\gamma}^k,\updelta_{\tx}\tR_{\gamma}^k)$ boils down to the study of the Markov kernel $Q_{\gamma}$ on $[0,+\infty)$, which is the main part of our work.

\subsection{Analysis of the auxiliary Markov chain}
\label{sebsec:auxiliary_Markov_chain}
 We start with a Lyapunov/drift result.
\begin{proposition}
\label{prop:drift_v_norm}
Assume \Cref{ass:LipsAnd}-\ref{ass:LipsAnd_2}. Then for any $w\geqslant 0$,
\begin{equation}
Q_\gamma \VlyapDs_1(w)\leq (1-\gamma\mtt)\VlyapDs_1(w)\1_{(R_1,+\infty)}(w)+(1+\gamma\Ltt)\VlyapDs_1(w)\1_{(0,R_1]}(w)+\gamma \InftyBound \eqsp ,
\end{equation}
where $Q_{\gamma}$ is defined by \eqref{def_q_gamma} and  for any \tcrw{$w\in\rset_+$, $\VlyapDs_1(w)=w$} .
\end{proposition}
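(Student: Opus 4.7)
The plan is to derive an exact closed-form identity for $Q_\gamma \VlyapDs_1(w)$ and then apply \Cref{ass:LipsAnd}. Set $a = \tau_\gamma(w) + \gamma \InftyBound \geq 0$. Since $\VlyapDs_1(0) = 0$, the Dirac-at-$0$ component of \eqref{def_q_gamma} contributes nothing, and one is left with
\[ Q_\gamma \VlyapDs_1(w) = \int_\R \abs{a - 2(\sigma^2 \gamma)^{\half} g}\{1-\bpg(a,g)\} \varphibf(g)\, \rmd g \eqsp . \]
The crucial observation is that $\bpg(a,g) < 1$ is equivalent, by the definition \eqref{eq:def_bpg}, to $\varphibf_{\sigma^2\gamma}(a-(\sigma^2\gamma)^{\half} g) < \varphibf_{\sigma^2\gamma}((\sigma^2\gamma)^{\half} g)$, which after expanding the Gaussian exponents reduces, when $a > 0$, to $a - 2(\sigma^2\gamma)^{\half} g > 0$. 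Hence on the support of the integrand the absolute value is redundant; the case $a = 0$ being trivial, one may drop it.

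The next step is to carry out the remaining integral. After the change of variable $z = (\sigma^2\gamma)^{\half} g$, the integrand becomes $(a - 2z)(\varphibf_{\sigma^2\gamma}(z) - \varphibf_{\sigma^2\gamma}(a - z))_+$, which is supported on $\{z < a/2\}$. Splitting this into two pieces and applying the further substitution $u = a - z$ to the term involving $\varphibf_{\sigma^2\gamma}(a-z)$, the two halves recombine into a single integral over $\R$:
\[ Q_\gamma \VlyapDs_1(w) = \int_\R (a - 2z) \varphibf_{\sigma^2\gamma}(z) \, \rmd z = a \eqsp , \]
thanks to the centering of $\varphibf_{\sigma^2\gamma}$. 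An equivalent and illuminating alternative is to recognize $Q_\gamma \VlyapDs_1(w)$ as the expected absolute difference of a reflection-coupled pair of $\loiGauss(0,\sigma^2\gamma)$ and $\loiGauss(a, \sigma^2\gamma)$ variables; by the optimal transport interpretation of the $\wassersteinD[1]$-Wasserstein distance in dimension one, this quantity equals $a$.

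Once the exact identity $Q_\gamma \VlyapDs_1(w) = \tau_\gamma(w) + \gamma \InftyBound$ is established, the claim follows immediately from \Cref{ass:LipsAnd}: for $w > R_1$, one has $\tau_\gamma(w) \leq (1 - \gamma\mtt) w = (1 - \gamma\mtt)\VlyapDs_1(w)$; for $0 < w \leq R_1$, $\tau_\gamma(w) \leq (1 + \gamma\Ltt) w = (1 + \gamma\Ltt)\VlyapDs_1(w)$; and $\tau_\gamma(0) = 0$. The only subtlety I anticipate is the careful bookkeeping of the change-of-variable argument leading to the closed-form identity — in particular, the seemingly restricted integration domain $\{z < a/2\}$ extends to all of $\R$ only after the two pieces are recombined with appropriate sign. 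Once this is handled, no substantial obstacle remains.
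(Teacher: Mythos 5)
Your proof is correct and follows essentially the same route as the paper: the paper's own argument (its Lemma on $Q_\gamma\VlyapDs_1$) establishes the exact identity $Q_\gamma \VlyapDs_1(w)=\tau_\gamma(w)+\gamma\InftyBound$ by the same change of variables $g\mapsto a-g$ (your split-and-recombine is just a cosmetic variant of its symmetrization), and then concludes via \Cref{ass:LipsAnd} exactly as you do. Your explicit justification that $1-\bpg(a,g)>0$ forces $a-2(\sigma^2\gamma)^{\half}g>0$, so the absolute value can be dropped, is a detail the paper leaves implicit.
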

\begin{proof}
  The proof is postponed to \Cref{sec:proof_prop:drift_v_norm}. 
\end{proof}
\Cref{prop:drift_v_norm} implies in particular that
 for any $w\in \tcrw{\rset_+}$,
\tcrw{\begin{equation}
Q_\gamma \VlyapDs_1(w)\leq (1-\gamma\mtt)\VlyapDs_1(w)+\gamma[(\Ltt+\mtt)R_1 + \InftyBound] \eqsp. 
\end{equation}}
Then, a straightforward induction shows that for any $k \in\nset$,
\begin{equation}
Q_\gamma^k \VlyapDs_1(w)\leq (1-\gamma\mtt)^k\VlyapDs_1(w)+[(\Ltt+\mtt)R_1 + \InftyBound]/\mtt \eqsp,
\end{equation}
and therefore by \Cref{coro:bound_sticky} taking $\tcbf(t) =t$,
\begin{equation}
  \label{eq:conseq_drift_v_norm}
  \wassersteinD[1](\updelta_x R_{\gamma}^k,\updelta_{\tx}\tR_{\gamma}^k) \leq (1-\gamma\mtt)^k\norm{x-\tx}+[(\Ltt+\mtt)R_1 + \InftyBound]/\mtt  \eqsp. 
\end{equation}
However, this result is not sharp as $k \to \plusinfty$. Indeed, in
the case $\InftyBound =0$, $R_{\gamma}= \tR_{\gamma}$ and by
\Cref{theo:convergence_markov_chain_rsetd}, it holds that
$ \wassersteinD[1](\updelta_x
R_{\gamma}^k,\updelta_{\tx}\tR_{\gamma}^k) \to 0$ as
$k \to \plusinfty$, while the right-hand side of  \eqref{eq:conseq_drift_v_norm} converges to $(\Ltt+\mtt)R_1 /\mtt  \neq 0$. In particular, that is why adapting existing results, such as the one established in \cite{rudolf:schweizer:2018}, is not an option here. We need to refine our results in order to fill this gap. To this end, we need to analyze more precisely the long-time behavior of $Q_{\gamma}$. A
first step is to show that it is ergodic.

\begin{proposition}
\label{propo:existence_mes_statio}
Assume \Cref{ass:LipsAnd}-\ref{ass:LipsAnd_2}. For any
  $\gamma\in \ocint{0,\overline{\gamma}}$, $Q_\gamma$ admits a unique
  invariant probability measure $\mu_{\gamma}$ and is geometrically
  ergodic. In addition, $\mu_{\gamma}(\{0\}) >0$ and $\mu_{\gamma}$ is absolutely continuous  with respect to  the measure   $\updelta_{0} + \Leb$ on $([0,+\infty),\mcb{[0,+\infty)})$. Finally, in the case $\InftyBound \neq 0$, $\mu_{\gamma}$ and   $\updelta_{0} + \Leb$ are equivalent. 
\end{proposition}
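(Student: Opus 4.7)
The plan is to exploit the fact that $\{0\}$ is a uniformly accessible atom for $Q_\gamma$, combine this with the Lyapunov drift of \Cref{prop:drift_v_norm}, and then extract the structural properties of the stationary measure from the explicit decomposition of $Q_\gamma$ in \eqref{def_q_gamma}.

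\emph{Step 1: accessibility of $\{0\}$.} For any $w \geq 0$, setting $a = \tau_\gamma(w) + \gamma\InftyBound \geq 0$, \eqref{def_q_gamma} reads $Q_\gamma(w,\{0\}) = \int_\rset \bpg(a,g)\varphibf(g)\rmd g$. Using the monotonicity of $\varphibf_{\sigma^2\gamma}$ in $|\cdot|$, one checks that $\bpg(a,g) = 1$ whenever $g \geq a/(2(\sigma^2\gamma)^{1/2})$, which gives $Q_\gamma(w,\{0\}) \geq \int_{a/(2(\sigma^2\gamma)^{1/2})}^{+\infty}\varphibf(g)\rmd g > 0$, a lower bound that is uniform in $w$ on compact subsets of $\coint{0,\plusinfty}$.

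\emph{Step 2: ergodicity.} I then combine the integrated drift $Q_\gamma\VlyapDs_1(w) \leq (1-\gamma\mtt)\VlyapDs_1(w) + \gamma[(\Ltt+\mtt)R_1 + \InftyBound]$ obtained from \Cref{prop:drift_v_norm} with the uniform accessibility of Step 1, which turns every sub-level set $\{\VlyapDs_1 \leq M\}$ into a small set with minorization measure proportional to $\delta_0$. Since $Q_\gamma(0,\{0\}) > 0$, the chain is also aperiodic, so classical theory for $V$-uniformly ergodic chains with a geometric Foster-Lyapunov drift and a small set (see, \eg, \cite{douc:moulines:priouret:soulier:2018}) will yield existence and uniqueness of the stationary probability $\mu_\gamma$ together with geometric convergence of $\delta_w Q_\gamma^k$ to $\mu_\gamma$ in $\VlyapDs_1$-norm.

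\emph{Step 3: structure and equivalence.} Finally, \eqref{def_q_gamma} already gives the decomposition $Q_\gamma(w,\cdot) = p_0(w)\delta_0 + q_\gamma(w,\cdot)\Leb|_{\ooint{0,\plusinfty}}$, where $p_0(w) = Q_\gamma(w,\{0\}) > 0$ by Step 1 and, via the change of variables $y = a - 2\sigma\sqrt{\gamma}g$, $q_\gamma(w,y)$ is an explicit non-negative density for $y > 0$. Integrating $\mu_\gamma = \mu_\gamma Q_\gamma$ then produces $\mu_\gamma = \mu_\gamma(\{0\})\delta_0 + \rho_\gamma\cdot \Leb|_{\ooint{0,\plusinfty}}$ with $\mu_\gamma(\{0\}) = \int p_0(w)\mu_\gamma(\rmd w) > 0$ and $\rho_\gamma(y) = \int q_\gamma(w,y)\mu_\gamma(\rmd w)$, yielding absolute continuity with respect to $\delta_0 + \Leb$ and the positivity of the atom at $0$. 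When $\InftyBound \neq 0$ one has $a > 0$ for every $w \geq 0$, and a direct Gaussian computation shows $\bpg(a,(a-y)/(2(\sigma^2\gamma)^{1/2})) < 1$ for every $y > 0$; hence $q_\gamma(w,y) > 0$ for all $w \geq 0$ and $y > 0$, which forces $\rho_\gamma > 0$ on $\ooint{0,\plusinfty}$ and gives the claimed equivalence.

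The main difficulty I anticipate is in Step 2, namely the precise verification that sub-level sets of $\VlyapDs_1$ qualify as small sets in the form demanded by the invoked off-the-shelf ergodicity theorem; the remaining arguments essentially reduce to explicit Gaussian computations based on the formulas for $\bpg$ and $Q_\gamma$.
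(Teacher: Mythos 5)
Your proposal is correct, and its first half coincides with the paper's argument: the paper also proves geometric ergodicity by combining the integrated drift from \Cref{prop:drift_v_norm} (with the Lyapunov function $1+|w|$, so that it is bounded below by one) with the observation that $Q_\gamma(w,\{0\})$ is bounded away from zero uniformly on compact sets, which makes compact sets small with a minorization measure built from $\updelta_0$ (the paper passes through a $(2,Q_\gamma(0,\cdot))$-small-set step, you use the one-step $(1,\eta\updelta_0)$ minorization directly; both are standard) and gives strong aperiodicity from $Q_\gamma(0,\{0\})>0$, before invoking the off-the-shelf results of Douc, Moulines, Priouret and Soulier. So the difficulty you flag in Step 2 is not an issue. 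Where you genuinely diverge is Step 3. The paper, after checking $\mu_\gamma \ll \updelta_0+\Leb$ by the same null-set computation you have in mind, obtains $\mu_\gamma(\{0\})>0$ and, when $\InftyBound\neq 0$, the equivalence, by observing that $\updelta_0$ (respectively $\updelta_0+\Leb$) is an irreducibility measure and appealing to the fact that the invariant measure of a positive recurrent kernel is a maximal irreducibility measure. You instead decompose $Q_\gamma(w,\cdot)=p_0(w)\updelta_0+q_\gamma(w,\cdot)\Leb$ explicitly via the change of variables in \eqref{def_q_gamma}, integrate against the invariance identity, and read off $\mu_\gamma(\{0\})=\int p_0\,\rmd\mu_\gamma>0$ and a density $\rho_\gamma$ that is everywhere positive on $\ooint{0,\plusinfty}$ when $\InftyBound\neq 0$ (your positivity check is right: with $g_y=(a-y)/(2\sigma\sqrt{\gamma})$ the ratio defining $\bpg$ equals $\varphibf_{\sigma^2\gamma}((a+y)/2)/\varphibf_{\sigma^2\gamma}((a-y)/2)<1$ exactly when $ay>0$). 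Your route is more elementary and self-contained, avoids the maximal-irreducibility-measure theorem, and yields explicit formulas for the atom mass and the density; the paper's route is shorter once that general theory is granted and requires no explicit manipulation of the kernel. Both are valid; just make sure to state the measurability/Tonelli step when integrating $q_\gamma(w,y)$ in $w$, and to run the ergodicity theorem with $1+|w|$ rather than $|w|$.
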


\begin{proof}
  The proof is postponed to \Cref{sec:proof_existence_mes_statio}.
\end{proof}

\begin{corollary}
  \label{coro_ergo_sticky_control_wasser}
Assume \Cref{ass:LipsAnd} and \Cref{ass:bound}  hold.  Let $\cbf : \rset^{2d} \to [0,+\infty)$ of the form
  $\cbf(x,y) = \tcbf(\norm{x-y})$ for some non-decreasing function
  $\tcbf : [0,+\infty) \to [0,+\infty)$, $\tcbf(0)=0$. For any 
  $x,\tx \in \rset^d$ and $k\in \mathbb{N}$,
  \begin{equation}
    \label{eq:coro_ergo_sticky_control_wasser}
  \wassersteinD[\cbf](\updelta_x R_{\gamma}^k,\updelta_{\tx}\tR_{\gamma}^k) \leq
\int_0^{+\infty} \tcbf(\tw) \{Q_{\gamma}^k(\normLigne{x-\tx}, \cdot)  - \mu_{\gamma}\}(\rmd \tw) + \mu_{\gamma}(\tcbf)   \eqsp.
\end{equation} 
where $\mu_{\gamma}$ is the stationary distribution of $Q_{\gamma}$ given by \eqref{def_q_gamma}. In particular, if $x = \tx$, \sloppy$ \wassersteinD[\cbf](\updelta_x R_{\gamma}^k,\updelta_{x}\tR_{\gamma}^k) \leq \mu_{\gamma}(\tcbf)$.
\end{corollary}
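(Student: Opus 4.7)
My plan is to combine \Cref{coro:bound_sticky} with the existence of the invariant measure $\mu_\gamma$ from \Cref{propo:existence_mes_statio} for the first inequality, and to additionally invoke the stochastic monotonicity of $Q_\gamma$ (noted just after \Cref{lem:G_inequality}) for the special case $x=\tx$.

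For the main inequality \eqref{eq:coro_ergo_sticky_control_wasser}, I would simply start from the conclusion of \Cref{coro:bound_sticky}, namely
\[
\wassersteinD[\cbf](\updelta_x R_{\gamma}^k,\updelta_{\tx}\tR_{\gamma}^k) \leq \int_0^{+\infty}\tcbf(\tw)\, Q_{\gamma}^k(\normLigne{x-\tx},\rmd \tw)\,,
\]
and add and subtract $\mu_{\gamma}(\tcbf)$ on the right-hand side. Since $\tcbf \geq 0$ is measurable and $Q_\gamma^k(\|x-\tx\|,\cdot)$ and $\mu_\gamma$ are probability measures, this decomposition is an identity (with the usual convention that the bound is vacuous whenever $\mu_\gamma(\tcbf) = +\infty$, in which case there is nothing to prove). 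This immediately yields \eqref{eq:coro_ergo_sticky_control_wasser}.

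For the special case $x = \tx$, we have $\|x-\tx\|=0$, so the base bound from \Cref{coro:bound_sticky} reduces to
\[
\wassersteinD[\cbf](\updelta_x R_{\gamma}^k,\updelta_{x}\tR_{\gamma}^k) \leq \int_0^{+\infty}\tcbf(\tw)\, Q_{\gamma}^k(0,\rmd \tw)\,.
\]
The key observation is that $0$ is the smallest element of $\coint{0,+\infty}$, so by the stochastic monotonicity of $Q_\gamma$ established after \Cref{lem:G_inequality} (coupling the chain started at $0$ with a chain started from $\mu_\gamma$ via the same driving noise $(G_k,U_k)_{k\geq 1}$), the law $Q_\gamma^k(0,\cdot)$ is stochastically dominated by $\mu_\gamma Q_\gamma^k = \mu_\gamma$. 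Since $\tcbf$ is non-decreasing and non-negative, stochastic dominance preserves the ordering of its integrals, giving
\[
\int_0^{+\infty}\tcbf(\tw)\, Q_{\gamma}^k(0,\rmd \tw) \leq \int_0^{+\infty}\tcbf(\tw)\, \mu_\gamma(\rmd \tw)=\mu_\gamma(\tcbf)\,,
\]
which concludes the proof.

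There is essentially no serious obstacle here: the first inequality is a formal decomposition, and the second uses only the stochastic monotonicity that was already observed as a direct consequence of \Cref{lem:G_inequality}. The role of \Cref{propo:existence_mes_statio} is merely to guarantee that $\mu_\gamma$ exists and is unique, so that the decomposition and the coupling with a stationary chain are well-defined.
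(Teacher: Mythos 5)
Your proposal is correct and follows essentially the same route as the paper: the main inequality is the add-and-subtract decomposition applied to the bound of \Cref{coro:bound_sticky} (with \Cref{propo:existence_mes_statio} guaranteeing $\mu_\gamma$), and the case $x=\tx$ uses the stochastic monotonicity of $Q_\gamma$ to dominate $Q_\gamma^k(0,\cdot)$ by $\mu_\gamma = \mu_\gamma Q_\gamma^k$. The only cosmetic difference is that you phrase the domination via the pathwise coupling with common noise $(G_k,U_k)_{k\geq 1}$, whereas the paper argues analytically with $w\mapsto Q_\gamma^k(w,\ccint{0,a})$ non-increasing and integrates against $\mu_\gamma$; both rest on the same monotonicity already recorded after \Cref{lem:G_inequality}.
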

\begin{proof}
  The proof of \eqref{eq:coro_ergo_sticky_control_wasser} is a
  consequence of \Cref{propo:existence_mes_statio} and
  \Cref{coro:bound_sticky}. The last statement follows from the fact
  that $Q_{\gamma}$ is stochastically monotonous. Indeed, by \Cref{lem:G_inequality},   
   for any  $w ,\tw \in \coint{0,\plusinfty}$, $w \leq \tw$, and $a \in \coint{0,\plusinfty}$,
  $Q_{\gamma}(w,\ccint{0,a}) \geq
  Q_{\gamma}(\tilde{w},\ccint{0,a})$. Therefore, for any
  $a \in \coint{0,\plusinfty}$, $w \mapsto Q_{\gamma}(w,\ccint{0,a})$ is
  non-increasing on $[0,+\infty)$ and for any non-increasing bounded function $f$,
  $Q_{\gamma} f(w) \geq Q_{\gamma}f(\tw)$ for any $w,\tw \in \coint{0,\plusinfty}$,
  $w \leq \tw$. As a result, a straightforward induction shows that
  for any $k \in \nset$, $w ,\tw \in \coint{0,\plusinfty}$, $w \leq \tw$, and
  $a \in \coint{0,\plusinfty}$,
  $Q_{\gamma}^k(w,\ccint{0,a}) \geq
  Q_{\gamma}^k(\tilde{w},\ccint{0,a})$. Then, we obtain $Q^k_\gamma(0,\ccint{0,a}) \geq \int_0^{+\infty} \mu_{\gamma}(\rmd w) Q_{\gamma}^k(w,\ccint{0,a}) = \mu_{\gamma}(\ccint{0,a})$. Since $\tcbf$ is non-decreasing on $[0,+\infty)$, we get $Q^{\tcrw{k}}_{\gamma}\tcbf(0) \leq \mu_{\gamma}(\tcbf)$, which combined with \eqref{eq:coro_ergo_sticky_control_wasser} completes the proof. 
\end{proof}

\Cref{coro_ergo_sticky_control_wasser} then naturally brings us to
derive moment bounds for the stationary distribution $\mu_{\gamma}$,
$\gamma \in \ocint{0,\bgamma}$ and quantitative convergence
bounds for $Q_{\gamma}$ to $\mu_{\gamma}$. Our next results address these two problems. 

\begin{theorem}
\label{theo:bound_moment_rset_star_invariant_mes}
Assume \Cref{ass:LipsAnd}-\ref{ass:LipsAnd_2}.
For any $\bdelta \in \ocint{0,\{ \Ltt^{-1} \wedge (\sigma\rme^{-1}/\InftyBound)^2\}}$ and  $\gamma\in \ocint{0,\bgamma}$,
\begin{equation}
  \label{eq:bound_moment_rset_star_invariant_mes}
 \int_{[0,+\infty)} w \,  \mu_\gamma(\rmd w)\leq \InftyBound\constMoment_1  \eqsp, \qquad \mu_\gamma((0,+\infty))\leq \InftyBound \constMoment_2 \eqsp ,
\end{equation}
where $\mu_{\gamma}$ is the stationary distribution of $Q_{\gamma}$ given by \eqref{def_q_gamma}, and, considering $\zeta$ given below in \eqref{eq:def_zeta},
\begin{align}
    \label{eq:def_delta_bar_lambda_1_eq:bound_moment_rset_star_invariant_mes}
  \constMoment_1&=\eta_1 R_1 (1+\Ltt/\mtt)+1/\mtt  \eqsp, \\
  \constMoment_2& =\rme^{(\deltaInf+\bgamma)\Ltt}\parentheseLigne{\constMoment_1(1+\bgamma \Ltt)/\deltaInf^{1/2}+[\deltaInf+\bgamma]^{1/2}}/(\sqrt{2\uppi}\sigma)+2\zeta[\deltaInf+\bgamma]^{1/2}\rme^{3(\deltaInf+\bgamma)\Ltt}/\sigma^3  \eqsp, \\
  \label{eq:def_delta_eq:bound_moment_rset_star_invariant_mes}
  \eta_1&=\left. [\deltaInf+\bgamma]^{1/2}\parentheseDeux{\frac{\tcrw{2}\zeta\rme^{3(\deltaInf+\bgamma)\Ltt}}{\sigma^3}+\frac{\rme^{(\deltaInf+\bgamma)\Ltt}}{2\sqrt{2\uppi}\sigma}}\middle/
\Phibf\parenthese{-\frac{(1+\bgamma\Ltt)R_1+(\deltaInf+\bgamma)\InftyBound}{2\deltaInf^{1/2}\sigma\rme^{-(\deltaInf+\bgamma) \Ltt}}}
        \right.\eqsp .
\end{align}
\end{theorem}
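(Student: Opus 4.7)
My plan is to establish both bounds by reducing them to a single key estimate: $\mu_\gamma((0,R_1])\leq \InftyBound\eta_1$. The three main tools are the drift bound of \Cref{prop:drift_v_norm}, a uniform-in-$\gamma$ lower bound on the probability of hitting zero over a time window of length $\deltaInf$, and the stochastic monotonicity of $Q_\gamma$ established after \Cref{lem:G_inequality}. As a first step I would integrate the drift inequality of \Cref{prop:drift_v_norm} against $\mu_\gamma$ and use $\mu_\gamma Q_\gamma=\mu_\gamma$ together with $\int_{(0,R_1]} w\,\mu_\gamma(\rmd w)\leq R_1\mu_\gamma((0,R_1])$. After rearranging, this yields
\begin{equation}
\int_0^{\plusinfty} w\,\mu_\gamma(\rmd w)\leq (1+\Ltt/\mtt)\,R_1\,\mu_\gamma((0,R_1])+\InftyBound/\mtt\eqsp,
\end{equation}
so the first-moment bound with $\constMoment_1$ given by \eqref{eq:def_delta_bar_lambda_1_eq:bound_moment_rset_star_invariant_mes} reduces to proving $\mu_\gamma((0,R_1])\leq \InftyBound\eta_1$.

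The second step provides the hitting estimate. Set $n_\gamma=\lceil\deltaInf/\gamma\rceil$ so that $n_\gamma\gamma\in\ccint{\deltaInf,\deltaInf+\bgamma}$. On the event $\mathscr{E}=\defEnsLigne{W_j>0:1\leq j\leq n_\gamma}$ that $W$ avoids $0$, iterating the recursion \eqref{eq:def_r} with $\tau_\gamma(r)\leq(1+\gamma\Ltt)r$ and $(1+\gamma\Ltt)^{n_\gamma}\leq\rme^{(\deltaInf+\bgamma)\Ltt}$ gives, for $W_0=w\in\ccint{0,R_1}$,
\begin{equation}
W_{n_\gamma}\leq \rme^{(\deltaInf+\bgamma)\Ltt}\parenthese{(1+\bgamma\Ltt)R_1+(\deltaInf+\bgamma)\InftyBound}-S\eqsp,
\end{equation}
where $S=2\sigma\sqrt{\gamma}\sum_{k=1}^{n_\gamma}(1+\gamma\Ltt)^{n_\gamma-k}G_k$ is centered Gaussian with variance bounded below by $4\sigma^2\deltaInf$. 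Since $W_{n_\gamma}>0$ on $\mathscr{E}$, by contraposition any $w\in(0,R_1]$ satisfies $Q_\gamma^{n_\gamma}(w,\defEnsLigne{0})\geq \Phibf(-C_\star)$ with $C_\star$ exactly matching the argument of $\Phibf$ in \eqref{eq:def_delta_eq:bound_moment_rset_star_invariant_mes}.

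Step three combines these estimates into a self-consistent inequality. By stationarity $\mu_\gamma((0,R_1])=\int \mu_\gamma(\rmd w)\,Q_\gamma^{n_\gamma}(w,(0,R_1])$. I would split the integration into $\defEnsLigne{0}$, $(0,R_1]$ and $(R_1,\plusinfty)$. On $(0,R_1]$, step two gives the contribution $(1-\Phibf(-C_\star))\mu_\gamma((0,R_1])$. On $\defEnsLigne{0}$ and $(R_1,\plusinfty)$, one bounds $Q_\gamma^{n_\gamma}(w,(0,R_1])$ through an explicit Gaussian density estimate: conditionally on not merging, the chain is dominated by a Gaussian random walk, so its density on $(0,\plusinfty)$ is controlled by a factor of order $\rme^{(\deltaInf+\bgamma)\Ltt}/(\sigma\sqrt{\deltaInf})$. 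Starting from $0$, the bound is combined with the escape rate $p_l=1-2\Phibf(-\sqrt{\gamma}\InftyBound/(2\sigma))\leq \sqrt{\gamma}\InftyBound/(\sigma\sqrt{2\uppi})$ and the cubic correction controlled by the constant $\zeta$ from \eqref{eq:def_zeta} (used to handle higher-order terms in the Taylor expansion of $\bpg$), yielding the $\zeta/\sigma^3$ term. The $(R_1,\plusinfty)$ contribution involves $\int w\,\mu_\gamma(\rmd w)/\sqrt{\deltaInf}$, which couples back into step one. Solving the resulting linear inequality for $\mu_\gamma((0,R_1])$ produces the claimed $\mu_\gamma((0,R_1])\leq\InftyBound\eta_1$. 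The total-mass bound follows by repeating this scheme with target $(0,\plusinfty)$ in place of $(0,R_1]$: now the density estimate applies on the full half-line, and the $(R_1,\plusinfty)$-term directly involves $\constMoment_1$ via step one, producing the $\constMoment_1(1+\bgamma\Ltt)/\deltaInf^{1/2}$ contribution in $\constMoment_2$.

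The main obstacle is the third step. Neither the hitting estimate alone (which only handles $(0,R_1]$) nor the density estimate alone (which produces a bound free of $\InftyBound$) is sufficient, and the two must be calibrated simultaneously so that the right-hand side of the self-consistent inequality is proportional to $\InftyBound$. A subtle point is that the iteration uses the sublinear map $\tau_\gamma$, so the chain cannot be identified with a genuine Gaussian random walk; stochastic monotonicity from \Cref{lem:G_inequality} is needed to validate the comparison. Finally, the restriction $\deltaInf\leq (\sigma\rme^{-1}/\InftyBound)^2$ enters exactly here: it ensures $C_\star$ stays bounded, so $\Phibf(-C_\star)$ is bounded away from zero and the self-consistent inequality has a finite solution $\eta_1$.
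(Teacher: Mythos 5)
Your step one is exactly the paper's own reduction: integrating \Cref{prop:drift_v_norm} against $\mu_\gamma$ and rearranging does reduce the first-moment bound to $\mu_\gamma((0,R_1])\lesssim\InftyBound\eta_1$, and your final step for $\constMoment_2$ (the Lipschitz bound on $t\mapsto 1-2\Phibf(-t)$ combined with the first-moment estimate) is also the right move. The gap is in steps two and three, which is precisely where the technical weight lies. Contraposition of your Gaussian domination only yields $\PP_w\left(\exists\, j\leq n_\gamma:\ W_j=0\right)\geq \Phibf(-C_\star)$, not $Q_\gamma^{n_\gamma}(w,\{0\})\geq \Phibf(-C_\star)$: when $\InftyBound\neq0$ the state $0$ is not absorbing, so a chain started in $(0,R_1]$ can hit $0$ and re-escape, and may well sit in $(0,R_1]$ again at time $n_\gamma$. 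Your decomposition in step three credits the full factor $1-\Phibf(-C_\star)$ to the $(0,R_1]$-block and only accounts for escapes from $0$ in the $\{0\}$-block, so the self-consistent inequality is not justified as written. Moreover, the mechanism you invoke for escapes, a per-step escape probability of order $\sqrt{\gamma}\,\InftyBound/\sigma$, is not sufficient on its own: summed naively over the $\sim\deltaInf/\gamma$ steps of the window it gives $\deltaInf\InftyBound/(\sigma\sqrt{\gamma})$, which blows up as $\gamma\to0$. To get a $\gamma$-uniform bound proportional to $\InftyBound$, each escape must be weighted by the probability of staying strictly positive until the end of the window, uniformly over the small random post-escape position; supplying that is exactly the content of the paper's induction (\Cref{lem:technique_lemma_induction} and \Cref{lem:post_induction}), which propagates the function $w\mapsto 1-2\Phibf(-(\tau_\gamma(w)+\alpha_k)/(2\beta_k))$ through $Q_\gamma$, the coalescence probability $\bpg$ generating the correction terms $\zeta\gamma\alpha_k/\beta_k^3$ via the Gaussian inequality of \Cref{lem:Phi_inequality}. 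Your sketch names these ingredients ($\zeta$, the escape rate) but does not provide the mechanism, and it cannot be recovered by a crude union bound.

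Two further calibration points. The restriction $\deltaInf\leq(\sigma\rme^{-1}/\InftyBound)^2$ is not there to keep $\Phibf(-C_\star)$ bounded away from zero ($\eta_1$ is finite for any finite $C_\star$); in the paper it guarantees $\alpha_k/(2\beta_k)\leq1$ (\Cref{lem:bound_alpha_beta}), which is the hypothesis needed to apply \Cref{lem:Phi_inequality} inside the induction. And once the survival bound of \Cref{lem:post_induction} is in hand, the three-block decomposition you propose is unnecessary: the paper simply integrates that bound against $\mu_\gamma$, absorbs a $\mu_\gamma((0,+\infty))$ term using $\Phibf(-t)\leq 1/2$, and rearranges to isolate $\mu_\gamma((0,R_1))$ directly (proof of \Cref{theo:bound_mu_0_R_1}); the hitting-time/last-exit bookkeeping your route requires is exactly what this rearrangement avoids.
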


\begin{proof}
  The proof is postponed to \Cref{sec:proof-crefth}.
\end{proof}

\begin{theorem}
  \label{coro:exp_moment}
  Assume \Cref{ass:LipsAnd}-\ref{ass:LipsAnd_2}.
For any  $a >0$  and $\gamma\in \ocint{0,\bgamma_1}$,
\begin{equation+}
  \label{eq:bound_moment_exp_rset_star_invariant_mes}
  \int_{\tcrw{\rset_+}}  \tcrw{\parentheseDeux{\exp(a w)-1}}  \, \rmd \mu_\gamma( w)\leq \InftyBound\constMoment_3 \eqsp,
\end{equation+}
where \tcrw{$\bgamma_1$ and} $\constMoment_3$ \tcrw{are} explicitly given in the proof and $\mu_{\gamma}$ is the stationary distribution of $Q_{\gamma}$ given by \eqref{def_q_gamma}.
\end{theorem}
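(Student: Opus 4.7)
The plan is to mirror the strategy used for \Cref{theo:bound_moment_rset_star_invariant_mes}, replacing the Lyapunov function $|\cdot|$ by the exponential $V_a(w)=\rme^{aw}$ and using the mass and first--moment bounds already furnished by that theorem to close a self--referential inequality.

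Starting from the invariance identity applied to $\1_{(0,+\infty)}V_a$, namely $M(a):=\mu_\gamma(\1_{(0,+\infty)}V_a)=\mu_\gamma(Q_\gamma(\1_{(0,+\infty)}V_a))$, the first step is to make the kernel action explicit. Writing $A(w):=\bar\tau_\gamma(w)+\gamma\InftyBound$, $\alpha(w):=A(w)/(2\sigma\sqrt\gamma)$ and $\beta:=2a\sigma\sqrt\gamma$, and exploiting the closed form $\bpg(A,g)=1\wedge\exp(Ag/(\sigma\sqrt\gamma)-A^2/(2\sigma^2\gamma))$ together with a completing--the--square computation in \eqref{def_q_gamma}, one gets the identity
\begin{equation*}
  Q_\gamma\bigl(\1_{(0,+\infty)}V_a\bigr)(w)=\rme^{2a^2\sigma^2\gamma}\bigl[\rme^{aA(w)}\Phibf(\alpha(w)+\beta)-\rme^{-aA(w)}\Phibf(\beta-\alpha(w))\bigr].
\end{equation*}

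Next I would split $\int Q_\gamma(\1_{(0,+\infty)}V_a)\,\rmd\mu_\gamma$ into three contributions corresponding to $\{0\}$, $(0,R_1]$, and $(R_1,+\infty)$. For the atom, since $A(0)=\gamma\InftyBound$ and $\alpha(0)=\InftyBound\sqrt\gamma/(2\sigma)$ are both small, a Taylor expansion of the above identity at $w=0$ yields a refined bound $Q_\gamma(\1_{(0,+\infty)}V_a)(0)\leq C_0\InftyBound\sqrt\gamma$ with $C_0=C_0(a,\sigma,\InftyBound,\bgamma)$ uniform in $\gamma$; multiplying by $\mu_\gamma(\{0\})\leq 1$ gives an $\bigO(\InftyBound)$ contribution. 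For $w\in(0,R_1]$, the estimate $A(w)\leq(1+\bgamma\Ltt)R_1+\bgamma\InftyBound$ gives a uniform bound, and the contribution is at most a constant $C_1$ times $\mu_\gamma((0,R_1])\leq c_{\mathbf{2}}\InftyBound$ by \Cref{theo:bound_moment_rset_star_invariant_mes}. For $w>R_1$, the contractivity $\bar\tau_\gamma(w)\leq(1-\gamma\mtt)w+\gamma(\Ltt+\mtt)R_1$ combined with $\Phibf\leq1$ produces
\begin{equation*}
  Q_\gamma\bigl(\1_{(0,+\infty)}V_a\bigr)(w)\leq\kappa_\gamma V_a(w)^{1-\gamma\mtt},\quad\kappa_\gamma:=\rme^{a\gamma[(\Ltt+\mtt)R_1+\InftyBound]+2a^2\sigma^2\gamma},
\end{equation*}
and applying Hölder's inequality with conjugate exponents $(1-\gamma\mtt)^{-1}$ and $(\gamma\mtt)^{-1}$, together with the mass bound $\mu_\gamma((R_1,+\infty))\leq c_{\mathbf{2}}\InftyBound$, bounds this contribution by $\kappa_\gamma(c_{\mathbf{2}}\InftyBound)^{\gamma\mtt}M(a)^{1-\gamma\mtt}$.

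Gathering the three contributions produces an inequality of the form $M(a)\leq\mathsf{A}\InftyBound+\mathsf{B}\InftyBound^{\gamma\mtt}M(a)^{1-\gamma\mtt}$ with $\mathsf{A},\mathsf{B}>0$ explicit and uniform in $\gamma\in(0,\bgamma]$. Setting $z=M(a)/\InftyBound$ reduces this to $z\leq\mathsf{A}+\mathsf{B}z^{1-\gamma\mtt}$, and the announced bound $M(a)\leq c_3\InftyBound$ follows by a case analysis on whether $\mathsf{B}z^{1-\gamma\mtt}$ dominates $\mathsf{A}$ or the reverse, the constant $c_3$ being thereby determined explicitly in terms of $a,\sigma,\InftyBound,\Ltt,\mtt,R_1,\bgamma$ and $c_{\mathbf{2}}$.

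The most delicate point is the uniform--in--$\gamma$ closing of the fixed--point inequality in the last step: because the Hölder exponent $\gamma\mtt$ vanishes as $\gamma\to 0$, a careless case split produces a factor $(2\mathsf{B})^{1/(\gamma\mtt)}$ that would explode. The resolution exploits the precise form $\mathsf{B}=\kappa_\gamma c_{\mathbf{2}}^{\gamma\mtt}$ together with the fact that $\kappa_\gamma=1+\bigO(\gamma)$ and $c_{\mathbf{2}}^{\gamma\mtt}=1+\bigO(\gamma)$, so that, combined with the corrective factor $\InftyBound^{\gamma\mtt}$, one effectively controls the threshold on $z$ by $\mathsf{A}$ itself; shrinking $\bgamma$ if necessary ensures that the root of $z=\mathsf{A}+\mathsf{B}z^{1-\gamma\mtt}$ remains bounded uniformly in $\gamma$. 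Writing out this quantitative balance is the main technical task of the proof.
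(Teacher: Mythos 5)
Your first three steps (the explicit bound on $Q_\gamma(\1_{(0,+\infty)}\rme^{a\cdot})$ by completing the square, the treatment of the atom, and the bounded region via the mass estimate of \Cref{theo:bound_moment_rset_star_invariant_mes}) are sound and parallel what the paper does. The gap is in the tail: the self-referential inequality $z \leq \mathsf{A} + \mathsf{B}\,z^{1-\gamma\mtt}$ with $\mathsf{B}=\kappa_\gamma c_2^{\gamma\mtt}=1+\bigO(\gamma)$ cannot be closed uniformly in $\gamma$, and the refinement you sketch at the end does not rescue it. Indeed, even the exact root of $z=\mathsf{A}+\mathsf{B}z^{1-\gamma\mtt}$ diverges as $\gamma\to 0$: writing $\mathsf{B}=\rme^{c\gamma}$ and $z=\rme^{y}$, the fixed-point equation reads
\begin{equation}
\rme^{y}\parenthese{1-\rme^{-\gamma(\mtt y-c)}}=\mathsf{A}\eqsp,
\end{equation}
and since $1-\rme^{-\gamma(\mtt y-c)}\leq \gamma(\mtt y-c)$, any solution must satisfy $\rme^{y}\gamma(\mtt y-c)\geq \mathsf{A}$, forcing $z=\rme^{y}\gtrsim \mathsf{A}/(\gamma\mtt\log(1/\gamma))$. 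So the constraint you derive is consistent with $M(a)\sim\InftyBound/\gamma$ (up to logarithms) and therefore cannot yield $M(a)\leq c_3\InftyBound$ with $c_3$ independent of $\gamma$; note also that shrinking $\bgamma$ cannot help, since the degeneracy occurs in the limit $\gamma\to 0$, which is contained in every interval $\ocint{0,\bgamma}$. The Hölder step is where the information is lost: it converts the genuine drift into an inequality whose contraction margin vanishes at the same rate $\gamma$ as the perturbation it is supposed to beat.

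The paper's proof avoids this by never splitting at $R_1$: it enlarges the radius to an $a$-dependent threshold $R_a = 1\vee R_1\vee[(4a\sigma^2+2\InftyBound)/\mtt]$, chosen so that for $w\geq R_a$ the additive terms $a\gamma\InftyBound+2a^2\sigma^2\gamma$ are absorbed by half of the linear contraction $a\gamma\mtt w$, giving the genuine geometric drift $Q_\gamma \LyapDs_a(w)\leq \uplambda_a^{\gamma}\LyapDs_a(w)$ with $\uplambda_a=\exp(-a\mtt R_a/2)$ (\Cref{lem:drift_exp_sticky_v0}), while on $\ooint{0,R_a}$ one only needs $Q_\gamma\LyapDs_a\leq B_a^{\gamma}\LyapDs_a$. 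Integrating against $\mu_\gamma$ and rearranging then gives
\begin{equation}
\int_{(0,+\infty)}\LyapDs_a\,\rmd\mu_\gamma\leq \frac{B_a^{\gamma}-\uplambda_a^{\gamma}}{1-\uplambda_a^{\gamma}}\,\rme^{aR_a}\,\mu_\gamma(\ooint{0,R_a})\eqsp,
\end{equation}
where the prefactor stays bounded uniformly in $\gamma$ because numerator and denominator are both of order $\gamma$ (via $t\leq\rme^t-1\leq t\rme^t$), and $\mu_\gamma(\ooint{0,R_a})\leq\eta_{R_a}\InftyBound$ follows from \Cref{theo:bound_mu_0_R_1} applied with $R=R_a$ (the mass bound for a general radius, not only $R_1$, is needed here). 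If you want to repair your argument you should replace the Hölder trick by exactly this mechanism: choose the cut-off radius large enough, depending on $a$, that the exponential Lyapunov function itself contracts on the tail, so that the final inequality is linear in $M(a)$ with coefficient $\uplambda_a^{\gamma}<1$ rather than a near-vacuous power inequality.
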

\begin{proof}
The proof is postponed to \Cref{sec:proof:coro:exp_moment}.
\end{proof}

% A quick study of the bounds provided by
% \Cref{theo:bound_moment_rset_star_invariant_mes} and
% \Cref{coro:exp_moment} on the following example yields that an appropriate choice for the
% parameter $\delta$ is
% $(1/2)\wedge \Ltt^{-1}$ and in that case $\bdelta  = \delta$ as $\InftyBound \to 0$.
% \alain{discussion sharpness two ar}
We now specify the convergence of
$Q_{\gamma}$ to $\mu_{\gamma}$ for any $\gamma \in \ocint{0,\bgamma}$.

\begin{theorem}
  \label{theo:convergence_Q_gamma}
Assume \Cref{ass:LipsAnd} and \Cref{ass:bound}  hold. There exist
  explicit constants $\rho \in \coint{0,1}$, $C \geq 0$ \tcrw{and $\bgamma_1\in \ocint{0,\bgamma}$} such that
  for any $\gamma \in \ocint{0,\bgamma_{\tcrw{1}}}$, $w \geqslant 0$, 
  \begin{equation}
    \label{eq:theo:convergence_Q_gamma}
    \Vnorm[\VlyapD]{\updelta_w Q_{\gamma}^k - \mu_{\gamma}} \leq C\rho^{\gamma k} \VlyapD(w) \eqsp,
  \end{equation}
  where $\VlyapD(w) = 1+ \tcrw{w}$ or $\VlyapD(w) = \exp(a \tcrw{w})$, for $a >0$. 
\end{theorem}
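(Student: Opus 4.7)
The plan is to combine a Lyapunov drift condition with a minorization condition and conclude via a Harris-type ergodic theorem, taking special care to obtain a rate of the form $\rho^{\gamma k}$ (\ie~uniform in the step-size $\gamma$). Since in a single step $Q_\gamma$ only displaces mass by $O(\sqrt{\gamma})$, a one-step minorization cannot hold uniformly in $\gamma$; instead, I will establish the small-set condition for the iterated kernel $Q_\gamma^{n_0(\gamma)}$ with $n_0(\gamma)=\lceil T/\gamma\rceil$ for a fixed time horizon $T>0$, and then translate back to single-step rates.

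For the drift, the case $\lyapD(w)=1+\abs{w}$ follows directly from \Cref{prop:drift_v_norm}, which yields $Q_\gamma \lyapD \leq (1-\gamma\mtt)\lyapD + c\gamma\1_{[0,R]}$ for appropriate $R$ and $c$. For $\lyapD(w)=\exp(a\abs{w})$, I would compute $Q_\gamma \lyapD$ using the explicit update \eqref{eq:def_r} together with the Gaussian moment generating function $\expeLigne{\exp(-2a\sigma\sqrt{\gamma}\, G)}=\exp(2a^2\sigma^2\gamma)$. Combined with the contraction of $\tau_\gamma$ from \Cref{ass:LipsAnd}, this gives, for $\abs{w}$ large enough so that $-a\gamma\mtt\abs{w}$ dominates $a\gamma\InftyBound + 2a^2\sigma^2\gamma$, a one-step bound $Q_\gamma \lyapD(w)\leq(1-\tilde c\gamma)\lyapD(w)$, and hence a global drift of the form $Q_\gamma \lyapD \leq (1-\tilde c \gamma)\lyapD + \tilde b\gamma\1_C$ on a suitable compact set $C$, with constants uniform in $\gamma \in \ocint{0,\bgamma}$.

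For the minorization, since $Q_\gamma$ has an atom at $0$ (\Cref{propo:existence_mes_statio}), the natural candidate for the minorizing measure is $\delta_0$. Using the expression of $\bpg$ in \eqref{eq:def_bpg}, one computes directly
\begin{equation*}
Q_\gamma(w,\{0\}) \ =\  2\Phibf\bigl(-(\tau_\gamma(w)+\gamma\InftyBound)/(2\sigma\sqrt{\gamma})\bigr),
\end{equation*}
which is bounded below uniformly in $\gamma$ only on a set of size $O(\sqrt{\gamma})$. Hence a single step is not enough; instead I would first show that, starting from any $w_0 \in [0,R]$, the chain enters $[0,\alpha\sqrt{\gamma}]$ within $n_1 = \lceil T_1/\gamma\rceil$ steps with probability bounded below uniformly in $\gamma$, and then apply one further step that brings it to $\{0\}$ with probability at least $2\Phibf(-\alpha')>0$, again uniform in $\gamma$. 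The strong Markov property then yields the uniform lower bound $\inf_{w\in[0,R]} Q_\gamma^{n_0}(w,\{0\})\geq\eta > 0$ with $\eta$ independent of $\gamma$, for $n_0 = n_1+1$.

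The main obstacle is this last step: ensuring uniform entrance of $[0,\alpha\sqrt{\gamma}]$ in $O(1/\gamma)$ steps. I would exploit (i) the monotonicity of $Q_\gamma$ noted after \eqref{eq:r_inequality}, which reduces the problem to the worst-case initial condition $w_0 = R$; (ii) the drift of \Cref{prop:drift_v_norm}, which forces the chain into the compact $[0,R_1]$ within $O(1/\gamma)$ steps in expectation; (iii) inside $[0,R_1]$, the non-coalescence representation $W_{k+1} = \tau_\gamma(W_k)+\gamma\InftyBound - 2\sigma\sqrt{\gamma} G_{k+1}$ (suitably stopped at zero), whose cumulative Gaussian fluctuations over $O(1/\gamma)$ steps are of order one, so that an exponential-supermartingale/Chernoff argument gives a uniform positive probability of visiting $[0,\alpha\sqrt{\gamma}]$ before leaving the compact. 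Armed with the drift and the $n_0$-step minorization, both uniform in $\gamma$, a standard Harris-type theorem applied to $Q_\gamma^{n_0}$ gives $\normLigne{\updelta_w Q_\gamma^{n_0 k}-\mu_\gamma}_{\lyapD}\leq C_0\rho_0^k \lyapD(w)$ with $C_0,\rho_0$ uniform in $\gamma$; writing any $k = j n_0 + r$ with $0\leq r < n_0$ and controlling the remaining $r$ steps by the drift yields \eqref{eq:theo:convergence_Q_gamma} with, say, $\rho=\rho_0^{1/T}$ and $C$ depending only on $C_0$, $T$ and the drift constants.
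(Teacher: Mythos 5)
Your overall architecture is the same as the paper's: a drift condition uniform in $\gamma$ for both $1+\abs{w}$ and the exponential function (your MGF computation is exactly the content of \Cref{lem:drift_exp_sticky_v0,lem:drift_exp_sticky_v1,propo:drift_iterate_convergence}), combined with a minorization for the skeleton kernel $Q_\gamma^{\ceil{t_0/\gamma}}$ based on hitting the atom $\{0\}$ within a fixed continuous-time horizon, and then a Harris-type theorem ([Theorem 19.4.1] of Douc et al.) applied to the skeleton, exactly as in \Cref{sec:proof:theo:convergence_Q_gamma}. Your diagnosis that a one-step minorization cannot be uniform in $\gamma$ and that one must iterate over $\sim t_0/\gamma$ steps is also the paper's.

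Where you differ is in how the uniform-in-$\gamma$ small-set condition is established, and this is the one place where your argument is not yet a proof. The paper does not pass through an intermediate set $[0,\alpha\sqrt{\gamma}]$: it couples two copies of the chain with the same $(G_k,U_k)$, uses stochastic monotonicity to reduce coalescence to the dominating chain hitting $0$, and then bounds the probability of avoiding $0$ over $k$ steps by an explicit induction on Gaussian integrals (\Cref{lem:0_proba,lem:technique_lemma_induction_v2,lem:min_w_i_zero,lem:minorization}), yielding the closed-form bound $\tvnorm{\updelta_w Q_{\gamma}^{\ceil{t_0/\gamma}}-\updelta_{\tw}Q_{\gamma}^{\ceil{t_0/\gamma}}}\leq 1-2\Phibf\parentheseLigne{-\Ltt^{\half}(w\vee\tw+t_0\InftyBound)/(2\sigma\{1-\rme^{-2\Ltt(t_0+\bgamma)}\}^{\half})}$ of \Cref{propo:minorization}, in which the accumulated variance $\beta_{k}^2\sim\sigma^2 k\gamma$ is what makes the bound uniform in $\gamma$. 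In your sketch, the corresponding crux — a uniformly positive probability of reaching an $O(\sqrt{\gamma})$ neighbourhood of $0$ within $O(1/\gamma)$ steps — is asserted rather than proved, and the tool you invoke (exponential supermartingale/Chernoff) points in the wrong direction: those give upper bounds on deviation probabilities, whereas here you need a \emph{lower} bound on the event that the cumulated noise drives the chain down by an order-one amount. The claim is nonetheless true and can be completed without Chernoff: on the event of no prior coalescence the chain satisfies $W_{j+1}=\tau_\gamma(W_j)+\gamma\InftyBound-2\sigma\sqrt{\gamma}G_{j+1}$ and is dominated by the linear recursion $V_{j+1}=(1+\gamma\Ltt)V_j+\gamma\InftyBound-2\sigma\sqrt{\gamma}G_{j+1}$, for which the probability of dipping below $0$ within $\ceil{T_1/\gamma}$ steps is an explicit Gaussian probability bounded below uniformly in $\gamma$ (coalescence before that time only helps, since it puts the chain at $0$). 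This is in substance what the paper's induction delivers, in a sharper quantitative form; also note that the restriction "before leaving the compact" in your step (iii) is unnecessary, since by monotonicity it suffices to start the dominating chain at the top of the sublevel set. Finally, two cosmetic points: the minorizing measure need not be restricted to $\updelta_0$ on $[0,R]$ — the paper's bound holds for all $w,\tw$ and is simply used on a sublevel set of the Lyapunov function — and handling the remainder $r<n_0$ in $k=jn_0+r$ by the drift, as you propose, is the routine step the paper leaves implicit.
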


\begin{proof}
The proof is postponed to \Cref{sec:proof:theo:convergence_Q_gamma}.
\end{proof}

Combining the results of \Cref{coro_ergo_sticky_control_wasser},
\Cref{theo:bound_moment_rset_star_invariant_mes},
\Cref{coro:exp_moment} and \Cref{theo:convergence_Q_gamma} allows to
address the main questions raised in this section and prove \Cref{theo:main_results}.

\paragraph*{Discussion on the bounds provided by \Cref{theo:bound_moment_rset_star_invariant_mes}}
In this paragraph, we discuss how the constants $c_1,c_2$ given in \Cref{theo:bound_moment_rset_star_invariant_mes} \tcr{behave} with respect to the parameters $R_1,\Ltt, \mtt$ in the limit $\InftyBound \to 0$ and $\bgamma \to 0$. For ease of presentation, we also only consider the case $\sigma=1$. 

\begin{enumerate}[label=(\arabic*)]
\item First consider the case $R_1=0$. As $\mtt \to 0$,  $c_1,c_2 $ are of order $\mtt^{-1}$ and
 $ 1/[\mtt \bdelta^{1/2}] + \bdelta^{1/2}$ respectively for
$\bdelta \in \ocint{0,\{ \Ltt^{-1} \wedge
  (\sigma\rme^{-1}/\InftyBound)^2\}}$. Since $\Ltt$ can be taken arbitrarily small (as $R_1=0$), choosing  $\bdelta = \mtt^{-1}$, we 
obtain that $c_2$ is of order $\mtt^{-1/2}$. Note that the
dependency of $c_1,c_2$ with respect to $\mtt$ is sharp; see \Cref{ex:ar_proc} below. 
\item We now consider the case $R_1 \geq 1$, $\Ltt =0$. Note that in
  this case $\bdelta$ can be chosen arbitrarily  in $\ooint{0,1}$. Then,
  for some universal constants $C_1,C_2,C_3$,
  $\eta_1 \geq  C_1\bdelta^{1/2} / \Phibf\{ C_2 R_1/\bdelta^{1/2} + C_3
  \InftyBound \bdelta^{1/2} \}$. Therefore, taking
  $\bdelta = \mtt^{-1} \vee R_1^{2}$, we get that for some
  universal constants $D_1,D_2,E \geq 0$,
  $c_1 \leq  D_1[(R_1\vee \mtt^{-1/2}) + \mtt^{-1}]$,
  $c_2 \leq E \mtt^{-1/2} \vee R_1$. Note that the bound of
  $c_2$ with respect to $R_1$ and $\mtt$ is consistent with the
  results obtained in \cite{eberle:zimmer:2016} (see \cite[Lemma
  1]{eberle:zimmer:2016}) for the stationary distributions of
  continuous sticky processes.  Note that it is shown in \cite[Example
  2]{eberle:zimmer:2016} that this bound is sharp with respect to
  $R_1$ and $\mtt$.
\item In the case $R_1\wedge \Ltt \geq 1$, taking $\bdelta = \Ltt^{-1}$ since we are in the regime $\InftyBound \to 0$ \tcr{and $\bgamma \to 0$}, we get that up to logarithmic term and using $\bgamma \leq \Ltt^{-1}$, $c_1,c_2$ are smaller than $ C \exp[\rme^{4}(R_1\Ltt^{\half} +\InftyBound)^2]$ for some universal constant $C \geq 0$. The  estimate for $c_2$ is also consistent with  \cite[Lemma 1]{eberle:zimmer:2016} which holds for stationary distributions of continuous sticky processes. 
\end{enumerate}

\begin{example}
  \label{ex:ar_proc}
Consider the particular example of two auto-regressive
processes for which $\rmT_{\gamma}(y) = (1-\varrho \gamma) y$ and $\trmT_{\gamma}(y) = (1-\varrho \gamma) y + \gamma \varrho a$ for $\gamma \in \ooint{0,\varrho^{-1}}$ and $a,\varrho >0$. Then, on the one hand, \Cref{ass:LipsAnd} and \Cref{ass:bound} are satisfied with $R_1=0$, $\mtt = \varrho$ and $\InftyBound = \varrho a$ which lead to $c_1 \InftyBound \sim a$ and $c_2 \InftyBound \sim C a / \varrho^{1/2}$, as $\varrho \to 0$, for some universal constant $C \geq 0$. On the other hand, an easy computation (see \eg~\cite{JMLR:v20:18-173}) shows that the stationary distributions $\pi_{\gamma}$ and $\tpi_{\gamma}$ provided by \Cref{theo:convergence_markov_chain_rsetd} and \Cref{theo:ergo_tilde_R} are $\loiGauss(0,\varrho^{-1}(2-\gamma \varrho \gamma)^{-1})$ and $\loiGauss(a,\varrho^{-1}(2-\gamma \varrho \gamma)^{-1})$ respectively. Therefore, we get $\wassersteinD[1](\pi_{\gamma}, \tpi_{\gamma}) = a $ and $\tvnorm{\pi_{\gamma}- \tpi_{\gamma}} \sim C a / \varrho^{1/2}$ as $\varrho \to 0$.% and we get up to universal constants $c_1$ and $c_2$.  
\end{example}

%%% Local Variables:
%%% mode: latex
%%% TeX-master: "main_imsart"
%%% End:

\section{Continuous-time limit}
\label{sec:contin_limit}

In the case where $\rmT_{\gamma}$ and $\tilde{\rmT}_{\gamma}$ are
specified by \eqref{eq:form_T_Euler}, then under appropriate
conditions on $b$ and $\tilde{b}$, it can be shown, see \eg~\cite[Proposition 25]{debortoli2019convergence}, that for any $T \geq 0$ and $x \in \rset^d$,
\begin{equation}
  \label{eq:convergence_discre_to_cont}
  \lim_{m \to \plusinfty} \{\Vnorm[V]{\updelta_x R_{T/m}^{m}-\updelta_{x}P_T}+ \Vnorm[V]{\updelta_x \tilde{R}_{T/m}^{m}-\updelta_{x}\tilde{P}_T}\} = 0\eqsp,
\end{equation}
for some measurable function $V : \rset^d \to \coint{1,\plusinfty}$ and where  $(P_t)_{t \geq 0}$ and $(\tilde{P}_t)_{t \geq 0}$ are the Markov semigroup corresponding \tcrw{to the diffusions $\rmd \mathbf{X}_t=b(\mathbf{X}_t)\tcr{\rmd t}+\sigma\rmd B_t$ and $\rmd \tilde{\mathbf{X}}_t=\tilde{b}(\tilde{\mathbf{X}}_t)\tcr{\rmd t}+\sigma\rmd B_t$, where $(B_t)_{t\in \rset_+}$ is a standard $d$-dimensional Brownian motion}. Then, this naturally implies convergence in total variation and also Wasserstein distance of order $p$ if $\inf_{x \in \rset^d\tcrw{\setminus\{0\}}} \{V(x)/\norm[p]{x}\} >0$. 
As a consequence, results of \Cref{sec:main_result_coupling} immediately transfer to the continuous-time processes. More precisely, let  $\cbf : \rset^{2d} \to [0,+\infty)$ of the form
  $\cbf(x,y) = \tcbf(\norm{x-y})$ for some non-decreasing function
  $\tcbf : [0,+\infty) \to [0,+\infty)$, $\tcbf(0)=0$. If \eqref{eq:convergence_discre_to_cont} holds and  $\sup_{x,y \in \rset^d}\{\cbf(x,y)/\{V(x)+V(y)\}\tcrw{\}} < \plusinfty$, we get \tcrw{if $\wassersteinD[\cbf]$ satisfies} the triangle inequality that for any $x,\tx\in\rset^d$, $T > 0$,
  $  \wassersteinD[\cbf](\updelta_x P_T ,\updelta_{\tx}\tilde{P}_T)  \leq \liminf_{m \to \plusinfty} \wassersteinD[\cbf](\updelta_x R_{T/m}^{m} , \updelta_{\tx} \tilde{R}_{T/m}^{m})$. Then, results of \Cref{sec:main_result_coupling} can be applied implying if \Cref{ass:LipsAnd} and \Cref{ass:bound} holds,  that for any $x,\tx \in \rset^d$ there exist $C_1,C_2\geq 0$ such that for any $T \geq 0$,  $ \wassersteinD[\cbf](\updelta_x P_T ,\updelta_{\tx}\tilde{P}_T) \leq C_1\rho^T + C_2 \InftyBound$. We therefore generalize the result provided in  \cite{eberle:zimmer:2016} which is specific to the total variation distance. We do not give a specific statement for this result which is mainly technical and is not the main subject of this paper.   Instead, the goal of this section is to study the continuous-time limit of the coupling \eqref{eq:def_refl_discret} (and not only of its marginals) toward  some continuous-time sticky diffusion.  % more precisely, the convergence of the chain $(W_k)_{k\geqslant 0}$ with transition operator $Q_\gamma$ toward the one-dimensional sticky diffusion considered in \cite{eberle:zimmer:2016}.

More precisely, let  $(\gamma_n)_{n \in\nset}$ be  a sequence of step sizes such
 that $\lim_{n \to \plusinfty} \gamma_n = 0$\tcrw{, $\gamma_n\leq \bgamma$} and $w_0 \geqslant 0$. Then, consider the sequence of Markov chains
 $\{(\Wn_k)_{k \in\nset}\, : \, n \in \nset\}$ for any $n \in \nset$,
 $(\Wn_k)_{k \in\nset}$ is the Markov chain defined by
 \eqref{eq:def_r} with $\Wn_0 = w_0$, $\gamma = \gamma_n$ and
 therefore associated with the Markov kernel $Q_{\gamma_n}$. 
Let  $\{(\Wnbf_t)_{t \in \ooint{0,\plusinfty}}\, : \, n \in \nset\}$ be the continuous linear interpolation of  $\{(\Wn_k)_{k \in\nset}\, : \, n \in \nset\}$, \ie~the sequence of continuous processes
 defined for any
 $n \in \nset$, $t \in \ooint{0,\plusinfty}$ by 
 \begin{equation}
   \label{eq:def_wnbf}
   \Wnbf_t = \Wn_{\floor{t/\gamma_n}} + \{\Wn_{\ceil{t/\gamma_n}}-\Wn_{\floor{t/\gamma_n}}\}\{t/\gamma_n - \floor{t/\gamma_n}\} \eqsp. 
 \end{equation}
 Note that for any $k \in \nset$ and $h \in \ccint{0,\gamma_n}$,
 $ \Wnbf_{k\gamma_n+h} = W_{k} +(h/\gamma_n)\{\Wn_{k+1}-\Wn_{k}\}$.  We
 denote by $\wiener = \rmC([0,+\infty),\rset)$ 
 endowed with the uniform topology on compact sets, $\wienersigma$ its
 corresponding $\sigma$-field  and $(\Wrm_t)_{t \geq 0}$ the
 canonical process defined for any $t \in \ooint{0,\plusinfty}$ and $\omega \in\wiener$ by $\Wrm_t(\omega) = \omega_t$.
 Denote by $(\wienersigma_t)_{t \geq 0}$ the filtration associated with $(\Wrm_t)_{t \geq 0}$. Note that  $\{(\Wnbf_t)_{t \in \ooint{0,\plusinfty}}\, : \, n \in \nset\}$ is a sequence of $\wiener$-valued random variables. The main result of this section concerns the convergence in distribution of this sequence. 

 We consider the following assumption on the function $\tau_{\gamma}$.
 \begin{assumptionA}
   \label{ass:sticky_cont}
There exists a function
   $\kappa : [0,+\infty)\to \tcrw{\rset}$ such that for any $\gamma \in \ocint{0,\bgamma}$, 
   $\tau_{\gamma}(w) = w+\gamma\kappa(w)$ and $\kappa(0)=0$. In
   addition, $\kappa$ is $\Ltt_{\kappa}$-Lipschitz: for any
   $w_1,w_2 \in \ooint{0,\plusinfty}$, $\absLigne{\kappa(w_1)-\kappa(w_2)} \leq \Ltt_{\kappa} \abs{w_1-w_2}$. 
 \end{assumptionA}
 This is not a restrictive condition since, under \Cref{ass:LipsAnd}, up to a possible modification of  $\tau_{\gamma}$, it is always possible to ensure \Cref{ass:sticky_cont}.

 Under \Cref{ass:sticky_cont}, we consider a sticky process \cite{watanabe:1971,watanabe1971_II,eberle:zimmer:2016}, which solves the stochastic differential equation
 \begin{equation}
   \label{eq:def_sticky_cont}
   \rmd \Wbf_t =  \{\kappa (\Wbf_t) + \InftyBound\} \rmd t + 2 \sigma \1_{(0,+\infty)} (\Wbf_t) \rmd B_t \eqsp,
 \end{equation}
 where $(B_t)_{t \geq 0}$ is a one-dimensional Brownian motion. Note
 that for any initial distribution $\mubf_0$ on
 $(\rset,\mcbb(\rset^d))$, \eqref{eq:def_sticky_cont} admits a unique
 weak solution by \cite[\tcrw{Lemma 17, Theorem 22}]{eberle:zimmer:2016}. 
%  for any 
%  $\varphi \in \rmC^{\infty}(\rset, \rset)$, with compact support,
%  \begin{equation}
%    \label{eq:martingale_problem_sticky}
% \parenthese{\varphi(\Wrm_t) - \varphi(\Wrm_0) - \int_{0}^t \generator \varphi(\Wrm_u) \rmd u }_{t \geq 0} \text{ is a $(\wienersigma_t)_{t \geq 0}$-martingale} \eqsp,
% \end{equation}

The main result of this section is the following.
\begin{theorem}
  \label{theo:continuous_limit}
  Assume \Cref{ass:sticky_cont}. Then, the sequence
  $\{(\Wbfn_t)_{t \geq 0} \, : \, n \in \nset\}$ defined by \eqref{eq:def_wnbf} converges in distribution
  to the solution $(\Wbf_t)_{t \geq 0}$ of the SDE \eqref{eq:def_sticky_cont}. 
\end{theorem}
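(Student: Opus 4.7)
My plan follows the classical Prokhorov strategy: first, establish tightness of $\{(\Wbfn_t)_{t\geq 0}\}_{n \in \nset}$ in $\wiener$; second, identify every weak subsequential limit as a solution of the martingale problem associated with the sticky SDE \eqref{eq:def_sticky_cont}; third, conclude using the weak uniqueness granted by \cite[Lemma 6]{eberle:zimmer:2016}. Tightness can be extracted from the recursion \eqref{eq:def_r}: the drift $\gamma_n(\kappa(W^{(n)}_k)+\InftyBound)$ is at most affine in $W^{(n)}_k$ (since $\kappa$ is Lipschitz with $\kappa(0)=0$ by \Cref{ass:sticky_cont}), the noise $-2\sigma\sqrt{\gamma_n}\, G_{k+1}$ is Gaussian with variance $O(\gamma_n)$, and the coalescence event only decreases $W^{(n)}_k$. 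Iterating $Q_{\gamma_n}$ against the Lyapunov functions $w \mapsto 1+w^p$ yields $\sup_n \sup_{k : k\gamma_n \leq T} \PE[(W^{(n)}_k)^p] < \infty$ for all $p\geq 1$ and $T\geq 0$, and the Kolmogorov--Chentsov criterion then gives $\PE[(\Wbfn_t - \Wbfn_s)^4] \leq C_T (t-s)^2$ on $[0,T]$, providing tightness in $\wiener$.

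For the identification step, denote by
\[
\mathcal{L}f(w) \ =\  (\kappa(w) + \InftyBound) f'(w) + 2\sigma^2 \1_{(0,+\infty)}(w)\, f''(w)
\]
the generator of \eqref{eq:def_sticky_cont}, acting on an appropriate class of test functions $f \in \rmC^2_c([0,+\infty))$ satisfying a sticky boundary condition at $0$ as in \cite{watanabe:1971,howitt2007stochastic}. The core of the argument will be the uniform one-step expansion $Q_\gamma f(w) - f(w) = \gamma\, \mathcal{L}f(w) + o(\gamma)$. Away from $0$, the coalescence probability is exponentially small in $\gamma$ by the Gaussian form of $\bpg$ in \eqref{eq:def_bpg}, so a second-order Taylor expansion of $f$ along $-2\sigma\sqrt{\gamma}\, G$, combined with $\tau_\gamma(w) = w + \gamma\kappa(w)$ from \Cref{ass:sticky_cont}, recovers both the drift $(\kappa(w)+\InftyBound)f'(w)$ and the diffusion $2\sigma^2 f''(w)$. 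At $w=0$, where $\mathcal{L}f(0) = \InftyBound f'(0)$, I would use $\bpg(\gamma\InftyBound,g) = 1 \wedge \exp(\sqrt{\gamma}g\InftyBound/\sigma - \gamma\InftyBound^2/(2\sigma^2))$ and compute directly by completing the square under the Gaussian density: the coalescence-failure probability equals $2\Phibf(\sqrt{\gamma}\InftyBound/(2\sigma)) - 1 = \sqrt{\gamma}\InftyBound/(\sigma\sqrt{2\uppi}) + O(\gamma)$, the conditional mean of the displacement is of order $\sqrt{\gamma}$, and their product matches exactly $\gamma \InftyBound f'(0)$ up to $O(\gamma^{3/2})$. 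Summing along the grid $\{k\gamma_n\}$ and invoking the moment bounds from the tightness step, one obtains that $f(\Wbfn_t) - f(\Wbfn_0) - \int_0^t \mathcal{L}f(\Wbfn_s)\rmd s$ is asymptotically a martingale, so every weak limit solves the martingale problem for $\mathcal{L}$.

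The hardest part will be the boundary layer $w = O(\sqrt{\gamma_n})$, where coalescence and diffusion compete on the same scale and neither a Taylor expansion around $w$ nor around $0$ gives a uniform remainder; this calls for a matched-asymptotics analysis. A secondary subtlety is selecting a test class that simultaneously characterises the martingale problem uniquely and admits the uniform one-step expansion above. Once these points are resolved, the weak uniqueness from \cite[Lemma 6]{eberle:zimmer:2016} identifies the common limit, and the full sequence $\{(\Wbfn_t)_{t\geq 0}\}_{n\in\nset}$ converges in distribution to the sticky diffusion $(\Wbf_t)_{t\geq 0}$.
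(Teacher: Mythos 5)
Your skeleton (moment bounds, Kolmogorov--Chentsov tightness, identification of limit points through a martingale problem, then weak uniqueness from \cite[Lemma 6]{eberle:zimmer:2016}) is the same as the paper's, and the tightness part and the drift identification (the analogue of showing that $\rmM_t$ in \eqref{eq:def_Mrm_Nrm} is a martingale, via $\varphi(w)=w$) would go through essentially as you describe. The genuine gap is in the identification of the diffusion part. Your plan rests on a uniform one-step expansion $Q_\gamma f(w)-f(w)=\gamma\,\mathcal{L}f(w)+o(\gamma)$ and on summing it along the grid, but this is precisely what breaks down because the diffusion coefficient $2\sigma^2\1_{(0,+\infty)}(w)$ is discontinuous at $0$: in the layer $w=O(\sqrt{\gamma_n})$ the expansion has no uniform remainder (as you yourself note), and, even granting pointwise convergence of the generators, passing to the limit in $\expe{\int_s^t \mathcal{L}f(\Wnbf_u)\,\rmd u\,\psi(\cdots)}$ along a weakly convergent subsequence requires continuity of $\mathcal{L}f$, which fails at $0$. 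Flagging a ``matched-asymptotics analysis'' does not resolve this; the paper states explicitly that the standard generator-convergence argument (its \Cref{propo:convergence_to_mart_prob}--\eqref{eq:propo:convergence_to_mart_prob_1}) is \emph{not} sufficient here, and instead adapts the strategy of \cite[Proposition 6]{racz:shkolnikov:2015}: it proves a one-sided asymptotic inequality with the everywhere-elliptic generator $\tilde{\generator}$ for convex test functions (\eqref{eq:propo:convergence_to_mart_prob_2}), deduces under any limit point that $\rmN_t$ is a submartingale via a lower-semicontinuity/Portmanteau argument for $\omega\mapsto\int_s^t\1_{(0,+\infty)}(\omega_u)\rmd u$, and that it is a supermartingale via Doob--Meyer, the identity $\qvar{\rmM}_t=\int_0^t\1_{(0,+\infty)}(\rmW_u)\,\rmd\qvar{\rmM}_u$ obtained from the occupation-times formula, and the monotonicity of $t\mapsto\qvar{\rmM}_t-4\sigma^2 t$. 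None of this two-sided mechanism appears in your proposal, and without it the key step — identifying the quadratic variation as $4\sigma^2\int_0^t\1_{(0,+\infty)}(\rmW_u)\rmd u$ — is not established.

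A secondary but real issue is your choice of test class. The uniqueness you invoke from \cite[Lemma 6]{eberle:zimmer:2016} is, via \cite[Theorem 1.27]{cherny:engelbert:2005}, uniqueness for the martingale problem in the form ``$\rmM_t$ and $\rmN_t$ are local martingales''; if you instead formulate the martingale problem on $\rmC^2_{\mathrm c}$ functions satisfying a Watanabe-type sticky boundary condition at $0$, you must additionally prove that a solution of \emph{your} martingale problem solves the one for which uniqueness is known (or prove uniqueness directly for yours). That equivalence is nontrivial and is not addressed in your sketch, whereas the paper avoids it by working directly with $\rmM$ and $\rmN$.
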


The proof of this theorem follows the usual strategy employed to show
convergence of a sequence of continuous processes to a Markov
process.  A first step is to show that under \Cref{ass:sticky_cont},
$\{(\Wbfn_t)_{t \geq 0} \, : \, n \in \nset\}$ is uniformly
bounded in $\mrl^q$ for some $q \geq 2$, on $\ccint{0,T}$ for any $T \geq 0$.

\begin{proposition}
  \label{propo:moment_conv_continuous}
  Assume \Cref{ass:sticky_cont}. Then for any $T \geq 0$, there exists
  $C_T \geq 0$ such that  $\sup_{n \in \nset} \expeLigne{\sup_{t \in \ccint{0,T}}  \{\Wbfn_t\}^4} \leq C_T$ where $(\Wbfn_t)_{t \geq 0}$ is defined by \eqref{eq:def_wnbf}. 
\end{proposition}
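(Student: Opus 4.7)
The plan is to combine uniform $L^p$ bounds on $\Wn_k$ at each fixed time index with a Doob decomposition to control the supremum on $[0,T]$.

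First, I will establish that for every integer $p \geq 1$, the quantity $m^{(n)}_{p,T} := \sup\{\mathbb{E}[(\Wn_k)^{2p}] : k \in \nset,\ k\gamma_n \leq T\}$ is bounded uniformly in $n$. From the recursion \eqref{eq:def_r}, $\Wn_{k+1} \in \{0,\ \tau_{\gamma_n}(\Wn_k) + \gamma_n \InftyBound - 2\sigma\sqrt{\gamma_n}G_{k+1}\}$, and on the event where the second branch is attained, it is automatically non-negative; thus pointwise,
\[
(\Wn_{k+1})^{2p} \leq \bigl(\tau_{\gamma_n}(\Wn_k) + \gamma_n\InftyBound - 2\sigma\sqrt{\gamma_n}G_{k+1}\bigr)^{2p}.
\]
Expanding, using \Cref{ass:sticky_cont} to write $\tau_{\gamma_n}(w) = w + \gamma_n \kappa(w)$ with $|\kappa(w)| \leq \Ltt_\kappa w$, and exploiting the vanishing of odd-order moments of $G_{k+1}$ yields a drift inequality $\mathbb{E}[(\Wn_{k+1})^{2p} \mid \mathcal{F}_k] \leq (1 + c_p\gamma_n)(\Wn_k)^{2p} + c'_p \gamma_n$, after absorbing intermediate-order polynomial terms via Young's inequality. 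Discrete Gronwall then yields the claim.

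To upgrade this to a bound on the supremum, I apply the Doob decomposition $(\Wn_k)^4 = w_0^4 + A^{(n)}_k + M^{(n)}_k$, where $A^{(n)}_k = \sum_{j < k}\mathbb{E}[\Delta_j \mid \mathcal{F}_j]$ with $\Delta_j = (\Wn_{j+1})^4 - (\Wn_j)^4$ is predictable and $M^{(n)}$ is an $(\mathcal{F}_k)$-martingale. The key technical step is to show $|\mathbb{E}[\Delta_j \mid \mathcal{F}_j]| \leq c\gamma_n(1 + (\Wn_j)^4)$, which I establish using the explicit formula $\bpg(a,g) = 1 \wedge \exp(ag/(\sigma\sqrt{\gamma_n}) - a^2/(2\sigma^2\gamma_n))$ and the change of variables $g \mapsto 2u_j - g$, with $u_j = a_j/(2\sigma\sqrt{\gamma_n})$ and $a_j = \tau_{\gamma_n}(\Wn_j) + \gamma_n \InftyBound$. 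This change of variables reveals that the deviation of $\mathbb{E}[(\Wn_{j+1})^4 \mid \mathcal{F}_j]$ from the full Gaussian integral $\int (a_j - 2\sigma\sqrt{\gamma_n}g)^4 \varphibf(g)\rmd g = a_j^4 + 24\sigma^2\gamma_n a_j^2 + 48\sigma^4\gamma_n^2$ equals $2\int_{u_j}^{+\infty}(2\sigma\sqrt{\gamma_n}g - a_j)^4\varphibf(g)\rmd g$, which is $O(\gamma_n^2)$ uniformly by standard Gaussian tail estimates. Combined with the elementary expansion $a_j^4 - (\Wn_j)^4 = O(\gamma_n)(1 + (\Wn_j)^4)$, this gives the claim, whence $\mathbb{E}[\sup_{k \leq N_n}|A^{(n)}_k|] \leq c T(1 + m^{(n)}_{2,T})$ with $N_n = \floor{T/\gamma_n}$.

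For the martingale part, Doob's $L^2$ maximal inequality gives $\mathbb{E}[\sup_{k \leq N_n}(M^{(n)}_k)^2] \leq 4\mathbb{E}[\langle M^{(n)}\rangle_{N_n}]$. I control $\mathbb{E}[\Delta_j^2 \mid \mathcal{F}_j] \leq c\gamma_n(1 + (\Wn_j)^6)$ by splitting on the reset event: on its complement, $\Delta_j$ is of order $\sqrt{\gamma_n}$ times a polynomial in $\Wn_j$ and $G_{j+1}$, while on the reset event $\Delta_j^2 = (\Wn_j)^8$, but the reset probability $2\Phibf(-u_j)$ combines with $(\Wn_j)^8$ via the elementary estimate $w^q \exp(-w^2/(8\sigma^2\gamma_n)) \leq C_q \gamma_n^{q/2}$ to produce an $O(\gamma_n^4)$ contribution. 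Summing over $j < N_n$ and invoking the first step with $p = 3$ gives $\mathbb{E}[\langle M^{(n)}\rangle_{N_n}] \leq C_T$. Since by convexity $(\Wbfn_t)^4 \leq \max((\Wn_{\floor{t/\gamma_n}})^4, (\Wn_{\ceil{t/\gamma_n}})^4)$, one concludes $\mathbb{E}[\sup_{t \in [0,T]}(\Wbfn_t)^4] \leq C_T$ uniformly in $n$. The main technical hurdle is the $O(\gamma_n)$ bound on the predictable increment: the reset probability is not uniformly small in $\Wn_j$, so the required cancellation between the drift contribution and the ``local time at $0$'' produced by the resets is only made visible by the change-of-variables symmetrisation described above.
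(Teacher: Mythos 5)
Your plan is correct, but it takes a genuinely different route from the paper. The paper reduces everything to \Cref{lem:moment_four_iterates_conti_limit}: it decomposes $W_\ell-W_0=A_\ell+B_\ell$, where $B_\ell$ sums the drift terms $\tau_\gamma^\infty(W_i)-W_i$ (identified as the conditional mean via \Cref{lem:norm_1_eq}) and $A_\ell$ sums martingale increments of $W$ itself; the drift part is handled by Young's inequality and the martingale part by Burkholder's inequality combined with \Cref{lem:moment_martinga_incre_proof_cont}, which shows each increment is $O(\gamma^{1/2})$ in $\mrl^q$, while the fixed-time fourth-moment bound (\Cref{lem:moment_four_iterates_conti_limit_0}) closes the estimate. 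You instead apply a Doob decomposition directly to $(\Wn_k)^4$, prove the predictable increment is $O(\gamma_n)(1+(\Wn_j)^4)$ through the symmetrisation identity for $\bpg$ (your computation that the deviation from the full Gaussian integral equals $2\int_{u_j}^{+\infty}(2\sigma\sqrt{\gamma_n}g-a_j)^4\varphibf(g)\,\rmd g=O(\gamma_n^2)$ uniformly is correct, and is the same cancellation the paper exploits in \Cref{lem:moment_martinga_incre_proof_cont_quad_approx}), and then control the martingale part by Doob's $\mrl^2$ maximal inequality; the price is that you need uniform sixth and eighth moments at fixed times, which your general $\mrl^{2p}$ Gronwall step supplies. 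The paper's route needs only fourth moments and yields the more structured bound $C(k\gamma)^2\rme^{Ck\gamma}\{\mathbb{E}[W_0^4]+1\}$, which is reused in the tightness proof (\Cref{propo:tight_convergence_continuous}); your route is self-contained for the present proposition but gives only a $T$-dependent constant.

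Two small repairs, neither a genuine gap. First, your intermediate bound $\mathbb{E}[\Delta_j^2\mid\mathcal{F}_j]\leq c\gamma_n(1+(\Wn_j)^6)$ is not quite right: squaring the term $4(\Wn_j)^3\gamma_n\kappa(\Wn_j)$ produces a contribution of order $\gamma_n^2(\Wn_j)^8$, which is not dominated by $\gamma_n(1+(\Wn_j)^6)$ for large $\Wn_j$; this is harmless because, after summing over $j\leq T/\gamma_n$ and taking expectations, it contributes $O(T\gamma_n)$ times the uniform eighth moment, which your first step with $p=4$ already provides. Second, on the reset event you compare the tail exponent to $\Wn_j$, whereas the coalescence probability involves $a_j=\tau_{\gamma_n}(\Wn_j)+\gamma_n\InftyBound\geq(1-\gamma_n\Ltt_{\kappa})\Wn_j$; the estimate therefore requires $\gamma_n\Ltt_{\kappa}$ bounded away from $1$, which holds for all but finitely many $n$ since $\gamma_n\to0$, and the finitely many remaining indices are covered by the fixed-$n$ finiteness coming from your first step.
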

\begin{proof}
The proof is postponed to \Cref{sec:proof-crefpr_moment_cont}. 
\end{proof}
  
Then, we are able to obtain the tightness of the sequence of stochastic processes \sloppy$\{(\Wnbf_t)_{t \geq 0}  \, : \, n \in \nset\}$. 
\begin{proposition}
  \label{propo:tight_convergence_continuous}
  Assume \Cref{ass:sticky_cont}. Then,  $\{(\Wbfn_t)_{t \geq 0} \, : \, n \in \nset\}$ is tight in $\wiener$. 
\end{proposition}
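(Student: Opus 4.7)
The approach is Kolmogorov's tightness criterion. Tightness in $\wiener$ with the topology of uniform convergence on compacts reduces to tightness in $C([0,T],\rset)$ for every $T > 0$; since $\Wbfn_0 = w_0$ is deterministic, it suffices to establish uniformly in $n$ the moment bound
\begin{equation*}
\expe{\abs{\Wbfn_t - \Wbfn_s}^4} \leq K_T\abs{t-s}^2, \qquad s,t \in [0,T].
\end{equation*}
The piecewise-linear construction of $\Wbfn$ makes the case $\abs{t-s} \leq \gamma_n$ follow from one-step moment estimates (and the fact that $(\abs{t-s}/\gamma_n)^2 \leq 1$), so the main work is to control $\expe{\abs{W^n_k - W^n_j}^4}$ at grid points $s = j\gamma_n$, $t = k\gamma_n$. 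I would use the Doob decomposition $W^n_k = W^n_0 + M^n_k + A^n_k$, with $M^n$ a martingale and $A^n_k = \sum_{i=0}^{k-1}D_i$, $D_i = \expe{W^n_{i+1} - W^n_i \mid \mathcal{F}^n_i}$, and treat the drift and martingale parts separately.

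The heart of the proof is an exact identity for the drift,
\begin{equation*}
D_i \ = \ \gamma_n\bigl(\kappa(W^n_i) + \InftyBound\bigr).
\end{equation*}
Under \Cref{ass:sticky_cont} one has $a_i := \tau_{\gamma_n}(W^n_i) + \gamma_n \InftyBound = W^n_i + \gamma_n(\kappa(W^n_i) + \InftyBound)$. Integrating out $U_{i+1}$ in the definition of $\mathscr{H}_{\gamma_n}$ and using the identity $\bpg(a,g)\varphibf(g) = \sigma\sqrt{\gamma_n}\min\{\varphibf_{\sigma^2\gamma_n}(u), \varphibf_{\sigma^2\gamma_n}(a-u)\}$ after the substitution $u = \sigma\sqrt{\gamma_n}g$, one computes $\int_\rset g\,\bpg(a_i,g)\varphibf(g)\,\rmd g = a_i p_i/(2\sigma\sqrt{\gamma_n})$ where $p_i = \int_\rset \bpg(a_i,g)\varphibf(g)\,\rmd g$; this makes $\int_\rset(a_i - 2\sigma\sqrt{\gamma_n}g)\bpg(a_i,g)\varphibf(g)\,\rmd g$ collapse to zero, hence $\expe{W^n_{i+1}\mid\mathcal{F}^n_i} = a_i$. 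A more conceptual way to see this is that $W^n_{i+1}$ may be realized as the difference $Y - X$ of two Gaussian random variables $X \sim \loiGauss(0,\sigma^2\gamma_n)$, $Y \sim \loiGauss(a_i,\sigma^2\gamma_n)$ coupled so that $Y \geq X$ almost surely (on the merge event $Y = X$, on the non-merge event $Y > X$ by construction of the reflection), whence $\expe{W^n_{i+1}\mid\mathcal{F}^n_i} = \expe{Y} - \expe{X} = a_i$.

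With this identity at hand the drift is easy to control: $\abs{A^n_k - A^n_j} \leq (t-s)(\Ltt_\kappa \sup_\ell W^n_\ell + \InftyBound)$, so by \Cref{propo:moment_conv_continuous}, $\expe{\abs{A^n_k - A^n_j}^4} \leq C_T T^2 (t-s)^2$. For the martingale I would invoke the discrete Burkholder--Davis--Gundy inequality, combined with the conditional moment bounds $\expe{(\Delta M_i)^2 \mid \mathcal{F}^n_i} \leq C\gamma_n$ and $\expe{(\Delta M_i)^4 \mid \mathcal{F}^n_i} \leq C\gamma_n^2(1 + (W^n_i)^4)$ obtained by analogous Gaussian moment computations, together with the Mills-ratio estimate $p_i \leq 2\Phibf(-a_i/(2\sigma\sqrt{\gamma_n}))$, which ensures that the reset contributions $p_i(W^n_i)^{2m}$ remain uniformly bounded by $C\gamma_n^m$. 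This yields $\expe{\abs{M^n_k - M^n_j}^4} \leq C_T(t-s)^2$ and closes the Kolmogorov estimate.

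The main obstacle is the drift identity $D_i = \gamma_n(\kappa(W^n_i) + \InftyBound)$: without the cancellation, the individual components of $D_i$ are only of order $\sqrt{\gamma_n}$ and would accumulate over the $(t-s)/\gamma_n$ steps into a contribution diverging as $\gamma_n \to 0$. The exact cancellation, which is a direct consequence of the marginal-preservation property of the maximum reflection coupling, is precisely what makes the Kolmogorov bound scale correctly in $\gamma_n$.
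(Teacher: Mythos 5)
Your proposal is correct and follows essentially the same route as the paper: Kolmogorov's moment criterion with $\expe{\abs{\Wbfn_t-\Wbfn_s}^4}\leq C_T(t-s)^2$, reduction to grid points, and a martingale-plus-drift decomposition in which the exact conditional-mean identity $\CPELigne{W^{(n)}_{i+1}}{\mcf_i^{(n)}}=\tau_{\gamma_n}(W^{(n)}_i)+\gamma_n\InftyBound$ (the paper's \Cref{lem:norm_1_eq}, your symmetry/coupling cancellation) controls the drift, while Burkholder together with Gaussian increment-moment bounds (the paper's \Cref{lem:moment_martinga_incre_proof_cont}) controls the martingale part, exactly as in \Cref{lem:moment_four_iterates_conti_limit} and \Cref{lem:moment_four_iterates_conti_limit_0}. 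The only cosmetic difference is that you invoke \Cref{propo:moment_conv_continuous} for the fourth-moment bound where the paper cites its underlying lemmas directly, which introduces no circularity.
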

\begin{proof}
  The proof is postponed to \Cref{sec:proof_tight_cont}.
\end{proof}

Denote for any $n \in \nset$, $\mubf_n$ the distribution of $(\Wnbf_t)_{t \geq 0}$ on $\wiener$.
Then, by Prohorov's Theorem \cite[Theorem 5.1,5.2]{billingsley:1999},
$(\mubf_n)_{n \in \nset}$ admits a limit
point.  If we now show that every limit point associated with
$\{(\Wbfn_t)_{t \geq 0} \, : \, n \in \nset\}$ is a solution of the SDE \eqref{eq:def_sticky_cont} using
that $\{(\Wbfn_t)_{t \geq 0} \, : \, n \in \nset\}$ is tight
again and since \eqref{eq:def_sticky_cont}  admits a unique weak solution, the proof of \Cref{theo:continuous_limit} will be  completed. To establish this result, we use the characterization of solutions of SDEs through martingale problems. More precisely by \cite[Theorem 1.27]{cherny:engelbert:2005}, the
 distribution $\mubf$ on $\wiener$ of $(\Wbf_t)_{t \geq 0}$, solution of \eqref{eq:def_sticky_cont}, is the
 unique solution to the martingale problem associated with $\mubf_0$,
 the drift function $w \mapsto \kappa (w) + \InftyBound$ and the
 variance function $2 \sigma \1_{(0,+\infty)}$, \ie~it is the unique probability
 measure satisfying on the filtered probability space 
 $(\wiener,\wienersigma, (\wienersigma_t)_{t \geq 0}, \mubf)$:
 \begin{enumerate}[wide, labelwidth=!, labelindent=0pt,label=(\alph*)]
 \item  the
 distribution of $\Wrm_0$ is $\mubf_0$;
\item the processes $(\Mrm_t)_{t\geq 0}$, $(\Nrm_t)_{t \geq 0}$ defined for any $t \geq 0$ by
  \begin{equation}
    \label{eq:def_Mrm_Nrm}
    \Mrm_t = \Wrm_t - \Wrm_0 - \int_{0}^{t} \{ \InftyBound + \kappa(\Wrm_u)\} \rmd u \eqsp, \qquad 
    \Nrm_t = \Mrm_t^2 - 4 \sigma^2 \int_{0}^t \1_{(0,+\infty)}(\Wrm_u) \rmd u \eqsp,
  \end{equation}
are $(\wienersigma_t)_{t \geq 0}$-local martingales. 
\end{enumerate}
In other words, it corresponds in showing that $(\Mrm_t)_{t \geq 0}$ is a $(\wienersigma_t)_{t \geq 0}$-local martingales and by \cite[Theorem 1.8]{revuz:yor:1994} identifying its quadratic variation $(\qvar{\rmM}_t)_{t \geq 0}$ as the process \sloppy$(4 \sigma^2 \int_{0}^t \1_{(0,+\infty)}(\Wrm_u) \rmd u)_{t \geq 0}$. 
Therefore, \Cref{theo:continuous_limit} is a direct consequence of the following result.

\begin{theorem}
  \label{theo:martingale_prob_martinM_N}
Assume \Cref{ass:sticky_cont}. Let $\mubf_{\infty}$ be a limit point of $(\mubf_n)_{n\in\nset}$. Then, the two processes $(\Mrm_t)_{t \geq 0}$ and $(\Nrm_t)_{t \geq 0}$ defined by \eqref{eq:def_Mrm_Nrm} are $(\wienersigma_t)_{t \geq 0}$-martingales on  $(\wiener,\wienersigma, (\wienersigma_t)_{t \geq 0}, \mubf_{\infty})$. 
\end{theorem}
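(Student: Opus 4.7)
The plan is to pass to the limit in suitable discrete-time martingales associated with the chains $(W_k^{(n)})_{k\in\nset}$. By \Cref{propo:tight_convergence_continuous} and Skorokhod's representation theorem, we may assume, along a subsequence, that on a common probability space $\Wbfn\to\Wbf$ almost surely in $\wiener$ with $\Wbf\sim\mubf_\infty$; the uniform $L^4$ bound of \Cref{propo:moment_conv_continuous} provides the uniform integrability needed at each passage to the limit.

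For the first martingale, a direct Gaussian computation based on the explicit form \eqref{def_q_gamma} of $Q_{\gamma_n}$, summing the contributions of the rejection branch (which contributes $0$) and of the accepted reflected-Gaussian branch, yields the exact identity $\mathbb{E}[W^{(n)}_{k+1}-W^{(n)}_k\mid\mathcal{F}_k]=\gamma_n(\kappa(W^{(n)}_k)+\InftyBound)$ under \Cref{ass:sticky_cont}. Hence the piecewise-linear interpolation $\check M^{(n)}_t$ of the discrete $(\mathcal{F}_k)$-martingale $M^{(n)}_k:=W^{(n)}_k-W^{(n)}_0-\gamma_n\sum_{j=0}^{k-1}(\kappa(W^{(n)}_j)+\InftyBound)$ converges almost surely to $\Mrm_t$ by the Lipschitz continuity of $\kappa$ combined with the uniform convergence of $\Wbfn$, and for any $0\leq s_1<\cdots<s_p\leq s<t$ and bounded continuous $F$ the identity $\mathbb{E}[(\check M^{(n)}_t-\check M^{(n)}_s)F(\Wbfn_{s_1},\ldots,\Wbfn_{s_p})]=0$ passes to the limit, showing that $(\Mrm_t)_{t\geq 0}$ is an $(\wienersigma_t)_{t\geq 0}$-martingale under $\mubf_\infty$.

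For the second martingale, a similar but more involved Gaussian computation produces
\[
\text{Var}(W^{(n)}_{k+1}\mid W^{(n)}_k=w)=4\sigma^2\gamma_n\,\psi\!\left(\frac{\tau_{\gamma_n}(w)+\gamma_n\InftyBound}{2\sigma\sqrt{\gamma_n}}\right),
\]
where $\psi(\alpha):=2\Phi(\alpha)-1+2\alpha\varphibf(\alpha)-2\alpha^2\Phi(-\alpha)$ is bounded by $1$, non-decreasing on $[0,+\infty)$, and satisfies $\psi(0)=0$ and $\lim_{\alpha\to+\infty}\psi(\alpha)=1$. Consequently $(M^{(n)}_k)^2-\sum_{j<k}\text{Var}(W^{(n)}_{j+1}\mid\mathcal{F}_j)$ is a discrete martingale; combined with the almost sure convergence $(\check M^{(n)}_t)^2\to\Mrm_t^2$ (with uniform integrability again provided by the $L^4$ bound), the martingale property of $\Nrm_t$ reduces to the convergence in probability
\[
\int_0^t \chi(\Wbfn_u,\gamma_n)\,\rmd u \ \longrightarrow \ \int_0^t\1_{(0,+\infty)}(\Wbf_u)\,\rmd u, \qquad \chi(w,\gamma):=\psi\!\left(\frac{\tau_\gamma(w)+\gamma\InftyBound}{2\sigma\sqrt{\gamma}}\right).
\]

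This last convergence is the main obstacle, since $\1_{(0,+\infty)}$ is discontinuous at $0$ while the sticky limit $\Wbf$ spends positive Lebesgue time at $0$. Decomposing the integral along $\{\Wbf_u>0\}$ versus $\{\Wbf_u=0\}$: on the first set, uniform convergence of $\Wbfn$ keeps $\Wbfn_u$ bounded away from $0$, so $\chi(\Wbfn_u,\gamma_n)\to 1$ uniformly there; on the second set, the key structural point is that the discrete chain admits a positive-probability atom at $0$ by construction (cf.\ \eqref{eq:def_r}), so the stuck phases of $\Wbf$ are approximated by long runs of consecutive steps where $W^{(n)}_k=0$ exactly, and since $\chi(0,\gamma_n)=\psi(\InftyBound\sqrt{\gamma_n}/(2\sigma))\to 0$ a dominated convergence argument---with the transition phases having asymptotically negligible Lebesgue measure---closes the claim. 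Once this identification of the quadratic variation is in hand, one concludes as in the first step that $(\Nrm_t)_{t\geq 0}$ is an $(\wienersigma_t)_{t\geq 0}$-martingale.
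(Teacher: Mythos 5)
Your treatment of $(\Mrm_t)_{t\geq 0}$ is fine and essentially parallel to the paper's: the exact identity $\CPELigne{W^{(n)}_{k+1}}{\mcf_k}=\tau_{\gamma_n}(W^{(n)}_k)+\gamma_n\InftyBound$ is Lemma~\ref{lem:norm_1_eq}, and passing to the limit with the $\mrl^4$ bound of \Cref{propo:moment_conv_continuous} is the same mechanism the paper runs through \Cref{propo:convergence_to_mart_prob} with $\varphi_1(w)=w$.

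The second part, however, has a genuine gap exactly at the point you compress into one sentence: the convergence in probability of $\int_0^t \chi(\Wbfn_u,\gamma_n)\,\rmd u$ to $\int_0^t \1_{(0,+\infty)}(\Wbf_u)\,\rmd u$. On $\{\Wbf_u>0\}$ your argument (uniform convergence plus Fatou) only yields the one-sided bound $\liminf_n\int_0^t\chi(\Wbfn_u,\gamma_n)\,\rmd u\geq\int_0^t\1_{(0,+\infty)}(\Wbf_u)\,\rmd u$; this is the ``easy'' direction, corresponding in the paper to the lower semicontinuity/Portmanteau step that gives the submartingale property. The reverse inequality is the crux, and your justification does not hold up. Uniform convergence of $\Wbfn$ to $\Wbf$ gives no information about how much Lebesgue time $\Wbfn$ spends in a band $(0,\varepsilon)$ during an interval where $\Wbf\equiv 0$; and the chain does not sit at $0$ in ``long runs'': since $Q_{\gamma_n}(0,\{0\})=2\Phibf(-\InftyBound\sqrt{\gamma_n}/(2\sigma))=1-O(\sqrt{\gamma_n})$, it leaves $0$ on the order of $\gamma_n^{-1/2}$ times per unit of time, making excursions of spatial scale $\sqrt{\gamma_n}$ during which $\chi(w,\gamma_n)=\psi\bigl((\tau_{\gamma_n}(w)+\gamma_n\InftyBound)/(2\sigma\sqrt{\gamma_n})\bigr)$ is of order one. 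Showing that the total duration of these ``transition phases'' is asymptotically negligible is not a dominated-convergence matter — it is essentially equivalent to the identification of the quadratic variation that you are trying to prove, and no domination or quantitative excursion estimate is offered.

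This is precisely why the paper does not attempt the direct identification. It proves the two inequalities separately: the submartingale direction via \Cref{propo:convergence_to_mart_prob}-\eqref{eq:propo:convergence_to_mart_prob_1} and semicontinuity of $\omega\mapsto\int_s^t\1_{(0,+\infty)}(\omega_u)\,\rmd u$, and the supermartingale direction indirectly, using the auxiliary one-sided bound \eqref{eq:propo:convergence_to_mart_prob_2} with the non-degenerate generator $\tilde{\generator}$ (valid since $\varphi_2''\geq 0$), which shows that $(\rmM_t^2-4\sigma^2 t)_{t\geq 0}$ is a supermartingale under $\mubf_\infty$; Doob--Meyer then gives that $t\mapsto\qvar{\rmM}_t-4\sigma^2 t$ is nonincreasing (\Cref{lem:second_part_proof_cont_theo_main}), and the occupation-times/local-time identity \eqref{eq:37} shows that $\qvar{\rmM}$ does not charge $\{\rmW=0\}$, which together yield $\qvar{\rmM}_t-\qvar{\rmM}_s\leq 4\sigma^2\int_s^t\1_{(0,+\infty)}(\rmW_u)\,\rmd u$. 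To repair your proof you would either have to reproduce an argument of this type (or the Racz--Shkolnikov scheme the paper adapts), or supply a genuine quantitative bound on the occupation time of $\Wbfn$ in $(0,\varepsilon\sqrt{\gamma_n}^{\,0})\setminus\{0\}$ — neither of which is contained in the current sketch.
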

\begin{proof}
  The proof is postponed to \Cref{sec:proof_theo:martingale_prob_martinM_N}.
\end{proof}

%%% Local Variables:
%%% mode: latex
%%% TeX-master: "main_imsart"
%%% End:

% !TeX root = main.tex
\section{An application in Bayesian statistics: parameter estimation in an ODE}
\label{sec:application}

\subsection{Setting and verifying the assumptions}
Consider an ordinary differential equation ODE on $\R^n$ of the form
\begin{equation}\label{eq:EDOappli}
  \dot x_{\theta}(t) \ = \ f_{\theta}(x_{\theta}(t),t)\,, \qquad x_{\theta}(0)=x_0 \in \R^n\,,
\end{equation}
where $\{f_{\theta} \, : \, \theta \in \rset^d\}$ is a family of function from $\R^n \times [0,+\infty)$ to $ \R^n$ parametrized by some parameter $\theta \in \R^d$. In all this section $x_0 \in \rset^n $ is assumed to be fixed  and we consider the following assumption.
\begin{assumptionAO}\label{ass:ODE}
For all $\theta\in\R^d$ there exists a unique solution of \eqref{eq:EDOappli} defined for all positive times, which we denote  by $(x_{\theta}(t))_{t\geqslant 0}$. In addition, the functions  $(\theta,x,t) \in \R^d\times\R^n\times[0,+\infty) \mapsto f_{\theta}(x,t)$ and $(\theta,t) \in \R^d\times[0,+\infty) \mapsto x_{\theta}(t)$  are continuously differentiable. 
  %For $T>0$, there exist a compact $\Omega_T$ of $\R^n$ such that for all $\theta\in\R^d$ and $t\in[0,T]$, $x_\theta(t) \in \Omega_T$.
 \end{assumptionAO}
 In fact the continuous differentiability of $(\theta,t)   \mapsto x_{\theta}(t)$ is a consequence of the one of 
 $(\theta,x,t) \mapsto f_{\theta}(x,t)$, see \eg~\cite[Theorem 4.D]{zeidler:1986}.
 
To fix ideas, throughout this section, we will repeatedly discuss the following case of a logistic equation.
\begin{example}\label{exemple:logistique}
For $r\in \mathrm C^1(\R,\R_+)$, set $f_\theta(x)=x(1-r(\theta)x)$ for any $\theta,x\in\R$, so that \eqref{eq:EDOappli} reads
\[\dot x_{\theta}(t) \ = \ x_{\theta}(t) \po 1 - r(\theta) x_{\theta}(t)\pf\eqsp, \qquad x_{\theta}(0)=x_0\,,\]
with $x_0\geqslant 0$.
In this example, \Cref{ass:ODE} holds and,  $r$ and $x_0$ being positive, for all $\theta\in\R$, the solution of \eqref{eq:EDOappli} is such that $x_\theta(t) \in [0,\rme^t x_0]$ for all $t\geqslant 0$. Indeed, $x \mapsto x(1-r(\theta)x)$ is locally Lipschitz continuous, which yields  existence and uniqueness of a maximal solution. Since $0$ is always an equilibrium, solutions stay positive, from which $\tcr{\dot x_{\theta}}(t) \leqslant x_{\theta}(t)$ for all $t\geqslant 0$, implying that $x_{\theta}(t)\leqslant e^t x_0$ for all $t\geqslant 0$. This also implies non-explosion, hence the solution is defined on $[0,+\infty)$.
\end{example}

 We consider the problem of estimating $\theta$ based on some observation of a trajectory of the ODE. 
 More precisely, for $T>0$, $N\in\N^*$, $(t_1,\ldots,t_N) \in \rset^N$, $0 < t_1<\dots<t_N=T$, the statistical model corresponding to the observation $\mathbf{y}=(y_i)_{i\in\{1,\ldots,N\}}\in(\R^n)^N$ is given by
 \begin{equation}
   \label{eq:stat_model_ODE_I}
   y_i \ = \ x_{\theta}(t_i) + \varepsilon_i \eqsp,
 \end{equation}
for $\theta \in \R^d$ and  where $(\varepsilon_i)_{i\in\{1,\ldots,N\}}$ are \tcr{\iid}~random variables  on $\R^n$ distributed according to some known positive density $\varphi_{\varepsilon}$ with respect to the Lebesgue measure. Given a prior distribution with positive density $\pi_0$ on $\R^d$, the a posteriori distribution for this model admits a positive density $\pi$ with respect to the Lebesgue measure which is characterized by the potential $U$ given by (up to an additive constant)
\[
  -\log\pi  (\theta) = U(\theta)  = - \ln \pi_0(\theta) -  \sum_{i=1}^N \ln \varphi_\varepsilon\po y_i - x_{\theta}(t_i)\pf \eqsp. 
\]
% up to a non-relevant additive constant.
We consider the following assumption on $\pi_0$ and $\varphi_{\varepsilon}$ setting $-\log(\pi_0) = U_0$.
\begin{assumptionAO}\label{ass:ODEbounded2}
  The functions $\pi_0$ and $\varphi_{\varepsilon}$ are twice continuously differentiable and there exist
  $\mtt_U >0$, $\Ltt_U,R_U\geq 0$ such that $\na U_0$ is $\Ltt_U$-Lipschitz continuous and  for any $\theta,\tilde\theta\in\R^d$ with $\normLigne{\theta-\tilde\theta}\geqslant R_U$,
\[\psLigne{\theta-\tilde\theta}{\na U_0(\theta)-\na U_0(\tilde\theta)} \ \geqslant \ \mtt_U \normLigne{\theta-\tilde\theta}^2\,.\]
%There exist $\bar h>0$ and $\Omega\subset \R^b\times\mathcal M_{d,n}(\R)$ such that for all $i\in\{1,\ldots,N\}$, $\theta\in\R^d$ and $h\in(0,\bar h)$, $z_\theta(t_i) ,\Phi_i^h(\theta)\in\Omega$. There exist $\Ltt_F,\Ltt_F'>0$ such that for all $z,\tilde z\in\Omega$, $t\geqslant 0$, $\theta,\tilde\theta\in\R^d$,
%\begin{equation}\label{eq:FdemoODE}
%\norm{F_{\theta}(z,t) - F_{\tilde \theta}(\tilde z,t)} \ \leqslant \ \Ltt_F  \norm{\theta-\tilde\theta} + \Ltt_F'\norm{z-\tilde z}\,.
%\end{equation} 
%Moreover, $\ln \varphi_\varepsilon$ is $\mathrm C^2$.
\end{assumptionAO}

In practice, expectations with respect to the posterior distribution can be approximated by ergodic means of the Unadjusted Langevin Algorithm (ULA), namely the Markov chain 
\begin{equation}
  \label{eq:def_ULA_ODE}
  X_{k+1} \ = \ X_k-\gamma \na U(X_k) + \sqrt{2\gamma} Z_{k+1} \eqsp,
\end{equation}
where $\gamma>0$ and $(Z_k)_{k\in\N}$  are \tcr{\iid}~standard Gaussian variables. The long-time convergence of this algorithm and the \tcr{asymptotic} bias on the invariant measure due to the time discretization are well understood, see e.g. \cite{dalalyan2017theoretical,durmus2017nonasymp,durmus2019high,dalalyan2019user,debortoli2019souk,durmus2021asymptotic} and references therein. However, in the present case, it is not possible to sample this Markov chain, as the  exact computation of 
\begin{equation}
  \label{eq:def_poten_ODE_U}
  \nabla U(\theta) \ = \ - \nabla_{\theta} \ln \pi_0(\theta) +  \sum_{i=1}^N \nabla_{\theta} x_{\theta}(t_i)  \nabla_x \ln \varphi_\varepsilon\po y_i - x_{\theta}(t_i)\pf \eqsp,
\end{equation}
is not possible in most cases because of the term involving $x_{\theta}$ and  $\nabla_{\theta} x_{\theta}$. Here $\nabla_{\theta}$ and $\nabla_x$ denote the gradient operator with respect to $\theta$ and $x$ respectively.
Therefore, only approximations of these two functions can be used in place of $( x_{\theta}(t_i) ,\nabla_{\theta} x_{\theta}(t_i) )_{i\in\cco 0,N\ccf}$, which leads to an additional discretization bias. Our results based on the sticky coupling yields a quantitative bound on this error (with respect to the ideal ULA above). Let us detail this statement.

First,  remark that $t \mapsto z_\theta(t)=( x_{\theta}(t) ,\nabla_{\theta} x_{\theta}(t) )$ solves
\begin{equation}\label{ed:ODEz}
\dot z_\theta(t) \ = \ F_{\theta}(z_\theta(t),t)\qquad z_\theta(0)=z_0=(x_0,0)
\end{equation} 
on $\R^n\times\mathcal M_{d,n}(\R)$   with for any $x \in \rset^n$, $\mathbf{A} \in \mathcal M_{d,n}(\R)$, $\theta \in \rset^d$, $t\geq 0$,
\begin{equation}
\label{eq:def_F_theta}
F_{\theta}((x,\mathbf{A}),t) \ = \ \po f_\theta (x,t), \nabla_\theta f_{\theta}(x,t) + \mathbf{A}  \na_x f_{\theta}(x,t)\pf\,.
\end{equation}
Provided $f_\theta$, $\na_\theta f_\theta$ and $\na_x f_\theta$ are computable, in practice this ODE can be approximated by standard numerical schemes. For instance, a basic explicit Euler discretization with time-step $h>0$ is given by
\begin{equation}
\tilde z_\theta^h(0)=z_0\,,\qquad \tilde z_\theta^h((k+1)h) \  = \ \tilde z_\theta^h(kh) + h F_{\theta}\po\tilde z_\theta^h(kh),kh\pf\qquad \forall k\in\N
\end{equation}
and
\begin{equation}\label{eq:EulerEDO}
\tilde z_\theta^h(t) = \tilde z_\theta^h(kh) + (t-kh) F_{\theta}\po\tilde z_\theta^h(kh),kh\pf\,,\qquad t\in [kh,(k+1)h) \eqsp.
\end{equation} 
 To establish the consistency of this approximation (with some uniformity in $\theta$), we consider the following condition.
\begin{assumptionAO}\label{ass:ODEbounded}
  There exist $\Ltt_F,\Ltt_F',\mathtt{C}_F,\delta>0$ and a compact set $\msk\subset \R^n\times\mathcal M_{d,n}(\R)$  such that the following holds.   For all $t\in[0,T]$ and $\theta\in\R^d$, the ball centered at $ z_\theta(t)$ and radius $\delta$ is included in $\msk$. Moreover, for all $z,\tilde z\in\msk$, $t,s\in[0,T]$ and $\theta,\tilde\theta\in\R^d$, $\norm{F_\theta(z,t)}\leqslant \mathtt \mathtt{C}_F$ and
\begin{equation}\label{eq:FdemoODE}
\norm{F_{\theta}(z,t) - F_{\tilde \theta}(\tilde z,s)} \ \leqslant \ \Ltt_F  \normLigne{\theta-\tilde\theta} + \Ltt_F'\po \normLigne{z-\tilde z}+|t-s|\pf\,.
\end{equation} 
\end{assumptionAO}
\begin{proposition}\label{prop:Euler}
  Assume \Cref{ass:ODE} and  \Cref{ass:ODEbounded}. There exist $\bar h,C>0$ such that for all $h\in(0,\bar h]$, $\theta\in \R^d$, and $t \in \ccint{0,T}$,
   $\tilde z_\theta^h(t)\in\msk$ and $\sum_{i=1}^N\norm{z_\theta(t_i) - \tilde z_\theta^h(t_i)} \ \leqslant \ C h$,
where $z_\theta$ solves \eqref{ed:ODEz} and $\tilde z_\theta^h$ is given by \eqref{eq:EulerEDO}.
\end{proposition}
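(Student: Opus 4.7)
The plan is to carry out the classical Euler error analysis, with a bootstrap argument coupling the error bound and the fact that the numerical trajectory stays in $\msk$, since \Cref{ass:ODEbounded} only provides uniform bounds on $F_\theta$ and its Lipschitz constant \emph{inside} $\msk$.

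First, I would set $e_k = \tilde z_\theta^h(kh) - z_\theta(kh)$ and proceed by induction on $k \in \{0,\ldots,\lfloor T/h\rfloor\}$, establishing simultaneously that $\tilde z_\theta^h(t)\in\msk$ for all $t \in [0,kh]$ and that $\|e_k\| \leq Kh$ for a constant $K$ independent of $\theta$ and $h$. The base case is immediate since $e_0=0$ and $z_0 \in \msk$ (being at distance $0 \leq \delta$ from $z_\theta(0)=z_0$). For the inductive step, subtracting the Euler update from the integrated exact equation $z_\theta((k+1)h) = z_\theta(kh) + \int_{kh}^{(k+1)h} F_\theta(z_\theta(s),s)\,\rmd s$ gives
\[
e_{k+1} \ = \ e_k + \int_{kh}^{(k+1)h}\bigl[ F_\theta(\tilde z_\theta^h(kh),kh) - F_\theta(z_\theta(s),s)\bigr]\,\rmd s\,.
\]
Since $z_\theta$ remains in $\msk$ on $[0,T]$ by \Cref{ass:ODEbounded} (the $\delta$-ball around $z_\theta(t)$ lies in $\msk$), one has $\|z_\theta(s)-z_\theta(kh)\| \leq \mathtt{C}_F(s-kh)$. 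Combined with \eqref{eq:FdemoODE} applied with $\tilde\theta = \theta$ and valid since both $\tilde z_\theta^h(kh)$ and $z_\theta(s)$ lie in $\msk$, this yields
\[
\|e_{k+1}\| \ \leq \ (1+h\Ltt_F')\|e_k\| + \Ltt_F'(1+\mathtt{C}_F)h^2\,.
\]

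Next, a discrete Gronwall inequality together with $e_0=0$ gives $\|e_k\| \leq (1+\mathtt{C}_F)(\mathrm{e}^{T\Ltt_F'}-1)h =: Kh$ uniformly in $\theta$ and $k\leq \lfloor T/h\rfloor$. To close the bootstrap, I would bound the error at arbitrary $t\in[kh,(k+1)h]$ via the triangle inequality using $\|\tilde z_\theta^h(t) - \tilde z_\theta^h(kh)\| \leq \mathtt{C}_F h$ (which uses that $\tilde z_\theta^h(kh)\in\msk$) and $\|z_\theta(t) - z_\theta(kh)\| \leq \mathtt{C}_F h$, obtaining
\[
\|\tilde z_\theta^h(t) - z_\theta(t)\| \ \leq \ \|e_k\| + 2\mathtt{C}_F h \ \leq \ (K + 2\mathtt{C}_F)h\,.
\]
Choosing $\bar h$ small enough so that $(K+2\mathtt{C}_F)\bar h \leq \delta$ then ensures $\tilde z_\theta^h(t)$ belongs to the $\delta$-ball around $z_\theta(t)$, hence to $\msk$, completing the inductive step.

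Finally, applying the same triangle inequality at each observation time $t_i\in[0,T]$ gives $\|\tilde z_\theta^h(t_i)-z_\theta(t_i)\|\leq (K+2\mathtt{C}_F)h$, and summing over $i=1,\ldots,N$ yields the claim with $C = N(K+2\mathtt{C}_F)$. The main subtlety is that the Lipschitz and boundedness estimates on $F_\theta$ are only available on $\msk$, so the containment $\tilde z_\theta^h(t)\in\msk$ and the error bound $\|e_k\|\leq Kh$ must be proved together inside a single induction; separating them would leave a circular dependency.
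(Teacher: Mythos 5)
Your proof is correct and follows essentially the same strategy as the paper: a single bootstrap induction in which the containment $\tilde z_\theta^h \in \msk$ and the $O(h)$ error bound are established jointly, using the Lipschitz bound \eqref{eq:FdemoODE} on $\msk$, the bound $\|z_\theta(t)-z_\theta(s)\|\leq \mathtt{C}_F(t-s)$, a per-step recursion plus (discrete) Grönwall, and a choice of $\bar h$ small enough that the error stays below $\delta$. The only cosmetic difference is that the paper runs the recursion directly on the continuous-time error $f(t)=\|z_\theta(t)-\tilde z_\theta^h(t)\|$, whereas you control grid-point errors and handle off-grid times by a triangle inequality costing an extra $2\mathtt{C}_F h$; both yield the stated conclusion.
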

\begin{proof}
The proof is postponed to \Cref{subsec:postponed-application}.
\end{proof}
\begin{example}[Continuation of \Cref{exemple:logistique}]
  \label{exemple:logistique_2}
  Let us check for instance that  \Cref{ass:ODEbounded} is satisfied for \Cref{exemple:logistique} provided that $r$ is twice continuously differentiable on $[0,+\infty)$ with, for some $\Ltt_r,\Ltt_r',\Ltt_r''>0$,
  \begin{equation}
    \label{eq:condition_r_logistique}
    \text{ $r,r'$ and $r''$  uniformly bounded respectively by $\Ltt_r$, $\Ltt'_r$ and $\Ltt_r''$  \eqsp.}
  \end{equation}
  We may consider for example $r: \theta \mapsto  a_1 \theta^2/(\theta^2+a_2)$ for $a_1,a_2 \in \ooint{0,\plusinfty}$.
   Recall that $f_\theta(x)=x(1-r(\theta)x)$ and $x_\theta(t) \in [0,\rme^tx_0]$ for all $\theta\in\R$ and all $t\in[0,T]$, so that for any $t \geq 0$ and \tcrw{$\theta \in\rset$},
\begin{equation}
  |\partial_\theta f_\theta(x_\theta(t))| = |r'(\theta) x^2_\theta(t)| \leqslant \Ltt_r' \rme^{2t}x_0^2 \eqsp. 
%  \partial_x f_\theta(x_\theta(t))| &= | 1 - 2r(\theta) x_\theta(t)|  \leqslant  1+2\Ltt_r \rme^{t} x_0 \eqsp.
\end{equation}
% for all $t\geqslant 0$. 
 Notice that $1/r(\theta)$ is an equilibrium of the equation, so that it cannot be crossed by other solutions. Hence, on the one hand, if $1 \leqslant r(\theta) x_0$ then $x_\theta$ is non-increasing (in particular $x_\theta(t)\leqslant x_0$ for all $t\geqslant 0$) while, on the other hand, if $1 \geqslant r(\theta) x_0$, then  $1 \geqslant r(\theta) x_\theta(t)$ for all $t\geqslant 0$.   In both cases, we get that for all  $t\geqslant 0$, 
\[|\partial_x f_\theta(x_\theta(t))| = | 1 - 2r(\theta) x_\theta(t)|  \leqslant  1+2\Ltt_r x_0\,. \]
 Combining the two previous bounds,
%   \[\partial_t \norm{\partial_\theta x_\theta(t)}^2 \ \leqslant \ 2 \norm{\partial_\theta x_\theta(t)} \po \Ltt_r/r_*^2 + \norm{\partial_\theta x_\theta(t)}  \pf \]
% Then, we get
\[
  \abs{\partial_\theta x_\theta(t)} \ \leqslant \ \int_0^t \po \Ltt_r' \rme^{2t}x_0^2 + \po 1+2\Ltt_r x_0\pf  \abs{\partial_\theta x_\theta(s)}\pf \dd s \eqsp,
\] 
and thus by Grönwall's inequality, $\abs{\partial_\theta x_\theta(t)}  \leqslant     M_t  = \Ltt_r'  x_0^2t \rme^{ \po 3+2\Ltt_r x_0\pf t} $,
for all $t\in[0,T]$ and $\theta\in\R$. Then, for any $\delta>0$, \Cref{ass:ODEbounded} is satisfied with  $\msk=[-\delta,\rme^Tx_0+\delta]\times[-M_T-\delta,M_T+\delta]$.  Since $\msk$ is compact and as, by \eqref{eq:def_F_theta},
  for any $x \in \tcrw{\rset}$, $\mathbf{A} \in \mathcal M_{\tcrw{1,1}}(\R)$, $\theta \in \tcrw{\rset}$, $t\geq 0$,
 \[F_\theta((x,\mathbf{A}),t) \ = \ \po x(1 - r(\theta)x)\ , \  \tcrw{-}r'(\theta) x^2 + \mathbf{A}(1 - 2r(\theta) x)\pf \,,\]
 then \eqref{eq:FdemoODE} easily follows from the condition \eqref{eq:condition_r_logistique}.
\end{example}
% Moreover, for $z=(x,\nu)\in\msk$,
%\[\|F_\theta(z,t)\| \ \leqslant \ |f_\theta(x)|+|\partial_\theta f_\theta(x)|+|\nu||\partial_x f_\theta(x)| \ \leqslant \ \frac{1}{r_*} + \frac{\Ltt_r}{r_*^2} + M \]
%and
%\begin{align}
%\|F_\theta(z,t)-F_{\tilde\theta}(\tilde z,s)\| \ \leqslant &\  |f_\theta(x)-f_{\tilde\theta}(\tilde x)|+|\partial_\theta f_\theta(x)-\partial_\theta f_{\tilde \theta}(\tilde x)|\\
%& \ +|\nu-\tilde \nu||\partial_x f_\theta(x)| + |\tilde \nu||\partial_x f_\theta(x)-\partial_x f_{\tilde \theta}(\tilde x)|\\
%\ \leqslant &\ \tilde x^2 |r(\theta)-r(\tilde\theta)| + \po r(\theta)+|r'(\theta)|\pf |x^2-\tilde x^2|\\
%& \ +|\nu-\tilde \nu| + 2|\tilde \nu| \po |r(\theta)-r(\tilde \theta)||\tilde x|\pf
%\end{align}

% Assuming that there exists $K>0$ such that $F_{\theta}$ is $K$-Lipschitz continuous for all $\theta \in \R^d$, we get by classical results on the Euler scheme that there exists $C>0$ such that for all $\theta\in\R^d$ and all $h>0$,
% \[\sup_{i\in\{1,\ldots,N\}}\norm{z_\theta(t_i) - \tilde z_\theta^h(t_i)} \ \leqslant \ C h\,.\]
\tcrw{We aim to consider general discretization schemes with higher orders than the Euler discretization defined in (31). Therefore, in the following we consider a solver $\Psi^h:\R^d \rightarrow (\R^n\times\mathcal M_{d,n}(\R))^N$ for $h > 0$ satisfying the condition:} % namely $(\Psi_1^h(\theta),\dots,\Psi_N^h(\theta))$ is a computable approximation of $(z_\theta(t_1),\dots,z_\theta(t_N))$.
\begin{assumptionAO}\label{ass:ODEsolver}
  There exist $\bar h,\mathtt{C}_{\Psi},\alpha>0$ such that, for any $\theta\in\R^d$ and $h\in(0,\bar h]$,
  \begin{equation}
    \label{eq:36}
    \sum_{i=1}^N\normLigne{z_\theta(t_i) - \Psi_i^h(\theta)} \ \leqslant \ \mathtt{C}_{\Psi} h^{\alpha}\eqsp,
  \end{equation}
  where $z_{\theta}$ is a solution of \eqref{ed:ODEz} and $\Psi_i^h : \rset^d \to \rset^n \times \mathcal{M}_{d,n}(\rset)$ is the $i$-th component of $\Psi^h$.
%\[\,.\]
\end{assumptionAO}
When \Cref{ass:ODEbounded} and \Cref{ass:ODEsolver} are both satisfied,  without loss of generality, we assume  furthermore that $\bar h$ is sufficiently small so that $\mathtt \mathtt{C}_{\Psi} \bar h^\alpha \leqslant \delta$. This implies that $\Psi_i^h(\theta) \in \msk$ for all $\theta\in\R^d$, $i\in\{1,\ldots,N\}$ and $h\in (0,\bar h]$.

Writing $\Psi_i^h(\theta) = (\tilde x_{\theta}^h(t_i),G_\theta^h(t_i))$, we can consider for any $\theta \in \rset^d$,
\begin{equation}
  \label{eq:def_tilde_b_ODE}
  \tilde b_h(\theta) \ = \  - \nabla_{\theta} \ln \pi_0(\theta) +  \sum_{i=1}^N G_{\theta}^h(t_i)\cdot  \nabla_x  \ln \varphi_\varepsilon \po y_i - \tilde x_{\theta}^h(t_i)\pf \eqsp,
\end{equation}
as an approximation of $\nabla U$ \eqref{eq:def_poten_ODE_U}
% , lead us
% to consider the the Markov chain on $\R^d$ with transitions
% \[\tX_{k+1} \ = \ \tX_k-\gamma \tilde b_h(\tX_k) + \sqrt{2\gamma} \tZ_{k+1}\,.\]
Remark that, now, in contrast to $b(\theta)=\na U (\theta)$, it is possible in practice to evaluate $\tilde b_h(\theta)$ for $\theta\in\R^d$, provided $\nabla_x  \ln \varphi_\varepsilon $ and $\nabla_{\theta} \ln \pi_0$ can be evaluated. % , which for instance is the case if the prior and the noise distributions are Gaussian laws.
We  now assess the error due to the use of $\tilde b_h$ in place of the exact gradient in \eqref{eq:def_ULA_ODE} by verifying that the assumption of \Cref{sec:main_result_coupling} are satisfied.
For $\gamma,h>0$ and $\theta\in\R^d$, denote
\begin{equation}\label{eq:TtTODE}
\rmT_{\gamma}(\theta) \ = \ \theta-\gamma\na U(\theta)\,,\qquad \tilde{\rmT}_{\gamma,h}(\theta) \ = \ \theta-\gamma\tilde b_h(\theta)\,.
\end{equation}
% In order to verify \Cref{ass:LipsAnd} and \Cref{ass:bound}, in addition to the previous conditions on the ODE \eqref{eq:EDOappli} and the solver, we make the following assumption on the prior and noise distributions.
When  \Cref{ass:ODEbounded2} and \Cref{ass:ODEbounded}  are both satisfied, there exist   $\mathtt{C}_{\sbf},\Ltt_\sbf>0$ such that for all $i\in\{1,\ldots,N\}$, the function $\sbf_i$ given for any $x \in \rset^n$ and $\mathbf{A} \in \mathcal{M}_{d,n}(\rset)$, by
\begin{equation}
  \label{eq:def_s_i}
  \sbf_i(x,\mathbf{A}) = \mathbf{A} \nabla_x \ln \varphi_\varepsilon\po y_i - x\pf
\end{equation}
is bounded by $\mathtt{C}_{\sbf}$ and $\Ltt_{\sbf}$-Lipschitz continuous on $\msk$.

\begin{proposition}\label{prop:EDOmain}
Under \Cref{ass:ODE}, \Cref{ass:ODEbounded2},  \Cref{ass:ODEbounded} and \Cref{ass:ODEsolver}, for any $h \in (0,\bar h)$, the functions $\rmT_\gamma$ and $\tilde{\rmT}_{\gamma,h}$ given by \eqref{eq:TtTODE} satisfy for any  \tcrw{$\bar{\gamma}\in\ooint{0,\mtt/\Ltt^2}$}, \Cref{ass:LipsAnd} and \Cref{ass:bound}, with
\begin{equation}
c_\infty = \mathtt{C}_{\Psi} \Ltt_{\sbf}  h^{\alpha}\eqsp,\quad R_1 = \frac{\tcrw{4}N\Ctt_{\sbf}}{\mtt_U}\vee R_U\eqsp,\quad \mtt = \frac{\mtt_U}{\tcrw{4}} \eqsp, \quad \Ltt  = \Ltt_U + \Ltt_{\sbf}\Ltt_F \sum_{i=1}^N t_i \rme^{\Ltt_F't_i} \eqsp. 
\end{equation}
\end{proposition}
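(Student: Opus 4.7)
The plan is to verify \Cref{ass:bound} and then both parts of \Cref{ass:LipsAnd}. The bound in \Cref{ass:bound} is direct: unrolling \eqref{eq:TtTODE}, \eqref{eq:def_poten_ODE_U} and \eqref{eq:def_tilde_b_ODE},
\begin{equation*}
\rmT_\gamma(\theta) - \tilde{\rmT}_{\gamma,h}(\theta) \ = \ \gamma \sum_{i=1}^N \po \sbf_i(z_\theta(t_i)) - \sbf_i(\Psi_i^h(\theta))\pf \eqsp,
\end{equation*}
with $\sbf_i$ as in \eqref{eq:def_s_i}. Both evaluation points lie in the compact set $\msk$ by \Cref{ass:ODEbounded} and the tuning of $\bar h$, so the $\Ltt_\sbf$-Lipschitz property of $\sbf_i$ on $\msk$ combined with \Cref{ass:ODEsolver} gives $\|\rmT_\gamma(\theta) - \tilde{\rmT}_{\gamma,h}(\theta)\| \leqslant \gamma \Ltt_\sbf \Ctt_\Psi h^\alpha = \gamma \InftyBound$ uniformly in $\theta$.

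For \Cref{ass:LipsAnd}, the condition $\sup_\gamma \|\rmT_\gamma(0)\| \leqslant \bar\gamma \|\nabla U(0)\| < \infty$ is immediate. The main ingredient is a Grönwall bound controlling the sensitivity of the ODE with respect to the parameter: using the integral form of \eqref{ed:ODEz}, the joint Lipschitz condition \eqref{eq:FdemoODE} (applicable since the trajectories remain in $\msk$), and Grönwall's lemma, I obtain
\begin{equation*}
\|z_\theta(t) - z_{\tilde\theta}(t)\| \ \leqslant \ \Ltt_F\, t \rme^{\Ltt_F' t} \|\theta - \tilde\theta\| \qquad \text{for } t \in [0,T] \eqsp.
\end{equation*}
Combining this with the $\Ltt_\sbf$-Lipschitz continuity of each $\sbf_i$ on $\msk$ and the $\Ltt_U$-Lipschitz continuity of $\nabla U_0$ (from \Cref{ass:ODEbounded2}) gives $\|\nabla U(\theta) - \nabla U(\tilde\theta)\| \leqslant \Ltt \|\theta - \tilde\theta\|$ with $\Ltt$ as stated, hence $\rmT_\gamma$ is $(1+\gamma\Ltt)$-Lipschitz.

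For the contraction at large distances, I decompose $\nabla U = \nabla U_0 + \sum_i \sbf_i \circ z_{(\cdot)}(t_i)$. The strong monotonicity of $\nabla U_0$ from \Cref{ass:ODEbounded2} (valid for $\|\theta - \tilde\theta\| \geqslant R_U$) together with the uniform bound $\|\sbf_i\| \leqslant \Ctt_\sbf$ on $\msk$ yields
\begin{equation*}
\ps{\theta - \tilde\theta}{\nabla U(\theta) - \nabla U(\tilde\theta)} \ \geqslant \ \mtt_U r^2 - 2N\Ctt_\sbf r \eqsp, \qquad r = \|\theta - \tilde\theta\| \eqsp.
\end{equation*}
For $r$ above the threshold $R_1$, the right-hand side is strictly positive; plugging it together with the Lipschitz bound $\|\nabla U(\theta) - \nabla U(\tilde\theta)\| \leqslant \Ltt r$ into the squared expansion of $\|\rmT_\gamma(\theta) - \rmT_\gamma(\tilde\theta)\|^2$ and applying $\sqrt{1 - x} \leqslant 1 - x/2$ (for $\gamma$ sufficiently small relative to $\Ltt^2$) yields the announced contraction rate $1 - \gamma \mtt_U/2$. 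One then defines $\tau_\gamma$ as the affine majorant \eqref{eq:def_tilde_tau} associated with these rates, which satisfies both conditions of \Cref{ass:LipsAnd} by construction.

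The main technical hurdle is the Grönwall step, which critically relies on the fact that the trajectories $z_\theta, z_{\tilde\theta}$ remain in $\msk$ so that the joint Lipschitz bound on $F$ in the state and parameter applies; the remainder is essentially a careful bookkeeping of constants.
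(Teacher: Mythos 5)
You follow the same route as the paper's proof: the verification of \Cref{ass:bound} by writing $\rmT_\gamma(\theta)-\tilde{\rmT}_{\gamma,h}(\theta)$ as $\gamma$ times a sum of differences of the $\sbf_i$, using their Lipschitz continuity on $\msk$ together with \Cref{ass:ODEsolver}; the Grönwall sensitivity bound $\|z_\theta(t)-z_{\tilde\theta}(t)\|\leq \Ltt_F\, t\,\rme^{\Ltt_F' t}\|\theta-\tilde\theta\|$; and the resulting Lipschitz constant $\Ltt$ for $\nabla U$ are exactly the paper's arguments, and these parts are fine.

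The gap is in the large-distance contraction step. Your inner-product estimate carries the constant $2N\Ctt_{\sbf}$, i.e. $\ps{\theta-\tilde\theta}{\nabla U(\theta)-\nabla U(\tilde\theta)}\geq \mtt_U r^2-2N\Ctt_{\sbf} r$ for $r=\|\theta-\tilde\theta\|\geq R_U$, and you then argue only that this is strictly positive for $r>R_1=\frac{2N\Ctt_{\sbf}}{\mtt_U}\vee R_U$. Strict positivity is not enough: for $r$ close to $R_1$ the inner product is $o(r^2)$, so the squared expansion $\|\rmT_\gamma(\theta)-\rmT_\gamma(\tilde\theta)\|^2\leq r^2-2\gamma\ps{\theta-\tilde\theta}{\nabla U(\theta)-\nabla U(\tilde\theta)}+\gamma^2\Ltt^2 r^2$ can even exceed $r^2$ there, and no uniform factor $1-\gamma\mtt_U/2$ (nor any fixed rate) follows on $\{r\geq R_1\}$. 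What the argument needs is a quadratic lower bound $\ps{\theta-\tilde\theta}{\nabla U(\theta)-\nabla U(\tilde\theta)}\geq c\,r^2$ valid on all of $\{r\geq R_1\}$; the paper gets $c=\mtt_U/2$ with the stated $R_1$ because its displayed perturbation term is $N\Ctt_{\sbf}r$, whereas with your bound $2N\Ctt_{\sbf}r$ one must enlarge the radius (e.g. $R_1\geq 4N\Ctt_{\sbf}/\mtt_U$) to recover the same $c$. So, as written, your chain does not reach the constants in the statement; the repair is constant bookkeeping, not a change of structure. Two further remarks of the same nature: the restriction you impose that $\gamma$ be small compared with $\mtt_U/\Ltt^2$ is genuinely needed (the ``any $\bar\gamma$'' in the statement, and the paper's one-line ``combining the last two estimates'', pass over it), and after the square-root step the route you describe actually lands at a rate of order $1-\gamma\mtt_U/4$ unless the constants are tuned further, exactly as in the paper's own Euler-scheme remark where the final $\mtt$ is half of the monotonicity constant; if you want the stated $\mtt=\mtt_U/2$ you must say how the factors of two are absorbed (larger $R_1$ and/or smaller $\bar\gamma$), which neither your sketch nor the affine majorant \eqref{eq:def_tilde_tau} does automatically.
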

\begin{proof}
  The proof is postponed to \Cref{subsec:postponed-application}.
\end{proof}
Under the conditions of \Cref{prop:EDOmain} and using the results of \Cref{sec:main_result_coupling}, we get that the Markov chains $(X_k)_{k\in\N}$ and $(\tilde X_k)_{k\in\N}$ associated to $\rmT_\gamma$ and $\tilde{\rmT}_{\gamma,h}$ given by  \eqref{eq:TtTODE} have unique invariant measure $\pi_\gamma$ and $\tilde \pi_{\gamma,h}$, and that there exist $\bar \gamma,\bar h,C>0$ such that for all $\gamma\in(0,\bar \gamma]$ and $h\in(0,\bar h]$,
 \begin{equation}
   \label{eq:bound_tv_h_ode}
   \tvnorm{\pi_\gamma - \tilde \pi_{\gamma,h}} \ \leq C h \eqsp. 
 \end{equation}
\begin{example}[Continuation of \Cref{exemple:logistique}]
As a conclusion, consider the logistic case of \Cref{exemple:logistique} with the Euler scheme \eqref{eq:EulerEDO}, assuming that $r\in \mathrm{C}^2(\R,[0,+\infty))$ satisfies \eqref{eq:condition_r_logistique}.   Then, by \Cref{exemple:logistique} and  \Cref{exemple:logistique_2}, \Cref{ass:ODE},  \Cref{ass:ODEbounded}  and \Cref{ass:ODEsolver} hold. Assuming moreover that $\pi_0$ and $\varphi_{\varepsilon}$ are Gaussian, then \Cref{ass:ODEbounded2} also holds and we obtain \eqref{eq:bound_tv_h_ode}.
\end{example}

\subsection{Numerical results}
\tcr{Note that we can simulate the reflection coupling (see \cite{jacob2020unbiased,wang2021maximal}) as the sticky process to illustrate our results. However, in this section we only focus on our result Theorem 4, for which there is no need to simulate the reflection coupling and the sticky process.}
More precisely, we illustrate our findings on two particular ODEs. First, we consider the ODE associated with the Van der Pol oscillator corresponding to the second order ODE:
\begin{equation}
  \label{eq:VdP}
  \ddot{x}_{\theta}(t) - \theta(1-x_{\theta}(t)^2)\dot{x}_{\theta}(t) + x_{\theta}(t) =0 \eqsp,
\end{equation}
where $\theta \in \rset$ is the parameter to infer. It corresponds to \eqref{eq:EDOappli} with $f_{\theta}(x_1,x_2) = (x_2, \theta(1-x_1^2)x_2 -x_1)$. We generate synthetic data solving \eqref{eq:VdP} using the 4th\tcr{-order} Runge-Kutta method for $T=10$ and $\theta=1$. We then select $(x_{\theta}(t_i))_{i=1}^{25}$ for $(t_i)_{i=1}^{25}$ uniformly chosen in $\ccint{0,T}$. The observations  $\mathbf{y} = (y_i)_{i=1}^{25}$ are obtained from $(x_{\theta}(t_i))_{i=1}^{25}$ adding \iid~zero-mean Gaussian noise with variance $0.5$. We consider the corresponding statistical model \eqref{eq:stat_model_ODE_I} where $(\varepsilon_i)_{i=1}^{25}$ are \iid~zero-mean Gaussian random variables with variance $0.5$. We consider as prior $\pi_0$, the zero-mean Gaussian distribution with variance $0.5$. We then use ULA with $\gamma = 10^{-2}$, for which the gradient is estimated using the Euler method with the time steps $h \in \{ 0.05,0.01,0.001\}$.
\Cref{fig:VdP_hist} \tcr{illustrates the convergence of} histograms corresponding to the different Markov chains after $10^5$ iterations with a burn-in of  $10^4$ steps. Gaussian kernel density approximation of these histograms are estimated and used as proxy for the density of the invariant distributions $\pi_{\gamma,h}$ of the Markov chain $(\tilde{X}_k)_{k \in\nset}$ associated to $\tilde{\rmT}_{\gamma,h}$ given by  \eqref{eq:TtTODE}. To obtain a proxy for the density of $\pi_{\gamma}$, the stationary distribution of $(X_k)_{k \in\nset}$ associated to $\rmT_{\gamma}$, we use the same procedure but using the Euler method with $h = 0.0001$. We then estimate the total variation between $\pi_{\gamma}$ and $\pi_{\gamma,h}$ for $h \in \{0.005,0.004,0.003,0.0025,0.0015,0.001,0.00075,0.0005\}$ using numerical integration. The corresponding results over $10$ replications are reported in \Cref{fig:VdP_tv}. We can observe that the total variation distance linearly decreases with $h$ which supports our findings. 

\tcr{Note that we use time steps smaller than 0.005 to ensure that we are operating in the asymptotic regime $\gamma \to 0$ and obtain a clear linear relation between the time step $h$ and the estimated total variation. We employed larger time steps in \Cref{fig:VdP_hist} to ensure that the histograms are not too closely packed for visibility.}
  \begin{figure}
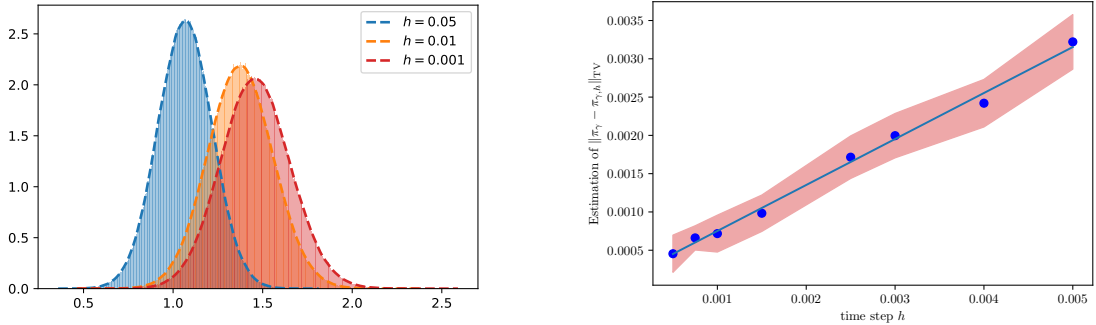

    \centering
    \begin{subfigure}{0.48\columnwidth}
      \includegraphics[width=\linewidth]{figures/VdP_hist4.pdf}
\caption{Empirical histograms and corresponding KDE for different time steps $h$} \label{fig:VdP_hist}
\end{subfigure}
\hfill
    \begin{subfigure}{0.48\columnwidth}
      \includegraphics[width=\linewidth]{figures/VdP_error_tv_2.pdf}
      \vspace{1pt}
      \caption{Numerical estimation of $\tvnorm{\pi_{\gamma} - \pi_{\gamma,h}}$}
      \label{fig:VdP_tv}
    \end{subfigure}
    \caption{Numerical illustrations for the Van der Pol oscillator \eqref{eq:VdP}}
  \label{fig:VdP}
  \end{figure}

For our second experiment, we consider the Lotka-Volterra model
describing the evolution of the population of two interacting
biological species denoted by $t \mapsto x_{\theta}(t)  = (u_{\theta}(t),v_{\theta}(t))$. The dynamics of
these two populations are assumed to be governed by the system of
equations given by:
\begin{equation}
\label{eq:lv_system}
	\dot{ u}_{\theta}(t) = (\upalpha -\upbeta v_{\theta}(t))u_{\theta}(t) \eqsp,  \qquad
	\dot{v}_{\theta}(t) = (-\upgamma + \updelta u_{\theta}(t))v_{\theta}(t) \eqsp,
%u(0) = u_{0},\ v(0) = v_{0}.
      \end{equation}
      where $\theta = (\upalpha,\upbeta,\upgamma,\updelta)$ is the
      parameter to infer.  We follow the same methodology presented in
      \cite[Chapter~16]{mcelreath2020statistical}.  For this
      experiment, we consider an other statistical model as previously
      and generate synthetic data $\bfy = (y_i)_{i=1}^{50}$
      accordingly and associated with observation times
      $(t_i)_{i=1}^{50}$ uniformly spaced in $\ccint{0,T}$ for $T=10$
      and the true parameter $\theta_0= (0.6,0.025,0.8,0.025)$. More
      precisely, for any $i \in\{1,\ldots,50\}$, $y_i = (u_i^y,v_i^y)$
      with $u_i^y = u_{\theta}(t_i) \rme^{\varepsilon_{u,i} }$,
      $v_i^y = v_{\theta}(t_i) \rme^{\varepsilon_{v,i} }$ and
      $(\varepsilon_{u,j},\varepsilon_{v,j})_{j=1}^{50}$ are \iid~one
      dimensional zero-mean Gaussian random variables with covariance matrix $\mathrm{I}_2$. The prior $\pi_0$ is set to be the Gaussian
      distribution on $\rset^4$ with means $(1,0.05,1,0.05)$ and
      standard deviations $(0.5,0.05,0.5,0.05)$. The posterior
      distribution is then given by
      $\pi(\theta | \ybf ) \propto \exp(-U(\theta))$, where
\begin{equation}
 U(\theta) =  -\log \pi_0(\theta) +  \sum_{i=1}^{50} \parenthese{ \frac{(\log{u_i^y} - \log{u_{\theta}(t_i)})^2 + (\log{v_i^y} - \log{v_{\theta}(t_i)})^2}{2 \varsigma^2}  } \eqsp.
\end{equation}
We then use ULA with $\gamma = 5\times 10^{-5}$, for which the gradient is estimated using the Euler method. We focus here on the second component of the chain. The results for the other components are similar.  
\Cref{fig:LV_hist} represents the histograms for the second component corresponding to the different Markov chains after $10^7$ iterations with a burn-in of  $10^3$ steps. Gaussian kernel density approximation of these histograms are estimated and used as proxy for the marginal density of the invariant distributions $\pi_{\gamma,h}$ of the Markov chain $(\tilde{X}_k)_{k \in\nset}$ associated to $\tilde{\rmT}_{\gamma,h}$ given by  \eqref{eq:TtTODE}. To obtain a proxy for the marginal density of $\pi_{\gamma}$, the stationary distribution of $(X_k)_{k \in\nset}$ associated to $\rmT_{\gamma}$, we use the same procedure but using the Euler method with $h = 0.0001$. We then estimate the total variation between $\pi_{\gamma}$ and $\pi_{\gamma,h}$ for $h \in \{k \times 10^{-2} \,: \, k \in \{1,\ldots,10\}\}$ using numerical integration. The corresponding results over $10$ replications are reported in \Cref{fig:LV_tv}. We can observe that the total variation distance still linearly decreases with $h$. 
  \begin{figure}
    \centering
    \begin{subfigure}{0.48\columnwidth}
      \includegraphics[width=\linewidth]{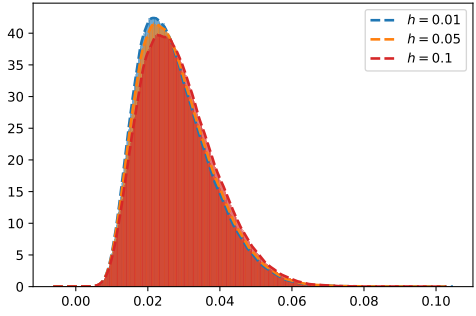}
\caption{Empirical histograms and corresponding KDE for different time steps $h$} \label{fig:LV_hist}
\end{subfigure}
\hfill
    \begin{subfigure}{0.48\columnwidth}
      \includegraphics[width=\linewidth]{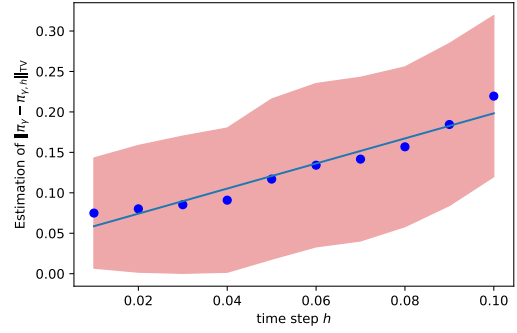}
      \vspace{1pt}
      \caption{Numerical estimation of $\tvnorm{\pi_{\gamma} - \pi_{\gamma,h}}$}
      \label{fig:LV_tv}
    \end{subfigure}
    \caption{Numerical illustrations for the Lotka-Volterra model  \eqref{eq:lv_system}}
  \label{fig:LV}
  \end{figure}

  %%% Local Variables:
%%% mode: latex
%%% TeX-master: "main_imsart"
%%% End:

\section{Postponed proofs}
\label{sec:Postponed_proofs}

\subsection{Proof of \Cref{lem:G_inequality}}
\label{sec:proof:lem:G_inequality}
The proof is based on this technical lemma.
\begin{lemma}
  \label{lem:H_increase}
  For any $g \in \rset$, $u\in\ccint{0,1}$, $\gamma \in \ocint{0,\bgamma}$ \tcwc{and $c\in \coint{0,\plusinfty}$,
  \begin{equation}
    \label{eq:H_positive}
    \mathscr{H}_\gamma\parentheseLigne{c,g,u}\geq 0 \eqsp ,
  \end{equation}
  in addition, for any} $a,b \in \coint{0,\plusinfty}$, $a\leq b$,
  \begin{equation}
  \label{eq:H_increase}
      \mathscr{H}_\gamma\parentheseLigne{a,g,u} \leq   \mathscr{H}_\gamma\parentheseLigne{b,g,u} \eqsp .
  \end{equation}
\end{lemma}
\begin{proof}
Let $u\in \ccint{0,1}$, $g\in \R$ and $a,b \in \coint{0,\plusinfty}$, $a \leq b$. We first prove that for any $c\in \R_+$ such that $c - 2(\sigma^2\gamma)^{\half}g<0$, 
\begin{equation}
\label{eq:H_pos}
\mathscr{H}_\gamma\parentheseLigne{c,g,u}= 0 \eqsp,
\end{equation}
which implies that for any $c\in \R_+$, $\mathscr{H}_\gamma\parentheseLigne{c,g,u}\geq 0$.

\tcwc{If $c - 2(\sigma^2\gamma)^{\half}g<0$ since $c\geq 0$, we have  $0\leq c < 2(\sigma^2\gamma)^{\half}g$. This implies that $-(\sigma^2\gamma)^{\half}g\leq c -(\sigma^2\gamma)^{\half}g< (\sigma^2\gamma)^{\half}g$ which gives $\varphibf_{\sigma^2\gamma}(c -(\sigma^2\gamma)^{\half}g)\geq \varphibf_{\sigma^2\gamma}((\sigma^2\gamma)^{\half}g)$. Finally, $\overline{p}_{\gamma}(c,g)=1$ and therefore \eqref{eq:H_pos} holds.}

We now show \eqref{eq:H_increase}.
It is straightforward by \eqref{eq:H_pos} if $0> a - 2(\sigma^2\gamma)^{\half}g$.
If $0\leq a - 2(\sigma^2\gamma)^{\half}g$. 
By using $t \mapsto \varphibf_{\sigma^2\gamma}(t)$ is decreasing on $[0,+\infty)$, we obtain
\begin{equation}
\label{eq:proba_Inequality}
\bpg(a,g)\geq \bpg(b,g) \eqsp .
\end{equation}
Then, \eqref{eq:H_increase} follows from  \eqref{eq:def_H} and \eqref{eq:proba_Inequality}.

\end{proof}
\begin{proof}[Proof of \Cref{lem:G_inequality}]
Let $k\in \mathbb{N}$ .
By \Cref{ass:LipsAnd}, \Cref{ass:bound} and the  triangle inequality, for any $x,\tilde{x}\in \R^d$,
\begin{align}
\label{eq:E_inequality}
\norm{\rmE(x,\tilde{x})}\leq \tau_{\gamma}(\norm{x-\tilde{x}})+\gamma \InftyBound \eqsp .
\end{align}

By using \eqref{eq:def_refl_discret}, and \eqref{eq:def_H} we have,
\begin{align}
\normLigne{X_{k+1}-\tilde{X}_{k+1}}
&=(1-B_{k+1})\normLigne{-\rmE(X_k,\tilde{X}_k)+2(\sigma^2\gamma)^{\half}e(X_k,\tilde{X}_k)e(X_k,\tilde{X}_k)^{\transpose}Z_{k+1}}\\
&=(1-B_{k+1})\normLigne{-\normLigne{\rmE(X_k,\tilde{X}_k)}e(X_k,\tilde{X}_k)+2(\sigma^2\gamma)^{\half}G_{k+1} e(X_k,\tilde{X}_k)}\\
&=(1-B_{k+1})\abs{\normLigne{\rmE(X_k,\tilde{X}_k)}-2(\sigma^2\gamma)^{\half}G_{k+1}}\\
%&=\abs{\mathscr{H}_\gamma(\normLigne{\rmE(X_k,\tilde{X}_k)},G_{k+1},U_{k+1})}\\
&=\mathscr{H}_\gamma(\normLigne{\rmE(X_k,\tilde{X}_k)},G_{k+1},U_{k+1})\eqsp .
\end{align}
This gives \eqref{eq:lemma_comparison_discrete} when combined with \eqref{eq:E_inequality}.
Finally, the last statement follows from \Cref{lem:H_increase} and \Cref{ass:LipsAnd} ensuring that $\tau_{\gamma}$ is non-decreasing on $[0,+\infty)$.
\end{proof}

\subsection{Proof of \Cref{prop:drift_v_norm}}
\label{sec:proof_prop:drift_v_norm}

The proof is an easy consequence of this technical lemma.
\begin{lemma}
  \label{lem:norm_1_eq}
For any $w \in \coint{0,\plusinfty}$ and $\gamma \in \ocint{0,\bgamma}$, we have that
  \begin{equation}
    \label{eq:7}
    Q_\gamma \VlyapDs_1(w) = \tau_{\gamma}(w) + \gamma \InftyBound \eqsp. 
  \end{equation}
\end{lemma}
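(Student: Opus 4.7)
The proof is a direct computation based on the explicit form of $Q_\gamma$ in \eqref{def_q_gamma} together with an identity expressing the maximal reflection coupling via a Radon-Nikodym derivative. My plan is as follows.

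First, I would observe that the $\updelta_0$ atomic part of $Q_\gamma(w,\cdot)$ contributes zero to $Q_\gamma\VlyapDs_1(w)$ since $\VlyapDs_1(0)=0$. Setting $a=\tau_\gamma(w)+\gamma\InftyBound \geq 0$, it then remains to compute
\[
Q_\gamma\VlyapDs_1(w) \;=\; \int_{\rset}\bigl|a-2\sigma\gamma^{1/2}g\bigr|\,\bigl\{1-\bpg(a,g)\bigr\}\,\varphibf(g)\,\rmd g\,.
\]

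Next, I would simplify $\bpg(a,g)$ by the scaling property of the Gaussian: with $\alpha=a/(\sigma\gamma^{1/2})$, one has $\varphibf_{\sigma^2\gamma}(a-\sigma\gamma^{1/2}g)/\varphibf_{\sigma^2\gamma}(\sigma\gamma^{1/2}g) = \exp(\alpha g-\alpha^2/2)$. Since $\alpha\geq 0$, this ratio is $\leq 1$ precisely when $g\leq \alpha/2$, so
\[
1-\bpg(a,g) \;=\; \bigl(1-\exp(\alpha g-\alpha^2/2)\bigr)\,\1_{(-\infty,\alpha/2]}(g)\,.
\]
Moreover for $g\leq \alpha/2$ one has $2\sigma\gamma^{1/2}g \leq \sigma\gamma^{1/2}\alpha = a$, hence $a-2\sigma\gamma^{1/2}g\geq 0$ and the absolute value can be dropped.

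The third step uses the key Gaussian identity $\varphibf(g)\exp(\alpha g-\alpha^2/2)=\varphibf(g-\alpha)$ to split the integral as $I_1-I_2$ with
\[
I_1=\int_{-\infty}^{\alpha/2}(a-2\sigma\gamma^{1/2}g)\,\varphibf(g)\,\rmd g,\qquad I_2=\int_{-\infty}^{\alpha/2}(a-2\sigma\gamma^{1/2}g)\,\varphibf(g-\alpha)\,\rmd g\,.
\]
In $I_2$ I would perform the change of variables $h=g-\alpha$ and then $h=-g'$ (using that $\varphibf$ is even) to rewrite $I_2=\int_{\alpha/2}^{+\infty}(-a+2\sigma\gamma^{1/2}g')\,\varphibf(g')\,\rmd g'$ after using $a-2\sigma\gamma^{1/2}\alpha = -a$. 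Then
\[
I_1-I_2 \;=\; \int_{\rset}(a-2\sigma\gamma^{1/2}g)\,\varphibf(g)\,\rmd g \;=\; a\,,
\]
where the last equality uses that $\varphibf$ has zero mean. Substituting back $a=\tau_\gamma(w)+\gamma\InftyBound$ gives the claim.

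The only mildly delicate point is bookkeeping of the sign of $a-2\sigma\gamma^{1/2}g$ on the truncated domain $\{g\leq \alpha/2\}$, which is what makes the absolute value disappear and allows the clean identification with the reflected Gaussian; apart from that, the argument is a routine manipulation of Gaussian integrals.
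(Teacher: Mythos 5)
Your proof is correct and follows essentially the same route as the paper's: after discarding the atom at $0$, both arguments express the absolutely continuous part of $Q_\gamma(w,\cdot)$ through the difference between the Gaussian density and its reflection about $a/2$ with $a=\tau_\gamma(w)+\gamma\InftyBound$ (you do this via the explicit likelihood ratio $\exp(\alpha g-\alpha^2/2)$ and the identity $\varphibf(g)\exp(\alpha g-\alpha^2/2)=\varphibf(g-\alpha)$, the paper via the pointwise minimum of densities), restrict to the half-line $g\leq \alpha/2$ where the integrand is nonnegative, and conclude by a reflection change of variables reducing everything to the full-line Gaussian integral of $a-2\sigma\gamma^{1/2}g$, which equals $a$ by zero mean. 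The sign bookkeeping you flag is handled correctly, so no gaps remain.
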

\begin{proof}
For any $w\in\R$, we have,
\begin{align}
&Q_\gamma \VlyapDs_1(w)= \int_\R (1-\bpg(\tau_{\gamma}(w) + \gamma \InftyBound,g))\parenthese{\tau_{\gamma}(w) + \gamma \InftyBound-2(\sigma^2\gamma)^{\half}g}\varphibf(g)\rmd g\\
&=\int_\R (\tau_{\gamma}(w) + \gamma \InftyBound-2g)\parenthese{\varphibf_{(\sigma^2\gamma)^{\half}}(g)-\varphibf_{(\sigma^2\gamma)^{\half}}(g)\wedge \varphibf_{(\sigma^2\gamma)^{\half}}(\tau_{\gamma}(w) + \gamma \InftyBound-g)}\rmd g\\
&=\int_{-\infty}^{(\tau_{\gamma}(w) + \gamma \InftyBound)/2} (\tau_{\gamma}(w) + \gamma \InftyBound-2g)\parenthese{\varphibf_{(\sigma^2\gamma)^{\half}}(g)- \varphibf_{(\sigma^2\gamma)^{\half}}(\tau_{\gamma}(w) + \gamma \InftyBound-g)}\rmd g \eqsp. 
\end{align}
By using change of variable $g\to\tcwc{\tau_{\gamma}(w) + \gamma \InftyBound}-g$ we have,
\begin{align}
&\int_{-\infty}^{(\tau_{\gamma}(w) + \gamma \InftyBound)/2} (\tau_{\gamma}(w) + \gamma \InftyBound-2g)\parenthese{\varphibf_{(\sigma^2\gamma)^{\half}}(g)- \varphibf_{(\sigma^2\gamma)^{\half}}(\tau_{\gamma}(w) + \gamma \InftyBound-g)}\rmd g\\
&\qquad\qquad=\frac{1}{2}\int_\R (\tau_{\gamma}(w) + \gamma \InftyBound-2g)\parenthese{\varphibf_{(\sigma^2\gamma)^{\half}}(g)- \varphibf_{(\sigma^2\gamma)^{\half}}(\tau_{\gamma}(w) + \gamma \InftyBound-g)}\rmd g\\
&\qquad\qquad=\tau_{\gamma}(w) + \gamma \InftyBound \eqsp .
\end{align}
\end{proof}

\begin{proof}[Proof of \Cref{prop:drift_v_norm}]
By \Cref{lem:norm_1_eq} and \Cref{ass:LipsAnd}-\ref{ass:LipsAnd_2}, for any $w \in \coint{0,\plusinfty}$,
\begin{align}
Q_\gamma \VlyapDs_1(w)&=\tau_{\gamma}(w)+\gamma c_\infty\\
&\leq (1-\gamma\mtt)\VlyapDs_1(w)\1_{(R_1,\infty)}(w)+(1+\gamma\Ltt)\VlyapDs_1(w)\1_{(0,R_1]}(w)+\gamma \InftyBound \eqsp .
\end{align}
This completes the proof.  
\end{proof}

\subsection{Proof of \Cref{propo:existence_mes_statio}}
\label{sec:proof_existence_mes_statio}

  We first establish that $Q_\gamma$ admits a unique invariant
  probability measure $\mu_{\gamma}$ and is geometrically ergodic.  To
  that end, we show that $Q_{\gamma}$ is \ref{item:proof_prop_9_irreducible_aperiodique} irreducible and
  aperiodic, \ref{item:proof_prop_9_compact_small} any compact set of $[0,+\infty)$ is small and \ref{item:proof_prop_9_drift} there
  exists $\tcwc{1>}\lambda >0$ and $b \geq 0$ such that
  $Q_{\gamma} \VlyapD(w) \leq \lambda \VlyapD(w) + b$ for any $w \in \coint{0,\plusinfty}$ with
  $\VlyapD(w) =w+1$.  The proof then follows from \cite[Corollary 14.1.6,
  Theorem 15.2.4]{douc:moulines:priouret:soulier:2018}.

  \begin{enumerate}[align=left, leftmargin=*, labelindent=\parindent, listparindent=\parindent, labelwidth=0pt, itemindent=!, labelindent=0pt,label=(\alph*),noitemsep,nolistsep]
  \item  \label{item:proof_prop_9_irreducible_aperiodique} Let $\msk$ be a compact set.  Then for any
  $w\in \msk$ we have
\begin{equation}
\label{eq:bound_q_gamma_eta_msk}
  Q_\gamma(w,\{0\}) \geq  \int_{\ccint{-1,1}} \bpg(\tau_{\gamma}(w) + \gamma \InftyBound , g)\varphibf(g)  \rmd g \geq  \eta_{\msk}\eqsp ,
\end{equation}
where using \Cref{ass:LipsAnd}-\ref{ass:LipsAnd_2}
\begin{multline}
  \eta_{\msk} =  \inf_{(r,g)\in \msk\times\ccint{-1,1}}\bpg(\tau_{\gamma}(\tcwc{r})+\gamma \InftyBound,g) \int_{\ccint{-1,1}} \varphibf(g) \rmd g \\
\geq \inf_{(a,g)\in \ccint{0,M} \times\ccint{-1,1}}\bpg(a,g)
  \int_{\ccint{-1,1}} \varphibf(g) \rmd g \eqsp,
\end{multline}
and $M = (1+\gamma\Ltt) \sup( \msk ) +\gamma \InftyBound$.  Note that
since $(a,g)\to \bpg(a,g)$ is a continuous positive function, and
$\ccint{0,M}\times\ccint{-1,1}$ is compact, $\eta_{\msk} >0$.
Therefore $\{0\}$ is an accessible $(1, \updelta_{0})$-small set
and $Q_\gamma$ is irreducible. In addition, $Q_{\gamma}(0,\{0\}) >0$
which implies that $Q_{\gamma}$ is strongly aperiodic. 

\item \label{item:proof_prop_9_compact_small} Let now $\msc$ be a compact set, we show that $\msc$ is
small. By \eqref{eq:bound_q_gamma_eta_msk}, for 
  $\msa\in \mathcal{B}([0,+\infty) )$ and $w \in \coint{0,\plusinfty}$,
\begin{align}
  Q_\gamma^2(w,\msa)\geq\int_\rset \1_{\{0\}}(\tw)Q_\gamma(\tw,\msa)Q_\gamma(w,\rmd \tw)
  \geq \eta_{\msc} Q_\gamma(0,\msa) \eqsp .
\end{align}
 Therefore $\msc$ is a $(2,Q_\gamma(0,\cdot))$-small set.
\item \label{item:proof_prop_9_drift} In addition, by \tcwc{\Cref{prop:drift_v_norm}} we
  have, for any $w \in \coint{0,\plusinfty}$,
\tcwc{\begin{align}
   Q_{\gamma}\VlyapD(w) &\leq 1+(1-\gamma\mtt)w\1_{(R_1,+\infty)}(w)+(1+\gamma\Ltt)w\1_{(0,R_1]}(w)+\gamma \InftyBound\\
                                             & \leq (1-\gamma\mtt)\VlyapD(w)+\gamma R_1(\mtt+ \Ltt)+\gamma \InftyBound+\gamma \mtt \eqsp.
\end{align}}
\end{enumerate}
The proof of the first part of the proposition is complete.

We now establish the second part. 
Let $\msa\in \mathcal{B}(\rset)$ such that $(\updelta_{0} + \Leb)(\msa)=0$. Then $0\not\in\msa$ and $\Leb(\msa)=0$ therefore for any $w\in\rset$,
\begin{equation}
 Q_{\gamma}(w,\msa) \leq \frac{1}{\sqrt{2\uppi}}\int_{\rset} \1_{\msa}\parenthese{\tau_{\gamma}(w)+\gamma \InftyBound-2 \sigma \gamma^{1/2} g}    \rmd g=0\eqsp .
\end{equation}
It follows that $\mu_\gamma(\msa)=\mu_\gamma Q_\gamma(\msa)=0$ and  $\mu_\gamma\ll(\updelta_{0} + \Leb)$. 

Since for any $w \in \coint{0,\plusinfty}$, $Q_{\gamma}(w,\{0\}) >0$, $\updelta_{\{0\}}$ is an irreducibility measure, and by \cite[Theorem 9.2.15]{douc:moulines:priouret:soulier:2018}, $\mu_\gamma$ is a maximal irreducibility measure for $Q_\gamma$, $\updelta_{0}\ll\mu_\gamma$ implying that $\mu_{\gamma}(\{0\}) >0$.

In the case $\InftyBound \neq 0$, \tcwc{first remark that for any $w \in \coint{0,\plusinfty}$ and $g\in \rset$, if $\tau_{\gamma}(w)   +\gamma \InftyBound-2 \sigma \gamma^{1/2} g>0$, then $\absLigne{\tau_{\gamma}(w)   +\gamma \InftyBound- \sigma \gamma^{1/2} g}>\absLigne{\sigma \gamma^{1/2} g}$ therefore $\bpg\parenthese{\tau_{\gamma}(w) + \gamma \InftyBound , g}<1$ by \eqref{eq:def_bpg}. Then by \eqref{def_q_gamma}, for any $\msa \in \mcb{[0,+\infty)}$, $\Leb(\msa) >0$ and $w \in \coint{0,\plusinfty}$, $Q_{\gamma}(w,\msa)\geq Q_{\gamma}(w,\msa\setminus\{0\}) >0$} and therefore $\Leb$ is an irreducibility measure. Applying  \cite[Theorem 9.2.15]{douc:moulines:priouret:soulier:2018} again, we get that $\updelta_{0} + \Leb\ll\mu_\gamma$. This completes the proof since we have already shown that $\mu_\gamma\ll(\updelta_{0} + \Leb)$.

\subsection{Proof of \Cref{theo:bound_moment_rset_star_invariant_mes}}
\label{sec:proof-crefth}

All technical results are moved to \Cref{sec:technical-results-1}.
The main idea is to analyze how the mass of $\ooint{0,\plusinfty}$ evolves with the sequence of distributions defined by $Q^n(w,\cdot)$ for $w \in\coint{0,\plusinfty}$. To do this, we first establish the following drift condition.
Define
$(\alpha_k)_{k \geq 1}$, $(\beta_k)_{k \geq 1}$ for any
$k \geq 1$ by
\begin{equation}
  \label{eq:def_alpha_beta}
%&D_1=1 \eqsp , \qquad D_{l+1}=-\sum_{k=1}^l \parentheseDeux{1-2\Phibf\parenthese{-\frac{\alpha_k}{2\beta_k}}}D_{l-k+1}\\
%&C_1=0 \eqsp , \qquad C_{l+1}=C_l -D_{l+1} \\
%&\alpha_1=\gamma \InftyBound \eqsp , \qquad \alpha_{k+1}=\gamma \InftyBound+\frac{\alpha_k}{1+\gamma \Ltt}\\
%&\beta_1=\sigma\sqrt{\gamma}\eqsp , \qquad \beta_{k+1}=\sqrt{\sigma^2\gamma+\parenthese{\frac{\beta_k}{1+\gamma\Ltt}}^2}\\
%\label{eq:def_alpha}
%&\alpha_k=\frac{1}{(1+\gamma\Ltt)^{k-1}}\parenthese{\gamma\InftyBound-\frac{\InftyBound(1+\gamma\Ltt)}{L}}+\frac{\InftyBound(1+\gamma\Ltt)}{L}\\
%&\beta^2_k=\frac{1}{(1+\gamma\Ltt)^{2(k-1)}}\parenthese{\sigma^2\gamma-\frac{\sigma^2(1+\gamma\Ltt)^2}{L(2+\gamma L)}}+\frac{\sigma^2(1+\gamma\Ltt)^2}{L(2+\gamma L)} \label{eq:def_beta}\\
\alpha_k=\gamma\InftyBound\sum_{i=0}^{k-1}(1+\gamma\Ltt)^{-i} \eqsp, \qquad 
\beta^2_k=\gamma\sigma^2\sum_{i=0}^{k-1} (1+\gamma\Ltt)^{-2i} \eqsp. 
\end{equation}

\begin{lemma}
\label{lem:post_induction}
Assume \Cref{ass:LipsAnd}-\ref{ass:LipsAnd_2}. Let $\bdelta \in \ocint{0,\{ \Ltt^{-1} \wedge (\sigma\rme^{-1}/\InftyBound)^2\}}$, with the convention $1/0=\plusinfty$.  For any
  $\gamma \in \ocint{0,\bgamma}$, $n\in \{0,\ldots,n_{\gamma}\}$, $n_{\gamma} = \floorLigne{\bdelta/\gamma}$, and $w\in \coint{0,\plusinfty}$, it holds 
\begin{equation}
\label{eq:induction_step}
\int_{[0,+\infty)} \1_{(0,+\infty) }(\tilde{w})
Q^{n+1}_\gamma(w,\rmd \tilde{w})\leq 1-2\Phibf\parenthese{-\frac{\tau_\gamma(w)+\alpha_{n+1}}{2\beta_{n+1}}}+\zeta\sum_{k=1}^n \frac{\gamma\alpha_k}{\beta_k^3} \eqsp,
\end{equation}
where $(\alpha_k)_{k\geq 1},(\beta_k)_{k\geq 1}$ are defined in
\eqref{eq:def_alpha_beta},  where $\Phibf$ are the density and the cumulative
distribution function of the one-dimensional Gaussian distribution
with mean $0$ and variance $1$ and
\begin{equation}
  \label{eq:def_zeta}
  \zeta = \tcwc{2}\textstyle{(1+\bgamma \Ltt)^2\sigma^2(2\sqrt{2\uppi})^{-1}\parentheseDeux{\sup_{t \geq 0} \{t^2\Phibf(-t)\}+ 1/8}} \eqsp. 
\end{equation}
\end{lemma}

\begin{proof}
Let $\gamma \in\ocint{0,\bgamma}$, $w \in \coint{0,\plusinfty}$.   Note that for any $n\in \{0,\ldots,n_{\gamma}\}$, by
\eqref{eq:def_alpha_beta} and \Cref{lem:bound_alpha_beta},
\begin{equation}
  \label{eq:bound_alpha_beta_proof_induc}
  \alpha_n/(2\beta_n) \leq 1 \eqsp. 
\end{equation}
Then, by \Cref{lem:0_proba}, \eqref{eq:induction_step} holds for $n=0$. Assume it holds for $n \in \{0,\ldots,n_{\gamma}-1\}$. Then, we get
\begin{align}
%\label{eq:induction_step}
\int_{[0,+\infty)} \1_{(0,+\infty)}(\tilde{w})
Q^{n+1}_\gamma(w,\rmd \tilde{w})&\leq \int_{[0,+\infty)}\parentheseDeux{1-2\Phibf\parenthese{-\frac{\tau_\gamma(w)+\alpha_{n}}{2\beta_{n}}}} Q_{\gamma}(w,\tilde{w})\\
&\qquad\qquad\qquad\qquad\qquad\qquad\qquad\qquad+\zeta\sum_{k=1}^{n-1} \frac{\gamma\alpha_k}{\beta_k^3} \eqsp.
\end{align}
The proof is then concluded by a straightforward induction using \Cref{lem:technique_lemma_induction} and \eqref{eq:bound_alpha_beta_proof_induc}. 
\end{proof}

From \Cref{lem:post_induction}, we can have the following bound on the mass $\mu_{\gamma}$ at $\ooint{0,R}$ for $R \geq  0$.

\begin{theorem}
  \label{theo:bound_mu_0_R_1}
Assume \Cref{ass:LipsAnd}-\ref{ass:LipsAnd_2}. Let $R \geq 0$. 
For any $\gamma\in \ocint{0,\bgamma}$ and $\bdelta \in \ocint{0,\{ \Ltt^{-1} \wedge (\sigma\rme^{-1}/\InftyBound)^2\}}$, $\mu_{\gamma}(\ooint{0,R}) \leq \eta_R  \InftyBound$, where
\begin{equation+}
  \label{eq:def_eta_R}
    \eta_R=\left. [\deltaInf+\bgamma]^{1/2}\parentheseDeux{\frac{\tcwc{2}\zeta\rme^{3(\deltaInf+\bgamma)\Ltt}}{\sigma^3}+\frac{\rme^{(\deltaInf+\bgamma)\Ltt}}{2\sqrt{2\uppi}\sigma}}\middle/
\Phibf\parenthese{-\frac{(1+\bgamma\Ltt)R+(\deltaInf+\bgamma)\InftyBound}{2\deltaInf^{1/2}\sigma\rme^{-(\deltaInf+\bgamma) \Ltt}}}
        \right.\eqsp .
\end{equation+}
\end{theorem}
\begin{proof}
  Let $\bdelta \in \ocint{0,\{ \Ltt^{-1} \wedge (\sigma\rme^{-1}/\InftyBound)^2\}}$ and  $\gamma \in \ocint{0,\bgamma}$. Set $n_{\gamma} = \floorLigne{\deltaInf/\gamma}$. Note that $\deltaInf \leq \gamma(n_{\gamma}+1) \leq \deltaInf +\bgamma$. 
  By \Cref{lem:post_induction}, \Cref{propo:existence_mes_statio}, integrating \eqref{eq:induction_step} with respect to $\mu_{\gamma}$ and using that $\tau_{\gamma}(0) = 0$, $\Phibf(-t) \leq 1/2$ for any $t \geq 0$, gives
  \begin{align}
      \label{eq:end_proof_mass_0_0}
    & \qquad \mu_\gamma((0,+\infty))\leq \int_\rset \defEns{1-2\Phibf\parenthese{-\frac{\tau_\gamma(w)+\alpha_{n_\gamma+1}}{2\beta_{n_\gamma+1}}} }\rmd \mu_{\gamma}( w)+\zeta \sum_{k=1}^{n_\gamma} \frac{\gamma\alpha_k}{\beta_k^3}\\
    &\leq 1-2\Phibf\parenthese{-\frac{\alpha_{n_\gamma+1}}{2\beta_{n_\gamma+1}}}+\zeta \sum_{k=1}^{n_\gamma} \frac{\gamma\alpha_k}{\beta_k^3}\\
    & \qquad \qquad + 2\int_{(0,+\infty)}\defEns{\Phibf\parenthese{-\frac{\alpha_{n_\gamma+1}}{2\beta_{n_\gamma+1}}}-\Phibf\parenthese{-\frac{\tau_\gamma(w)+\alpha_{n_\gamma+1}}{2\beta_{n_\gamma+1}}}} \rmd \mu_{\gamma}( w) 
    \\
     \label{eq:end_proof_mass_0}
&\leq 1-2\Phibf\parenthese{-\frac{\alpha_{n_\gamma+1}}{2\beta_{n_\gamma+1}}}+\mu_\gamma((0,+\infty))+\zeta \sum_{k=1}^{n_\gamma} \frac{\gamma\alpha_k}{\beta_k^3}\\
&\qquad \qquad-2\int_{\ooint{0,\tcwc{R}}}\Phibf\parenthese{-\frac{\tau_\gamma(w)+\alpha_{n_\gamma+1}}{2\beta_{n_\gamma+1}}} \rmd \mu_{\gamma}( w) \eqsp .
% &\qquad\qquad\qquad\qquad\qquad\qquad\qquad+2\int_{\ooint{0,R_1}}\Phibf\parenthese{-\frac{\tau_\gamma(w)+\alpha_{n_\gamma+1}}{2\beta_{n_\gamma+1}}} \rmd \mu_{\gamma}( w) \eqsp .
\end{align}
Rearranging the terms yields
\begin{equation}
\label{eq:end_proof_mass_0_1}
2\int_{\ooint{0,\tcwc{R}}} \Phibf\parenthese{-\frac{\tau_\gamma(w)+\alpha_{n_\gamma+1}}{2\beta_{n_\gamma+1}}}\rmd \mu_{\gamma}( w)\leq 1-2\Phibf\parenthese{-\frac{\alpha_{n_\gamma+1}}{2\beta_{n_\gamma+1}}}+\zeta \sum_{k=1}^{n_\gamma} \frac{\gamma\alpha_k}{\beta_k^3} \eqsp .
\end{equation}
In addition, by \Cref{ass:LipsAnd}-\ref{ass:LipsAnd_2} using \Cref{lem:bound_alpha_beta} and $t \mapsto \Phibf(-t)$ is decreasing on $\rset$, we have
\begin{align}
 \int_{\ooint{0,\tcwc{R}}} \Phibf\parenthese{-\frac{\tau_\gamma(w)+\alpha_{n_\gamma+1}}{2\beta_{n_\gamma+1}}}\rmd \mu_{\gamma}( w)
&\geq
  \Phibf\parenthese{-\frac{(1+\gamma\Ltt)\tcwc{R}+\alpha_{n_\gamma+1}}{2\beta_{n_\gamma+1}}}\mu_\gamma((0,\tcwc{R})) \\
  \label{eq:end_proof_mass_0_2}
&\geq \Phibf\parenthese{-\frac{(1+\bgamma\Ltt)\tcwc{R}+(\deltaInf+\bgamma)\InftyBound}{2\deltaInf^{1/2}\sigma\rme^{-(\deltaInf+\bgamma) \Ltt}}}\mu_\gamma((0,\tcwc{R}))\eqsp,
\end{align}
Using that $t \mapsto 1-2\Phibf(-t)$ is $\sqrt{2/\uppi}$-Lipschitz and combining \eqref{eq:end_proof_mass_0_1}, \eqref{eq:end_proof_mass_0_2} and  \Cref{lem:bound_alpha_beta} we get that
\begin{multline}
  \label{eq:bound_mu_gamma_0_r_One}
\Phibf\parenthese{-\frac{(1+\bgamma\Ltt)\tcwc{R}+(\deltaInf+\bgamma)\InftyBound}{2\deltaInf^{1/2}\sigma\rme^{-(\deltaInf+\bgamma) \Ltt}}}\mu_\gamma((0,\tcwc{R})) \leq \alpha_{n_\gamma+1}/({2\sqrt{2\uppi}\beta_{n_\gamma+1}})+ \zeta \gamma \sum_{k=1}^{n_\gamma} \{\alpha_k/\beta_k^3\} \\
  \leq \InftyBound [\deltaInf+\bgamma]^{1/2}\parentheseDeux{\frac{\tcwc{2}\zeta\rme^{3(\deltaInf+\bgamma)\Ltt}}{\sigma^3}+\frac{\rme^{(\deltaInf+\bgamma)\Ltt}}{2\sqrt{2\uppi}\sigma}} \eqsp,
\end{multline}
which implies that $\mu_\gamma((0,\tcwc{R})) \leq \tcwc{\eta_R} \InftyBound$ and completes the proof. 
\end{proof}

Now we can easily complete the proof of \Cref{theo:bound_moment_rset_star_invariant_mes}.

\begin{proof}[Proof of \Cref{theo:bound_moment_rset_star_invariant_mes}]
  Let $\bdelta \in \ocint{0,\{ \Ltt^{-1} \wedge (\sigma\rme^{-1}/\InftyBound)^2\}}$ and  $\gamma \in \ocint{0,\bgamma}$. By \Cref{prop:drift_v_norm} and using $\mu_{\gamma}$ is invariant for $Q_{\gamma}$, we obtain 
\begin{equation}
\int_\rset w \eqsp \rmd \mu_{\gamma}( w)\leq (1-\gamma\mtt)\int_{\coint{R_1,\plusinfty}} w \eqsp \rmd \mu_{\gamma}( w) +(1+\gamma\Ltt)\int_{\ooint{0,R_1}} w \eqsp \rmd \mu_{\gamma}( w) +\gamma\InftyBound \eqsp.
\end{equation}
Then, rearranging the terms in this inequality yields 
\begin{align}
\int_{\coint{R_1,\plusinfty}} w \eqsp \rmd \mu_{\gamma}( w) & \leq R_1\mu_\gamma((0,R_1))\Ltt/\mtt+ \InftyBound/\mtt \\
  \int_{[0,+\infty)} w \eqsp \rmd \mu_{\gamma}( w)& \leq R_1\mu_\gamma((0,R_1))(1+\Ltt/\mtt)+ \InftyBound/\mtt \eqsp,
\end{align}
which, combined with \Cref{theo:bound_mu_0_R_1} applied to $R\leftarrow R_1$,  concludes the proof of the first inequality in
\eqref{eq:bound_moment_rset_star_invariant_mes}. Finally, by
\eqref{eq:end_proof_mass_0_0}, using that $t \mapsto 1-2\Phibf(-t)$ is $\sqrt{2/\uppi}$-Lipschitz, we have 
\begin{multline}
  \mu_\gamma((0,+\infty))\leq (\sqrt{2\uppi}\beta_{n_\gamma+1})^{-1}\int_{[0,+\infty)} \{(1+\bgamma \Ltt)w + \alpha_{n_\gamma+1}\} \rmd \mu_{\gamma}(w) + 
\zeta \gamma\sum_{k=1}^{n_\gamma} \{\alpha_k/\beta_k^3\} \\
\leq  (\InftyBound \constMoment_1 (1+\bgamma \Ltt)  +\alpha_{n_\gamma+1})/(\sqrt{2\uppi}\beta_{n_\gamma+1})+\zeta \gamma\sum_{k=1}^{n_\gamma} \{\alpha_k/\beta_k^3\} \eqsp .
\end{multline}
This finishes the proof using $n_{\gamma} = \floorLigne{\deltaInf/\gamma}$, $\deltaInf \leq \gamma(n_{\gamma}+1) \leq \deltaInf +\bgamma$ and \Cref{lem:bound_alpha_beta}. 
\end{proof}

\subsubsection{Technical results}
\label{sec:technical-results-1}

\begin{lemma}
\label{lem:0_proba}
For any $w\in \rset$,
\begin{equation}
Q_\gamma(w,\{0\})=2\Phibf\parenthese{-\frac{\tau_{\gamma}(w) + \gamma \InftyBound}{2\sigma\sqrt{\gamma}}} \eqsp ,
\end{equation}
where $Q_\gamma$ is defined by \eqref{def_q_gamma} and $\Phibf$ is the cumulative distribution of the one-dimensional Gaussian distribution with mean $0$ and variance $1$.
\end{lemma}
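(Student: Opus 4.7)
My proof plan would proceed by direct computation, exploiting the Gaussian structure to reduce everything to elementary tail integrals.

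First I would unfold the definition of $Q_\gamma(w,\{0\})$ from \eqref{def_q_gamma}, which gives
\[Q_\gamma(w,\{0\}) \;=\; \int_{\rset} \bpg\!\left(a,g\right)\varphibf(g)\,\rmd g, \qquad a := \tau_\gamma(w) + \gamma \InftyBound,\]
and then simplify the density ratio inside $\bpg$. A short calculation using $\varphibf_{\sigma^2\gamma}(x) = (2\pi\sigma^2\gamma)^{-1/2}\exp(-x^2/(2\sigma^2\gamma))$ yields
\[\frac{\varphibf_{\sigma^2\gamma}\!\bigl(a - (\sigma^2\gamma)^{1/2}g\bigr)}{\varphibf_{\sigma^2\gamma}\!\bigl((\sigma^2\gamma)^{1/2}g\bigr)} \;=\; \exp\!\left(-\frac{a\bigl(a - 2(\sigma^2\gamma)^{1/2}g\bigr)}{2\sigma^2\gamma}\right),\]
since the quadratic terms in $g$ cancel.

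Next I would use this expression to identify exactly where the minimum in $\bpg$ switches. Setting $\alpha := a/(2\sigma\sqrt{\gamma})$, the ratio above is $\leq 1$ precisely when $g \leq \alpha$, and $\geq 1$ when $g \geq \alpha$. Splitting the integral at $\alpha$ gives
\[Q_\gamma(w,\{0\}) \;=\; \int_{\alpha}^{+\infty}\varphibf(g)\,\rmd g \;+\; \int_{-\infty}^{\alpha} \frac{\varphibf_{\sigma^2\gamma}\!\bigl(a - (\sigma^2\gamma)^{1/2}g\bigr)}{\varphibf_{\sigma^2\gamma}\!\bigl((\sigma^2\gamma)^{1/2}g\bigr)} \varphibf(g)\,\rmd g.\]
The first integral equals $\Phibf(-\alpha)$ immediately. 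The main computation is to show that the integrand in the second integral equals $\frac{1}{\sqrt{2\pi}}\exp(-(g-2\alpha)^2/2)$; this follows by completing the square in the exponent, since $-a^2/(2\sigma^2\gamma) + ag/(\sigma^2\gamma)^{1/2} - g^2/2 = -(g - a/(\sigma^2\gamma)^{1/2})^2/2$ and $a/(\sigma^2\gamma)^{1/2} = 2\alpha$. Then the substitution $u = g-2\alpha$ shows the second integral also equals $\Phibf(-\alpha)$.

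Summing the two contributions gives $Q_\gamma(w,\{0\}) = 2\Phibf(-\alpha) = 2\Phibf\!\bigl(-(\tau_\gamma(w)+\gamma \InftyBound)/(2\sigma\sqrt{\gamma})\bigr)$, as claimed. There is no real obstacle here beyond bookkeeping: the only delicate step is correctly completing the square in the exponent, and recognizing that the shift in the resulting Gaussian is exactly $2\alpha$, which then makes the two pieces of the integral recombine into $2\Phibf(-\alpha)$. This symmetry is essentially the well-known fact that the maximal coupling probability for two Gaussians with the same variance differing in mean is $2\Phibf(-|\text{gap}|/(2\sigma))$, specialized here to the effective gap $a$ with effective noise scale $\sigma\sqrt{\gamma}$.
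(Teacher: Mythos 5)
Your proof is correct and follows essentially the same route as the paper: both unfold $Q_\gamma(w,\{0\})=\int \bpg(a,g)\varphibf(g)\,\rmd g$ with $a=\tau_\gamma(w)+\gamma\InftyBound$, split the integral at the crossover point $a/(2\sigma\sqrt{\gamma})$ where the minimum in $\bpg$ switches, and use the Gaussian shift/reflection symmetry to identify each piece with $\Phibf\bigl(-a/(2\sigma\sqrt{\gamma})\bigr)$. The paper performs the change of variable $g\mapsto\sigma\sqrt{\gamma}\,g$ and works with $\varphibf_{\sigma^2\gamma}(g)\wedge\varphibf_{\sigma^2\gamma}(a-g)$ instead of completing the square explicitly, but this is only a bookkeeping difference.
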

\begin{proof}
Let $w\in \rset$. By \eqref{eq:def_r} and the change of variable $g\to\sigma\sqrt{\gamma}g$, we get
\begin{align}
&Q_\gamma(\tcwc{w},\{0\})%&=\int_{\rset} \bpg(\tau_{\gamma}(w) + \gamma \InftyBound,g)\varphibf_1(g) \rmd g\\
=\int_{\rset} \parenthese{1\wedge \frac{\varphibf_{\sigma^2\gamma}\parenthese{ \tau_{\gamma}(w) + \gamma \InftyBound- \sigma\sqrt{\gamma}g } }{\varphibf_{\sigma^2\gamma}\parenthese{ \sigma\sqrt{\gamma}g}}}\varphibf(g) \rmd g\\
&=\int_{\rset}\varphibf_{\sigma^2\gamma}\parenthese{ g}\wedge \varphibf_{\sigma^2\gamma}\parenthese{ \tau_{\gamma}(w) + \gamma \InftyBound- g }  \rmd g=\int_{\rset}\varphibf_{\sigma^2\gamma}\parenthese{ g}\wedge \varphibf_{\sigma^2\gamma}\parenthese{ g-\tau_{\gamma}(w) \tcwc{-} \gamma \InftyBound }  \rmd g\\
&=\int_{-\infty}^{(\tau_{\gamma}(w) + \gamma \InftyBound)/2} \varphibf_{\sigma^2\gamma}\parenthese{ g-\tau_{\gamma}(w) \tcwc{-} \gamma \InftyBound }  \rmd g+\int_{(\tau_{\gamma}(w) + \gamma \InftyBound)/2}^{+\infty} \varphibf_{\sigma^2\gamma}\parenthese{ g}  \rmd g\\
&=\int_{-\infty}^{-(\tau_{\gamma}(w) + \gamma \InftyBound)/2} \varphibf_{\sigma^2\gamma}\parenthese{ g }  \rmd g+\int_{(\tau_{\gamma}(w) + \gamma \InftyBound)/2}^{+\infty} \varphibf_{\sigma^2\gamma}\parenthese{ g}  \rmd g=2\Phibf\parenthese{-\frac{\tau_{\gamma}(w) + \gamma \InftyBound}{2\sigma\sqrt{\gamma}}} \eqsp,
\end{align}
and the lemma follows.
\end{proof}

%\begin{lemma}
%\label{lem:Bernoulli}
%For any $\zeta,a>0$, and $t,x\in \coint{0,\plusinfty}$,
%\begin{align}
%&\int_\rset \varphibf_{\zeta^2}(g)\parenthese{1-1\wedge\frac{\varphibf_{\zeta^2}(t-g)}{\varphibf_{\zeta^2}(g)}}\parentheseDeux{1-2\Phibf\parenthese{-\frac{t+x-2g}{2a}}}\rmd g\\
%&\qquad\qquad\qquad\qquad\qquad\qquad\qquad\qquad\qquad\qquad\leq 1-2\Phibf\parenthese{-\frac{t+x}{2(\zeta^2+a^2)}} \eqsp .
%\end{align}
%\end{lemma}
%\begin{proof}
%Let $\zeta,a>0$, and $t,x\in \coint{0,\plusinfty}$. By \cite[Lemma 20]{durmus2019high}, we have
%\begin{align}
%&\int_\rset \varphibf_{\zeta^2}(g)\parenthese{1-1\wedge\frac{\varphibf_{\zeta^2}(t+x-g)}{\varphibf_{\zeta^2}(g)}}\parentheseDeux{1-2\Phibf\parenthese{-\frac{t+x-2g}{2a}}}\rmd g\\
%&\qquad\qquad\qquad\qquad\qquad\qquad\qquad\qquad\qquad\qquad= 1-2\Phibf\parenthese{-\frac{t+x}{2(\zeta^2+a^2)}} \eqsp .
%\end{align}
%\end{proof}

\begin{lemma}
\label{lem:null_term}
Let $\sigma^2,\gamma >0$. For any $t \geq 0$ \tcwc{and $a>0$}, we have  
\begin{equation}
\int_\rset \parentheseDeux{1-2\Phibf\parenthese{-\frac{t-2\sigma \gamma^{1/2} g}{2a}}}\bpg(t,g)\varphibf(g)\rmd g=0 \eqsp,
\end{equation}
where $\bpg$ is defined by \eqref{eq:def_bpg}, $\varphibf$ and
$\Phibf$ are the density and the cumulative distribution function of the one-dimensional
Gaussian distribution with mean $0$ and variance $1$ respectively.
\end{lemma}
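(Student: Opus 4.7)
The plan is to establish this identity by a reflection-symmetry argument: the integrand factors into one piece that is invariant under a specific affine involution and another piece that is antisymmetric under the same involution, so the integral must vanish. Concretely, I would work with the involution $\iota : g \mapsto g^\star := t/(\sigma\sqrt{\gamma}) - g$.

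First, I would put the ``density'' part of the integrand in a manifestly symmetric form. Using $\varphibf(g) = \sigma\sqrt{\gamma}\,\varphibf_{\sigma^2\gamma}(\sigma\sqrt{\gamma}\, g)$ and the definition \eqref{eq:def_bpg}, one obtains
\begin{equation*}
\bpg(t,g)\,\varphibf(g) \;=\; \sigma\sqrt{\gamma}\,\bigl[\varphibf_{\sigma^2\gamma}(\sigma\sqrt{\gamma}\, g)\wedge \varphibf_{\sigma^2\gamma}(t-\sigma\sqrt{\gamma}\, g)\bigr].
\end{equation*}
The involution $\iota$ exactly swaps the two arguments of the minimum, so this quantity is $\iota$-invariant.

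Next, I would verify that the remaining factor is $\iota$-antisymmetric. Since $t - 2\sigma\sqrt{\gamma}\,g^\star = -(t - 2\sigma\sqrt{\gamma}\, g)$, the identity $\Phibf(-x) = 1 - \Phibf(x)$ gives
\begin{equation*}
1 - 2\Phibf\!\left(-\tfrac{t-2\sigma\sqrt{\gamma}\, g^\star}{2a}\right) \;=\; 1 - 2\Phibf\!\left(\tfrac{t-2\sigma\sqrt{\gamma}\, g}{2a}\right) \;=\; -\left[1 - 2\Phibf\!\left(-\tfrac{t-2\sigma\sqrt{\gamma}\, g}{2a}\right)\right].
\end{equation*}
Performing the change of variables $g \mapsto g^\star$ in the integral $I$ of interest therefore yields $I = -I$, hence $I = 0$.

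There is no real obstacle — the lemma is a pure symmetry identity built around the reflection $g \mapsto t/(\sigma\sqrt{\gamma}) - g$. The only point requiring care is the degenerate value $a = 0$, which I would handle using the convention $s/0 = +\infty$ for $s>0$ stated in the paper's notation section: the prefactor then degenerates to a signed indicator of $\{t - 2\sigma\sqrt{\gamma}\, g > 0\}$ versus $\{< 0\}$ (with a measure-zero ambiguity at the midpoint), which remains $\iota$-antisymmetric, so the same argument applies verbatim.
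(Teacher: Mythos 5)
Your proof is correct and rests on the same underlying mechanism as the paper's: the reflection of $g$ about the midpoint $t/(2\sigma\sqrt{\gamma})$ leaves the factor $\bpg(t,g)\varphibf(g)=\sigma\sqrt{\gamma}\,[\varphibf_{\sigma^2\gamma}(\sigma\sqrt{\gamma}g)\wedge\varphibf_{\sigma^2\gamma}(t-\sigma\sqrt{\gamma}g)]$ invariant while flipping the sign of $1-2\Phibf(\cdot)$, which is exactly what the paper achieves by splitting the minimum at $t/2$ and applying the substitutions $g\mapsto g-t$ and $g\mapsto -g$ before invoking $1-2\Phibf(s)=-[1-2\Phibf(-s)]$. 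Your packaging as a single involution is merely a more compact way of organizing the same symmetry argument, and your treatment of the degenerate case $a=0$ is a harmless addition.
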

\begin{proof}
  Using the changes of variable $g \mapsto \sigma \gamma^{1/2}g$ and \tcwc{$g \mapsto t-g$}, we obtain
\begin{align}
  &\int_\rset \parentheseDeux{1-2\Phibf\parenthese{-\frac{t-2\sigma \gamma^{1/2} g}{2a}}}\bpg(t,g)\varphibf(g)\rmd g\\
  &=\int_\rset \parentheseDeux{1-2\Phibf\parenthese{-\frac{t-2\sigma \gamma^{1/2} g}{2a}}}\defEns{ 1\wedge \frac{\varphibf_{\sigma^2\gamma}\parenthese{ \tcwc{t}- \sigma\sqrt{\gamma}g } }{\varphibf_{\sigma^2\gamma}\parenthese{ \sigma\sqrt{\gamma}g}}}\varphibf_{}(g)\rmd g\\
  &=\int_{-\infty}^{t/2} \parentheseDeux{1-2\Phibf\parenthese{-\frac{t-2 g}{2a}}}  \varphibf_{\sigma^2\gamma}\parenthese{ t-  g } \rmd g+\int_{t/2}^{+\infty} \parentheseDeux{1-2\Phibf\parenthese{-\frac{t-2 g}{2a}}}  \varphibf_{\sigma^2\gamma}\parenthese{ g } \rmd g\\
  % &=\int_{-\infty}^{-t/2} \parentheseDeux{1-2\Phibf\parenthese{-\frac{-t-2 g}{2a}}}  \varphibf_{\sigma^2\gamma}\parenthese{ g } \rmd g+\int_{t/2}^{+\infty} \parentheseDeux{1-2\Phibf\parenthese{-\frac{t-2 g}{2a}}}  \varphibf_{\sigma^2\gamma}\parenthese{ g } \rmd g\\
  &\tcwc{=\int_{t/2}^{+\infty} \parentheseDeux{1-2\Phibf\parenthese{-\frac{-t+2 g}{2a}}}  \varphibf_{\sigma^2\gamma}\parenthese{ g } \rmd g +\int_{t/2}^{+\infty} \parentheseDeux{1-2\Phibf\parenthese{-\frac{t-2 g}{2a}}}  \varphibf_{\sigma^2\gamma}\parenthese{ g } \rmd g
    \eqsp. }
\end{align}
Using that $s\in\rset$,  $1-2\Phibf\parenthese{s} =-\parentheseDeux{1-2\Phibf\parenthese{-s}} $ completes the proof.
\end{proof}

\begin{lemma}
\label{lem:proba_form}
Let $\sigma^2,\gamma >0$. For any $t, s \geq 0$ and $a >0$,
\begin{multline}
  \int_\rset \parentheseDeux{1-2\Phibf\parenthese{-\frac{t+s-2\sigma\sqrt{\gamma} g}{2a}}}\bpg(t,g)\varphibf(g)\rmd g\\
  = 2\proba{\sigma\sqrt{\gamma} G \geq t/2, -s-t\leq 2a \tilde{G}-2\sigma\sqrt{\gamma} G\leq s-t }\eqsp,
\end{multline}
where $G,\tilde{G}$ are two independent one-dimensional standard
Gaussian random variables, $\bpg$ is defined by \eqref{eq:def_bpg},
$\varphibf$ and $\Phibf$ are the density and the cumulative
distribution function of the one-dimensional Gaussian distribution
with mean $0$ and variance $1$ respectively.
\end{lemma}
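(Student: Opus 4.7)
The plan is to exploit the explicit reflection-coupling structure of the weight $\bpg(t,g)\varphibf(g)$. First, from the closed form $\varphibf_{\sigma^2\gamma}(x) = (2\uppi\sigma^2\gamma)^{-1/2}\exp(-x^2/(2\sigma^2\gamma))$ and the definition \eqref{eq:def_bpg}, a direct computation yields
\[
\frac{\varphibf_{\sigma^2\gamma}(t-\sigma\sqrt{\gamma}g)}{\varphibf_{\sigma^2\gamma}(\sigma\sqrt{\gamma}g)} \,=\, \exp\parenthese{\frac{tg}{\sigma\sqrt{\gamma}} - \frac{t^2}{2\sigma^2\gamma}}\eqsp,
\]
which is bounded by $1$ precisely when $g \leq t/(2\sigma\sqrt{\gamma})$. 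Writing $b = t/(2\sigma\sqrt{\gamma})$, this translates into the reflection identity
\[
\bpg(t,g)\varphibf(g) \,=\, \varphibf(g)\1_{[b,+\infty)}(g) + \varphibf(g-2b)\1_{(-\infty,b)}(g)\eqsp.
\]

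Second, I would split the left-hand side of the claimed identity along these two half-lines. On $g \leq b$, the change of variables $g \mapsto 2b - g$ bijects $(-\infty,b]$ onto $[b,+\infty)$, sends $\varphibf(g-2b)\,\rmd g$ to $\varphibf(g)\,\rmd g$, and transforms $t+s-2\sigma\sqrt{\gamma}g$ into $s-t+2\sigma\sqrt{\gamma}g$. Combining both contributions, the left-hand side rewrites as a single integral against $\varphibf(g)\,\rmd g$ on $[b,+\infty)$ whose integrand is
\[
\bigl[1-2\Phibf((2\sigma\sqrt{\gamma}g-t-s)/(2a))\bigr] + \bigl[1-2\Phibf((t-s-2\sigma\sqrt{\gamma}g)/(2a))\bigr]\eqsp.
\]

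Third, applying the identity $\Phibf(-x)=1-\Phibf(x)$ to the second bracket collapses this integrand to $2\Phibf((2\sigma\sqrt{\gamma}g + s - t)/(2a)) - 2\Phibf((2\sigma\sqrt{\gamma}g - s - t)/(2a))$. Since $s \geq 0$, this equals $2\proba{-s-t \leq 2a\tilde G - 2\sigma\sqrt{\gamma}g \leq s-t}$ for a standard Gaussian $\tilde G$. Integrating in $g$ against $\varphibf(g)\,\rmd g$ over $[b,+\infty)$ and using the independence of $G$ and $\tilde G$, the factor $\1_{[b,+\infty)}(G) = \1_{\{\sigma\sqrt{\gamma}G \geq t/2\}}$ appears inside the joint expectation, which produces exactly the announced right-hand side.

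The argument is essentially bookkeeping; the only point requiring a bit of care is the systematic use of $\Phibf(-x) = 1 - \Phibf(x)$ after the reflection change of variables, in order to match the two $\Phibf$-arguments on the two regions. As a sanity check, setting $s=0$ collapses the sum of brackets pointwise to $0$ by the same identity, so that \Cref{lem:null_term} is recovered as a special case of this computation.
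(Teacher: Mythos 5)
Your proof is correct and follows essentially the same route as the paper's: both exploit that $\bpg(t,g)\varphibf(g)$ is the minimum of a Gaussian density and its reflection about $b=t/(2\sigma\sqrt{\gamma})$, split the integral at this midpoint, fold the lower half onto the upper half by the reflection change of variables, and identify the resulting expression as a band probability for two independent standard Gaussians. The only difference is cosmetic: the paper finishes at the level of probabilities, writing the folded integral as $2[\proba{\sigma\sqrt{\gamma}G\geq t/2}-A-B]$ and invoking the symmetry of $(G,\tilde G)$, whereas you collapse the integrand pointwise via $\Phibf(-x)=1-\Phibf(x)$ before integrating.
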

\begin{proof}
    Using the changes of variable $g \mapsto \sigma \gamma^{1/2}g$, $g \mapsto g-t$, we get
\begin{align}
  &  \int_\rset \parentheseDeux{1-2\Phibf
    \parenthese{-\frac{t+s-2\sigma\sqrt{\gamma} g}{2a}}}\bpg(t,g)\varphibf(g)\rmd g \\
  &\qquad \qquad  = \int_{-\infty}^{-t/2} \parentheseDeux{1-2\Phibf\parenthese{-\frac{-t+s-2 g}{2a}}}  \varphibf_{\sigma^2\gamma}\parenthese{ g } \rmd g \\
  &\qquad \qquad \qquad 
   \qquad \qquad  +\int_{t/2}^{+\infty}
    \parentheseDeux{1-2\Phibf\parenthese{-\frac{t+s-2 g}{2a}}}  \varphibf_{\sigma^2\gamma}\parenthese{ g } \rmd g\\
  \label{eq:proof_gaussian_distr_follow}
  & \qquad \qquad  = 2 [\probaLigne{\sigma \gamma^{1/2} G \geq t/2}  -A-B]
\end{align}
\begin{equation}
A   =\int_{-\infty}^{-t/2}\Phibf\parenthese{-\frac{-t+s-2 g}{2a}}  \varphibf_{\sigma^2\gamma}\parenthese{ g } \rmd g \eqsp, \, B= \int_{t/2}^{+\infty}
\Phibf\parenthese{-\frac{t+s-2 g}{2a}}  \varphibf_{\sigma^2\gamma}\parenthese{ g } \rmd g  \eqsp. 
\end{equation}
In addition, we have since $(-G,-\tilde{G})$ has the same distribution than $(G,\tilde{G})$, 
\begin{multline}
  A  = \proba{\sigma\sqrt{\gamma} G\leq -\frac{t}{2}, \tilde{G}\leq-\frac{-t+s-2 \sigma\sqrt{\gamma}G}{2a} } \\= \proba{\sigma\sqrt{\gamma} G\geq \frac{t}{2}, \tilde{G}\geq\frac{-t+s+2 \sigma\sqrt{\gamma}G}{2a} } \eqsp ,
\end{multline}
and 
\begin{equation}
  B  = \proba{\sigma\sqrt{\gamma} G\geq \frac{t}{2}, \tilde{G}\leq-\frac{t+s-2 \sigma\sqrt{\gamma}G}{2a} } \eqsp .
\end{equation}
Therefore, we obtain 
\begin{align}
  &A+B\\
  &=\proba{\sigma\sqrt{\gamma} G\geq t/2}-\proba{\sigma\sqrt{\gamma} G\geq t/2, -\frac{t+s-2 \sigma\sqrt{\gamma}G}{2a}\leq\tilde{G}\leq\frac{-t+s+2 \sigma\sqrt{\gamma}G}{2a} }\\
   &=\proba{\sigma\sqrt{\gamma} G\geq t/2}-\proba{\sigma\sqrt{\gamma} G \geq t/2, -s-t\leq 2a \tilde{G}-2\sigma\sqrt{\gamma} G\leq s-t }
\end{align}
Plugging this expression in \eqref{eq:proof_gaussian_distr_follow} concludes the proof.  % \rem{P : le résultat de ce lemme englobe le lemme précédent (cas $s=0$) et sa démo ne l'utilise pas, du coup le lemme précédent est-il très utile ?}
\end{proof}

\begin{lemma}
\label{lem:Phi_inequality}
For any $a\in \rset$, $b\in \ccint{0,1}$, it holds
\begin{equation}
\Phibf\parenthese{a+b}-\Phibf\parenthese{a-b}\geq 1-2\Phibf\parenthese{-b} -a^2b\exp(-b^2/2)/\sqrt{2\uppi} \eqsp,
\end{equation}
where $\Phibf$ are \tcwc{the cumulative
distribution function} of the one-dimensional Gaussian distribution
with mean $0$ and variance $1$.
\end{lemma}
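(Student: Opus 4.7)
The plan is to introduce the auxiliary function
\[ F(a) \ = \ 1 - 2\Phibf(-b) - \bigl[\Phibf(a+b) - \Phibf(a-b)\bigr] \]
and to show that $F(a) \leq a^2 b\, e^{-b^2/2}/\sqrt{2\uppi}$ for every $a \in \rset$ and $b \in [0,1]$, which is exactly the target inequality after rearrangement. First I would observe that $F$ is even in $a$ (using $\Phibf(-x) = 1 - \Phibf(x)$), so one can assume without loss of generality that $a \geq 0$. Since $F(0) = 0$, an application of the fundamental theorem of calculus, together with a direct expansion of the exponents, gives
\[ F(a) \ = \ \int_0^a F'(s)\, ds \ = \ \frac{2\, e^{-b^2/2}}{\sqrt{2\uppi}} \int_0^a e^{-s^2/2} \sinh(sb)\, ds, \]
from $F'(s) = \varphibf(s-b) - \varphibf(s+b) = \frac{2}{\sqrt{2\uppi}} e^{-(s^2+b^2)/2} \sinh(sb)$.

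The heart of the argument is the pointwise estimate $e^{-s^2/2}\sinh(sb) \leq sb$ for all $s \geq 0$ and all $b \in [0,1]$. I would derive it from the elementary inequality $\sinh(x) \leq x\, e^{x^2/2}$, valid for every $x \geq 0$, obtained by comparing Taylor coefficients: one has $\sinh(x) = \sum_{k \geq 0} x^{2k+1}/(2k+1)!$ and $x\, e^{x^2/2} = \sum_{k \geq 0} x^{2k+1}/(2^k k!)$, and $2^k k! \leq (2k+1)!$ holds for every $k \in \nset$. Specializing to $x = sb$ gives $\sinh(sb) \leq sb \cdot e^{s^2 b^2/2}$, and the assumption $b^2 \leq 1$ then yields $e^{s^2 b^2/2} \leq e^{s^2/2}$, producing the required pointwise bound.

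Plugging this estimate into the integral representation of $F(a)$ yields
\[ F(a) \ \leq \ \frac{2\, e^{-b^2/2}}{\sqrt{2\uppi}} \int_0^a sb\, ds \ = \ \frac{a^2 b\, e^{-b^2/2}}{\sqrt{2\uppi}}, \]
which is exactly the claim. I do not foresee any serious obstacle: the whole argument reduces to the innocuous series comparison above, and the hypothesis $b \in [0,1]$ is precisely what is needed for the Gaussian factor $e^{-s^2/2}$ to dominate the hyperbolic sine via the factorial inequality $2^k k! \leq (2k+1)!$.
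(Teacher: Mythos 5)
Your proof is correct and follows essentially the same route as the paper's: both reduce by evenness in $a$ to one sign of $a$, use that the difference vanishes at $a=0$, and control the $a$-derivative of the Gaussian difference by the derivative of the quadratic term via the elementary bound $e^{-s^2/2}\sinh(sb)\leq sb$ for $b\in[0,1]$ (the paper gets it from $\sinh(t)\leq t\cosh(t)$ and $\cosh(t)\leq e^{t^2/2}$, you from the equivalent series comparison $\sinh(x)\leq x\,e^{x^2/2}$). The only cosmetic difference is that you integrate the pointwise bound while the paper phrases it as monotonicity of the combined function.
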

\begin{proof}
Define $\psi : \rset \times \ccint{0,1} \to \rset$  for any $a\in \rset$, $b\in \ccint{0,1}$ by
\begin{equation}
\psi(a,b)=\Phibf\parenthese{a+b}-\Phibf\parenthese{a-b}- 1+2\Phibf\parenthese{-b} +a^2b\exp(-b^2/2)/\sqrt{2\uppi} \eqsp. 
\end{equation}
We show that $\psi(a,b) \geq  0$ for any $a \in \rset$ and
$b \in \ccint{0,1}$.  Using that $1-\Phibf(-t) = \Phibf(t)$ for any
$t \in \rset$, we get that $\psi(a,b)=\psi(-a,b)$ and therefore we
only need to consider the case $a \leq 0$ and $b \in \ccint{0,1}$. In
addition, for any $b \in \ccint{0,1}$, $\psi(0,b) = 0$ and thus, it is sufficient to establish
that for any $b \in \ccint{0,1}$,  $a \mapsto \psi(a,b)$ is non-increasing on $\rset_-$.

For any $a \leq 0$ and
$b \in \ooint{0,1}$, we have using that
$\sinh(t) = \int_0^t \cosh(s)\rmd s \leq t \cosh(t)$ and
$\rme^{-t^2/2} \cosh(t) \leq 1$ for any $t \in \coint{0,\plusinfty}$,
\begin{align}
\sqrt{2\uppi}\exp(b^2/2)\frac{\partial \psi}{\partial a}(a,b)&=  2 \exp(-a^2/2)\sinh(-ab)+2ab\\
&< -2ab[\exp(-a^2b^2/2)\cosh(ab)-1]\leq 0 \eqsp. 
\end{align}
By continuity, it also holds for $a \leq 0$ and $b \in \ccint{0,1}$ which concludes the proof. 
\end{proof}

  \begin{lemma}
\label{lem:technique_lemma_induction_v2}
Assume \Cref{ass:LipsAnd}-\ref{ass:LipsAnd_2}. For any $w\in \coint{0,\plusinfty}$ and $\alpha,\beta\in \coint{0,\plusinfty}$, $\beta >0$,
\begin{align}
&  \int_{(0,+\infty)} \parentheseDeux{1-2\Phibf\parenthese{-\frac{\tau_{\gamma}(\tilde{w})+\alpha}{2\beta}}}Q_\gamma(w,\rmd \tilde{w})  \leq 1-2\Phibf\parenthese{-\frac{\tau_\gamma(w)+\gamma \InftyBound+\alpha/(1+\gamma\Ltt)}{2\sqrt{\sigma^2\gamma+\beta^2/(1+\gamma\Ltt)^2}}} \\
  & \qquad - \int_\rset \parentheseDeux{1-2\Phibf\parenthese{-\frac{\uppsi_{\gamma}(w)-2\sigma\sqrt{\gamma} g}{2\beta/(1+\gamma\Ltt)}}}\bpg(\tau_\gamma(w)+\gamma \InftyBound,g))\varphibf(g) \rmd g \\
  &  \qquad \leq  1-2\Phibf\parenthese{-\frac{\tau_\gamma(w)+\gamma \InftyBound+\alpha/(1+\gamma\Ltt)}{2\sqrt{\sigma^2\gamma+\beta^2/(1+\gamma\Ltt)^2}}} \eqsp,
\end{align}
 where $\Phibf$ are the density and the cumulative
distribution function of the one-dimensional Gaussian distribution
with mean $0$ and variance $1$. 
\end{lemma}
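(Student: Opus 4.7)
The proof of this lemma is a simplified version of that of \Cref{lem:technique_lemma_induction}: restricting the integration domain to $(0,+\infty)$ eliminates both the atom-at-zero contribution of $Q_\gamma$ and the subsequent need to control a residual probability from below, and with them the constraint $\alpha/(2\beta) \leq 1$ and the additive error term $\zeta \gamma \alpha/\beta^3$.

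Concretely, by the explicit form of $Q_\gamma$ in \eqref{def_q_gamma}, the atom at $0$ contributes nothing to the integral restricted to $(0,+\infty)$, so the left-hand side of the inequality equals precisely the continuous (non-atom) summand appearing in the decomposition \eqref{eq:proof_induciton_0_0} used at the beginning of the proof of \Cref{lem:technique_lemma_induction}. I would then apply verbatim the sequence of steps carried out there on this same integral: the pointwise bound from the Lipschitz estimate $\tau_\gamma(r) \leq (1+\gamma\Ltt)r$ combined with the monotonicity of $t \mapsto 1 - 2\Phibf(-t)$, followed by \cite[Lemma~20]{durmus2019high}, \Cref{lem:null_term}, and \Cref{lem:proba_form}. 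These steps produce exactly the intermediate bound \eqref{eq:proof_induciton_0_0_0}, which is the target right-hand side of the lemma minus $2\mathbb{P}(\{2\sigma\sqrt\gamma G \geq \tau_\gamma(w)+\gamma\InftyBound\}\cap \msa)$, where $\msa$ is the event defined in \eqref{eq:def_mca_proof_induciton}.

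The final step is then simply to discard this non-negative probability term. This is the only point where the argument departs from that of \Cref{lem:technique_lemma_induction}: there, the atom-at-zero contribution had to be compensated by bounding the probability from below via \Cref{lem:Phi_inequality}, which both forced the constraint $\alpha/(2\beta) \leq 1$ and produced the $\zeta \gamma \alpha/\beta^3$ error. Here neither offset nor lower bound is required, and I do not foresee any genuine obstacle: the content of the lemma is essentially the observation that the sharper inequality can be read off directly from an intermediate estimate already established in the earlier proof.
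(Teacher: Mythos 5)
Your proposal is correct and matches the paper's own proof: the paper likewise observes that on $(0,+\infty)$ the atom at $0$ contributes nothing, bounds the remaining continuous part via the monotonicity of $t\mapsto 1-2\Phibf(-t)$ and $\tau_\gamma(r)\leq(1+\gamma\Ltt)r$, applies \cite[Lemma 20]{durmus2019high}, \Cref{lem:null_term} and \Cref{lem:proba_form}, and then simply drops the nonnegative probability term, so that neither the constraint $\alpha/(2\beta)\leq 1$ nor the $\zeta\gamma\alpha/\beta^3$ correction (i.e. \Cref{lem:Phi_inequality}) is needed.
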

\begin{proof}
  Let $\alpha, \beta \geq 0$, $\beta >0$. By \eqref{def_q_gamma}, we have 
\begin{align}
&  \int_{(0,+\infty)} \parentheseDeux{1-2\Phibf\parenthese{-\frac{\tau_{\gamma}(\tilde{w})+\alpha}{2\beta}}}Q_\gamma(w,\rmd \tilde{w}) \\
&=\int_\rset \parentheseDeux{1-2\Phibf\parenthese{-\frac{\tau_{\gamma}(\tau_{\gamma}(w)+\gamma \InftyBound-2\sigma\sqrt{\gamma}g)+\alpha}{2\beta}}}(1-\bpg(\tau_\gamma(w)+\gamma \InftyBound,g))\varphibf(g) \rmd g\eqsp.
\end{align}
By \Cref{ass:LipsAnd}-\ref{ass:LipsAnd_2}, $t \mapsto 1-2\Phibf(-t)$ is increasing,  we have setting $\uppsi_{\gamma}(w) = \tau_\gamma(w)+\gamma \InftyBound+\alpha/(1+\gamma\Ltt)$,
\begin{equation}
  1-2\Phibf\parenthese{-\frac{\tau_{\gamma}(\tau_\gamma(w)+\gamma \InftyBound-2\sigma\sqrt{\gamma}g)+\alpha}{2\beta}} 
   \leq  1-2\Phibf\parenthese{-\frac{\uppsi_{\gamma}(w)-2\sigma\sqrt{\gamma} g}{2\beta/(1+\gamma\Ltt)}} \eqsp. 
\end{equation}
Using \cite[Lemma 20]{durmus2019high}, \Cref{lem:null_term} and \Cref{lem:proba_form}, we get 
\begin{align}
&\int_\rset \parentheseDeux{1-2\Phibf\parenthese{-\frac{\tau_{\gamma}(\tau_\gamma(w)+\gamma \InftyBound-2\sigma\sqrt{\gamma}g)+\alpha}{2\beta}}}(1-\bpg(\tau_\gamma(w)+\gamma \InftyBound,g))\varphibf(g) \rmd g\\
&\leq\int_\rset \parentheseDeux{1-2\Phibf\parenthese{-\frac{\uppsi_{\gamma}(w)-2\sigma\sqrt{\gamma} g}{2\beta/(1+\gamma\Ltt)}}}(1-\bpg(\uppsi_{\gamma}(w) ,g))\varphibf(g) \rmd g\\
&\qquad - \int_\rset \parentheseDeux{1-2\Phibf\parenthese{-\frac{\uppsi_{\gamma}(w)-2\sigma\sqrt{\gamma} g}{2\beta/(1+\gamma\Ltt)}}}\bpg(\tau_\gamma(w)+\gamma \InftyBound,g))\varphibf(g) \rmd g\\
    \label{eq:proof_induciton_0}
&\leq  1-2\Phibf\parenthese{-\frac{\uppsi_{\gamma}(w)}{2\sqrt{\sigma^2\gamma+\beta^2/(1+\gamma\Ltt)^2}}}  \eqsp,
\end{align}
which completes the proof. 
\end{proof}

\begin{lemma}
\label{lem:technique_lemma_induction}
Assume \Cref{ass:LipsAnd}-\ref{ass:LipsAnd_2}. For any $w\in \coint{0,\plusinfty}$ and $\alpha,\beta\in \coint{0,\plusinfty}$, \tcwc{$\beta>0$,} such that $\alpha/(2\beta)\leq 1$,
\begin{multline}
  \int_{[0,+\infty)} \parentheseDeux{1-2\Phibf\parenthese{-\frac{\tau_{\gamma}(\tilde{w})+\alpha}{2\beta}}}Q_\gamma(w,\rmd \tilde{w})\\
  \leq 1-2\Phibf\parenthese{-\frac{\tau_\gamma(w)+\gamma \InftyBound+\alpha/(1+\gamma\Ltt)}{2\sqrt{\sigma^2\gamma+\beta^{\tcwc{2}}/(1+\gamma\Ltt)^2}}} +\zeta\frac{\gamma\alpha}{\beta^3}\eqsp,
\end{multline}
where
 $\zeta$  is defined in \eqref{eq:def_zeta}.
\end{lemma}
\begin{proof}
  Let $\alpha, \beta \geq 0$ such that $\alpha/(2\beta)\leq 1$. By \tcr{\Cref{lem:0_proba}} and \Cref{lem:technique_lemma_induction_v2}, we have 
  \begin{align}
    &  \int_{\tcr{[}0,+\infty)} \parentheseDeux{1-2\Phibf\parenthese{-\frac{\tau_{\gamma}(\tilde{w})+\alpha}{2\beta}}}Q_\gamma(w,\rmd \tilde{w})  \leq 1-2\Phibf\parenthese{-\frac{\tau_\gamma(w)+\gamma \InftyBound+\alpha/(1+\gamma\Ltt)}{2\sqrt{\sigma^2\gamma+\beta^2/(1+\gamma\Ltt)^2}}} \\ &\qquad+\parentheseDeux{1-2\Phibf\parenthese{-\frac{\tau_{\gamma}(0)+\alpha}{2\beta}}}2\Phibf\parenthese{-\frac{\tau_\gamma(w)+\gamma \InftyBound}{2\sigma\sqrt{\gamma}}} \\
  & \qquad - \int_\rset \parentheseDeux{1-2\Phibf\parenthese{-\frac{\uppsi_{\gamma}(w)-2\sigma\sqrt{\gamma} g}{2\beta/(1+\gamma\Ltt)}}}\bpg(\tau_\gamma(w)+\gamma \InftyBound,g))\varphibf(g) \rmd g \\
% &  \int_{[0,+\infty)} \parentheseDeux{1-2\Phibf\parenthese{-\frac{\tau_{\gamma}(\tilde{w})+\alpha}{2\beta}}}Q_\gamma(w,\rmd \tilde{w}) \\
% &=
% 1-2\Phibf\parenthese{-\frac{\tau_\gamma(w)+\gamma \InftyBound+\alpha/(1+\gamma\Ltt)}{2\sqrt{\sigma^2\gamma+\beta^2/(1+\gamma\Ltt)^2}}}\\
% &\qquad+\parentheseDeux{1-2\Phibf\parenthese{-\frac{\tau_{\gamma}(0)+\alpha}{2\beta}}}\int_\rset \bpg(\tau_{\gamma}(w)+\gamma \InftyBound,g)\varphibf(g) \rmd g \\
% &=
% 1-2\Phibf\parenthese{-\frac{\tau_\gamma(w)+\gamma \InftyBound+\alpha/(1+\gamma\Ltt)}{2\sqrt{\sigma^2\gamma+\beta^2/(1+\gamma\Ltt)^2}}}\\
%      \label{eq:proof_induciton_0_0}
&   \leq 1-2\Phibf\parenthese{-\frac{\tau_\gamma(w)+\gamma \InftyBound+\alpha/(1+\gamma\Ltt)}{2\sqrt{\sigma^2\gamma+\beta^2/(1+\gamma\Ltt)^2}}} +\parentheseDeux{1-2\Phibf\parenthese{-\frac{\alpha}{2\beta}}}2\Phibf\parenthese{-\frac{\tau_\gamma(w)+\gamma \InftyBound}{2\sigma\sqrt{\gamma}}} \\
    & \qquad  -2 \proba{\{2 \sigma\sqrt{\gamma} G \geq \tau_\gamma(w)+\gamma\InftyBound\} \cap \msa} \eqsp,
      \label{eq:proof_induciton_0_0}
\end{align}
where we used $\tau_{\gamma}(0)=0$ and set 
\begin{equation}
  \label{eq:def_mca_proof_induciton}
  \msa=\defEns{-\tau_\gamma(w)-\gamma\InftyBound-\frac{\alpha}{1+\gamma\Ltt}\leq \frac{2\beta\tilde{G} }{1+\gamma \Ltt} -2\sigma \sqrt{\gamma} G\leq -\tau_\gamma(w)-\gamma\InftyBound+\frac{\alpha}{1+\gamma\Ltt}} \eqsp,
\end{equation}
and $G,\tilde{G}$ are two independent one-dimensional standard
Gaussian random variables.

Define $\uptheta_{\gamma} : [0,+\infty) \times \rset \to \rset$ for any
$w \in \coint{0,\plusinfty}$ and $g \in \rset$ by
$\uptheta_{\gamma}(w,g) =(2\beta)^{-1}(1+\gamma\Ltt)[-\tau_\gamma(w)-\gamma\InftyBound+\sigma\sqrt{\gamma}g]$.  Then, using that
$$\msa=\defEns{\uptheta_{\gamma}(w,G)-\alpha/(2\beta)\leq \tilde{G}\leq
\uptheta_{\gamma}(w,G)+\alpha/(2\beta)} \eqsp,$$ we have
\begin{align}
&\proba{\{2 \sigma\sqrt{\gamma} G \geq \tau_\gamma(w)+\gamma\InftyBound\} \cap \msa}\\&=\int_{\rset} \1_{[0,+\infty)}\parenthese{g-\frac{\tau_\gamma(w)+\gamma\InftyBound}{2\sigma\sqrt{\gamma}}}\parentheseDeux{\Phibf\parenthese{\uptheta_{\gamma}(w,g)+\frac{\alpha}{2\beta}}-\Phibf\parenthese{\uptheta_{\gamma}(w,g)-\frac{\alpha}{2\beta}}}\varphibf(g)\rmd g \eqsp.
\end{align}

Since $\alpha/(2\beta)\leq 1$ by \Cref{lem:Phi_inequality} we have, for any $a\in \rset$,
\begin{equation}
\Phibf\parenthese{a+\frac{\alpha}{2\beta}}-\Phibf\parenthese{a-\frac{\alpha}{2\beta}}\geq 1-2\Phibf\parenthese{-\frac{\alpha}{2\beta}}-\frac{a^2\alpha}{2\sqrt{2\uppi}\beta}\rme^{-\alpha^2/(8\beta^2)} \eqsp,
\end{equation}
which implies 
\begin{multline}
\proba{\{2 \sigma\sqrt{\gamma} G \geq \tau_\gamma(w)+\gamma\InftyBound\} \cap \msa} \geq \Phibf\parenthese{-\frac{\tau_\gamma(w)+\gamma \InftyBound}{2\sigma\sqrt{\gamma}}}\parenthese{1-2\Phibf\parenthese{-\frac{\alpha}{2\beta}}}
\\    -\int_{\rset} \1_{[0,+\infty)}\parenthese{g-\frac{\tau_\gamma(w)+\gamma\InftyBound}{2\sigma\sqrt{\gamma}}}\frac{\uptheta^2_{\gamma}(g,w)\alpha}{2\sqrt{2\uppi}\beta}\rme^{-\alpha^2/(8\beta^2)}\varphibf(g)\rmd g \eqsp. 
\end{multline}
Therefore, we obtain using that $\PE[\1_{[0,+\infty)}(G) G^2] = 1/2$,
\begin{align}
&\parenthese{1-2\Phibf\parenthese{-\frac{\alpha}{2\beta}}}  \Phibf\parenthese{-\frac{\tau_\gamma(w)+\gamma \InftyBound}{2\sigma\sqrt{\gamma}}}-\proba{\{2 \sigma\sqrt{\gamma} G \geq \tau_\gamma(w)+\gamma\InftyBound\} \cap \msa}\\
&\leq \int_{\rset} \1_{[0,+\infty)}\parenthese{g-\frac{\tau_\gamma(w)+\gamma\InftyBound}{2\sigma\sqrt{\gamma}}}\frac{\uptheta^2_{\gamma}(g,w)\alpha}{2\sqrt{2\uppi}\beta}\rme^{-\alpha^2/(8\beta^2)}\varphibf(g)\rmd g\\
&\leq \frac{\alpha\gamma}{\beta^3}\frac{(1+\gamma\Ltt)^2\sigma^2}{2\sqrt{2\uppi}}\rme^{-\alpha^2/(8\beta^2)}\Bigg[\parenthese{\frac{\tau_\gamma(w)+\gamma\InftyBound}{2\sigma\sqrt{\gamma}}}^2\Phibf\parenthese{-\frac{\tau_\gamma(w)+\gamma\InftyBound}{2\sigma\sqrt{\gamma}}}\\
&\qquad-\frac{1}{\sqrt{2\uppi}}\frac{\tau_\gamma(w)+\gamma\InftyBound}{2\sigma\sqrt{\gamma}}\exp\parenthese{-\parenthese{\frac{\tau_\gamma(w)+\gamma\InftyBound}{2\sigma\sqrt{\gamma}}}^2\middle/2}+1/8\Bigg] \eqsp.
\end{align}
Plugging this inequality  in \eqref{eq:proof_induciton_0_0} concludes the proof.
\end{proof}

\begin{lemma}
  \label{lem:bound_alpha_beta}
For any $\gamma>0$, $k\geq 1$, we have 
\begin{align}
& k\gamma\InftyBound \rme^{-k \gamma \Ltt} \leq \alpha_k
                                                              \leq k\gamma\InftyBound \eqsp, 
\qquad  (k\gamma)^{1/2}\sigma \rme^{-k \gamma \Ltt} \leq \beta_k
                                                              \leq (k\gamma)^{1/2}\sigma \eqsp, \\
&  [(k\gamma)^{1/2}\InftyBound/\sigma] \rme^{-k \gamma \Ltt} \leq \alpha_k/\beta_k
  \leq [(k\gamma)^{1/2}\InftyBound/\sigma]\rme^{k \gamma \Ltt} \\
    \label{eq:sum_alpha_beta_cube_0}
& [\InftyBound\gamma^{1/2}/(\sigma^3k^{1/2})]\rme^{-k\gamma\Ltt} \leq \gamma\alpha_k/\beta_k^3\leq [\InftyBound\gamma^{1/2}/(\sigma^3k^{1/2})]\rme^{3k\gamma\Ltt} \\
  \label{eq:sum_alpha_beta_cube}
&\textstyle{  \gamma \sum_{i=1}^{k-1} \{\alpha_i/\beta_i^3\}\leq [2\InftyBound(k\gamma)^{1/2}/\sigma^3]\rme^{3k\gamma\Ltt}}\eqsp,
\end{align}

%\begin{equation}
%\sqrt{2+\gamma \Ltt}\frac{\sqrt{\gamma k}\InftyBound}{2\sigma\sqrt{\rme^{k\gamma\Ltt}+1}}\leq \frac{\alpha_k}{2\beta_k}\leq \frac{\sqrt{\gamma k}\InftyBound}{2\sqrt{2}\sigma}\rme^{k\gamma\Ltt/2}\sqrt{2+\gamma \Ltt}
%\end{equation}
%and 
%\begin{equation}
%\sigma^2\gamma k \rme^{-2L}\leq \beta_k^2\leq \sigma^2\gamma k
%\end{equation}
%\begin{equation}
%\Phibf\parenthese{-\frac{\alpha_k}{2\beta_k}}-\Phibf\parenthese{-\frac{\alpha_{k-1}}{2\beta_{k-1}}}\leq -\frac{\InftyBound}{4\sigma}\sqrt{\frac{\gamma}{k}}\sqrt{2+\gamma \Ltt}\frac{(1+\gamma \Ltt)^{k-1}}{\sqrt{2\pi}\rme^{\gamma k \Ltt}}\rme^{-\InftyBound^2\rme^{2\Ltt}(2+\gamma \Ltt)/(16\sigma^2)}
%\end{equation}
where $(\alpha_k)_{k\geq 1},(\beta_k)_{k\geq 1}$ are defined in
\eqref{eq:def_alpha_beta}.
\end{lemma}
\begin{proof}
Let $k\geq 1$. 
Using for any $i\in \mathbb{N}$, $\rme^{-i\gamma\Ltt}\leq (1+\gamma\Ltt)^{-i}\leq 1$, we have 
\begin{equation}
  \label{eq:bound_alpha_beta_proof}
k\gamma\InftyBound\rme^{-k\gamma\Ltt}\leq\gamma\InftyBound\sum_{i=0}^{k-1}(1+\gamma\Ltt)^{-i}\leq k\gamma\InftyBound \eqsp.
\end{equation}
In the same way, using for any $i\in \mathbb{N}$, $\rme^{-2i\gamma\Ltt}\leq (1+\gamma\Ltt)^{-2i}\leq 1$, we obtain
\begin{equation}
    \label{eq:bound_alpha_beta_proof_2}
k\gamma\sigma^2\rme^{-2k\gamma\Ltt}\leq \gamma\sigma^2\sum_{i=0}^{k-1}(1+\gamma\Ltt)^{-2i}\leq k\gamma\sigma^2 \eqsp. 
\end{equation}
Combining \eqref{eq:bound_alpha_beta_proof} and \eqref{eq:bound_alpha_beta_proof_2} completes the proof of the first four inequalities. Then, \eqref{eq:sum_alpha_beta_cube} is a simple consequence of  \eqref{eq:sum_alpha_beta_cube_0} and a comparison test. 
\end{proof}

  \subsection{Proof of \Cref{theo:convergence_Q_gamma}}
\label{sec:proof:theo:convergence_Q_gamma}

We show first that any compact sets of $\rset_+$ are $k+1$-small sets (also called $(\varepsilon_k,k)$ Doeblin sets) with an explicit constant $\varepsilon_k >0$ depdending on $k \in\nset$.
To this end, we need the following technical lemma.

  We consider in what follows that $(W_k)_{ k\in\nset}$ is the
  canonical process on
  \sloppy$([0,+\infty)^{\nset}, \mcbb([0,+\infty))^{\otimes \nset})$ and for any
  $w \in \coint{0,\plusinfty}$, $\PP_{w}$ and $\PE_{w}$ correspond to the probability
  and expectation respectively, associated with $Q_{\gamma}$ and the initial condition $\updelta_w$ on this space. 

  \begin{lemma}
    \label{lem:min_w_i_zero}
    Assume \Cref{ass:LipsAnd}-\ref{ass:LipsAnd_2}. For any $k \in\nset$ and $w \in \ooint{0,\plusinfty}$,
    \begin{equation}
      \label{eq:2}
      \probaMarkov{w}{ \min_{i \in\{0,\ldots,k+1\}} W_i >0} \leq 1 - 2 \Phibf\parentheseDeux{ -\frac{\tau_{\gamma}(w)+ \alpha_{k+1}}{2\beta_{k+1}}} \eqsp,
    \end{equation}
where $\alpha_{k+1}, \beta_{k+1}$ are given in \eqref{eq:def_alpha_beta}. 
  \end{lemma}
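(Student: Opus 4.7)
The plan is to proceed by induction on $k$, using \Cref{lem:0_proba} for the base case and \Cref{lem:technique_lemma_induction_v2} together with the Markov property for the inductive step. The key algebraic fact is that the sequences $(\alpha_k)_{k\geq 1}$ and $(\beta_k)_{k\geq 1}$ defined in \eqref{eq:def_alpha_beta} satisfy the recursions
\[
\alpha_{k+1} \ = \ \gamma \InftyBound + \frac{\alpha_k}{1+\gamma \Ltt}\,, \qquad \beta_{k+1}^2 \ = \ \sigma^2 \gamma + \frac{\beta_k^2}{(1+\gamma \Ltt)^2}\,,
\]
which can be verified by splitting the sums in \eqref{eq:def_alpha_beta} and factoring out $(1+\gamma\Ltt)^{-1}$ (resp. $(1+\gamma\Ltt)^{-2}$). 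These are precisely the updates appearing on the right-hand side of \Cref{lem:technique_lemma_induction_v2}.

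For the base case $k=0$, since $w>0$ by assumption we have $W_0=w>0$ almost surely under $\PP_w$, and hence $\PP_w(\min_{i\in\{0,1\}}W_i>0)=Q_\gamma(w,(0,+\infty))$. By \Cref{lem:0_proba} this equals $1-2\Phibf(-(\tau_\gamma(w)+\gamma\InftyBound)/(2\sigma\sqrt\gamma))$, which matches the desired bound since $\alpha_1=\gamma\InftyBound$ and $\beta_1=\sigma\sqrt\gamma$.

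For the inductive step, assume the bound holds for some $k-1\geq 0$. Conditioning on $W_1$ and using the Markov property,
\[
\PP_w\Big(\min_{i\in\{0,\ldots,k+1\}}W_i>0\Big) \ = \ \int_{(0,+\infty)} \PP_{\tilde w}\Big(\min_{i\in\{0,\ldots,k\}}W_i>0\Big)\, Q_\gamma(w,\rmd \tilde w)\,.
\]
The induction hypothesis bounds the integrand by $1-2\Phibf(-(\tau_\gamma(\tilde w)+\alpha_k)/(2\beta_k))$ on $(0,+\infty)$, and applying \Cref{lem:technique_lemma_induction_v2} with $\alpha=\alpha_k$ and $\beta=\beta_k$ yields
\[
\PP_w\Big(\min_{i\in\{0,\ldots,k+1\}}W_i>0\Big) \ \leq \ 1-2\Phibf\Big(-\frac{\tau_\gamma(w)+\gamma\InftyBound+\alpha_k/(1+\gamma\Ltt)}{2\sqrt{\sigma^2\gamma+\beta_k^2/(1+\gamma\Ltt)^2}}\Big)\,.
\]
The recursions recalled above identify the numerator as $\tau_\gamma(w)+\alpha_{k+1}$ and the denominator as $2\beta_{k+1}$, closing the induction.

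The main obstacle is essentially cosmetic: one must make sure to formulate the Markov-property step so that the integrand is viewed on $(0,+\infty)$ (which forces the chain to avoid the atom at zero at time $1$), in order to match the integration domain of \Cref{lem:technique_lemma_induction_v2}. This is why the induction is stated with strict positivity on $\{0,\ldots,k+1\}$ rather than absorbing at~$0$.
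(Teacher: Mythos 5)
Your proposal is correct and follows essentially the same route as the paper's proof: induction on $k$, with the base case given by \Cref{lem:0_proba} and the inductive step obtained by conditioning on $W_1$ via the Markov property and applying \Cref{lem:technique_lemma_induction_v2}. Spelling out the recursions $\alpha_{k+1}=\gamma\InftyBound+\alpha_k/(1+\gamma\Ltt)$ and $\beta_{k+1}^2=\sigma^2\gamma+\beta_k^2/(1+\gamma\Ltt)^2$ is a nice touch the paper leaves implicit, but it does not change the argument.
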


  \begin{proof}
    The proof is by induction on $k \in \nset$. The proof for $k =0$ follows from \Cref{lem:0_proba}. Assume that the result holds for $k-1 \in \nset$ and for any $w \in \ooint{0,\plusinfty}$. Then, by the Markov property and the assumption hypothesis,  for any $w \in \ooint{0,\plusinfty}$,
    \begin{multline}
      \label{eq:3}
      \probaMarkov{w}{\min_{i \in\{0,\ldots,k\}} W_i >0 } =       \expeMarkov{w}{\1_{(0,+\infty)}(W_1) \probaMarkov{W_1}{\min_{i \in\{0,\ldots,k-1\}} W_i >0 }} \\
      \leq  \expeMarkov{w}{\1_{(0,+\infty)}(W_1) \defEns{1 - 2 \Phibf\parenthese{ -\frac{\tau_{\gamma}(W_1)+ \alpha_{k}}{2\beta_{k}}}}} \eqsp.
    \end{multline}
    The proof is then completed upon using \Cref{lem:technique_lemma_induction_v2}.
  \end{proof}

We now ready to show that any compact set of $\rset_+$ is $k+1$-small. 

  \begin{lemma}
    \label{lem:minorization}
    Assume \Cref{ass:LipsAnd}-\ref{ass:LipsAnd_2}.  Then, for any $k \in \nset$, $w,\tw \in \coint{0,\plusinfty}$,
    \begin{equation}
      \label{eq:4}
      \tvnorm{\updelta_w Q_{\gamma}^{k+1} - \updelta_{\tw} Q_{\gamma}^{k+1}} \leq 1 - 2 \Phibf\parentheseDeux{ -\frac{\tau_{\gamma}(w\vee \tw)+ \alpha_{k+1}}{2\beta_{k+1}}} \eqsp,
    \end{equation}
    where $\alpha_{k+1}, \beta_{k+1}$ are given in \eqref{eq:def_alpha_beta}. 
  \end{lemma}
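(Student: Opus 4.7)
The plan is to construct a synchronous monotone coupling of the two chains and reduce the bound to \Cref{lem:min_w_i_zero}. Without loss of generality we can assume $\tw \geq w$, and we may further assume $\tw > 0$ since otherwise both points equal $0$ and the inequality is trivial. Let $(G_k)_{k \geq 1}$ be \iid{} standard Gaussian variables independent of \iid{} uniform variables $(U_k)_{k \geq 1}$ on $[0,1]$, and define two processes $(W_i)_{i \in \nset}$ and $(\tilde W_i)_{i \in \nset}$ by the recursion \eqref{eq:def_r} driven by the \emph{same} sequences $(G_k,U_k)_{k\geq 1}$, starting from $W_0 = w$ and $\tilde W_0 = \tw$.

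The first key step is to exploit the stochastic monotonicity of $Q_\gamma$ established right after \Cref{lem:G_inequality}: since $w \leq \tw$ and both chains use the same noise, one has $W_i \leq \tilde W_i$ almost surely for every $i \in \nset$. The second key step is to notice that this coupling becomes a true merger as soon as the larger chain hits zero. Indeed, defining the hitting time $\tau = \inf\{ i \geq 0 \,: \, \tilde W_i = 0 \}$, on the event $\{\tau \leq k+1\}$ we have $0 \leq W_\tau \leq \tilde W_\tau = 0$, so $W_\tau = \tilde W_\tau = 0$, and then, because the kernel $Q_\gamma$ is applied with identical random inputs from that step on, $W_i = \tilde W_i$ for all $i \geq \tau$. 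In particular,
\begin{equation}
\label{eq:plan_minor_coupling}
\tvnorm{\updelta_w Q_{\gamma}^{k+1} - \updelta_{\tw} Q_{\gamma}^{k+1}} \;\leq\; \PP\parenthese{W_{k+1} \neq \tilde W_{k+1}} \;\leq\; \PP\parenthese{\tau > k+1} \;=\; \PP\parenthese{\min_{0 \leq i \leq k+1} \tilde W_i > 0}\,.
\end{equation}

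The final step is to apply \Cref{lem:min_w_i_zero} directly to $(\tilde W_i)$ starting from $\tw \in \ooint{0,\plusinfty}$, which yields
\begin{equation}
\PP\parenthese{\min_{0 \leq i \leq k+1} \tilde W_i > 0} \;\leq\; 1 - 2\Phibf\parentheseDeux{-\frac{\tau_\gamma(\tw) + \alpha_{k+1}}{2 \beta_{k+1}}} \;=\; 1 - 2\Phibf\parentheseDeux{-\frac{\tau_\gamma(w \vee \tw) + \alpha_{k+1}}{2 \beta_{k+1}}}\,,
\end{equation}
by the WLOG reduction. Combined with \eqref{eq:plan_minor_coupling} this gives the claim. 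The only subtle point is to verify that the recursion \eqref{eq:def_r} really does preserve equality once both chains coincide: this is immediate from the explicit form of $\mathscr{G}_\gamma$, so no real obstacle arises. The main conceptual input is simply the combination of monotonicity with the atom at $0$ created by $\bpg$.
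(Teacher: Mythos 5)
Your proof is correct and follows essentially the same route as the paper: a synchronous coupling of the two chains driven by the same $(G_k,U_k)$, the monotonicity from \Cref{lem:G_inequality} giving $W_i \leq \tilde W_i$, the observation that the chains merge once the dominating chain hits the atom at $0$, and an application of \Cref{lem:min_w_i_zero} to the chain started from $w \vee \tw$. No gaps to report.
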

  \begin{proof}
    We consider again $(G_k)_{k \geq 1}$ and $(U_k)_{k \geq 1}$ two independent sequences of \iid~standard Gaussian and $\ccint{0,1}$-uniform random variables respectively. Define the Markov chains $(W_k)_{k\in \mathbb{N}}$ and $(\tW_k)_{k\in \mathbb{N}}$ starting from $w \in \coint{0,\plusinfty}$ and $\tw \in \coint{0,\plusinfty}$ respectively, for any $k \in \nset$, $W_{k+1}=\mathscr{G}_\gamma (W_k,G_{k+1},U_{k+1})$ and $\tW_{k+1}=\mathscr{G}_\gamma (\tW_k,G_{k+1},U_{k+1})$. Note that the case $w = \tw$ is trivial so we only consider the converse and assume that $w < \tw$, $\tw >0$. Then, we obtain 
    by \Cref{lem:G_inequality} that almost surely $W_{k} \leq \tW_k$ for any $k \in \nset$, which implies that
    \begin{equation}
      \label{eq:5}
      \tvnorm{\updelta_w Q_{\gamma}^{k+1} - \updelta_{\tw} Q_{\gamma}^{k+1}}  \leq       \proba{W_{k+1} \neq \tW_{k+1}} \leq \proba{\min_{i \in \{0,\ldots,k+1\}} \tW_{i} > 0} \eqsp.
    \end{equation}
    Indeed, if $\min_{i \in \{0,\ldots,k+1\}} \tW_{i} =0$, then there exists $i\in \{0,\ldots,k+1\}$, $\tW_i = 0$ which implies since $\tW_i \geq W_i \geq 0$ that $\tW_i = W_i$ and therefore $\tW_{k+1} = W_{k+1}$ by definition of the two processes. The proof is then completed by  \Cref{lem:min_w_i_zero}.
\end{proof}

We easily deduce then the following corollary.

\begin{corollary}
  \label{propo:minorization}
  Assume \Cref{ass:LipsAnd}-\ref{ass:LipsAnd_2}. Let $t_0 >0$. Then, for any $w,\tw \in \coint{0,\plusinfty}$,
    \begin{equation}
      \label{eq:4}
      \tvnorm{\updelta_w Q_{\gamma}^{\ceil{t_0/\gamma}} - \updelta_{\tw} Q_{\gamma}^{\ceil{t_0/\gamma}}} \leq 1 - 2 \Phibf\parentheseDeux{ -\Ltt^{\half} \frac{\tcrw{(1+\bgamma\Ltt)}(w\vee \tw)+ (t_0\tcrw{+\bgamma}) \InftyBound}{\{\tcrw{2} \sigma^2(1-\rme^{-\tcrw{( \Ltt t_0\wedge 2\log(2))}})\}^{\half}}} \eqsp. 
    \end{equation}
\end{corollary}
\begin{proof}
  Note that by \eqref{eq:def_alpha_beta} for any $k \in \nset$,
  $\alpha_{k+1} \leq (k\tcrw{+1}) \gamma \InftyBound$ and $\beta_{k+1}^2 \tcrw{\geq} (\sigma^{2}(1+\gamma\Ltt)/(\tcrw{2}\Ltt))\{1-(1+\gamma\Ltt)^{-2\tcrw{(k+1)}}\} \geq (\sigma^{2}(1+\gamma\Ltt)/(\tcrw{2}\Ltt)) \{1 - \rme^{-2 \tcrw{(k+1)} \tcrw{(\gamma \Ltt/2\wedge \log(2))}}\}$ using that \tcrw{$\log(1+t) \geq t/2 \wedge \log(2)$} for any $t \in \rset_+$. The proof is then completed using \Cref{lem:minorization}, \Cref{ass:LipsAnd}-\ref{ass:LipsAnd_2} and the previous bounds for $k \leftarrow \tcrw{\ceil{t_0/\gamma}-1}$.
\end{proof}

\tcwc{Define $\VlyapD_1,\VlyapDexp_a : \rset_+ \to [1,+\infty)$ for any $w \in \rset_+$ by
\begin{equation}
  \label{eq:eq:def_lyap_convergence_geo}
  \VlyapD_1(w) = 1 + w \eqsp, \qquad  \VlyapDexp_a(w) = \exp(aw) \eqsp.
\end{equation}}

    \begin{lemma}
    \label{lem:drift_exp_sticky_v1}
    Assume \Cref{ass:LipsAnd}-\ref{ass:LipsAnd_2}. Let $a >0$. Then, for any $w  \in \coint{0,\plusinfty}$ and $\gamma \in \ocint{0,\bgamma_1}$,
    \begin{align}
            \label{eq:drift_lin_sticky_v1}
Q_\gamma \VlyapD_1(w)&\leq (1-\gamma\mtt)\VlyapD_1(w)+\gamma R_1(\mtt+ \Ltt)+\gamma \InftyBound+\gamma \mtt \eqsp ,
\\
      \label{eq:drift_exp_sticky_v1}
      Q_{\gamma} \lyapDexp_a (w) &\leq \uplambda_a^{\gamma}\lyapDexp_a (w)  + \gamma \upalpha_a \eqsp, 
    \end{align}
    where $Q_{\gamma}$ is defined by \eqref{def_q_gamma} and  $\uplambda_a, \bgamma_1, R_a, B_a, D_a$ are defined by \eqref{eq:fix_exp_deift_const} and 
    \begin{equation}
      \label{eq:drift_exp_sticky_v2_def_A}
     \upalpha_a =  \log(B_a/\uplambda_a)\lyapDexp_a(R_a) B_a^{\bgamma}-\log(\uplambda_a)+D_a\eqsp.
    \end{equation}
  \end{lemma}
  \begin{proof}
    \eqref{eq:drift_lin_sticky_v1} is a simple consequence of \Cref{prop:drift_v_norm}.
    In addition, by \Cref{prop:fix_drift_exp_drift}, we have 
    \begin{align}
      Q_{\gamma} \VlyapDexp_{a}(w)&\leq \uplambda_a^\gamma \VlyapDsexp_{a}(w)\1_{\coint{R_a,\plusinfty}}(w)+B_a^\gamma \VlyapDsexp_{a}(w)\1_{\coint{0,R_a}}(w) +\gamma D_a\1_{\coint{0,R_a}}(w)+1\\
      &\leq \uplambda_a^\gamma \VlyapDexp_{a}(w)+\uplambda_a^\gamma(B_a^\gamma\uplambda_a^{-\gamma}-1) \VlyapDexp_{a}(R_a) +\gamma D_a+1-\uplambda_a^\gamma \eqsp .
    \end{align}
    Using $\rme^t - 1 \leq t \rme^t$ for any $t \geq 0$, completes the proof. 
  \end{proof}

\begin{proof}[Proof of \Cref{theo:convergence_Q_gamma}]
  Let $a >0$ and $t_0 >0$. By \Cref{lem:drift_exp_sticky_v1} and an easy induction, and using
  that $t\rme^{-t} \leq 1-\rme^{-t}\leq t$ for any $t \geq 0$, we have for any
  $k \in \nset$,
       \begin{align}
Q_\gamma^k \VlyapD_1(w)\leq \uplambda_1^{k\gamma}\VlyapD_1(w)+[ R_1(\mtt+ \Ltt)+ \InftyBound+ \mtt]/\mtt \eqsp ,
 \quad            Q_{\gamma}^k \lyapDexp_a (w) \leq \uplambda_a^{k\gamma}\lyapDexp_a (w)  +k \gamma \upalpha_a\uplambda_a^{-\bgamma} \eqsp.
    \end{align}
    where $Q_{\gamma}$ is defined by \eqref{def_q_gamma}, $\VlyapD_1, \VlyapDexp_a$ by \eqref{eq:eq:def_lyap_convergence_geo}, $\uplambda_a$  by \eqref{eq:fix_exp_deift_const}, $\upalpha_a$ by \eqref{eq:drift_exp_sticky_v2_def_A} and 
    \begin{equation}
      \label{eq:def_const_drift_convergence}
\uplambda_1=\rme^{-\mtt } \eqsp, \qquad \upbeta_1 =  [ R_1(\mtt+ \Ltt)+ \InftyBound+ \mtt]/\mtt \eqsp, \qquad \upbeta_a = (t_0 + \bgamma) \upalpha_a\uplambda_a^{-\bgamma} \eqsp. 
\end{equation}
Then, for any $w  \in \coint{0,\plusinfty}$ and $\gamma \in \ocint{0,\bgamma_1}$,
    \begin{equation}
            \label{eq:drift_sticky_v2}
Q_\gamma^{\ceil{t_0/\gamma}} \VlyapD_1(w)\leq \uplambda_1^{\tcwc{t_0}}\VlyapD_1(w)+ \upbeta_1 \eqsp , \qquad 
      Q^{\ceil{t_0/\gamma}}_{\gamma} \lyapDexp_a (w) \leq \uplambda_a^{\tcwc{t_0}} \lyapDexp_a (w)  +  \upbeta_a \eqsp.
    \end{equation}

    We now only complete the proof show for $\VlyapD = \VlyapD_1$. The result for $\VlyapD = \VlyapDexp_a$, $a >0$ is similar upon replacing $\uplambda_1$ and $\upbeta_1$ by $\uplambda_a$ and $\upbeta_a$ given in \eqref{eq:def_const_drift_convergence} respectively.

  Define $\updelta_1 = 4 \upbeta_1/(1-\uplambda_1) -1$ and $M_1 = \sup \{ w \in \coint{0,\plusinfty} \, : \, \LyapD_1(w) \leq \updelta_1\}$ which is well defined since $\lim_{w \to \plusinfty} \LyapD_1(w) = \plusinfty$. Define in addition,
  \begin{equation}
    \varepsilon_1 = 2 \Phibf\parentheseDeux{ -\Ltt^{\half} \frac{\tcrw{(1+\bgamma\Ltt)}M_1+ (t_0\tcrw{+\bgamma}) \InftyBound}{\{\tcrw{2} \sigma^2(1-\rme^{-\tcrw{( \Ltt t_0\wedge 2\log(2))}})\}^{\half}}} < 1 \eqsp.
  \end{equation}
  Then, $\{\LyapD_1 \leq \updelta_1\}$ is a $(\tcwc{\ceil{t_0/\gamma}},\varepsilon_{\tcwc{1}})$-Doeblin set for $Q_{\gamma}$ and $\uplambda_1 +2\upbeta_1/(1+\updelta_1) <1$. Therefore, \cite[Theorem 19.4.1]{douc:moulines:priouret:soulier:2018}\footnote{\tcwc{There is a $b$ missing in Equation 19.4.2d in \cite[Theorem 19.4.1]{douc:moulines:priouret:soulier:2018} which has been confirmed by one of the authors of \cite{douc:moulines:priouret:soulier:2018}}} implies that for any $k \in \nset$,
  \begin{equation}
    \label{eq:6}
    \norm{\updelta_w Q_{\gamma}^{\tcwc{k}} - \mu_{\gamma}}_{\LyapD_1} \leq \tilde{C} \rho^k\{\LyapD_1(w) + \mu_{\gamma}(\LyapD_1)\} \eqsp,
  \end{equation}
  where a bound on $\mu_{\gamma}(\LyapD_1)$ is provided by \Cref{theo:bound_moment_rset_star_invariant_mes} and
  \begin{align}
    \log(\rho) &= \tcwc{(t_0+\bgamma)^{-1}}\log(1-\varepsilon_1)\log(\bar{\uplambda_1})/\{\log(1-\varepsilon_1)+\log(\bar{\uplambda_1})-\log(\bar{\upbeta}_1)\}\\
    \bar{\uplambda}_1 &= \uplambda_1^{\tcwc{t_0}} + 2\upbeta_1/(1+\updelta_1) \eqsp, \quad \bar{\upbeta}_1 = \uplambda_1^{\tcwc{t_0}} \upbeta_1+\updelta_1 \\
    \tilde{C} &= \tcwc{\rho^{-1}}\{\uplambda_1^{\tcwc{t_0}}+\upbeta_1\}/[1+\bar{\upbeta}_1/\{(1-\varepsilon_1)(1-\bar{\uplambda}_1)\}] \eqsp. 
  \end{align}
\end{proof}

%%% Local Variables:
%%% mode: latex
%%% TeX-master: "main_imsart"
%%% End:

% !TeX root = main_imsart.tex
%\subsection{Postponed proofs of \Cref{sec:contin_limit}}
%In this section, $C$ denotes a constant which can change from line to line. 

\subsection{Proof of \Cref{propo:moment_conv_continuous}}
\label{sec:proof-crefpr_moment_cont}
  The  proof is an easy consequence of  \Cref{lem:moment_four_iterates_conti_limit} below and the definition of $(\Wbfn_t)_{t \geq 0}$. 
Before stating and proving \Cref{lem:moment_four_iterates_conti_limit}, we need the following technical results. 
\begin{lemma}
  \label{lem:moment_martinga_incre_proof_cont}
  Assume \Cref{ass:sticky_cont}. Then, for any $q \in \coint{1,\plusinfty}$, we have for any $\gamma \in \ocint{0,\bgamma}$, 
  \begin{equation}
    \label{eq:14}
    \txts    \expeLigne{\abs{W_1-\tau_{\gamma}(W_0)-\gamma \InftyBound}^q} \leq (4\sigma^2 \gamma)^{q/2}\defEns{ \bfm_{q} +
    2 \sup_{u \geq 0} [u^q \Phibf(-u)] }\eqsp,
  \end{equation}
  where $W_1$ is defined by \eqref{eq:def_r}, $\bfm_{q}$ is the $q$-th moment of the standard Gaussian distribution and $\Phibf$ is its cumulative distribution function. 
\end{lemma}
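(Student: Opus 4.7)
The plan is to unpack the definition of $W_1$ from \eqref{eq:def_r} and compute the $q$-th moment by integrating against the law of $(G_1,U_1)$. Setting $a = \tau_{\gamma}(W_0) + \gamma \InftyBound$, we have
\[
W_1 - a \;=\; \begin{cases} -2\sigma\sqrt{\gamma}\, G_1 & \text{if } U_1 \geq \bpg(a,G_1), \\ -a & \text{otherwise,}\end{cases}
\]
so on the event $\{U_1 \geq \bpg(a,G_1)\}$ the increment has magnitude $2\sigma\sqrt{\gamma}|G_1|$, and on the complement it equals $a$. Since $U_1$ is uniform on $[0,1]$ independent of $G_1$, integrating the indicator in $U_1$ conditionally on $G_1$ yields
\[
\expe{|W_1-a|^q \mid W_0,G_1} \;=\; (1-\bpg(a,G_1))\,(2\sigma\sqrt{\gamma})^q |G_1|^q \;+\; \bpg(a,G_1)\,a^q.
\]

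Next, I would take expectation in $G_1$. For the first term, I simply use $1-\bpg(a,G_1) \leq 1$ and $\expe{|G_1|^q} = \bfm_q$, giving the contribution $(2\sigma\sqrt{\gamma})^q \bfm_q$. For the second term, I invoke \Cref{lem:0_proba}, which expresses $\int_{\rset} \bpg(a,g)\varphibf(g)\,\mathrm{d}g = 2\Phibf(-a/(2\sigma\sqrt{\gamma}))$, so that
\[
a^q \int_{\rset} \bpg(a,g)\varphibf(g)\,\mathrm{d}g \;=\; (2\sigma\sqrt{\gamma})^q \cdot 2\Bigl(\tfrac{a}{2\sigma\sqrt{\gamma}}\Bigr)^{\!q}\! \Phibf\!\Bigl(-\tfrac{a}{2\sigma\sqrt{\gamma}}\Bigr) \;\leq\; (2\sigma\sqrt{\gamma})^q \cdot 2\sup_{u\geq 0}\bigl[u^q \Phibf(-u)\bigr].
\]
Adding the two bounds and using $(2\sigma\sqrt{\gamma})^q = (4\sigma^2\gamma)^{q/2}$ gives exactly \eqref{eq:14}.

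There is essentially no obstacle beyond the bookkeeping above; the only nontrivial ingredient is the identification of the coalescence mass via \Cref{lem:0_proba}, and one must note that the supremum $\sup_{u\geq 0}[u^q \Phibf(-u)]$ is finite because $\Phibf(-u)$ decays faster than any polynomial as $u\to\infty$ and $u^q\Phibf(-u)\to 0$ as $u\to 0$. Note that the bound holds uniformly in $W_0$, which is what will be needed downstream when controlling the fourth moment of the linear interpolation $(\Wbfn_t)_{t\geq 0}$ in the proof of \Cref{propo:moment_conv_continuous}.
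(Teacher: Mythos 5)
Your proof is correct and follows essentially the same route as the paper: condition on the coalescence event, bound the reflected-Gaussian contribution by $(4\sigma^2\gamma)^{q/2}\bfm_q$, and bound the atom contribution by $(4\sigma^2\gamma)^{q/2}\,2\sup_{u\geq 0}[u^q\Phibf(-u)]$ after identifying the coalescence mass as $2\Phibf(-a/(2\sigma\sqrt{\gamma}))$. The only cosmetic difference is that you invoke \Cref{lem:0_proba} and the crude bound $1-\bpg\leq 1$, whereas the paper re-derives the same quantities inline via Gaussian changes of variables in the kernel integral; the resulting bound is identical.
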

\begin{proof}
  Let $w_0 \in\coint{0,\plusinfty}$ and $\gamma \in\ocint{0,\bgamma}$.
  By definition \eqref{def_q_gamma} and \eqref{eq:def_bpg}, we have setting $\bar{\tau}_{\gamma}^{\infty}(w_0) = \{\tau_{\gamma}(w_0) +\gamma \InftyBound\}/(2 \sqrt{\sigma^2 \gamma})$, 
  \begin{align}
   & \int_{\rset_+}   \abs{w_1-\tau_{\gamma}(w_0)-\gamma \InftyBound}^q Q_{\gamma}(w_0,\rmd w_1) = (4 \sigma^2 \gamma)^{q/2} \int_{-\infty}^{\bar{\tau}_{\gamma}^{\infty}(w_0)} \abs{g}^q \varphibf(g) \rmd g \\
 &   \qquad\qquad\qquad\qquad\qquad\qquad\qquad -(4 \sigma^2 \gamma)^{q/2} \int_{\bar{\tau}_{\gamma}^{\infty}(w_0)}^{\plusinfty} \abs{g\tcrw{-2\bar{\tau}_{\gamma}^{\infty}(w_0)}}^q\varphibf(g) \rmd g \\
 &\qquad\qquad\qquad\qquad\qquad\qquad\qquad+ \int_{\rset} \absLigne{\tau_{\gamma}(w_0)+\gamma \InftyBound}^q
\varphibf( 2 \bar{\tau}_{\gamma}^{\infty}(w_0)-g) \wedge \varphibf(g) 
    \rmd g \\
&   \qquad\qquad\qquad\qquad\qquad\qquad\qquad \leq (4 \sigma^2 \gamma)^{q/2} \bfm_{q} + 2^{\tcrw{q}+1} \sigma^{\tcrw{q}} \gamma^{\tcrw{q}/2} [\bar{\tau}_{\gamma}^{\infty}(w_0)]^q \Phibf\defEnsLigne{ -\bar{\tau}_{\gamma}^{\infty}(w_0)} \eqsp,
  \end{align}
  which completes the proof. 
\end{proof}

\begin{lemma}
    \label{lem:moment_four_iterates_conti_limit_0}
    Assume \Cref{ass:sticky_cont}. Then, there exists $C \geq 0$ such
    that for any $k \in \nset$ and $\gamma \in \ocint{0,\bgamma}$,
    $\expeLigne{W_k^4} \leq \rme^{C k \gamma} \{\expeLigne{W_0^4} + 1\}$, where $(W_k)_{k \in \nset}$ is defined by \eqref{eq:def_r}.
  \end{lemma}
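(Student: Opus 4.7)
The plan is to establish a one-step drift inequality of the form $\mathbb{E}[W_{k+1}^4\mid\mathcal{F}_k] \leq (1+C\gamma) W_k^4 + C\gamma$ for some $C\geq 0$ independent of $\gamma \in \ocint{0,\bgamma}$, and then iterate.

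First I would observe that, by the very definition \eqref{eq:def_r} of the chain and by the fact (established in the proof of \Cref{lem:G_inequality}) that the event $\{W_{k+1}\neq 0\}$ forces $\tau_\gamma(W_k)+\gamma\InftyBound-2\sigma\sqrt{\gamma}\, G_{k+1}\geq 0$, we have the almost sure inequality
\begin{equation*}
0\leq W_{k+1}\leq \maxplus{A_k-B_{k+1}}\eqsp,\qquad A_k\defeq \tau_\gamma(W_k)+\gamma\InftyBound\eqsp,\qquad B_{k+1}\defeq 2\sigma\sqrt{\gamma}\, G_{k+1}\eqsp.
\end{equation*}
Since $A_k\geq 0$ under \Cref{ass:sticky_cont} for $\gamma\bar\gamma$ small enough (because $\tau_\gamma(w)=w+\gamma\kappa(w)\geq (1-\gamma \Ltt_\kappa)w$), we deduce $W_{k+1}^4\leq (A_k-B_{k+1})^4$ regardless of the sign of $A_k-B_{k+1}$.

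Next I would expand this fourth power and take conditional expectation. Because $G_{k+1}$ is standard Gaussian and independent of $\mathcal{F}_k$ (see the discussion after \eqref{eq:def_z_k}), the odd moments of $B_{k+1}$ vanish and
\begin{equation*}
\expeLigne{W_{k+1}^4\mid \mathcal{F}_k}\leq A_k^4 + 6A_k^2\, \expeLigne{B_{k+1}^2}+\expeLigne{B_{k+1}^4}= A_k^4+24\sigma^2\gamma\, A_k^2+48\sigma^4\gamma^2\eqsp.
\end{equation*}
Using \Cref{ass:sticky_cont} ($\tau_\gamma(w)=w+\gamma\kappa(w)$ with $\kappa$ $\Ltt_\kappa$-Lipschitz and $\kappa(0)=0$), one gets $A_k=W_k+\gamma[\kappa(W_k)+\InftyBound]$ with $\abs{\kappa(W_k)+\InftyBound}\leq \Ltt_\kappa W_k+\InftyBound$. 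Expanding $A_k^4$ by the binomial formula and using the elementary bound $W_k^j\leq 1+W_k^4$ valid for $j\in\{0,1,2,3\}$ and $W_k\geq 0$, all cross terms can be absorbed to yield $A_k^4\leq (1+C_1\gamma)W_k^4+C_1\gamma$ for some constant $C_1$ depending only on $\Ltt_\kappa$, $\InftyBound$ and $\bgamma$. The terms $24\sigma^2\gamma\, A_k^2$ and $48\sigma^4\gamma^2$ are similarly bounded by $C_2\gamma(1+W_k^4)$. Combining these yields the desired drift inequality
\begin{equation*}
\expeLigne{W_{k+1}^4\mid\mathcal{F}_k}\leq (1+C\gamma) W_k^4+C\gamma\eqsp.
\end{equation*}

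Finally, taking expectations and setting $a_k\defeq \expeLigne{W_k^4}+1$, this drift relation reads $a_{k+1}\leq (1+C\gamma)a_k$, so that $a_k\leq (1+C\gamma)^k a_0\leq \mre^{Ck\gamma}\{\expeLigne{W_0^4}+1\}$, which gives the claim. The only slightly delicate step is verifying the pointwise inequality $W_{k+1}\leq \maxplus{A_k-B_{k+1}}$; the rest is a routine expansion and an absorption of lower-order terms via the polynomial bound $W_k^j\leq 1+W_k^4$.
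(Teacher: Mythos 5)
Your proof is correct and takes essentially the same route as the paper: bound $W_{k+1}^4$ by the fourth power of the unreflected Gaussian step, compute the Gaussian moments, use the Lipschitz bound on $\kappa$ to obtain the one-step drift $\expeLigne{W_{k+1}^4\mid\mathcal{F}_k}\leq(1+C\gamma)W_k^4+C\gamma$, and iterate. (Only cosmetic remark: the aside about $A_k\geq 0$ and ``$\gamma\bar\gamma$ small enough'' is unnecessary—since $W_{k+1}\in\{0,\,A_k-B_{k+1}\}$ by \eqref{eq:def_r}, the bound $W_{k+1}^4\leq(A_k-B_{k+1})^4$ holds for every $\gamma\in\ocint{0,\bgamma}$ with no smallness condition.)
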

  \begin{proof}
    By \Cref{ass:sticky_cont}, \eqref{eq:def_r} and \eqref{def_q_gamma} we have that for any $w_0 \in \coint{0,\plusinfty}$ and $\gamma \in \ocint{0,\bgamma}$, setting $W_0 = w_0$ and $\kappa_{\infty}(w_0) = \kappa(w_0)+ \InftyBound$,
    \begin{align}
      %\label{eq:10}
      \int_{\rset_+} w_1^4      Q_{\gamma}(w_0, \rmd w_1)& = \expeLigne{W_1^4} \leq \expeLigne{(w_0 + \gamma \kappa_{\infty}(w_0) - 2 \sqrt{\sigma^2 \gamma} G_1)^4}\\
      & = \{w_0 + \gamma \kappa_{\infty}(w_0)\}^4 + \tcrw{48}\sigma^2 \gamma  \{w_0 + \gamma \kappa_{\infty}(w_0)\}^2 + \tcrw{2^4}\sigma^4 \gamma^2 \eqsp.
    \end{align}
    By  \Cref{ass:sticky_cont}, for any $\ell \in \{2,4\}$, we have that for any $w_0 \in \coint{0,\plusinfty}$, $\gamma \in \ocint{0,\bgamma}$,
    \begin{equation}
      \label{eq:11}
      \{w_0 + \gamma \kappa_{\infty}(w_0)\}^{\ell} \leq w_0^\ell + 2^{\ell-1} \tcrw{(1\vee\bgamma)}^{\ell}\ell \gamma(1+ \Ltt_{\kappa})^{\ell} [\abs{w_0}^{\ell} + \InftyBound^{\tcrw{\ell}}]   \eqsp.
    \end{equation}
    Therefore, we obtain that there exists some constant $C_1,C_2 \geq 0$, such that for any $w_0 \in \coint{0,\plusinfty}$, $\gamma \in \ocint{0,\bgamma}$,
    \begin{equation}
      \label{eq:12}
      \int_{\rset_+} w_1^4      Q_{\gamma}(w_0, \rmd w_1) \leq w_0^4 + C_1 \gamma\{1+w_0^2+w_0^4\} \leq (1+\gamma C_2)w_0^4 + \gamma C_2 \eqsp.       
    \end{equation}
    By an easy induction, we get then that for any $w_0 \in \coint{0,\plusinfty}$, $\gamma \in \ocint{0,\bgamma}$ and $k \in \nset$,
    \begin{equation}
      \label{eq:12}
      \int_{\rset_+} w_1^4      Q_{\gamma}^k(w_0, \rmd w_1) \leq (1+C_2\gamma)^kw_0^4 + C_2 \gamma\sum_{i=0}^{k-1} (1+C_2\gamma)^i \leq \rme^{k\gamma C_2}[ w_0^4 + \tcrw{1}]   \eqsp,
    \end{equation}
    which completes the proof by the Markov property. 
  \end{proof}
  \begin{lemma}
    \label{lem:moment_four_iterates_conti_limit}
    Assume \Cref{ass:sticky_cont}. Then, there exists $C \geq 0$ such
    that for any $k \in \nset$ and $\gamma \in \ocint{0,\bgamma}$,
    $\expeLigne{\max_{\ell \in\{0,\ldots,k\}} [W_\ell-W_0]^4} \leq C (k\gamma)^2 \rme^{C k \gamma} \{\expeLigne{W_0^4} + 1\}$, where $(W_k)_{k \in \nset}$ is defined by \eqref{eq:def_r}.
  \end{lemma}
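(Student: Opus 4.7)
The plan is to split the increment $W_\ell-W_0$ into a predictable drift part and a martingale part, and to bound each separately. Denote by $(\mcf_\ell)_{\ell\in\nset}$ the natural filtration of $(W_\ell)_{\ell\in\nset}$ and set $M_{i+1}=W_{i+1}-\tau_\gamma(W_i)-\gamma\InftyBound$. Since $W_{i+1}\geq 0$ almost surely, \Cref{lem:norm_1_eq} applied conditionally on $\mcf_i$ gives $\expe{W_{i+1}\mid\mcf_i}=\tau_\gamma(W_i)+\gamma\InftyBound$, so $(M_{i+1})_{i\in\nset}$ is a sequence of martingale differences with respect to $(\mcf_i)_{i\in\nset}$. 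Using \Cref{ass:sticky_cont} to write $\tau_\gamma(w)=w+\gamma\kappa(w)$, we obtain the telescoping decomposition
\begin{equation}
W_\ell-W_0 \;=\; \gamma\sum_{i=0}^{\ell-1}\{\kappa(W_i)+\InftyBound\} + \sum_{i=0}^{\ell-1} M_{i+1} \eqsp.
\end{equation}

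For the drift part, $\kappa$ is $\Ltt_\kappa$-Lipschitz with $\kappa(0)=0$, so $|\kappa(W_i)+\InftyBound|\leq \Ltt_\kappa W_i+\InftyBound$. By Jensen's inequality and \Cref{lem:moment_four_iterates_conti_limit_0}, there exists $C\geq 0$ such that
\begin{equation}
\expe{\max_{\ell\in\{0,\ldots,k\}}\Big(\gamma\sum_{i=0}^{\ell-1}\{\kappa(W_i)+\InftyBound\}\Big)^{\!4}} \leq (k\gamma)^3\gamma\sum_{i=0}^{k-1} C\,\expe{W_i^4+1} \leq C(k\gamma)^4 \rme^{Ck\gamma}\{\expe{W_0^4}+1\}\eqsp,
\end{equation}
and the factor $(k\gamma)^4$ can be absorbed as $(k\gamma)^2\cdot(k\gamma)^2 \leq (k\gamma)^2\cdot C'\rme^{k\gamma}$.

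For the martingale part, Doob's $\mrl^4$ maximal inequality combined with the discrete Burkholder--Davis--Gundy inequality (or directly by expanding $(\sum M_{i+1})^4$ and using the martingale property together with Cauchy--Schwarz on the quadratic variation) yields
\begin{equation}
\expe{\max_{\ell\in\{0,\ldots,k\}}\Big(\sum_{i=0}^{\ell-1} M_{i+1}\Big)^{\!4}} \leq C\,k\sum_{i=0}^{k-1}\expe{M_{i+1}^4}\eqsp.
\end{equation}
The crucial input is that \Cref{lem:moment_martinga_incre_proof_cont} applied conditionally on $\mcf_i$ (with $q=4$) gives a bound $\expe{M_{i+1}^4\mid\mcf_i}\leq C'\gamma^2$ which is \emph{uniform in the state}, since the right-hand side of \eqref{eq:14} does not depend on $W_0$. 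Therefore $\expe{M_{i+1}^4}\leq C'\gamma^2$ and the martingale term is bounded by $C''k^2\gamma^2=C''(k\gamma)^2$.

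Combining the two bounds via $(a+b)^4\leq 8(a^4+b^4)$ yields the claimed estimate. The only real subtlety is the verification that $(M_{i+1})$ is a martingale difference (which hinges on $W_{i+1}\geq 0$ together with \Cref{lem:norm_1_eq}) and the observation that the conditional fourth moment estimate from \Cref{lem:moment_martinga_incre_proof_cont} is state-independent; everything else reduces to standard discrete-time martingale and Grönwall-type manipulations.
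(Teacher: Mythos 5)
Your proposal is correct and follows essentially the same route as the paper: the same drift/martingale-difference decomposition justified via \Cref{lem:norm_1_eq} and the Markov property, a Hölder/Young bound on the drift sum controlled through \Cref{lem:moment_four_iterates_conti_limit_0}, and a Burkholder-type maximal inequality on the martingale part with the state-independent fourth-moment bound of \Cref{lem:moment_martinga_incre_proof_cont}. No gaps.
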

  \begin{proof}
    Assume that $\expe{W_0^4} < \plusinfty$, otherwise the results holds.  Denote by
    $(\mcf_k)_{k \in\nset}$ the filtration associated with $(W_k)_{k \in \nset}$.    We consider the following decomposition for any $\ell \in \nset$,
    \begin{equation}
      \label{eq:27}
      W_\ell  - W_0 = A_\ell + B_\ell \eqsp, \quad A_\ell = \sum_{i=0}^{\ell-1} \Delta M_i \eqsp,  \quad   B_\ell = \sum_{i=0}^{\ell-1} H_i \eqsp,
    \end{equation}
where using that $\expeLigne{W_{i+1}|\mcf_i} = \tau_{\gamma}(W_i) + \gamma \InftyBound$ by  \Cref{lem:norm_1_eq} and the Markov property,
\begin{equation}
  \label{eq:def_Delta_M_H_moment}
  \Delta M_i = W_{i+1} - \expeLigne{W_{i+1}|\mcf_i} = W_{i+1} -\tau_{\gamma}(W_i) + \gamma \InftyBound \eqsp, \quad H_i =  \tau_{\gamma}(W_i) + \gamma \InftyBound  - W_i \eqsp. 
\end{equation}
Then, using Young's inequality, we get for any $\gamma \in \ocint{0,\bgamma}$ and $k \in\nset$,
\begin{equation}
  \label{eq:bound_moment_sup_proof_0}
\txts  \max_{\ell \in \{0,\ldots,k\}} [W_\ell-W_0]^4 \leq 2^3 \{  \max_{\ell \in \{0,\ldots,k\}} A_{\ell}^4 +  \max_{\ell \in \{0,\ldots,k\}} B_{\ell}^4 \}\eqsp. 
\end{equation}
We now bound the two last terms in the right hand side of this equation. First, by  \Cref{ass:sticky_cont} and Young's inequality, we get for any $\gamma \in \ocint{0,\bgamma}$ and $k \in\nset$,
\begin{equation}
  \txts \expeLigne{  \max_{\ell \in \{0,\ldots,k\}} B_{\ell}^4}  \ \leq \  \tcrw{\expeLigne{k^3 \sum_{i=0}^{k-1} H_i^4}}   \leq \ 2^3 (k\gamma)^4 (1+\Ltt_{\kappa})^4 \{\max_{i\in \{0,\ldots,k-1\}} \expeLigne{W_i^4} + \InftyBound^4\}  \eqsp.  \label{eq:bound_B_moment_sup}
\end{equation}
In addition, by definition \eqref{eq:def_Delta_M_H_moment}, $(\Delta M_i)_{i\in\nset}$ are $(\mcf_i)_{i \in\nset}$-martingale increments. It follows by Burkholder inequality \cite[Theorem 3.2]{burkholder:1973} and Young's inequality that there exists $C_4 \geq 0$ satisfying for any $k \in \nset$ and $\gamma  \in\ocint{0,\bgamma}$,
\begin{equation}
  \label{eq:16}
  \txts \expe{\max_{\ell \in \{0,\ldots,k\}} A^4_{\ell}} \leq C_4 \expeLigne{\{\sum_{i=0}^{k-1}\Delta M_i^2 \}^{2}} \leq C_4 k \sum_{i=0}^{k-1} \expeLigne{\Delta M_i^4}   \eqsp.
\end{equation}
Therefore by \Cref{lem:moment_martinga_incre_proof_cont}, we get that
\begin{equation}
  \txts \expe{\max_{\ell \in \{0,\ldots,k\}} A^4_{\ell}}  \leq \tcrw{C_4 k \sum_{i=0}^{k-1} \expeLigne{\Delta M_i^4}} \leq C_4  (4\sigma^2 k \gamma)^{2}\defEns{ \bfm_{4} +
    2 \sup_{u \geq 0} [u^4 \Phibf(-u)] }   \eqsp,
\end{equation}
where $\bfm_{4}$ is the fourth moment of the standard Gaussian distribution. Combining this result with \eqref{eq:bound_B_moment_sup} and using \Cref{lem:moment_four_iterates_conti_limit_0} in \eqref{eq:bound_moment_sup_proof_0} concludes the proof. 
  \end{proof}  

  \subsection{Proof of \Cref{propo:tight_convergence_continuous}}
\label{sec:proof_tight_cont}

   To show  this result, we use the Komolgorov criteria
  \cite[Corollary 14.9]{kallenberg:2002}: for any $T \geq 0$, there
  exist $C_T \geq 0$ such that for any $n \in \nset$ and $s,t \in \ccint{0,\tcrw{T}}$, $s \leq t$,
  \begin{equation}
    \label{eq:7}
    \expe{\abs{\Wbfn_t-\Wbfn_s}^4} \leq C_T (t-s)^2 \eqsp. 
  \end{equation}
  Note that denoting $k_1^{(n)} = \ceil{s/\gamma_n}$ and $k_2^{(n)} = \floor{t/\gamma_n}$, we have by \eqref{eq:def_wnbf}
  \begin{align}
    \label{eq:9}
    &    \expe{\abs{\Wbfn_t-\Wbfn_s}^4} \\
    &\leq
    \begin{cases}
 (t-s)^{\tcrw{4}}\gamma_n^{-\tcrw{4}} \expeLigne{W_{k^{(n)}_{\tcrw{2}}+1} - W_{k^{(n)}_{\tcrw{2}}}}^4 & \text{ if $k^{(n)}_{\tcrw{2}} < k^{(n)}_{\tcrw{1} }$} \\
          \tcrw{3^3 \tcrw{(t-s)^{\tcrw{4}}\gamma_n^{-\tcrw{4}}}\expeLigne{\{W_{k^{(n)}_2+1}-W_{k^{(n)}_{\tcrw{2}}}\}^4+ \{W_{k^{(n)}_1}-W_{k^{(n)}_1-1}\}^4  } }   \\
          \qquad\qquad\qquad\qquad\qquad\qquad\qquad\qquad\tcrw{+3^3\expeLigne{ \{W_{k^{(n)}_2}-W_{k^{(n)}_1}\}^4}}  & \text{ otherwise}\eqsp.
    \end{cases}
  \end{align}
 \Cref{lem:moment_four_iterates_conti_limit}, \Cref{lem:moment_four_iterates_conti_limit_0} and the Markov property complete the proof.

\subsection{Proof of \Cref{theo:martingale_prob_martinM_N}}
\label{sec:proof_theo:martingale_prob_martinM_N}

Consider the differential operators $\generator, \tilde{\generator}$ defined for any $\psi \in \rmC^2(\rset)$ by
\begin{align}
  \label{eq:def_generator_cont_sticky}
  \generator \psi(w)  &= \{\kappa(w) + \InftyBound\} \psi'(w) + 2 \1_{(0,+\infty)}(w)\sigma^2  \psi''(w) \\
\tilde{    \generator} \psi(w)  &= \{\kappa(w) + \InftyBound\} \psi'(w) + 2 \sigma^2  \psi''(w)  \eqsp,
\end{align}
where $\kappa$ is arbitrary extended on $\rset$. 
Note that $\generator$ is the  \textit{extended} generator associated with
\eqref{eq:def_sticky_cont}.

A crucial step in the proof of  \Cref{theo:martingale_prob_martinM_N} is the following.

\begin{proposition}
  \label{propo:convergence_to_mart_prob}
  Assume \Cref{ass:sticky_cont}. Let $\varphi\in\rmC^{3}(\rset)$, satisfying
  \begin{equation}
  \label{eq:hyp_varphi:propo:convergence_to_mart_prob}
    \sup_{w \in \rset} \{ \absLigne{\varphi}(w)/(1+w^2) + \absLigne{\varphi'}(w)/(1+\absLigne{w}) + \absLigne{\varphi''}(w) + \absLigne{\varphi^{(3)}}(w) \} < \plusinfty \eqsp. 
  \end{equation}
Then,  for any  $N \in \nset$, $(t_1,\ldots,t_N,s,t) \in \coint{0,\plusinfty}^{N+2}$, $0 \leq t_1 \leq \cdots \leq t_N \leq s < t$, $\psi : [0,+\infty)^N \to \rset$, \tcwc{nonnegative}, continuous and bounded, it holds that
  \begin{equation}
    \label{eq:propo:convergence_to_mart_prob_1}
  \lim_{n \to \plusinfty} \expe{\parenthese{\varphi(\Wnbf_t) - \varphi(\Wnbf_s) - \int_{s}^t \generator \varphi(\Wnbf_u) \rmd u} \psi(\Wbfn_{t_1},\ldots,\Wbfn_{t_N})} = 0 \eqsp.
  \end{equation}
If $\varphi''(w) \geq 0$ for any $w \in \rset$, it holds that
  \begin{equation}
        \label{eq:propo:convergence_to_mart_prob_2}
 \limsup_{n \to \plusinfty} \expe{\parenthese{\varphi(\Wnbf_t) - \varphi(\Wnbf_s) - \int_{s}^t \tilde{\generator} \varphi(\Wnbf_u) \rmd u} \psi(\Wbfn_{t_1},\ldots,\Wbfn_{t_N})} \leq 0\eqsp. 
  \end{equation}
\end{proposition}
\begin{proof}
  The proof is postponed to \Cref{sec:proof-crefpr_conv_mart_pb}. 
\end{proof}
Note that while \Cref{propo:convergence_to_mart_prob}-\eqref{eq:propo:convergence_to_mart_prob_1} is in general sufficient to conclude on the convergence of the sequence of processes $\{ (\Wnbf_{t })_{t \geq 0} \, : \, n \in\nset\}$ (see \eg~\cite{ethier:kurtz:1986}), in our setting, it is not enough to complete the proof of \Cref{theo:continuous_limit} since the diffusion coefficient associated with $\generator$ is discontinuous. To circumvent this issue, we adapt to our sequence $\{ (\Wnbf_{t })_{t \geq 0} \, : \, n \in\nset\}$  the same strategy employed in \cite[Proposition 6]{racz:shkolnikov:2015}.

\begin{proposition}
  \label{prop:limit_point_nonnegative}
  Assume \Cref{ass:sticky_cont}. Let $\mubf_{\infty}$ be a limit point of $(\mubf_n)_{n\in\nset}$. Then, $\mubf_{\infty}$-almost everywhere, $\inf_{t \in \coint{0,\plusinfty}} \rmW_t \geq 0$. 
\end{proposition}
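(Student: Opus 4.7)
The plan is to combine two elementary facts: (i) each prelimit chain $(\Wn_k)_{k \in \nset}$ is almost surely non-negative by construction, and (ii) path-non-negativity defines a closed subset of $\wiener$, so the property is preserved under weak limits.

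First I would verify by induction on $k$ that $\Wn_k \geq 0$ almost surely. Assume $\Wn_k \geq 0$, so that $a := \tau_{\gamma_n}(\Wn_k) + \gamma_n \InftyBound \geq 0$ by \Cref{ass:LipsAnd}. From \eqref{eq:def_r}, $\Wn_{k+1}$ is either $0$ (which is non-negative) or equal to $a - 2\sigma\sqrt{\gamma_n}\, G_{k+1}$, and the latter alternative occurs only when $U_{k+1} \geq \bpg(a, G_{k+1})$. However, exactly as in the proof of \Cref{lem:H_increase}, whenever $a - 2\sigma\sqrt{\gamma_n}\, G_{k+1} < 0$ one has $\bpg(a, G_{k+1}) = 1$, so the event $\{U_{k+1} \geq \bpg\}$ has probability $0$ and this alternative never realizes. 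Hence $\Wn_{k+1} \geq 0$ almost surely. Since the linear interpolation \eqref{eq:def_wnbf} is a convex combination of consecutive $\Wn_k$'s, $\Wnbf_t \geq 0$ almost surely for every $t \geq 0$ and every $n \in \nset$.

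Next, for each $T \geq 0$, define
\[
F_T \ = \ \{\omega \in \wiener : \inf_{t \in [0,T]} \omega_t \geq 0\} \eqsp.
\]
The map $\omega \mapsto \inf_{t \in [0,T]} \omega_t$ is $1$-Lipschitz for uniform convergence on $[0,T]$ and therefore continuous for the uniform-on-compacts topology of $\wiener$, so $F_T$ is closed as the preimage of $[0,+\infty)$. By the previous step, $\mubf_n(F_T) = 1$ for every $n \in \nset$. Let $(\mubf_{n_k})_{k \in \nset}$ be a subsequence converging weakly to $\mubf_{\infty}$; the Portmanteau theorem yields $\mubf_{\infty}(F_T) \geq \limsup_{k} \mubf_{n_k}(F_T) = 1$, so $\mubf_{\infty}(F_T) = 1$ for every $T \in \nset^*$. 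A countable intersection then gives
\[
\mubf_{\infty}\Bigl(\bigcap_{T \in \nset^*} F_T\Bigr) \ = \ \mubf_{\infty}\bigl(\{\omega : \inf_{t \geq 0}\omega_t \geq 0\}\bigr) \ = \ 1\eqsp,
\]
which is the desired conclusion.

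I do not expect any substantial obstacle here: all of the delicate work has already been done, namely the construction of $\mathscr{H}_\gamma$ that forces non-negative increments and the tightness statement (\Cref{propo:tight_convergence_continuous}) that guarantees the existence of limit points. The proof is just the standard weak-limit transfer of a closed-set property.
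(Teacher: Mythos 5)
Your proof is correct and takes essentially the same route as the paper: almost-sure non-negativity of the prelimit processes combined with the Portmanteau theorem applied to the closed set of non-negative paths. Your treatment is in fact slightly more careful, since you work with the closed sets $F_T$ and a countable intersection rather than asserting continuity of $\omega \mapsto \inf_{t \geq 0}\omega_t$ on $\wiener$ as the paper does, and your induction showing $\Wn_k \geq 0$ makes explicit what the paper takes for granted from the fact that $Q_{\gamma}$ is a Markov kernel on $\coint{0,+\infty}$.
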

\begin{proof}
Without loss of generality, we assume that $(\mubf_n)_{n \in\nset}$ converges to $\mubf_{\infty}$. Since $\omega \mapsto \inf_{t \in \coint{0,\plusinfty}} \omega_t$ is continuous, $\msf = \{ \omega \in \wiener \, : \, \inf_{t \in \coint{0,\plusinfty}} \omega_t \geq 0\}$ is closed. Therefore, by the Portmanteau theorem \cite[Theorem 13.16]{book:klenke:2014}, we obtain that $\mubf_{\infty}(\msf) \geq \limsupn \mubf_n(\msf) = 1$.
\end{proof}

\begin{proof}[Proof of \Cref{theo:martingale_prob_martinM_N}]
  Recall that we denote by $(\mubf_n)_{n \in\nset}$ the sequence of
  distribution on $\wiener$ associated with
  $\{(\Wnbf_t)_{t \geq 0} \, :\, n \in \nset\}$. Let $\mubf_{\infty}$
  be a limit point of this sequence for the convergence in distribution. Without loss of generality, we
  assume that $(\mubf_n)_{n \in\nset}$ converges in distribution to
  $\mubf_{\infty}$. Note that by \Cref{propo:moment_conv_continuous},
  for any continuous function $F: \wiener \to \rset$ such that
  $\abs{F}(\omega) \leq C_T \{1+\sup_{t \in \ccint{0,T}}
  \abs{\omega_t}^{\delta_c}\}$ for $\delta_c \in \coint{0,4}$, $T,C_T \geq 0$, then $F$ is uniformly
  integrable for $(\mubf_n)_{n \in\nset}$ and therefore (see \eg~\cite[Lemma 5.1.7.]{ambrosio:gigli:savare:2008})
  \begin{equation}
    \label{eq:weak_convergence_plus_unif_int_convergence}
    \lim_{n\to \plusinfty} \int_{\wiener} F \rmd \mubf_n = \int_{\wiener}
  F \rmd \mubf_{\infty} \eqsp. 
\end{equation}

We divide then the proof into two parts.
\textbf{First part}: we first show that under $\mubf_{\infty}$, $(\rmM_t)_{t \geq 0}$ is a $(\wienersigma_t)_{t \geq 0}$-martingale.    Considering
    \begin{equation}
    \label{eq:18}
    F_{1} : \omega \mapsto  \parenthese{\varphi_1(\omega_t) - \varphi_1(\omega_s) - \int_{s}^t \generator \varphi_1(\omega_u) \rmd u} \psi(\omega_{t_1},\ldots,\omega_{t_N}) \eqsp,
  \end{equation}
and applying   \Cref{propo:convergence_to_mart_prob}-\eqref{eq:propo:convergence_to_mart_prob_1} to $\varphi_1(w)
    = w$ for any $w \in \rset$, since $\generator
    \varphi_1$ is continuous under \Cref{ass:sticky_cont}, for any $N
    \in \nset$, $(t_1,\ldots,t_N,s,t) \in \coint{0,\plusinfty}^{N+2}$, $0 \leq t_1
    \leq \cdots \leq t_N \leq s < t$, $\psi : \rset_+^N \to
    \rset_{\tcwc{+}}$, continuous and bounded, 
    \begin{equation}
      \label{eq:32}
      \expeW{\mubf_{\infty}}{\parenthese{\rmM_t - \rmM_s} \psi(\rmW_{t_1},\ldots,\rmW_{t_N})} = 0 \eqsp,
    \end{equation}
    where $\expeW{\mubf_{\infty}}{\cdot}$ is the expectation under $\mubf_{\infty}$ on $(\wiener, \wienersigma)$. 
  We obtain by the monotone class theorem and \cite[Theorem 2.3, Chapter 0]{revuz:yor:1994} that the first part of the result holds, \ie~ $(\Mrm_t)_{t \geq 0}$ defined by \eqref{eq:def_Mrm_Nrm} is a $(\wienersigma_t)_{t \geq 0}$-martingale on  $(\wiener,\wienersigma, (\wienersigma_t)_{t \geq 0}, \mubf_{\infty})$. 

\textbf{Second part}:  It remains to  show that under $\mubf_{\infty}$, $(\rmN_t)_{t \geq 0}$ is a $(\wienersigma_t)_{t \geq 0}$-martingale. We first establish setting $\varphi_2(w) = w^2$ for $w \in \rset$, that
  \begin{equation}
    \tilde{\Nrm}_t =   \varphi_2(\rmW_t)-\varphi_2(\rmW_0) - \int_{0}^t\generator \varphi_2(\rmW_u) \rmd u   \eqsp,
  \end{equation}
  is a $(\wienersigma_t)_{t \geq 0}$-submartingale, which easily implies that $(\Nrm_t)_{t \geq 0}$ is a $(\wienersigma_t)_{t \geq 0}$-submartingale. Let $N
    \in \nset$, $(t_1,\ldots,t_N,s,t) \in \coint{0,\plusinfty}^{N+2}$, $0 \leq t_1
    \leq \cdots \leq t_N \leq s < t$, $\psi : \rset_+^N \to
    \rset$, continuous, nonnegative and bounded. Then, consider $F^+_2 =  F^+_{2,1} -  F^+_{2,2}$ on  $\wiener$ with :  
  \begin{align}
    \label{eq:33}
    &  F^+_{2,1} : \omega \mapsto   \defEns{\varphi_2(\omega_t)-\varphi_2(\omega_s) - 2 \int_{s}^t \omega_u (\kappa(\omega_u) + \InftyBound) \rmd u}\psi(\omega_{t_1},\ldots, \omega_{t_{N}}) \\
&    F^+_{2,2} : \omega \mapsto  4 \sigma^2 \defEns{\int_{s}^t \1_{\rset_+^*}(\omega_u)  \rmd u} \psi(\omega_{t_1},\ldots, \omega_{t_{N}})
  \end{align}
  Note that it is easy to check that $F^+_{2,1}$ is continuous and $F^+_{2,2}$ is bounded
  lower semi-continuous  on $\wiener$, \ie~for any $(\omega^n)_{n \in\nset}$
  converging to $\omega^{\infty}$ in $\wiener$ endowed with the uniform convergence on compact set, $\liminf_{n \to \plusinfty} F^+_{2,2}(\omega^{n}) \geq F^+_{2,2}(\omega^{\infty})$.
  Therefore, we obtain by the Portmanteau theorem \cite[Theorem 13.16]{book:klenke:2014} and \eqref{eq:weak_convergence_plus_unif_int_convergence} that
  \begin{equation}
    \label{eq:34}
   \int_{\wiener} F^+_{2,1} \rmd \mubf_{\tcwc{\infty}}  = \limn    \int_{\wiener} F^+_{2,1} \rmd \mubf_n \eqsp, \text{ and }    \int_{\wiener} F^+_{2,2} \rmd \mubf_{\tcwc{\infty}} \leq     \liminfn \int_{\wiener} F^+_{2,2} \rmd \mubf_n \eqsp.
  \end{equation}
  Therefore, \Cref{propo:convergence_to_mart_prob}-\eqref{eq:propo:convergence_to_mart_prob_1} applied with $\varphi \leftarrow \varphi_2$ implies that 
  \begin{equation}
    \label{eq:30}
0 =   \limsupn \int_{\wiener} F^+_2 \rmd \mubf_n \leq \int_{\wiener} F^+_2 \rmd \mubf_{\tcwc{\infty}} \eqsp. 
\end{equation}
Using the same arguments as before, we obtain that under $\mubf_{\infty}$, $(\tilde{\Nrm}_t)_{t \geq 0}$ is a \sloppy$(\wienersigma_t)_{t \geq 0}$-submartingale. Then, it is easy to verify that  $({\Nrm}_t)_{t \geq 0}$ is  a $(\wienersigma_t)_{t \geq 0}$-submartingale. We complete then the proof by showing that $({\Nrm}_t)_{t \geq 0}$ is also a $(\wienersigma_t)_{t \geq 0}$-supermartingale under $\mubf_{\infty}$. To do so, we need the following lemma. 

\begin{lemma}
\label{lem:second_part_proof_cont_theo_main}
  Assume \Cref{ass:sticky_cont}. Then, for any limit point $\mubf_{\infty}$ of $(\mubf_n)_{n \in\nset}$, $\mubf_{\infty}$-almost everywhere, $t \mapsto \qvar{\rmM}_t - 4 \sigma^4 t $ is nonincreasing, where $(\qvar{\rmM}_t)_{ t \geq 0}$ is the quadratic variation of $(\rmM_t)_{t \geq 0}$. 
\end{lemma}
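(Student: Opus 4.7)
The strategy is to apply Proposition~\ref{propo:convergence_to_mart_prob}-\eqref{eq:propo:convergence_to_mart_prob_2} to the test function $\varphi_2(w) = w^2$, which satisfies \eqref{eq:hyp_varphi:propo:convergence_to_mart_prob} and has $\varphi_2'' \equiv 2 \geq 0$. Transferring the resulting $\limsup$-inequality to $\mubf_\infty$ relies on a uniform integrability argument based on the $\mathrm{L}^4$ bound from Proposition~\ref{propo:moment_conv_continuous}, which is needed since $\tilde{\generator}\varphi_2(w) = 2w(\kappa(w)+\InftyBound)+4\sigma^2$ grows quadratically in $w$. Combined with a monotone class argument as in the first part of the proof of Theorem~\ref{theo:martingale_prob_martinM_N}, this shows that
\[
\tilde{\Nrm}_t \ \defeq \ \Wrm_t^2 - \Wrm_0^2 - 2\int_0^t \Wrm_u(\kappa(\Wrm_u)+\InftyBound)\, \rmd u - 4\sigma^2 t
\]
is a $(\wienersigma_t)_{t \geq 0}$-supermartingale under $\mubf_\infty$.

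Next, knowing from the first part of the proof of Theorem~\ref{theo:martingale_prob_martinM_N} that $\Mrm$ is a continuous local martingale, the canonical process admits the semimartingale decomposition $\Wrm_t = \Wrm_0 + \Mrm_t + \int_0^t (\kappa(\Wrm_u) + \InftyBound)\, \rmd u$. Itô's formula applied to $w \mapsto w^2$ then yields
\[
\Wrm_t^2 - \Wrm_0^2 \ = \ 2\int_0^t \Wrm_u\, \rmd \Mrm_u + 2\int_0^t \Wrm_u(\kappa(\Wrm_u) + \InftyBound)\, \rmd u + \qvar{\Mrm}_t,
\]
so that $\tilde{\Nrm}_t = 2\int_0^t \Wrm_u\, \rmd \Mrm_u + \bigl[\qvar{\Mrm}_t - 4\sigma^2 t\bigr]$. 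The stochastic integral on the right is a continuous local martingale (its integrand being continuous and adapted, hence locally bounded), while $C_t \defeq \qvar{\Mrm}_t - 4\sigma^2 t$ is a continuous adapted process of finite variation with $C_0 = 0$.

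To conclude, I would localize the local martingale part by a sequence of stopping times $(\tau_k)_{k \in \nset}$ reducing $\int_0^{\cdot} \Wrm_u\, \rmd \Mrm_u$ to a true martingale. The supermartingale property of $\tilde{\Nrm}$, after subtracting this true martingale part, then yields $\expe{C_{t \wedge \tau_k} \mid \wienersigma_s} \leq C_{s \wedge \tau_k}$ for all $0 \leq s \leq t$. Letting $k \to \infty$ and using continuity of $C$, one obtains that $C$ itself is a local supermartingale; since it is also continuous, adapted, and of finite variation, it must be nonincreasing---this is the content of the uniqueness clause of the Doob--Meyer decomposition for continuous special semimartingales. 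This gives $t \mapsto \qvar{\Mrm}_t - 4\sigma^2 t$ nonincreasing, as claimed. The principal obstacle is this final monotonicity extraction, together with the uniform integrability needed to turn the $\limsup$-bound of Proposition~\ref{propo:convergence_to_mart_prob} into a genuine supermartingale inequality under $\mubf_\infty$; the latter is handled using the Lipschitz growth of $\kappa$ from \Cref{ass:sticky_cont} combined with the fourth-moment bound of Proposition~\ref{propo:moment_conv_continuous}.
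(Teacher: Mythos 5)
Your proposal is correct and follows essentially the paper's own route: both rest on applying \Cref{propo:convergence_to_mart_prob}-\eqref{eq:propo:convergence_to_mart_prob_2} to $\varphi_2(w)=w^2$, transferring the resulting inequality to $\mubf_{\infty}$ by the uniform integrability coming from \Cref{propo:moment_conv_continuous}, and then invoking (in one guise or another) the fact that a continuous finite-variation local martingale is constant. The only difference is cosmetic: you make the Itô step explicit by writing $\tilde{\Nrm}_t = 2\int_0^t \rmW_u\,\rmd\rmM_u + (\qvar{\rmM}_t - 4\sigma^2 t)$ and extract monotonicity of the finite-variation part after localization, whereas the paper passes to the supermartingale $\rmM_t^2-4\sigma^2 t$, applies Doob--Meyer to it and compares with the martingale $\rmM_t^2-\qvar{\rmM}_t$ via \cite[Proposition 1.2, Chapter IV]{revuz:yor:1994}; both reduce to the same uniqueness argument.
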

\begin{proof}
  Let $N
    \in \nset$, $(t_1,\ldots,t_N,s,t) \in \coint{0,\plusinfty}^{N+2}$, $0 \leq t_1
    \leq \cdots \leq t_N \leq s < t$, $\psi : \rset_+^N \to
    \rset$, continuous, nonnegative and bounded. Consider now the continuous map
\begin{equation}
  \label{eq:35}
  F_2^- : \omega \mapsto \defEns{\varphi_2(\omega_t)-\varphi_2(\omega_{\tcwc{s}}) - \int_{\tcwc{s}}^t\tilde{\generator} \varphi_2(\omega_u) \rmd u } \psi(\omega_{t_1},\ldots,\omega_{t_N}) \eqsp. 
\end{equation}
Then, by \eqref{eq:weak_convergence_plus_unif_int_convergence} and
\Cref{propo:convergence_to_mart_prob}-\eqref{eq:propo:convergence_to_mart_prob_2},
we get $\limn \int_{\wiener} F_2^- \rmd \mubf_n \leq 0$. Using that  under $\mubf_{\infty}$
$(\rmM_t)_{t \geq 0}$ is a $(\wienersigma_t)_{t \geq 0}$-martingale,
we get that $(\rmM_t^2 - 4 \sigma^2 t)_{t \geq 0}$ is a
$(\wienersigma_t)_{t\geq 0}$ supermartingale. By the
Doob-Meyer decomposition \cite[Theorem 22.5]{kallenberg:2002}, under $\mubf_{\infty}$, there
exists a unique nondecreasing, locally integrable and predictable
process $(\rmC_t)_{t \geq 0}$, such that
$(\rmM_t^2 -4 \sigma^2 t + \rmC_t)_{t \geq 0}$ is a local
$(\wienersigma_t)_{t\geq 0}$-martingale. In addition, under $\mubf_{\infty}$, by  \cite[Theorem 1.8,
Chapter IV]{revuz:yor:1994}, the quadratic variation
$(\qvar{\rmM}_t)_{t \geq 0}$ of $(\rmM_t)_{t \geq 0}$ is a finite  variation process satisfying
$(\rmM_t^2 -\qvar{\rmM}_t)_{ t\geq 0}$ is a
$(\wienersigma_t)_{t\geq 0}$-martingale therefore
$(\rmM_t^2 -4 \sigma^2 t -(\qvar{\rmM}_t - 4 \sigma^2 t) )_{ t\geq 0}$
is a $(\wienersigma_t)_{t\geq 0}$-martingale.   Therefore,
$(\qvar{\rmM}_t - 4 \sigma^2 t - \rmC_t)_{ t\geq 0}$ is a
local $(\wienersigma_t)_{t\geq 0}$-martingale and a finite variation
process. By \cite[Proposition 1.2, Chapter
IV]{revuz:yor:1994}, $\mubf_{\infty}$-almost everywhere, for any
$t \in \coint{0,\plusinfty}$, $ \qvar{\rmM}_t - 4 \sigma^2 t+\rmC_t = 0$, which
completes the proof. 
\end{proof}
 By \Cref{lem:second_part_proof_cont_theo_main}, denoting by $(\qvar{\rmM}_t)_{t \geq 0}$, the quadratic variation of $(\rmM_t)_{t \geq 0}$, see \cite[Theorem 1.8,
Chapter IV]{revuz:yor:1994},  $\mubf_{\infty}$-almost everywhere,
$t \mapsto \qvar{\rmM}_t - 4 \sigma^2 t$ is \tcrw{nonincreasing} and
therefore we get that for any $s,t \in \coint{0,\plusinfty}$, $s \leq t$, $\mubf_{\infty}$-almost everywhere,
\begin{equation}
  \label{eq:proof_cont_limit_second_part_ineq_1}
  \int_{s}^t \1_{\rset_+^*} (\rmW_u) \rmd \qvar{\rmM}_u \leq  4 \sigma^2  \int_{s}^t \1_{\rset_+^*} (\rmW_u) \rmd u \eqsp.
\end{equation}
In addition,  by the occupation times formula \cite[Corollary 1.6, Chapter VI]{revuz:yor:1994} applied twice and \Cref{prop:limit_point_nonnegative}, $\mubf_{\infty}$-almost everywhere,  
\begin{equation}
  \label{eq:37}
  \qvar{\rmM}_t = \int_{0} ^t \rmd \qvar{\rmM}_u = \int_{0} ^t \1_{\rset_+}(\rmW_u) \rmd \qvar{\rmW}_u = \int_{\rset_+} \rmL^a_t \rmd a = \int_{\rset_+^*} \rmL^a_t \rmd a = \int_{0} ^t  \1_{\rset_+^*}(\rmW_u) \rmd \qvar{\rmM}_u \eqsp .
\end{equation}
Using this result and  \eqref{eq:proof_cont_limit_second_part_ineq_1}, we get  that $\qvar{M}_t -\qvar{M}_s \leq 4 \sigma^2 \int_{s}^t \1_{\rset_+^*}(\rmW_u) \rmd u$, for any $s,t \in\coint{0,\plusinfty}$, $s \leq t$. Therefore since $(\rmM^2_t - \qvar{M}_t)_{ t\geq 0}$ is a $(\wienersigma_t)_{t \geq 0}$-martingale under $\mubf_{\infty}$, we conclude that $(\rmN_t)_{t \geq 0}$ is a $(\wienersigma_t)_{t \geq 0}$-supermartingale which completes the proof.
\end{proof}

\subsection{Proof of \Cref{propo:convergence_to_mart_prob}}
\label{sec:proof-crefpr_conv_mart_pb}

We preface the proof by the following technical lemma.

\begin{lemma}
  \label{lem:moment_martinga_incre_proof_cont_quad_approx}
  Assume \Cref{ass:sticky_cont}. Then, for any $q \in \coint{1,\plusinfty}$, we have
\begin{enumerate}[wide, labelwidth=!, labelindent=0pt, label=(\alph*)]
\item  \label{lem:moment_martinga_incre_proof_cont_quad_approx_item_1}
  for any $\gamma \in \ocint{0,\bgamma}$ and $w_0 \in \coint{0,\plusinfty}$, 
  \begin{align}
    \label{eq:14_approx_quad}
&     \txts -2 (4\sigma^2 \gamma)^{q/2} \int_{\btau_{\gamma}^{\infty}(w_0)}^{\plusinfty} \abs{g}^q \varphibf(g) \rmd g \\ 
 \qquad   &  \qquad\txts  \leq  \int_{\rset_+}   \abs{w_1-\tau_{\gamma}(w_0)-\gamma \InftyBound}^q Q_{\gamma}(w_0,\rmd w_1)- (4\sigma^2 \gamma)^{q/2} \bfm_{q} \ \leq \ 0 \eqsp,
  \end{align}
  where $\btau_{\gamma}^{\infty}(w_0) = \{\tau_{\gamma}(w_0) +\gamma \InftyBound\}/(2 \sqrt{\sigma^2 \gamma})$, $Q_{\gamma}$ is defined by  \eqref{def_q_gamma}, $\bfm_{q}$ is the $q$-th moment of the standard Gaussian distribution and $\varphibf$ is its probability density function;
\item  \label{lem:moment_martinga_incre_proof_cont_quad_approx_item_2} for any $\gamma \in \ocint{0,\bgamma}$, 
  \begin{equation}
    \label{eq:14_approx_quad_2}
    \txts   \int_{\rset_+}   \abs{w_1-\gamma \InftyBound}^q Q_{\gamma}(0,\rmd w_1) \leq   3 (  \gamma \InftyBound)^q +   (2  \gamma \InftyBound)^q + \tcrw{q}\bfm_{q-1}(4 \sigma^2 \gamma)^{q/2} \gamma^{1/2} \InftyBound/\sigma  \eqsp.
  \end{equation}
\end{enumerate}
\end{lemma}
\begin{proof}
  \begin{enumerate}[wide, labelwidth=!, labelindent=0pt, label=(\alph*)]
      \item 
  Let $w_0 \in\coint{0,\plusinfty}$ and $\gamma \in\ocint{0,\bgamma}$.
  By definition \eqref{def_q_gamma} and \eqref{eq:def_bpg}, we have setting $\bar{\tau}_{\gamma}^{\infty}(w_0) = \{\tau_{\gamma}(w_0) +\gamma \InftyBound\}/(2 \sqrt{\sigma^2 \gamma})$, 
  \begin{equation}
    \begin{aligned}
    & \qquad  \int_{\rset_+}   \abs{w_1-\tau_{\gamma}(w_0)-\gamma \InftyBound}^q Q_{\gamma}(w_0,\rmd w_1) = (4 \sigma^2 \gamma)^{q/2} \int_{-\infty}^{\bar{\tau}_{\gamma}^{\infty}(w_0)} \abs{g}^q \varphibf(g) \rmd g \\
         &   \qquad\qquad\qquad\qquad\qquad\qquad\qquad -(4 \sigma^2 \gamma)^{q/2} \int_{\bar{\tau}_{\gamma}^{\infty}(w_0)}^{\plusinfty} \abs{g-\tcrw{2\bar{\tau}_{\gamma}^{\infty}(w_0)}}^q\varphibf(g) \rmd g \\
         &\qquad\qquad\qquad\qquad\qquad\qquad\qquad+ \int_{\rset} \absLigne{\tau_{\gamma}(w_0)+\gamma \InftyBound}^q
\varphibf( 2 \bar{\tau}_{\gamma}^{\infty}(w_0)-g) \wedge \varphibf(g) 
    \rmd g \eqsp. \end{aligned} \label{eq:14_approx_quad_proof_0}
  \end{equation}
  Therefore, we obtain that
    \begin{align}
    \label{eq:14_approx_quad_proof}
&    \txts   \int_{\rset_+}   \abs{w_1-\tau_{\gamma}(w_0)-\gamma \InftyBound}^q Q_{\gamma}(w_0,\rmd w_1)- (4\sigma^2 \gamma)^{q/2} \bfm_{q}  \\
      &\qquad    \txts =  - (4 \sigma^2 \gamma)^{q/2} \int_{\bar{\tau}_{\gamma}^{\infty}(w_0)}^{\plusinfty} \abs{g}^q\varphibf(g) \rmd g
        - (4 \sigma^2 \gamma)^{q/2} \int_{\bar{\tau}_{\gamma}^{\infty}(w_0)}^{\plusinfty} \abs{2\bar{\tau}_{\gamma}^{\infty}(w_0) - g}^q\varphibf(g) \rmd g \\
      & \qquad \qquad  
    \txts    +  2 (4\sigma^2 \gamma)^{q/2} \{\btau_{\gamma}^{\infty}(w_0)\}^q \int_{\bar{\tau}_{\gamma}^{\infty}(w_0)}^{\plusinfty} \varphibf(g) \rmd g \eqsp.
    \end{align}
    Using that for $g \in \cointLigne{\bar{\tau}_{\gamma}^{\infty}(w_0), \plusinfty}$, $\{\btau_{\gamma}^{\infty}(w_0)\}^q \leq 2^{-1}[\abs{g}^q + \absLigne{2\bar{\tau}_{\gamma}^{\infty}(w_0) - g}^q]$ \tcrw{and $\absLigne{2\bar{\tau}_{\gamma}^{\infty}(w_0)-g}\leq \absLigne{g}$} completes the proof.
  \item By \eqref{eq:14_approx_quad_proof_0} and since $\tau_{\gamma}(0)=0$ and  $\btau_{\gamma}^{\infty}(0) = \gamma \InftyBound/(2 \sqrt{\sigma^2 \gamma})$, 
    \begin{align}
      \label{eq:31}
&      \txts  \abs{ \int_{\rset_+}   \abs{w_1-\tau_{\gamma}(0)-\gamma \InftyBound}^q Q_{\gamma}(0,\rmd w_1)} \leq  2 (4 \sigma^2 \gamma)^{q/2}\int_{0}^{ \frac{\gamma \InftyBound}{2 \sqrt{\sigma^2 \gamma}}} \abs{g}^{\tcrw{q}} \varphibf(g) \rmd g
      \\
& \txts \qquad \qquad        +  (4 \sigma^2 \gamma)^{q/2}\int_{-\infty}^{- \frac{\gamma \InftyBound}{2 \sqrt{\sigma^2 \gamma}}} [\abs{g  }^{\tcrw{q}}  - \absLigne{2\bar{\tau}_{\gamma}^{\infty}(w_0) + g}^q ]\varphibf(g) \rmd g
      + (\gamma \InftyBound)^q\\
      & \txts \qquad \qquad    
\leq  2 (4 \sigma^2 \gamma)^{q/2}\int_{0}^{ \frac{\gamma \InftyBound}{2 \sqrt{\sigma^2 \gamma}}} \abs{g}^{\tcrw{q}} \varphibf(g) \rmd g + (4 \sigma^2 \gamma)^{q/2}\int_{-2 \frac{\gamma \InftyBound}{2 \sqrt{\sigma^2 \gamma}}}^{- \frac{\gamma \InftyBound}{2 \sqrt{\sigma^2 \gamma}}} \abs{g}^{\tcrw{q}}\varphibf(g) \rmd g
      \\
& \txts \qquad \qquad           +  (4 \sigma^2 \gamma)^{q/2}\int_{-\infty}^{- \frac{\gamma \InftyBound}{ \sqrt{\sigma^2 \gamma}}} \parentheseDeux{(-g )^q  - \defEns{-\frac{\gamma \InftyBound}{ \sqrt{\sigma^2 \gamma}} - g}^q }\varphibf(g) \rmd g
      + (\gamma \InftyBound)^q
        \eqsp.
    \end{align}
    Using that $(4 \sigma^2 \gamma)^{q/2}\int_{0}^{\gamma \InftyBound/(2 \sqrt{\sigma^2 \gamma})} \abs{g}^{\tcrw{q}} \varphibf(g) \rmd g  \leq (\gamma \InftyBound)^q$ and $a^q - (a-h)^q \leq \tcrw{q}a^{q-1}h$ for $a,h \geq 0$, $a\geq h$, completes the proof.
  \end{enumerate}
\end{proof}

\begin{proof}[Proof of \Cref{propo:convergence_to_mart_prob}]

\textbf{Proof of \eqref{eq:propo:convergence_to_mart_prob_1}.}
  Let  $\varphi\in\rmC^{\infty}(\rset^d)$ satisfying \eqref{eq:hyp_varphi:propo:convergence_to_mart_prob}, $(s,t) \in \coint{0,\plusinfty}^{2}$, $ s < t$.
Note that we only need to show that 
\begin{equation}
  \label{eq:17}
  \lim_{n \to \plusinfty} \expe{\abs{\CPE{\varphi(\Wnbf_t) - \varphi(\Wnbf_s) - \int_{s}^t \generator \varphi(\Wnbf_u) \rmd u}{\mcg_s^{(n)}}}}\tcrw{=0}\eqsp,
\end{equation}
setting for any $n \in \nset$, $u \in \coint{0,\plusinfty}$, $\mcg_u^{(n)} = \mcf^{(n)}_{\ceil{u/\gamma_n}}$, where $(\mcf^{(n)}_k)_{k \in \nset}$ is  the filtration corresponding to $(\Wn_k)_{k \in \nset}$.

Define $k_1^{(n)} = \ceil{t/\gamma_n}$ and $k_2^{(n)} = \ceil{s/\gamma_n}$ and  consider the following decomposition
\begin{align}
  \label{eq:decomposition-main-proof-martingale-pb}
  &  \CPE{\varphi(\Wnbf_t) - \varphi(\Wnbf_s) - \int_{s}^t \generator \varphi(\Wnbf_u) \rmd u}{\mcg_s^{(n)}} = \CPE{A_1^{(n)}+A_2^{(n)}+A_3^{(n)}}{\mcg_s^{(n)}} \\
  &  A_1^{(n)}  = \varphi(\Wnbf_t)-\varphi(\Wn_{k_1^{(n)}}) - \{\varphi(\Wnbf_s) - \varphi(\Wn_{k_2^{(n)}})\} \\
  & A_2^{(n)} = -\int_{s}^t \generator \varphi(\Wnbf_u) \rmd u + \gamma_n \sum_{k=k_2^{(n)}}^{k_1^{(n)}-1} \generator \varphi(\Wn_k)  \\
  & A_3^{(n)} = \varphi(\Wn_{k_1^{(n)}}) - \varphi(\Wn_{k_2^{(n)}}) - \gamma_n\sum_{k=k_2^{(n)}}^{k_1^{(n)}-1} \generator \varphi(\Wn_k) \eqsp.
\end{align}

We deal with these three terms separately.

First since $\varphi$ satisfies \eqref{eq:hyp_varphi:propo:convergence_to_mart_prob}, by the fundamental theorem of calculus, there exists $C \geq 0$ such that for any $w_0,w_1 \in \rset$, $\abs{\varphi(w_1) - \varphi(w_0)} \leq C\parenthese{\tcrw{1+}\max(\abs{w_0},\abs{w_1})} \abs{w_0-w_1}$. By \eqref{eq:def_wnbf}\tcrw{, Cauchy–Schwarz inequality and \Cref{lem:moment_four_iterates_conti_limit}}, we get that there exists $C \geq 0$ such that for any $n \in \nset$, 
\begin{equation}
  \label{eq:20}
\txts\tcrw{\expeLigne{\absLigne{A_1^{(n)}}}^4\leq \expeLigne{\absLigne{A_1^{(n)}}^2}^2\leq C \gamma_n^2 \{\max_{i \in \{k_1^{(n)}-1,k_1^{(n)},k_2^{(n)}-1,k_2^{(n)}\}}\expeLigne{\absLigne{\Wn_{i}}^4} +1\}^2} \eqsp. 
\end{equation}
This implies by \tcrw{\Cref{lem:moment_four_iterates_conti_limit_0} that}
  \begin{equation}
  \label{eq:lim-A-1-pb-nartingale}
\lim_{n \to \plusinfty} \expeLigne{    \absLigne{A_1^{(n)}}}  = 0 \eqsp. 
\end{equation}

Regarding $A_2^{(n)}$, we consider the decomposition,%setting $k_u^{(n)}  = \floor{u/\gamma_n}$,
\begin{align}
  \label{eq:19}
  A_2^{(n)} &= A_{2,1}^{(n)} + A_{2,2}^{(n)} \eqsp, \\
  A_{2,1}^{(n)} & =  \tcrw{-}\int^{k_2^{(n)} \gamma_n}_s\generator \varphi(\Wnbf_u) \rmd u + \int_t^{k_1^{(n)}\gamma_n} \generator \varphi(\Wnbf_u) \rmd u  \\
A_{2,2}^{(n)} &=   -\sum_{k=k_2^{(n)}}^{k_1^{(n)}-1} \int_{k \gamma_n}^{(k+1)\gamma_n} \{\generator \varphi(\Wnbf_u) - \generator \varphi (\Wn_{k})\}\rmd u \eqsp. 
\end{align}

Since $\varphi$ satisfies \eqref{eq:hyp_varphi:propo:convergence_to_mart_prob} \tcrw{and by \Cref{lem:moment_four_iterates_conti_limit_0}}, we get that $\lim_{n\to \plusinfty} \expeLigne{\absLigne{A_{2,1}^{(n)}}} = 0$. In addition, we have by definition of $\generator$ \eqref{eq:def_generator_cont_sticky} that for any $n \in \nset$, $k \in \{k_2^{(n)}, \ldots,k_1^{(n)}-1\}$, $u \in \ooint{k \gamma_n, (k+1) \gamma_n}$, 
\begin{align}
  &  \txts \abs{  \generator \varphi(\Wnbf_u) - \generator \varphi (\Wn_{k})} \leqslant B_{u,k}^{\tcrw{(n)}} +  2 \1_{\tcrw{\msa_k}} \sigma^2 \sup_{\rset}\abs{\varphi''} \\
& \txts  B_{u,k}^{(n)} =  (\absLigne{\kappa(\Wn_k)} + \InftyBound)\absLigne{\varphi'(\Wnbf_u)-\varphi'(\Wn_k)} \\
&\quad \txts  +   \absLigne{\varphi'(\Wnbf_u)}\absLigne{\kappa(\Wnbf_u)- \kappa(\Wn_k)}
 + 2 \1_{\tcrw{\msa_k^{\comp}}}\sigma^2\absLigne{ \varphi''(\Wnbf_u)-\varphi''(\Wn_k)}  \eqsp,
\end{align}
where
$\tcrw{\msa_k= \{\Wn_k =0, \Wn_{k+1} \neq 0\}}$. Note that using that $\varphi'$ and $\varphi''$ are
Lipschitz and $\sup_{\tilde{w} \in \coint{0,\plusinfty}} \absLigne{\varphi'}(w)/(1+\abs{w}) < \plusinfty$ by \eqref{eq:hyp_varphi:propo:convergence_to_mart_prob},  \Cref{ass:sticky_cont}, \eqref{eq:def_wnbf}\tcrw{, Cauchy–Schwarz inequality, \Cref{lem:moment_four_iterates_conti_limit_0} and \Cref{lem:moment_four_iterates_conti_limit}}, we get that there exists $C \geq 0$ such that for any $n \in\nset$, $k \in \{k_2^{(n)}, \ldots,k_1^{(n)}-1\}$, $u \in \ooint{k \gamma_n, (k+1) \gamma_n}$, 
\begin{equation}
  \label{eq:21}
\txts  \tcrw{\expeLigne{\absLigne{  B_{u,k}^{(n)} }}^4\leq \expeLigne{\absLigne{  B_{u,k}^{(n)} }^2}^2 \leq C \gamma_n^2 \{\expeLigne{\absLigne{\Wn_0}^4}+1\}^2} \eqsp,
\end{equation}
which implies that
\begin{equation}
  \label{eq:22}
  \lim_{n \to \plusinfty} \sum_{k=k_2^{(n)}}^{k_1^{(n)}-1} \int_{k \gamma_n}^{(k+1)\gamma_n} \expe{\abs{B_{u,k}^{(n)}} } \rmd u = 0 \eqsp. 
\end{equation}
To conclude that $  \lim_{n \to \plusinfty} \expeLigne{\absLigne{A_2^{(n)}}} = 0$,
% \begin{equation}
%   \label{eq:23}
%  \eqsp,
% \end{equation}
it remains to show that 
\begin{equation}
  \label{eq:lim_A_2_pb_martingale_0}
  \lim_{n \to \plusinfty} \gamma_n\sum_{k=k_2^{(n)}}^{k_1^{(n)}-1}\expe{\1_{\tcrw{\msa_k}}} = 0 \eqsp.
\end{equation}
Note that using that by definition, $(\Wn_{k})_{k \in\nset}$ is a
Markov chain with  Markov kernel $Q_{\gamma_n}$ \eqref{def_q_gamma}, the Markov property \tcrw{and \Cref{lem:0_proba}} implies that for any $n  \in \nset$ and 
$k \in \{k_2^{(n)}, \ldots, k_1^{(n)}-1\}$,
\begin{equation}
  \label{eq:lim_A_2_pb_martingale_1}
  \tcrw{\expe{\1_{\tcrw{\msa_k}}} =  \proba{\Wn_{k}=0, \Wn_{k+1}\neq 0} \leq  1-  2 \Phibf[-\InftyBound\sqrt{\gamma_n}/(2 \sigma)]\eqsp.}
\end{equation}
Since $1 -2 \Phibf(-u) \leq u$ for any $u \in \coint{0,\plusinfty}$, we get that there exists $C \geq 0$ such that 
for any $n \in \nset$,
$\gamma_n\sum_{k=k_2^{(n)}}^{k_1^{(n)}-1} 1-2
\Phibf[-\InftyBound\sqrt{\gamma_n}/(2 \sigma)] \leq
C((t-s)+\gamma_n)\gamma_n^{1/2}$ and therefore
\begin{equation}
  \label{eq:lim_A_2_pb_martingale_2}
  \lim_{n \to \plusinfty} \gamma_n\sum_{k=k_2^{(n)}}^{k_1^{(n)}-1} \{1-2
\Phibf[-\InftyBound\sqrt{\gamma_n}/(2 \sigma)]\} = 0\eqsp.
\end{equation}
% Regarding the second term in
% \eqref{eq:lim_A_2_pb_martingale_1}, consider the sequence of
% measurable functions defined for any $n \in\nset$, $\omega \in \Omega$,
% $k \in \nset$ by
% $$f_n(\omega, k) = \gamma_n \1_{\{k_2^{(n)},\ldots,k_1^{(n)}-1\}}(k)
% \1_{\rset_+^*}(\Wn_{k})
% \Phibf[-\tau_{\gamma}^{\infty}(\Wn_k)/\{2(\sigma^2 \gamma_n)^{\half}\}]\eqsp,$$
% on the measure space
% $(\Omega \times \nset, \mcf \otimes 2^{\nset}, \PP \otimes
% \nu_{\mathrm{c}})$, where $2^{\nset}$ is the power set of $\nset$ and
% $\nu_{\mathrm{c}}$ is the counting measure on $\nset$. Note that
% $\PP \otimes \nu_{\mathrm{c}}$ almost everywhere,
% $\lim_{n \to \plusinfty} f_n(\omega,k) = 0$ and in addition,
%  $\sum_{k\in\nset} \int_{\Omega}\tilde{f}_n(\omega,k) \rmd \PP(\omega) \leq (t-s) + \gamma_n$. Therefore by the Lebesgue dominated convergence theorem, we obtain that $\lim_{n \to \plusinfty} \sum_{k\in\nset} \int_{\Omega}\tilde{f}_n(\omega,k) \rmd \PP(\omega) =0$ which implies by definition,
% \begin{equation}
%   \label{eq:26}
% \lim_{n \to \plusinfty}  2 \gamma_n\sum_{k=k_2^{(n)}}^{k_1^{(n)}-1} \expe{\1_{\rset_+^*}(\Wn_{k}) \Phibf[-\tau_{\gamma_n}^{\infty}(\Wn_k)/\{2(\sigma^2 \gamma_n)^{\half}\}]}  = 0 \eqsp.
% \end{equation}
This result combined with \eqref{eq:lim_A_2_pb_martingale_1} in \eqref{eq:lim_A_2_pb_martingale_0} shows that
\begin{equation}
  \label{eq:lim-A-2-pb-nartingale}
  \lim_{n\to \plusinfty} \expe{\absLigne{A_2^{(n)}}} = 0 \eqsp. 
\end{equation}

Finally we deal with $A_3^{(n)}$ from the decomposition
\begin{align}
 A_3^{(n)} = \sum_{k=k_2^{(n)}}^{k_1^{(n)}-1} \varphi(\Wn_{k+1}) - \varphi(\Wn_{k}) - \gamma_n \generator \varphi(\Wn_k) \eqsp.
\end{align}
Set for any $k \in \{k_2^{(n)},\ldots,k_1^{(n)} -1 \}$, $\Delta \Wn_{\tcrw{k+1}} = \Wn_{k+1} - \Wn_{k}$. Using that $\varphi$ is three times continuously differentiable, we
get by Taylor's theorem with Lagrange reminder, that for any
$n  \in \nset$, $k \in \{k_2^{(n)} , \ldots , k_1^{(n)} - 1\}$, there exists
$u_k \in \ccint{0,1}$ satisfying
\begin{align}
  \varphi(\Wn_{k+1}) - \varphi(\Wn_{k})  &= \varphi'(\Wn_k)\Delta \Wn_{\tcrw{k+1}} 
  + (\varphi''(\Wn_k)/2)\{\Delta \Wn_{\tcrw{k+1}}\}^2 
  \\&\qquad\qquad\qquad\qquad+ 6^{-1} \varphi^{(3)}(u_k \Wn_{k+1} +(1-u_k)\Wn_k)\{\Delta \Wn_{\tcrw{k+1}}\}^3  \eqsp.
\end{align}
It follows from  the definition \eqref{eq:def_r}, \Cref{ass:sticky_cont} 
 and Young's inequality, setting $\kappa^{\infty}(w) = \kappa(w) + \InftyBound$ that for any
$n  \in \nset$, $k \in \{k_2^{(n)} , \ldots , k_1^{(n)} - 1\}$,
\begin{equation}
  \label{eq:25}
 \absLigne{\Delta \Wn_{\tcrw{k+1}}}^3\leq 4 \{\gamma_n^3\absLigne{\kappa^{\infty}(\Wn_k)}^3 + \absLigne{ \Wn_{k+1} - (\tau_{\gamma_n}(\Wn_k) + \tcrw{\gamma_n}\InftyBound)}^3 \}\eqsp.
\end{equation}
It follows then using the definition of $\generator$
\eqref{eq:def_generator_cont_sticky}, \eqref{eq:hyp_varphi:propo:convergence_to_mart_prob},
\tcrw{\Cref{lem:moment_four_iterates_conti_limit_0}} and
\Cref{lem:moment_martinga_incre_proof_cont} that
\begin{align}
  \label{eq:go-to-A-4}
  &\lim_{n \to \plusinfty} \expeLigne{\absLigne{A_3^{(n)} - A_4^{(n)}}}  = 0 \eqsp, \quad \text{ where }  A_4^{(n)} = A_{4,1}^{(n)} + A_{4,2}^{(n)} \eqsp,\\
  & A_{4,1}^{(n)} = \sum_{k=k_2^{(n)}}^{k_1^{(n)}-1} [\varphi'(\Wn_k)\Delta \Wn_{k+1} - \gamma_n \varphi'(\Wn_k) \kappa^{\infty}(\Wn_k) ]\\
  & A_{4,2}^{(n)} = \sum_{k=k_2^{(n)}}^{k_1^{(n)}-1}[ (\varphi''(\Wn_k)/2)\{\Delta \Wn_{k+1} \}^2 - 2 \gamma_n \sigma^2 \varphi''(\Wn_k)\1_{\rset_+^*}(\Wn_k)] \eqsp.
\end{align}
Note that by \Cref{lem:norm_1_eq} and the Markov property, we have that
for any $n \in \nset$ and $k \in \{k_2^{(n)} , \ldots , k_1^{(n)} - 1\}$, $ \varphi'(\Wn_k)\CPELigne{\Delta \Wn_{k+1}} {\mcf_{k}^{(n)}}- \gamma_n \varphi'(\Wn_k) \kappa^{\infty}(\Wn_k)  = 0$, which implies that 
% \begin{equation}
%   \label{eq:28}
%   \varphi'(\Wn_k)\CPELigne{\Delta \Wn_{k+1}} {\mcf_{k}^{(n)}}- \gamma_n \varphi'(\Wn_k) \kappa^{\infty}(\Wn_k)  = 0 \eqsp,
% \end{equation}
\begin{align}
  \label{eq:exp_condi-A-4-1}  & \qquad   \CPELigne{A_{4,1}^{(n)}}{\mcg^{(n)}_s} =\CPELigne{A_{4,1}^{(n)}}{\mcf_{k_2^{(n)}}^{(n)}}\\
  & \qquad \qquad  = \txts \sum_{k=k_2^{(n)}}^{k_1^{(n)}-1} \CPELigne{ \varphi'(\Wn_k)\CPELigne{\Delta \Wn_{k+1}} {\mcf_{k}^{(n)}}- \gamma_n \varphi'(\Wn_k) \kappa^{\infty}(\Wn_k) }{\mcf_{k_2^{(n)}}^{(n)}} = 0 \eqsp. 
\end{align}
We now show that $\lim_{n \to \plusinfty} \expeLigne{\absLigne{\CPELigne{A_{4,2}^{(n)}}{\mcg^{(n)}_s}}} = 0$ using the decomposition
\begin{align}
  \label{eq:decomposition-A-4-2}
  & A_{4,2}^{(n)} = A_{4,2,1}^{(n)}+ A_{4,2,2}^{(n)}+A_{4,2,3}^{(n)} \eqsp,\\
A_{4,2,1}^{(n)}& =\sum_{k=k_2^{(n)}}^{k_1^{(n)}-1} \1_{\rset_+^*}(\Wn_k) (\varphi''(\Wn_k)/2)[\{\bar{\Delta} \Wn_{k+1} \}^2 - 4 \sigma^2 \gamma_n] \\
       & A_{4,2,2}^{(n)} =\sum_{k=k_2^{(n)}}^{k_1^{(n)}-1} \1_{\{0\}}(\Wn_k) (\varphi''(0)/2)\{ \bar{\Delta} \Wn_{k+1} \}^2 \\
  &A_{4,2,3}^{(n)} =\sum_{k=k_2^{(n)}}^{k_1^{(n)}-1}  (\varphi''(\Wn_k)/2)[\{\Delta \Wn_{k+1} \}^2 - \{\bar{\Delta} \Wn_{k+1} \}^2]\eqsp ,
     \end{align}
     where $\bar{\Delta} \Wn_{k+1}=\Wn_{k+1}-(\tau_{\gamma_n}(\Wn_k(\omega)) + \tcrw{\gamma_n}\InftyBound)$ and $M\in \rset_+^*$.

%  $(\tilde{f}_n)_{n \in \nset}$ is a sequence of
% measurable functions defined $(\Omega \times \nset, \mcf \otimes 2^{\nset}, \PP \otimes
% \nu_{\mathrm{c}})$. Note that
% $\PP \otimes \nu_{\mathrm{c}}$ almost everywhere,
% $\lim_{n \to \plusinfty} \tilde{f}_n(\omega,k) = 0$ and in addition,
%  $$\sum_{k\in\nset} \int_{\Omega}\tilde{f}_n(\omega,k) \rmd \PP(\omega) \leq [(t-s) + \gamma_n] \int_{\rset} \abs{g}^q \varphibf(g) \rmd g \eqsp.$$ Therefore, we obtain that $\lim_{n \to \plusinfty} \sum_{k\in\nset} \int_{\Omega}\tilde{f}_n(\omega,k) \rmd \PP(\omega) =0$ by the Lebesgue dominated convergence theorem,  which implies by \eqref{eq:proof_A_4_2_1-a} that
%  \begin{equation}
%    \label{eq:lim-A-4-2-1}
% \lim_{n \to \plusinfty}   \expeLigne{  \absLigne{\CPELigne{A_{4,2,1}^{(n)}}{\mcg_s^{(n)}}}} = 0 \eqsp.
% \end{equation}

We first consider \tcrw{$A_{4,2,2}$}. By
\Cref{lem:moment_martinga_incre_proof_cont_quad_approx}-\ref{lem:moment_martinga_incre_proof_cont_quad_approx_item_2}
and the Markov property, we have
\begin{equation}
  \label{eq:29}
  \absLigne{\CPELigne{\tcrw{A_{4,2,2}}^{(n)}}{\mcg_s^{(n)}}} = \absLigne{ \CPELigne{\tcrw{A_{4,2,2}}^{(n)}}{\mcf_{k_2^{(n)}}^{(n)}}} \leq (\varphi''(0)/2)[(t-s)+\gamma_n][7 \gamma_n \InftyBound^2 + \tcrw{8} \sigma \gamma_n^{1/2} \InftyBound ] \eqsp,
\end{equation}
showing that 
\begin{equation}
  \label{eq:limit_4_2_2}
  \lim_{n \to \plusinfty}   \expeLigne{  \absLigne{\CPELigne{\tcrw{A_{4,2,2}^{(n)}}}{\mcg_s^{(n)}}}} = 0 \eqsp .
\end{equation}

Regarding the third term, we first have 
\begin{align}
  \expeLigne{\absLigne{A_{4,2,3}^{(n)}}}\leq\sup_{\rset}\{\abs{ \varphi''}/2 \}\expe{\sum_{k=k_2^{(n)}}^{k_1^{(n)}-1}  \abs{\CPE{\{\Delta \Wn_{k+1} \}^2 - \{\bar{\Delta} \Wn_{k+1} \}^2}{\mcf_{k}^{(n)}}}} \eqsp .
  \label{eq:A_4_2_4_decomposition}
\end{align}
In addition by \Cref{lem:norm_1_eq}, the Markov property and \Cref{ass:sticky_cont}, we have for any $k\in \nset$,
\begin{align}
  &\abs{\CPE{\{\Delta \Wn_{k+1} \}^2 - \{\bar{\Delta} \Wn_{k+1} \}^2}{\mcf_{k}^{(n)}}}\\
  &\qquad\qquad\qquad\qquad\qquad\qquad\qquad=\abs{\gamma_n\kappa^{\infty}(\Wn_k)\CPE{2\Delta \Wn_{k+1} - \gamma_n\kappa^{\infty}(\Wn_k)}{\mcf_{k}^{(n)}}}\\
  &\qquad\qquad\qquad\qquad\qquad\qquad\qquad=(\gamma_n\kappa^{\infty}(\Wn_k))^2\\
  &\qquad\qquad\qquad\qquad\qquad\qquad\qquad\leq 2\gamma_n^2 \parentheseDeux{(1+\Ltt_\kappa)^2\{\Wn_k\}^2+\InftyBound^2} \eqsp .
\end{align}
Then by \Cref{lem:moment_four_iterates_conti_limit_0}, there exists $C \geq 0$ such that for any $k\leq k_1^{(n)}-1$ and $n \in\nsets$,
\begin{equation}
  \expe{\abs{\CPE{\{\Delta \Wn_{k+1} \}^2 - \{\bar{\Delta} \Wn_{k+1} \}^2}{\mcf_{k}^{(n)}}}}\leq 2\gamma_n^2 \parentheseDeux{(1+\Ltt_\kappa)^2\rme^{C t/2} \sqrt{\expeLigne{W_0^4} + 1}+\InftyBound^2} \eqsp .
\end{equation}
Combining this with \eqref{eq:A_4_2_4_decomposition} and using that $\gamma_n(k_1^{(n)}-k_2^{(n)})\leq (t-s+\gamma_n) $ we obtain
\begin{equation}
  \label{eq:limit_A_4_2_3}
  \lim_{n \to \plusinfty} \expeLigne{\absLigne{A_{4,2,3}^{(n)}}}=0 \eqsp .
\end{equation}

     Regarding the first term in \eqref{eq:decomposition-A-4-2}, we
     consider for $M >0$, the following decomposition
     \begin{align}
       A_{4,2,1}^{(n)} & = D^{(n)}_1(M) + D_2^{(n)}(M) \eqsp,
     \end{align}
     where
     \begin{align}
         & D^{(n)}_1(M) =\sum_{k=k_2^{(n)}}^{k_1^{(n)}-1} \1_{\ooint{0,M\sqrt{\gamma_n}}}(\Wn_k) (\varphi''(\Wn_k)/2)[\{\bar{\Delta} \Wn_{k+1} \}^2 - 4 \sigma^2 \gamma_n] \\
  &D_2^{(n)}(M)=\sum_{k=k_2^{(n)}}^{k_1^{(n)}-1} \1_{\coint{M\sqrt{\gamma_n},\plusinfty}}(\Wn_k) (\varphi''(\Wn_k)/2)[\{\bar{\Delta} \Wn_{k+1} \}^2 - 4 \sigma^2 \gamma_n] \eqsp.
     \end{align}
Then, by \Cref{lem:moment_martinga_incre_proof_cont_quad_approx}-\ref{lem:moment_martinga_incre_proof_cont_quad_approx_item_1}
, we have using the Markov property that
\begin{align}
  \label{eq:proof_A_4_2_1-a}
  \expeLigne{\absLigne{D_1^{(n)}(M)}}&\leq \sup_{\rset}\{\abs{ \varphi''}/2 \}\expe{\sum_{k=k_2^{(n)}}^{k_1^{(n)}-1} \1_{\ooint{0,M\sqrt{\gamma_n}}}(\Wn_k) \abs{\CPELigne{\{\Delta \Wn_{k+1} \}^2}{\mcf_{k}^{(n)}} - 4 \sigma^2 \gamma_n}}\\
  & \leq 4\gamma_n \sigma^2  \sup_{\rset}\{\abs{ \varphi''} \} \sum_{k=k_2^{(n)}}^{k_1^{(n)}-1} \expe{\1_{\ooint{0,M\sqrt{\gamma_n}}}(\Wn_k) \Upsilon(\btau_{\gamma_n}^{\infty}(\Wn_k(\omega)))}\eqsp,
  \label{eq:A_4_2_1_decomposition}
% \text{ where }  \tilde{f}_n(\omega, k) &=\gamma_n \1_{\{k_2^{(n)},\ldots,k_1^{(n)}-1\}}(k) \1_{\rset_+^*}(\Wn_k(\omega)) \Upsilon(\btau_{\gamma}^{\infty}(\Wn_k(\omega))) \eqsp,
\end{align}
where $\Upsilon(u) = \int_{u}^{\plusinfty} \abs{g}^2 \varphibf(g) \rmd g$,  $\btau_{\gamma_n}^{\infty}(\Wn_k(\omega)) = [ \tau_{\gamma_n}(\Wn_k(\omega)) + \tcrw{\gamma_n}\InftyBound ]/\{2(\sigma^2 \gamma_n)^{\half}\}$.
In addition by \Cref{lem:0_proba}, \Cref{ass:sticky_cont} and using that for any $x\in \rset_+$, $1-2\Phibf(-x)\leq \sqrt{2/\pi} x$, we have for any $w\in \rset_+$
\begin{align}
  &Q_{\gamma_n} \1_{\rset_+^*}(w)= 1 - 2\Phibf\parenthese{-\frac{\tau_{\gamma_n}(w) + \gamma_n \InftyBound}{2\sigma\sqrt{\gamma_n}}}\\
  &\qquad\qquad\leq \1_{\rset_+^*}(w)- 2\Phibf\parenthese{-\frac{(1+\Ltt_{\kappa})M + \sqrt{\gamma_n} \InftyBound}{2\sigma}}\1_{\ooint{0,M\sqrt{\gamma}}}(w)+(2\pi\sigma^2)^{-\half}\sqrt{\gamma_n}\InftyBound \eqsp .
\end{align}
Therefore for any $l\in \nset^*$,
\begin{equation}
  \sum_{k=0}^{l-1}  Q_{\gamma_n}^k \1_{\ooint{0,M\sqrt{\gamma}}}(w)\leq 2^{-1}\Phibf\parenthese{-\frac{(1+\Ltt_{\kappa})M + \sqrt{\gamma_n} \InftyBound}{2\sigma}}^{-1}\parentheseDeux{\1_{\rset_+^*}(w)+l(2\pi\sigma^2)^{-\half}\sqrt{\gamma_n}}\eqsp .
\end{equation}
Using that $k_1^{(n)}-k_2^{(n)}\leq 1+(t-s)/\gamma_n$, we have
\begin{align}
  &\gamma_n \sum_{k=k_2^{(n)}}^{k_1^{(n)}-1}\CPE{\1_{\ooint{0,M\sqrt{\gamma_n}}}(\Wn_k) }{\mcf_{k_2^{(n)}}^{(n)}}=\gamma_n \sum_{k=0}^{k_1^{(n)}-k_2^{(n)}-1}Q_{\gamma_n}^k \1_{\ooint{0,M\sqrt{\gamma}}}(\Wn_{k_2^{(n)}})\\
  &\quad\leq 2^{-1}\Phibf\parenthese{-\frac{(1+\Ltt_{\kappa})M + \sqrt{\gamma_n} \InftyBound}{2\sigma}}^{-1}\parentheseDeux{\gamma_n\1_{\rset_+^*}(\Wn_{k_2^{(n)}})+(\gamma_n+t-s)(2\pi\sigma^2)^{-\half}\sqrt{\gamma_n}} \eqsp .
  \label{eq:drift_sqrt_gamma}
\end{align}
Combining \eqref{eq:drift_sqrt_gamma} with \eqref{eq:A_4_2_1_decomposition} and using that $\Upsilon(u) \leq  \int_{0}^{\plusinfty} \abs{g}^2 \varphibf(g) \rmd g$ for any $u\in \rset_+$ we obtain that for any $M\in\rset_+^*$,
 \begin{equation}
   \label{eq:lim-A-4-2-1}
\lim_{n \to \plusinfty}   \expeLigne{\absLigne{D_1^{(n)}(M)}} = 0 \eqsp.
\end{equation}
We now consider $D_2^{(n)}$. Similarly to \eqref{eq:A_4_2_1_decomposition} we obtain
\begin{equation}
  \expeLigne{\absLigne{D_2^{(n)}(M)}} 
\leq 4\gamma_n \sigma^2  \sup_{\rset}\{\abs{ \varphi''} \} \sum_{k=k_2^{(n)}}^{k_1^{(n)}-1} \expe{\1_{\coint{M\sqrt{\gamma_n},\plusinfty}}(\Wn_k) \Upsilon(\btau_{\gamma}^{\infty}(\Wn_k(\omega)))}\eqsp.
  \label{eq:A_4_2_2_decomposition}
\end{equation}
In addition there exists $N_1$ such that for $n\geq N_1$, $1-\gamma_n \Ltt_{\kappa}>1/2$, which implies by  \Cref{ass:sticky_cont} that for any $n \geq N_1$,
\begin{align}
  \1_{\coint{M\sqrt{\gamma_n},\plusinfty}}(\Wn_k) \Upsilon(\btau_{\gamma}^{\infty}(\Wn_k(\omega)))&\leq \1_{\coint{M\sqrt{\gamma_n},\plusinfty}}(\Wn_k) (\btau_{\gamma}^{\infty}(\Wn_k(\omega)))^{-1}\sup_{t\in \rset_+}t\Upsilon(t)\\
  &\leq 4\sigma\sup_{t\in \rset_+}t\Upsilon(t)/M \eqsp .
\end{align}
Note that by using Cauchy–Schwarz inequality we have $\Upsilon(t)^2\leq \Phibf(t)\bfm_{4}$, therefore $\sup_{t\in \rset_+}t\Upsilon(t)<\plusinfty$.
Combining this with \eqref{eq:A_4_2_2_decomposition} and using that $\gamma_n(k_1^{(n)}-k_2^{(n)})\leq (t-s+\gamma_n)$ we obtain that for any $M\in\rset_+^*$,
\begin{equation}
  \limsup_{n\to\plusinfty}\expeLigne{\absLigne{D_2^{(n)}(M)}} 
\leq 16\sigma^3\sup_{\rset}\{\abs{ \varphi''} \}\sup_{t\in \rset_+}\{t\Upsilon(t)\}(t-s)/M \eqsp .
\label{eq:limite_4_2_2}
\end{equation}
Then by \eqref{eq:lim-A-4-2-1} and \eqref{eq:limite_4_2_2},
\begin{align}
    \label{eq:limit_4_2_1_4_2_2}
  \limsup_{n\to \plusinfty} \expeLigne{\absLigne{A_{4,2,1}^{(n)}}}\leq \limsup_{M\to \plusinfty}\limsup_{n\to \plusinfty} \defEns{\expeLigne{\absLigne{D_1^{(n)}(M)}} + \expeLigne{\absLigne{D_2^{(n)}(M)}}} =0 \eqsp .
\end{align}
% Let $\varepsilon>0$. Then by \eqref{eq:limite_4_2_2} there exist $\bar{M}$ and $\bar{N}_1$ such that for any $n\geq \bar{N}_1$, $\expeLigne{\absLigne{A_{4,2,2}^{(n)}(\bar{M})}}\leq \varepsilon/2$. In addition by \eqref{eq:lim-A-4-2-1} there exists $\bar{N}_2$ such that for any $n\geq \bar{N}_2$, $\expeLigne{\absLigne{D_1^{(n)}(\bar{M})}}\leq \varepsilon/2$. Therefore for any $n\geq \max(\bar{N}_1,\bar{N}_2)$, $\expeLigne{\absLigne{D_1^{(n)}(\bar{M})+A_{4,2,2}^{(n)}(\bar{M})}}\leq \varepsilon$. 

Combining this result,  \eqref{eq:limit_4_2_1_4_2_2}-\eqref{eq:limit_A_4_2_3}-\eqref{eq:limit_4_2_2}-\eqref{eq:decomposition-A-4-2}-\eqref{eq:exp_condi-A-4-1}-\eqref{eq:go-to-A-4},  we get  that $\lim_{n \to \plusinfty}   \expeLigne{  \absLigne{\CPELigne{A_{3}^{(n)}}{\mcg_s^{(n)}}}} = 0$. Plugging this result and \eqref{eq:lim-A-2-pb-nartingale}-\eqref{eq:lim-A-1-pb-nartingale} in  \eqref{eq:decomposition-main-proof-martingale-pb} completes the proof.

\textbf{Proof of \eqref{eq:propo:convergence_to_mart_prob_2}.} The proof follows exactly the same lines as  \eqref{eq:propo:convergence_to_mart_prob_1} but we use that the only different and non negligible terms are $A_{4,2,1}^{(n)}$ and $A_{4,2,2}^{(n)}$ which becomes
\begin{equation}
  \label{eq:15}
A_{4,2,1}^{(n)}+A_{4,2,2}^{(n)} = \sum_{k=k_2^{(n)}}^{k_1^{(n)}-1} (\varphi''(\Wn_k)/2)[\{\bar{\Delta} \Wn_{k+1} \}^2 - 4 \gamma_n \sigma^2 ] \eqsp.
\end{equation}

Using \eqref{eq:14_approx_quad} in \Cref{lem:moment_martinga_incre_proof_cont_quad_approx}, the assumption that $\varphi''(w) \geq 0$ for any $w \in \rset$, and the Markov property, we get that for any $n  \in \nset$, $\expeLigne{A_{4,2,1}^{(n)}+A_{4,2,2}^{(n)} \, | \, \mcg_s} \leq 0$, which concludes the proof. 
\end{proof}

%%% Local Variables:
%%% mode: latex
%%% TeX-master: "main_imsart"
%%% End:

\subsection{Postponed proofs of \Cref{sec:application}}
\label{subsec:postponed-application}

\begin{proof}[Proof of \Cref{prop:Euler}]
From \Cref{ass:ODEbounded}  we know that $z_\theta(t)\in\msk$ for all $t\in[0,T]$.  In particular, using that for all $0\leqslant s \leqslant t \leqslant T$,
$z_\theta(t) - z_{\theta}(s) \ = \ \int_s^t F_\theta(z_\theta(u),u)\dd u$,
we get that
\begin{equation}
  \label{eq:38}
  \|z_\theta(t) - z_{\theta}(s)\|\leqslant \mathtt{C}_F(t-s) \eqsp. 
\end{equation}

Let $k\in\N$ be such that $\tilde z_\theta^h(kh)\in\msk$ (this is for instance the case of $k=0$).
% , then $\|\tilde z_\theta^h(t)-\tilde z_\theta^h(k)\| \leqslant \mathtt{C}_F(t-kh)$ for all $t\in [kh,(k+1)h)$.
 Then, for $t\in [kh,(k+1)h]$, using by \eqref{eq:EulerEDO} that
\[z_\theta(t)-\tilde z_\theta^h\po t\pf     =   z_\theta(kh)-\tilde z_\theta^h\po kh\pf + \int_{kh}^t \po F_\theta(z_\theta(s),s)- F_\theta(\tilde z^h_\theta(kh),kh)\pf\dd s \,,\]
and setting $f(t)=\norm{z_\theta(t)-\tilde z_\theta^h\po t\pf}$, we get by \eqref{eq:38} and \Cref{ass:ODEbounded} for any $h >0$ and $t \in \ccintLigne{kh,(k+1)h}$,
\[f(t) \ \leqslant \ (1+\Ltt_F' h) f(kh) + \Ltt_F'(1+\mathtt{C}_F) \frac {h^2}2  \,. \]
Assuming that $h\leqslant \bar h$ where $\bar h$ is sufficiently small so that
\[\frac12 \Ltt_F'(1+\mathtt{C}_F) \bar h T e^{\Ltt_F'T}   < \delta\,, \]
we get by a  direct induction that, for all $k \in \{ 0,\ldots,\lfloor T/h\rfloor\} $, $\tilde z_\theta^h(kh)\in\msk$  and for all $t\in [0,T]$,
\[f(t) \ \leqslant \ \frac12 \Ltt_F'(1+\mathtt{C}_F)   hT e^{\Ltt_F' T} \]
Conclusion follows with 
\[ \mathtt{C} \ = \ \frac12 \Ltt_F'(1+\mathtt{C}_F)NT e^{\Ltt_F'T}\,. \]
\end{proof}

\begin{proof}[Proof of \Cref{prop:EDOmain}]
  We have by \Cref{ass:ODEbounded2} and \eqref{eq:def_s_i},  $\rmT_\gamma(\theta) = \theta - \gamma \nabla U(\theta)$ with
  \begin{equation}
    \label{eq:def_nabla_U}
    \nabla U(\theta) \ = \ \nabla U_0(\theta) + \sum_{i=1}^N \bfs_i \po z_\theta(t_i)\pf
  \end{equation}
where $z_\theta$ solves \eqref{ed:ODEz} and $\bfs_i$ are defined in \eqref{eq:def_s_i}. For all $\theta,\tilde\theta\in\R^d$ and $t\in[0,T]$,
\[z_\theta(t) -  z_{\tilde \theta}(t) \ = \ \int_0^t \po F_\theta(z_\theta(s),s)-F_{\tilde\theta}\po \tilde z_\theta(s),s\pf\pf\dd s\]
and thus, using \eqref{eq:FdemoODE}, Grönwall's inequality implies that 
\[\normLigne{z_\theta(t) -  z_{\tilde \theta}(t)} \ \leqslant \ \Ltt_F \normLigne{\theta-\tilde \theta} t e^{\Ltt_F't}\qquad\forall t\geqslant0\,.\]
In particular, by \eqref{eq:def_nabla_U} and \eqref{eq:def_s_i},
\[\normLigne{\nabla U(\theta)-\nabla U(\tilde\theta)} \ \leqslant \ \po \Ltt_U + \Ltt_{\sbf} \Ltt_F\sum_{i=1}^N t_i e^{\Ltt_F't_i}\pf \normLigne{\theta-\tilde \theta} \,.\]
Moreover, similarly, we get  if  $\normLigne{\theta-\tilde\theta}\geqslant R_U$,
\[\ps{\theta-\tilde\theta}{\nabla U(\theta)-\nabla U(\tilde\theta)} \  \geqslant \ \mtt_U \normLigne{\theta-\tilde\theta}^2 - 2N \mathtt{C}_{\sbf} \normLigne{\theta-\tilde\theta}\,.\]
Combining the last two estimates yields \Cref{ass:LipsAnd}. Finally, \Cref{ass:bound}  follows using \Cref{ass:ODEsolver},  \eqref{eq:def_tilde_b_ODE} and \eqref{eq:def_s_i} from
\[\normLigne{\nabla U(\theta)-\tilde b_h(\theta)} \leqslant \Ltt_{\sbf} \sum_{i=1}^N \normLigne{z_\theta(t_i) - \Psi_i^h(\theta)} \ \leqslant \ \mathtt{C}_{\Psi} \Ltt_{\sbf}  h^{\alpha}
  \,.\]
%\alain{je ne trouve pas tout à fait cela}
\end{proof}

%%% Local Variables:
%%% mode: latex
%%% TeX-master: "main_imsart"
%%% End:

%%% Local Variables:
%%% mode: latex
%%% TeX-master: "main_imsart"
%%% End:

\paragraph{Acknowledgments.}
The work of AG is funded in part by the Project EFI ANR-17-CE40-0030 of the French National Research Agency. AD acknowledges support  of the Lagrange Mathematical and Computing Research Center. A.E. has been supported by the Hausdorff Center for Mathematics. Gef\"ordert durch die Deutsche Forschungsgemeinschaft (DFG) im Rahmen der Exzellenzstrategie des Bundes und der L\"ander - GZ 2047/1, Projekt-ID 390685813.

\bibliographystyle{plain}
\bibliography{../../bibliography/bibliography}

\begin{thebibliography}{10}

\bibitem{ambrosio:gigli:savare:2008}
L.~Ambrosio, N.~Gigli, and G.~Savar{\'e}.
\newblock {\em Gradient flows: in metric spaces and in the space of probability
  measures}.
\newblock Springer Science \& Business Media, 2008.

\bibitem{bou2020sticky}
Nawaf B.-R. and M.~C. Holmes-Cerfon.
\newblock Sticky brownian motion and its numerical solution.
\newblock {\em SIAM Review}, 62(1):164--195, 2020.

\bibitem{billingsley:1999}
Patrick Billingsley.
\newblock Convergence of probability measures.
\newblock 1999.

\bibitem{bou2018continuous}
Nawaf Bou-Rabee and Eric Vanden-Eijnden.
\newblock {\em Continuous-time random walks for the numerical solution of
  stochastic differential equations}, volume 256.
\newblock American Mathematical Society, 2018.

\bibitem{BROSSE20193638}
Nicolas Brosse, Alain Durmus, Éric Moulines, and Sotirios Sabanis.
\newblock The tamed unadjusted langevin algorithm.
\newblock {\em Stochastic Processes and their Applications},
  129(10):3638--3663, 2019.

\bibitem{bubley:dyer:jerrum:1998}
R.~Bubley, M.~Dyer, and M.~Jerrum.
\newblock An elementary analysis of a procedure for sampling points in a convex
  body.
\newblock {\em Random Structures Algorithms}, 12(3):213--235, 1998.

\bibitem{burkholder:1973}
D.~L. Burkholder.
\newblock Distribution function inequalities for martingales.
\newblock {\em Ann. Probab.}, 1(1):19--42, 02 1973.

\bibitem{cherny:engelbert:2005}
A.~S. Cherny and H.-J. Engelbert.
\newblock {\em Singular stochastic differential equations}, volume 1858 of {\em
  Lecture Notes in Mathematics}.
\newblock Springer-Verlag, Berlin, 2005.

\bibitem{dalalyan2017theoretical}
Arnak~S. Dalalyan.
\newblock Theoretical guarantees for approximate sampling from smooth and
  log-concave densities.
\newblock {\em J. R. Stat. Soc. Ser. B. Stat. Methodol.}, 79(3):651--676, 2017.

\bibitem{dalalyan2019user}
Arnak~S Dalalyan and Avetik Karagulyan.
\newblock User-friendly guarantees for the langevin monte carlo with inaccurate
  gradient.
\newblock {\em Stochastic Processes and their Applications}, 2019.

\bibitem{debortoli2019convergence}
V.~De~Bortoli and A.~Durmus.
\newblock Convergence of diffusions and their discretizations:from continuous
  to discrete processes and back.
\newblock {\em arXiv preprint arXiv:1904.09808}, 2019.

\bibitem{debortoli2019souk}
Valentin De~Bortoli, Alain Durmus, Marcelo Pereyra, and Ana~F Vidal.
\newblock Efficient stochastic optimisation by unadjusted langevin monte carlo.
  application to maximum marginal likelihood and empirical bayesian estimation.
\newblock {\em arXiv preprint arXiv:1906.12281}, 2019.

\bibitem{douc:moulines:priouret:soulier:2018}
Randal Douc, Eric Moulines, Pierre Priouret, and Philippe Soulier.
\newblock {\em Markov Chains}.
\newblock Springer Series in Operations Research and Financial Engineering.
  2018.

\bibitem{JMLR:v20:18-173}
A.~Durmus, S.~Majewski, and B.~Miasojedow.
\newblock Analysis of langevin monte carlo via convex optimization.
\newblock {\em Journal of Machine Learning Research}, 20(73):1--46, 2019.

\bibitem{durmus:moulines:2019:supp}
A.~Durmus and E.~Moulines.
\newblock Supplement to high-dimensional bayesian inference via the unadjusted
  langevin algorithm.
\newblock {\em Bernoulli}.

\bibitem{durmus2017nonasymp}
A.~Durmus and \'{E}. Moulines.
\newblock Nonasymptotic convergence analysis for the unadjusted {L}angevin
  algorithm.
\newblock {\em Ann. Appl. Probab.}, 27(3):1551--1587, 2017.

\bibitem{durmus2019high}
A.~Durmus and E.~Moulines.
\newblock High-dimensional bayesian inference via the unadjusted langevin
  algorithm.
\newblock {\em Bernoulli}, 25(4A):2854--2882, 2019.

\bibitem{durmus2021asymptotic}
Alain Durmus and Andreas Eberle.
\newblock Asymptotic bias of inexact markov chain monte carlo methods in high
  dimension.
\newblock {\em arXiv preprint arXiv:2108.00682}, 2021.

\bibitem{eberle:2015}
A.~Eberle.
\newblock Reflection couplings and contraction rates for diffusions.
\newblock {\em Probab. Theory Related Fields}, pages 1--36, 2015.

\bibitem{eberle2018quantitative}
A.~Eberle and M.~Majka.
\newblock Quantitative contraction rates for markov chains on general state
  spaces.
\newblock {\em Electronic Journal of Probability}, 24, 2019.

\bibitem{eberle:zimmer:2016}
A.~Eberle and R.~Zimmer.
\newblock Sticky couplings of multidimensional diffusions with different
  drifts.
\newblock In {\em Annales de l'Institut Henri Poincar{\'e}, Probabilit{\'e}s et
  Statistiques}, volume~55, pages 2370--2394. Institut Henri Poincar{\'e},
  2019.

\bibitem{ethier:kurtz:1986}
S.~N. Ethier and T.~G. Kurtz.
\newblock {\em {M}arkov processes}.
\newblock Wiley Series in Probability and Mathematical Statistics: Probability
  and Mathematical Statistics. John Wiley \& Sons Inc., New York, 1986.
\newblock Characterization and convergence.

\bibitem{howitt2007stochastic}
C.~J. Howitt.
\newblock {\em Stochastic flows and sticky {B}rownian motion}.
\newblock PhD thesis, University of Warwick, 2007.

\bibitem{hutzenthaler2015numerical}
Martin Hutzenthaler and Arnulf Jentzen.
\newblock {\em Numerical approximations of stochastic differential equations
  with non-globally Lipschitz continuous coefficients}.
\newblock American Mathematical Soc., 2015.

\bibitem{jacob2020unbiased}
Pierre~E Jacob, John O’Leary, and Yves~F Atchad{\'e}.
\newblock Unbiased markov chain monte carlo methods with couplings.
\newblock {\em Journal of the Royal Statistical Society Series B: Statistical
  Methodology}, 82(3):543--600, 2020.

\bibitem{johndrow:et:al:2015}
J.~E. Johndrow, J.~C. Mattingly, S.~Mukherjee, and D.~Dunson.
\newblock Approximations of markov chains and high-dimensional bayesian
  inference.
\newblock {\em arXiv preprint arXiv:1508.03387}, 2015.

\bibitem{kallenberg:2002}
Olav Kallenberg.
\newblock {\em Foundations of modern probability}.
\newblock Probability and its Applications (New York). Springer-Verlag, New
  York, second edition, 2002.

\bibitem{book:klenke:2014}
A.~Klenke.
\newblock {\em Probability Theory: A Comprehensive Course}.
\newblock Universitext. Springer-Verlag London, 2 edition, 2014.

\bibitem{lund:meyn:tweedie:96}
R.~B. Lund, S.~P. Meyn, and R.~L. Tweedie.
\newblock Computable exponential convergence rates for stochastically ordered
  markov processes.
\newblock {\em Ann. Appl. Probab.}, 6(1):218--237, 02 1996.

\bibitem{MATTINGLY2002185}
J.C. Mattingly, A.M. Stuart, and D.J. Higham.
\newblock Ergodicity for sdes and approximations: locally lipschitz vector
  fields and degenerate noise.
\newblock {\em Stochastic Processes and their Applications}, 101(2):185--232,
  2002.

\bibitem{mcelreath2020statistical}
R.~McElreath.
\newblock {\em Statistical Rethinking: A Bayesian Course with Examples in R and
  STAN}.
\newblock Chapman \& Hall/CRC Texts in Statistical Science. CRC Press, 2020.

\bibitem{medina:rudolf:schweizer:2020}
F.~Medina-Aguayo, D.~Rudolf, and N.~Schweizer.
\newblock Perturbation bounds for monte carlo within metropolis via restricted
  approximations.
\newblock {\em Stochastic Processes and their Applications}, 130(4):2200--2227,
  2020.

\bibitem{MeynTweedie}
S.~Meyn and R.~L. Tweedie.
\newblock {\em Markov chains and stochastic stability}.
\newblock Cambridge University Press, Cambridge, second edition, 2009.
\newblock With a prologue by Peter W. Glynn.

\bibitem{mitrophanov:2005}
A.~Y. Mitrophanov.
\newblock Sensitivity and convergence of uniformly ergodic markov chains.
\newblock {\em Journal of Applied Probability}, 42(4):1003--1014, 2005.

\bibitem{racz:shkolnikov:2015}
Miklos~Z. Racz and Mykhaylo Shkolnikov.
\newblock Multidimensional sticky brownian motions as limits of exclusion
  processes.
\newblock {\em Ann. Appl. Probab.}, 25(3):1155--1188, 06 2015.

\bibitem{revuz:yor:1994}
D.~Revuz and M.~Yor.
\newblock {\em Continuous martingales and {B}rownian motion}, volume 293 of
  {\em Grundlehren der Mathematischen Wissenschaften [Fundamental Principles of
  Mathematical Sciences]}.
\newblock Springer-Verlag, Berlin, second edition, 1994.

\bibitem{roberts:tweedie:2000}
G.~O. Roberts and R.~L. Tweedie.
\newblock Rates of convergence of stochastically monotone and continuous time
  markov models.
\newblock {\em Journal of Applied Probability}, 37(2):359--373, 2000.

\bibitem{rudolf:schweizer:2018}
D.~Rudolf and N.~Schweizer.
\newblock {Perturbation theory for Markov chains via Wasserstein distance}.
\newblock {\em Bernoulli}, 24(4A):2610 -- 2639, 2018.

\bibitem{shardlow:stuart:2000}
T.~Shardlow and A.~M. Stuart.
\newblock A perturbation theory for ergodic markov chains and application to
  numerical approximations.
\newblock {\em SIAM Journal on Numerical Analysis}, 37(4):1120--1137, 2000.

\bibitem{talay2002stochastic}
Denis Talay.
\newblock Stochastic hamiltonian systems: exponential convergence to the
  invariant measure, and discretization by the implicit euler scheme.
\newblock {\em Markov Process. Related Fields}, 8(2):163--198, 2002.

\bibitem{wang2021maximal}
Guanyang Wang, John O’Leary, and Pierre Jacob.
\newblock Maximal couplings of the metropolis-hastings algorithm.
\newblock In {\em International Conference on Artificial Intelligence and
  Statistics}, pages 1225--1233. PMLR, 2021.

\bibitem{watanabe:1971}
S.~Watanabe.
\newblock On stochastic differential equations for multi-dimensional diffusion
  processes with boundary conditions.
\newblock {\em J. Math. Kyoto Univ.}, 11(1):169--180, 1971.

\bibitem{watanabe1971_II}
S.~Watanabe.
\newblock On stochastic differential equations for multi-dimensional diffusion
  processes with boundary conditions ii.
\newblock {\em J. Math. Kyoto Univ.}, 11(3):545--551, 1971.

\bibitem{zeidler:1986}
E.~Zeidler.
\newblock {\em Nonlinear functional analysis and its applications. Fixed-point
  theorems}, volume Vol.1.
\newblock Springer-Verlag Berlin and Heidelberg GmbH {\&} Co. K, 1986.

\end{thebibliography}

\tableofcontents
\appendix

% !TeX root = main.tex

\section{Proof of Proposition~\ref{theo:convergence_markov_chain_rsetd}}
\label{sec:theo:convergence_markov_chain_rsetd}

Recall that under \Cref{ass:LipsAnd}, we have for any $\gamma \in\ocint{0,\bgamma}$,
\begin{equation}
  \label{eq:proof_theo:convergence_markov_chain_rsetd_ass}
  \begin{aligned}
&\sup_{x  \in \rset^d, \, x \neq 0} \{ \norm{\rmT_{\gamma}(x) -
  \rmT_{\gamma}(0)} / \norm{x} \}\leq (1+\gamma \Ltt) \eqsp, \\
& \sup_{x \in \rset^d, \, \norm{x} \geq R_1} \{ \norm{\rmT_{\gamma}(x) -
 \rmT_{\gamma}(0)} / \norm{x} \}\leq (1-\gamma \mtt)  \eqsp.
\end{aligned} 
\end{equation}
We show that there
  exist $\lambda \in \ooint{0,1}$,
  $A \geq 0$ such that for any $\gamma \in \ocint{0,\tcwc{\bgamma}}$ and
  $x \in \rset^d$,
  \begin{equation}
    \label{eq:drift_R_gamma_V_c_proof_1}
    R_{\gamma}V_c(x) \leq \lambda^{\gamma} \tcwc{V_c(x)} + \gamma A  \eqsp. 
  \end{equation}
  Denote $\rmT_{\infty}=\sup_{\gamma\in \ocint{0,\bgamma}}\gamma^{-1}\norm{\rmT_{\gamma}(0)}$, we show that it holds with
  \begin{equation}
        \label{eq:proof_ergo_R_gamma_def_constant}
  \begin{aligned}
   & \lambda =\exp\parentheseLigne{-c\mtt M^2/8} \eqsp, \quad     M = R_1 \vee (16 d \sigma^2 /\mtt)^{\half}\vee \left.\parentheseDeux{4\rmT_{\infty}+2\bgamma \rmT_{\infty}^2}\right/\mtt \eqsp,
    \\
&    A =  \exp(c M^2 + \bgamma\{\tcwc{c}B_1 M^2+B_2 - \log(\lambda)\})\{\tcwc{c}B_1 M^2+B_2-\log(\lambda)\}\eqsp, \,  \\
 &   B_1 = 4 C_1+2(1+8 c \sigma^2 \bgamma) (1+\bgamma L)\rmT_{\infty} \eqsp, \quad C_1 = C_2 \vee C_2^2 \bgamma \eqsp, \quad C_2 = (2L\tcwc{+}L^{\tcwc{2}}\bgamma)\vee (8c \sigma^2)\eqsp ,\\
 & c=\mtt/(32\sigma^2)\eqsp , \qquad B_2 = 2dc\sigma^2+2(1+8 c \sigma^2 \bgamma) (1+\bgamma L)\rmT_{\infty}+c(1+8 c \sigma^2 \bgamma)\bgamma\rmT_{\infty}^2  \eqsp. 
  \end{aligned}
\end{equation}
Define for any $x \in \rset^d$,
  $\overline{\rmT}_{\gamma}(x) = \rmT_{\gamma}(x)-\rmT_{\gamma}(0)$. Note
  that for any $\gamma \in \ocint{0,\tcwc{\bgamma}}$,
  $2 c \sigma^2\gamma < 1$ by definition of $c$
  \eqref{eq:proof_ergo_R_gamma_def_constant} and \tcwc{using that $\bgamma\leq 1/\mtt$}. Let $x \in \rset^d$ and
  $\gamma \in \ocint{0,\tcwc{\bgamma}}$. Then, we obtain using that $\int_{\rset} \rme^{az +bz^2 -z^2/2} \rmd z = (2\uppi(1-2b)^{-1})^{\tcwc{1/2}}\rme^{a^2/(2(1-2b))}$ for any $a \in \rset$ and $b \in \coint{0,1/2}$,
  \begin{align}
    R_{\gamma}V_c(x) & = (2\uppi)^{-d/2}\int_{\rset^d}\exp(c \normLigne[2]{\rmT_{\gamma}(x)+(\sigma^2 \gamma)^{\half} z} - \norm[2]{z}/2) \rmd z \\
    \label{eq:proof_ergo_R_gamma_eq_base}
                     & = (1-2c\sigma^2 \gamma)^{-d/2} \exp\defEnsLigne{c(1-2c\sigma^2\gamma)^{-1} \normLigne[2]{\rmT_{\gamma}(x)}}\\
                     &\leq (1-2c\sigma^2 \gamma)^{-d/2} \exp\defEns{c(1-2c\sigma^2\gamma)^{-1} \parentheseDeux{\normLigne{\overline{\rmT}_{\gamma}(x)}+\norm{\rmT_{\gamma}(0)}}^2}\eqsp .
  \end{align}
  We now distinguish the case $\norm{x} \geq M$ and $\norm{x} < M$.
  In the first case, we get by \eqref{eq:proof_theo:convergence_markov_chain_rsetd_ass},
  $(1-2 c \sigma^2 \gamma)^{-1} \leq \tcwc{1+}8 c \sigma^2 \gamma$ and
  $(1-\mtt \gamma)^2 \leq 1 - \mtt \gamma$ since
  $2 c \sigma^2 \gamma \leq 1/2$ and $\gamma \leq 1/\mtt$, \tcwc{by definition of $c$
  \eqref{eq:proof_ergo_R_gamma_def_constant} and using that $\bgamma\leq 1/\mtt$},
  \begin{align}
    R_{\gamma}V_c(x) &\leq (1-2c \gamma \sigma^2)^{-d/2} \exp\parentheseDeux{c (1+8 c \sigma^2 \gamma) \parentheseDeux{{(1-\mtt \gamma)\norm[2]{x}}+2\gamma \norm{x}\rmT_{\infty}+\gamma^2\rmT_{\infty}^2}} \\
\label{eq:proof_ergo_R_gamma_eq_base_1}
&    \leq  \exp\parentheseDeux{c (1-\mtt \gamma/8) \norm[2]{x} + 2 d c \sigma^2 \gamma - \mtt c \gamma \norm[2]{x}/8} \leq \lambda^{\gamma} \tcwc{V_c}(x)   \eqsp,
\end{align}
where we used for the penultimate inequality that $-\log(1-t) \leq 2 t$ for $t \in \ccint{0,1/2}$.
For the case $\norm{x} \leq M$, by \eqref{eq:proof_theo:convergence_markov_chain_rsetd_ass} and
$(1-2 c \sigma^2 \gamma)^{-1} \leq \tcwc{1+}8 c \sigma^2 \gamma$ since
$2 c \sigma^2 \gamma \leq 1/2$, $\gamma \leq 1/\mtt$,
and using $-\log(1-t) \leq 2 t$ for $t \in \ccint{0,1/2}$ again,
  \begin{align}
    R_{\gamma}V_c(x) &\leq (1-2c \gamma \sigma^2)^{-d/2} \exp\parentheseDeux{c (1+8 c \sigma^2 \gamma) \parenthese{(1+\Ltt \gamma)^2\norm[2]{x}+2(1+\gamma\Ltt)\gamma \norm{x}\rmT_{\infty}}} \\
    &\qquad\qquad\qquad\qquad\qquad\qquad\qquad\qquad\qquad\qquad \times\exp\parenthese{c (1+8 c \sigma^2 \gamma)\gamma^2\rmT_{\infty}^2}\\ 
%\label{eq:proof_ergo_R_gamma_eq_base_1}
&    \leq  \exp\parentheseDeux{c (1+ \gamma B_1) \norm[2]{x} + \gamma B_2 } \eqsp,
\end{align}
where we used for the last line that for any $x\in \rset^d$, $\norm{x}\leq 1+\norm{x}^2$.
Using that $\rme^{t} -1 \leq t \rme^{t}$ for $t \geq 0$, we obtain that
\begin{align}
  %\label{eq:1}
&  R_{\gamma}V_c(x) \tcwc{\leq} \lambda^{\gamma} \tcwc{V_c}(x) + \lambda^{\gamma}\tcwc{V_c}(x)\{\exp\parentheseDeux{c \gamma B_1 \norm[2]{x} + \gamma B_2  -\gamma \log(\lambda)} -1 \} \\
  & \leq  \lambda^{\gamma} \tcwc{V_c}(x) + \gamma \tcwc{V_c}(x)\{\tcwc{c}B_1 \norm[2]{x} +  B_2  - \log(\lambda)\} \exp\parentheseDeux{ \bgamma (\tcwc{c}B_1\norm[2]{x} +B_2-\log(\lambda)} \eqsp,
\end{align}
which combined with \eqref{eq:proof_ergo_R_gamma_eq_base_1} completes the proof of \eqref{eq:drift_R_gamma_V_c_proof_1}.

In addition, by \cite[Theorem 19]{durmus2019high} and using that for any $i\in \mathbb{N}$, $\rme^{-2i\gamma\Ltt}\leq (1+\gamma\Ltt)^{-2i}$, we have, for any $x,y\in \rset^d$ and $t_0\in \rset_+^*$,
\begin{equation}
  \label{eq:minorization_prop_1}
  \tvnorm{\updelta_x R^{\lceil t_0/\gamma\rceil }_{\gamma}-\updelta_y R^{\lceil t_0/\gamma\rceil }_{\gamma}}\leq 1-2\Phibf\parenthese{-\frac{\norm{x-y}}{2\sigma^2t_0\rme^{-2(t_0+\bgamma)\Ltt}}}
\end{equation}
where $\Phibf$ is the cumulative distribution of the one-dimensional Gaussian distribution with mean $0$ and variance $1$.

The proof is completed by combining \eqref{eq:drift_R_gamma_V_c_proof_1}, \eqref{eq:minorization_prop_1} and \cite[Theorem 19.4.1]{douc:moulines:priouret:soulier:2018}\footnote{\tcwc{There is a $b$ missing in Equation 19.4.2d in \cite[Theorem 19.4.1]{douc:moulines:priouret:soulier:2018}}}.

\section{Proof of Proposition \ref{theo:ergo_tilde_R}}
\label{sec:proof-theo:ergo_tilde_R}

First note that under \Cref{ass:LipsAnd} and \Cref{ass:bound}, for all $x \in \rset^d$, since $$\norm{\trmT_\gamma(x) -
 \rmT_\gamma(0)} \leq \gamma \InftyBound + \norm{\rmT_\gamma(x) -
 \rmT_\gamma(0)}$$ and from \eqref{eq:proof_theo:convergence_markov_chain_rsetd_ass},
\begin{equation}
  \label{eq:proof_theo:convergence_markov_chain_rsetd_ass_tilde_R}
 \sup_{x \in \rset^d, \, \normLigne{x} \geq R_1} \parentheseDeux{  {\normLigne{\trmT_\gamma(x) -
 \rmT_\gamma(0)} }/\{(1-\gamma \mtt) \normLigne{x} + \gamma \InftyBound\}} \leq  1  \eqsp.
\end{equation}

% Define for any $x \in \rset^d$
%   \begin{equation}
% \label{eq:tilde_Rproof_ergo_R_gamma_def_tilde_V}
% \text{      $\overline{V}_{c,\gamma}(x) = \exp(c \normLigne[2]{x-\rmT_{\gamma}(0)})$, for
%   $c = \tcwc{\mtt}/(16\sigma^2)$ } \eqsp.
%   \end{equation}
%  Note that
%   $\lim_{\norm{x} \to \plusinfty} \overline{V}_{c,\gamma}(x) = \plusinfty$ and
%   $\lim_{\norm{x} \to \plusinfty} \{\overline{V}_{c,\gamma}/
%   V_c\}(x)=1$.
  
  \tcwc{
In addition, by \eqref{eq:def_R_gamma}, for any compact set $\msk$, for any $y\in \msk$ and $\msa \in \mathcal{B}(\rset^d)$,
\begin{align}
  \tR_\gamma(y,\msa) &= (2\uppi \sigma^2 \gamma)^{-\nicefrac{d}{2}}\int_{\rset^d} \1_{\msa}(y') \exp\defEns{-\norm[2]{y'-\trmT_{\gamma}(y)}/(2\sigma^2 \gamma)} \rmd y' \\
  &\geq 2^{-d/2}\tcr{\inf_{y\in \msk}}\parenthese{\rme^{-\normLigne{\trmT_{\gamma}(y)}^2/(\sigma^2 \gamma)}}(\uppi \sigma^2 \gamma)^{-\nicefrac{d}{2}}\int_{\rset^d} \1_{\msa}(y') \exp\defEns{-\norm[2]{y'}/(\sigma^2 \gamma)} \rmd y'\eqsp .
\end{align}
 Note that by \Cref{ass:LipsAnd} and \Cref{ass:bound}, for any compact set $\msk$, $\tcr{\inf_{y\in \msk}}\parentheseLigne{\rme^{-\normLigne{\trmT_{\gamma}(y)}^2/(\sigma^2 \gamma)}}>0$.
As a result, by \cite[Definition 9.3.5, Definition 9.2.2, Definition 9.1.1]{douc:moulines:priouret:soulier:2018}, $\tilde{R}_{\gamma}$ is strongly aperiodic,
$\Leb$-irreducible and all compact sets are $1$-small.
  }
  
   It is therefore sufficient to show by
  \cite[Theorem 16.0.1]{MeynTweedie} that there
  exists $c >0$,
  such that for any $\gamma \in \ocint{0,\tcwc{\bgamma}}$, there exists
  $\lambda_{\gamma} \in \coint{0,1}$, a compact $\msk\subset\rset^d$ and $A_{\gamma} \geq 0$, such
  that% $\tilde{R}_{\gamma} V_c \leq \lambda_{\gamma} V_c + A_{\gamma}$
  \begin{equation}
    \label{eq:tilde_Rdrift_R_gamma_V_c_proof_1}
\tilde{R}_{\gamma} V_c \leq \lambda_{\gamma} V_c + A_{\gamma}\1_\msk  \eqsp. 
  \end{equation}
  %We show that it holds with .
%   \begin{equation}
%         \label{eq:tilde_Rproof_ergo_R_gamma_def_constant}
%   \begin{aligned}
%    & \bgamma_1 = \bgamma \wedge\{1/\mtt\} \eqsp, \quad 
% \lambda =\exp\parentheseLigne{-c\mtt M^2/4} \eqsp, \quad     M = R_1 \vee (8 d \sigma^2 /\mtt)^{\half} \eqsp,
%     \\
% &    A = \lambda^{\bgamma} \exp(c M^2 + \bgamma\{B_1 M^2+B_2 - \log(\lambda)\})\{B_1 M^2+B_2-\log(\lambda)\}\eqsp, \, B_2 = 2dc\sigma^2 = d\mtt/8 \eqsp, \\
%  &   B_1 = 4 C_1 \eqsp, \quad C_1 = C_2 \vee C_2^2 \bgamma \eqsp, \quad C_2 = (2L)\vee(L\bgamma)\vee (8c \sigma^2) = (2L)\vee(L\bgamma)\vee (\mtt/2) \eqsp. 
%   \end{aligned}
% \end{equation}
\tcwc{Let $c = \tcwc{\mtt}/(16\sigma^2)$ and }define for any $x \in \rset^d$,
  $\overline{\rmT}_{\gamma}(x) = \trmT_{\gamma}(x)-\rmT_{\gamma}(0)$. Note
  that for any $\gamma \in \ocint{0,\tcwc{\bgamma}}$,
  $2 c \sigma^2\gamma \leq 1$ by definition of $c$ and \tcwc{using that $\bgamma\leq 1/\mtt$}. Let $x \in \rset^d$ and
  $\gamma \in \ocint{0,\tcwc{\bgamma}}$. Then, we obtain using that $\int_{\rset} \rme^{az +bz^2 -z^2/2} \rmd z = (2\uppi(1-2b)^{-1})^{\tcwc{1/2}}\rme^{a^2/(2(1-2b))}$ for any $a \in \rset$ and $b \in \coint{0,1/2}$,
  \begin{align}
    \tilde{R}_{\gamma}V_c(x) & = (2\uppi)^{-d/2}\int_{\rset^d}\exp(c \normLigne[2]{\tilde{\rmT}_{\gamma}(x)+(\sigma^2 \gamma)^{\half} z} - \norm[2]{z}/2) \rmd z \\
    \label{eq:proof_ergo_R_gamma_eq_base}
                     & = (1-2c\sigma^2 \gamma)^{-d/2} \exp\defEnsLigne{c(1-2c\sigma^2\gamma)^{-1} \normLigne[2]{\tilde{\rmT}_{\gamma}(x)}}\\
                     &\leq (1-2c\sigma^2 \gamma)^{-d/2} \exp\defEns{c(1-2c\sigma^2\gamma)^{-1} \parentheseDeux{\normLigne{\overline{\rmT}_{\gamma}(x)}+\norm{\rmT_{\gamma}(0)}}^2}\eqsp .
  \end{align}

If $\norm{x} \geq M$, we get by \eqref{eq:proof_theo:convergence_markov_chain_rsetd_ass_tilde_R},
  $(1-2 c \sigma^2 \gamma)^{-1} \leq \tcwc{1+}8 c \sigma^2 \gamma$ and
  $(1-\mtt \gamma)^2 \leq 1 - \mtt \gamma$ since
  $2 c \sigma^2 \gamma \leq 1/2$ and $\gamma \leq 1/\mtt$, by definition of $c$ and \tcwc{using that $\bgamma\leq 1/\mtt$},
  \begin{align}
    \tilde{R}_{\gamma}V_c(x) &\leq (1-2c \gamma \sigma^2)^{-d/2} \exp\parentheseDeux{c (1+8 c \sigma^2 \gamma)\{ (1-\mtt \gamma) \norm{x} +\gamma\InftyBound+\norm{\rmT_{\gamma}(0)}\}^2} \\
\label{eq:tilde_Rproof_ergo_R_gamma_eq_base_1}
&    \leq  (1-2c \gamma \sigma^2)^{-d/2} \exp\parentheseDeux{c (1+\mtt \gamma/2)\{ (1-\mtt \gamma) \norm{x} +\gamma\InftyBound+\norm{\rmT_{\gamma}(0)}\}^2} \\
&\leq (1-2c \gamma \sigma^2)^{-d/2} \exp\parentheseDeux{c (1-\mtt \gamma/2)\norm{x}^2 +B_1\norm{x}+B_2} \eqsp,
\end{align}
for some $B_{1,\gamma},B_{2,\gamma}\in \rset$.

Therefore, we get $\limsup_{\norm{x} \to \plusinfty}     [\tilde{R}_{\gamma}V_c(x)/ V_c(x)] = 0$ which completes the proof of \eqref{eq:tilde_Rdrift_R_gamma_V_c_proof_1}.

\section{Proof of Theorem~\ref{coro:exp_moment}}
\label{sec:proof:coro:exp_moment}
For any $a >0$, define $\LyapDsexp_a : \rset_+ \to [0,+\infty)$ for any $w \in \rset_+$ by
\begin{equation}
  \label{eq:def_v_a}
  \LyapDsexp_a(w)= \exp(aw)-1 \eqsp.
\end{equation}

The proof of \Cref{coro:exp_moment} is based on the following proposition which combined technical lemmas gathered in \Cref{sec:technical-results}.

  \begin{proposition}
    \label{prop:fix_drift_exp_drift}
    Assume \Cref{ass:LipsAnd}-\ref{ass:LipsAnd_2}. For any $w\in \rset_+$ and $\gamma\in \ocint{0,\bgamma_1}$,
    \begin{equation}
      Q_{\gamma} \VlyapDsexp_{a}(w)\leq \uplambda_a^\gamma \VlyapDsexp_{a}(w)\1_{\coint{R_a,\plusinfty}}(w)+B_a^\gamma \VlyapDsexp_{a}(w)\1_{\coint{0,R_a}}(w) +\gamma D_a\1_{\coint{0,R_a}}(w) \eqsp ,
    \end{equation}
    where 
    \begin{align}
      \label{eq:fix_exp_deift_const}
      \begin{aligned}
        \tilde{R}_a &= 1 \vee R_1 \vee [(4a \sigma^2 +2 \InftyBound)/\mtt] \vee 16\sigma^2 a /\mtt\eqsp,\qquad    \uplambda_a = \exp(-a \mtt \tilde{R}_a/8)\eqsp , \\
        C_a&= a(\InftyBound+2a\sigma^2)\rme^{a\bgamma(\InftyBound+2a\sigma^2)}+2\sigma^2 (2\uppi)^{-1/2} a \rme^{(a+2\sigma\bgamma^{\half}a)^2/2}\eqsp ,\\
        \quad R_a&=\tilde{R}_a\vee a^{-1}\log\parenthese{1+C_a/(-\log(\uplambda_a)\uplambda_a^{2\bgamma})}\\
        \bgamma_1&=\bgamma\wedge 1/(-\log(\uplambda_a))\wedge1/(4\sigma^2)\eqsp ,\qquad B_a=\rme^{a(\InftyBound+2a\sigma^2+\Ltt R_a)} \\
        D_a&=a(\InftyBound+2a\sigma^2+\Ltt R_a)\rme^{a\bgamma(\InftyBound+2a\sigma^2+\Ltt R_a)}\\
        &\qquad\qquad\qquad\qquad\qquad+2\sigma^2 (2\uppi)^{-1/2} a \rme^{(a+2\sigma\bgamma^{\half}a)^2/2}+2\sigma^2 a^2 \uplambda_a^{2\bgamma}\VlyapDsexp_{a}(R_a) \eqsp. 
      \end{aligned}
    \end{align}
  \end{proposition}
  \begin{proof}
    Combining \Cref{lem:big_fix_exact_decomp}, \Cref{lem:big_fix_dev_Phi} and \Cref{lem:big_fix_last_term} we have for any $w\in \rset_+$ and $\gamma\in \ocint{0,\bgamma_1}$,
    \begin{align}
      \label{eq:big_fix_drift_exp_1}
      Q_{\gamma} \VlyapDsexp_{a}(w)&\leq\parenthese{\rme^{a(\tau_\gamma(w)+\gamma\InftyBound)+2a^2\sigma^2\gamma}-1}+4\rme^{2a^2\sigma^2\gamma}\sigma^2 a \gamma (2\uppi)^{-1/2}  \rme^{(a+2\sigma\bgamma^{\half}a)^2/2}\\
      &\qquad\qquad\qquad\qquad\qquad+2\sigma^2\gamma a^2\rme^{a(\tau_\gamma(w)+\gamma\InftyBound)+2a^2\sigma^2\gamma} \eqsp .
    \end{align}

    In addition, for any $w\in \rset_+$ and $\gamma\in \ocint{0,\bgamma}$ 
    \begin{align}
      \label{eq:fix_exp_drift_1}
      \parenthese{\rme^{a(\tau_\gamma(w)+\gamma\InftyBound)+2a^2\sigma^2\gamma}-1}&=\rme^{a(\tau_\gamma(w)-w+\gamma\InftyBound)+2a^2\sigma^2\gamma}\parenthese{\rme^{aw}-1}\\&\qquad+\parenthese{\rme^{a(\tau_\gamma(w)-w+\gamma\InftyBound)+2a^2\sigma^2\gamma}-1}\\
      &\leq \rme^{a(\tau_\gamma(w)-w+\gamma\InftyBound)+2a^2\sigma^2\gamma}\VlyapDsexp_{a}(w)\\
      &\qquad +\parenthese{\rme^{a(\tau_\gamma(w)-w+\gamma\InftyBound)+2a^2\sigma^2\gamma}-1} \eqsp .
    \end{align}
  By \Cref{ass:LipsAnd}-\ref{ass:LipsAnd_2},
  \begin{align}
    \label{eq:fix_exp_drift_2}
    \rme^{a(\tau_\gamma(w)-w+\gamma\InftyBound)+2a^2\sigma^2\gamma}\leq \rme^{a\gamma(-\mtt w+\InftyBound+2a\sigma^2)}\1_{\coint{R_a,\plusinfty}}+\rme^{a\gamma(\Ltt w+\InftyBound+2a\sigma^2)}\1_{\coint{0,R_a}} \eqsp .
  \end{align}
Then using that $R_a\geq \tilde{R}_a$, for any $t\in \rset_+ $, $\rme^{t}-1\leq t\rme^{t}$ and combining \eqref{eq:fix_exp_drift_1}, \eqref{eq:fix_exp_drift_2} and \eqref{eq:fix_exp_deift_const}
\begin{align}
  \label{eq:big_fix_drift_exp_2}
  \begin{aligned}
    \parenthese{\rme^{a(\tau_\gamma(w)+\gamma\InftyBound)+2a^2\sigma^2\gamma}-1}&\leq \uplambda_a^{4\gamma}\VlyapDsexp_{a}(w)\1_{\coint{R_a,\plusinfty}}+B_a^\gamma \VlyapDsexp_{a}(w)\1_{\coint{0,R_a}}\\
    & +\gamma a(\InftyBound+2a\sigma^2)\rme^{a\bgamma(\InftyBound+2a\sigma^2)} \1_{\coint{R_a,\plusinfty}}\\
    & + \gamma(a\InftyBound+2a^2\sigma^2+a\Ltt R_a)\rme^{a\gamma\InftyBound+2a^2\sigma^2\gamma+a\Ltt\gamma R_a}\1_{\coint{0,R_a}}(w) \eqsp .
  \end{aligned}
\end{align}
In addition, using that for any $t\in \rset_-$, $\rme^{t}-1\leq t+t^2/2$, $\gamma\leq 1/(-\log(\uplambda_a))$ and by \eqref{eq:fix_exp_deift_const}, for any $w\in \coint{R_a,\plusinfty}$, and $\gamma\in \ocint{0,\bgamma_1}$ we have 
\begin{align}
  \label{eq:big_fix_drift_exp_4}
  \begin{aligned}
    \parenthese{\uplambda_a^{4\gamma}\VlyapDsexp_{a}(w)+\gamma C_a}&\leq \uplambda_a^{2\gamma}\parenthese{1+\uplambda_a^{2\gamma}-1+\gamma C_a/(\uplambda_a^{2\gamma}\VlyapDsexp_{a}(w))}\VlyapDsexp_{a}(w)\\
    &\leq \uplambda_a^{2\gamma}\parenthese{1+\gamma\log(\uplambda_a^2)+\gamma^2\log(\uplambda_a^2)^2/2+\gamma C_a/(\uplambda_a^{2\gamma}\VlyapDsexp_{a}(w))}\VlyapDsexp_{a}(w)\\
    &\leq \uplambda_a^{2\gamma}\parenthese{1+\gamma\log(\uplambda_a^2)/2+\gamma C_a/(\uplambda_a^{2\gamma}\VlyapDsexp_{a}(w))}\VlyapDsexp_{a}(w)\leq \uplambda_a^{2\gamma}\VlyapDsexp_{a}(w) \eqsp .
  \end{aligned}
\end{align}
In addition, for any $w\in \coint{R_a,\plusinfty}$ and $\gamma\in \ocint{0,\bgamma_1}$,
\begin{equation}
  \label{eq:big_fix_drift_exp_3}
  \uplambda_a^{2\gamma} \VlyapDsexp_{a}(w)+2\sigma^2\gamma a^2 \rme^{a(\tau_\gamma(w)+\gamma\InftyBound)+2a^2\sigma^2\gamma}\leq \uplambda_a^{2\gamma} \VlyapDsexp_{a}(w)+2\sigma^2\gamma a^2 \uplambda_a^{2\gamma}\VlyapDsexp_{a}(w) \eqsp .
\end{equation}
Using \eqref{eq:fix_exp_deift_const} we have 
\begin{equation}
  \uplambda_a^{2\gamma}\parenthese{1+\gamma2\sigma^2 a^2}=\rme^{2\gamma\log(\uplambda_a)+\log(1+\gamma2\sigma^2 a^2)}\leq \rme^{2\gamma\log(\uplambda_a)+\gamma2\sigma^2 a^2}\leq \uplambda_a^{\gamma} \eqsp ,
\end{equation}
which completes the proof when combined with \eqref{eq:big_fix_drift_exp_1}, \eqref{eq:big_fix_drift_exp_4}, \eqref{eq:big_fix_drift_exp_2} and \eqref{eq:big_fix_drift_exp_3}.
  \end{proof}

  \begin{proof}[Proof of \Cref{coro:exp_moment}]
  Let $\bdelta \in \ocint{0,\{ \Ltt^{-1} \wedge (\sigma\rme^{-1}/\InftyBound)^2\}}$ and  $\gamma \in \ocint{0,\bgamma_1}$.    We show that \eqref{eq:bound_moment_exp_rset_star_invariant_mes} holds with
    \begin{align}
      \label{eq:8}
      \constMoment_3 &= (B_a /\lambda_a)^{\bgamma} a(\Ltt R_a +2 \sigma^2 a +\InftyBound +m\tilde{R}_a/8)  \eta_{R_a} \LyapDsexp_a (R_a) / \abs{\log(\lambda_a)} \\
      &\qquad+\left.\parentheseDeux{D_a\eta_{R_a}+A_a}\right/\abs{\log(\uplambda_a)}\eqsp,
    \end{align}
    where $\uplambda_a,R_a,\tilde{R}_a,B_a,D_a$ are defined in \eqref{eq:fix_exp_deift_const}, $A_a$ in \Cref{lem:fix_drift_exp_0}, and $\eta_R$ in \eqref{eq:def_eta_R}. 
    
    By \Cref{lem:fix_drift_exp_0}, \Cref{prop:fix_drift_exp_drift} and since $\VlyapDsexp(0) = 0$ and $\mu_{\gamma}$ is invariant for $Q_{\gamma}$, we have
        \begin{align}
      \label{eq:drift_exp_sticky_v0_2}
\int_{(0,+\infty)}     \LyapDsexp_a (w) \rmd \mu_{\gamma}(w)&=\int_{(0,+\infty)}     Q_{\gamma}\LyapDsexp_a (w) \rmd \mu_{\gamma}(w)+Q_{\gamma}\LyapDsexp_a (0)\mu_\gamma(\{0\})\\
 &\leq \uplambda_a^{\gamma} \int_{\coint{R_a,\plusinfty}}     \LyapDsexp_a (w) \rmd \mu_{\gamma}(w) + B_a^{\gamma}\int_{\ooint{0,R_a}}     \LyapDsexp_a (w) \rmd \mu_{\gamma}(w) \\
&\qquad +\gamma D_a\mu_\gamma(\ooint{0,R_a})+\gamma\InftyBound A_a\eqsp. 
    \end{align}
     Rearranging terms yields
        \begin{align}
      \int_{(0,+\infty)}     \LyapDsexp_a (w) \rmd \mu_{\gamma}(w)& \leq \{B_a^{\gamma} - \uplambda_a^{\gamma}\}/\{1-\uplambda_a^{\gamma}\}  \int_{\ooint{0,R_a}} \VlyapDsexp_a(w) \rmd \mu_{\gamma}(w)\\
      &\quad +\gamma D_a\mu_\gamma(\ooint{0,R_a})/(1-\uplambda_a^{\gamma})+\gamma \InftyBound A_a/(1-\uplambda_a^{\gamma})\\
&\leq \{B_a^{\gamma} \uplambda_a^{- \gamma} - 1 \}/\{\uplambda_a^{-\gamma}-1\} \LyapDsexp_a (R_a) \InftyBound \eta_{R_a} \\
&\qquad +\InftyBound\gamma\uplambda_a^{-\gamma}\left.\parentheseDeux{D_a\eta_{R_a}+A_a}\right/(\uplambda_a^{-\gamma}-1) \eqsp,
\end{align}
where we have used \Cref{theo:bound_mu_0_R_1} applied to $R \leftarrow R_a$ in the last inequality. The proof is then completed upon using that for any $t \geq 0$, $t \leq \rme^{t} - 1 \leq t \rme^{t}$.
  \end{proof}

  \subsection{Technical results}
\label{sec:technical-results}
  
  \begin{lemma}
    \label{lem:big_fix_exact_decomp}
    Let $a >0$. Then, for any $\gamma >0$, and $w\in \rset_+$
    \begin{align}
      Q_{\gamma} \VlyapDsexp_{a}(w)&=\rme^{2a^2\sigma^2\gamma}\Bigg[\rme^{a(\tau_{\gamma}(w) +\gamma \InftyBound)}\Bigg(\Phibf\parenthese{\frac{\tau_{\gamma}(w) +\gamma \InftyBound}{2\sigma\gamma^{1/2}}+2\sigma\gamma^{1/2}a}\\
      &\qquad\qquad\qquad\qquad\qquad\qquad\qquad\qquad-\Phibf\parenthese{-\frac{\tau_{\gamma}(w) +\gamma \InftyBound}{2\sigma\gamma^{1/2}}+2\sigma\gamma^{1/2}a}\Bigg)\\
      &\qquad \qquad \qquad+2\sinh\parenthese{a(\tau_{\gamma}(w) +\gamma \InftyBound)}\Phibf\parenthese{-\frac{\tau_{\gamma}(w) +\gamma \InftyBound}{2\sigma\gamma^{1/2}}+2\sigma\gamma^{1/2}a}\Bigg]\\
      &\qquad -1+2\Phibf\parenthese{-\frac{\tau_{\gamma}(w) +\gamma \InftyBound}{2\sigma\gamma^{1/2}}} \eqsp ,
    \end{align}
    where $\sinh$ is the hyperbolic sine.
  \end{lemma}
  \begin{proof}
By \eqref{def_q_gamma} and \Cref{lem:0_proba}, for any $w\in \rset_+$, $\gamma >0$
\begin{align}
  Q_{\gamma} \VlyapDsexp_{a}(w)&=\int_{\rset} \VlyapDsexp_{a}\parenthese{\tau_{\gamma}(w)   +\gamma \InftyBound-2 \sigma \gamma^{1/2} g} \{1- \bpg\parenthese{\tau_{\gamma}(w) + \gamma \InftyBound , g} \} \varphibf(g)  \rmd g\\
  &=\int_{\rset} \rme^{a\parenthese{\tau_{\gamma}(w)   +\gamma \InftyBound-2 \sigma \gamma^{1/2} g}} \{1- \bpg\parenthese{\tau_{\gamma}(w) + \gamma \InftyBound , g} \} \varphibf(g)  \rmd g\\
  &\qquad\qquad\qquad\qquad\qquad\qquad\qquad\qquad-1+2\Phibf\parenthese{-\frac{\tau_{\gamma}(w) +\gamma \InftyBound}{2\sigma\gamma^{1/2}}}\eqsp .
\end{align}
    In addition, by \eqref{eq:def_bpg} and using changes of variable,
    \begin{align}
      &\int_{\rset} \rme^{a\parenthese{\tau_{\gamma}(w)   +\gamma \InftyBound-2 \sigma \gamma^{1/2} g}} \{1- \bpg\parenthese{\tau_{\gamma}(w) + \gamma \InftyBound , g} \} \varphibf(g)  \rmd g\\
      &=\int_{-\infty}^{(\tau_{\gamma}(w)   +\gamma \InftyBound)/2} \rme^{a\parenthese{\tau_{\gamma}(w)   +\gamma \InftyBound-2  g}} \defEns{1- 1\wedge \frac{\varphibf_{\sigma^2\gamma}\parenthese{ \tau_{\gamma}(w) + \gamma \InftyBound- g } }{\varphibf_{\sigma^2\gamma}\parenthese{ g}} } \varphibf_{\sigma^2\gamma}(g)  \rmd g\\
      &=\int_{-\infty}^{(\tau_{\gamma}(w)   +\gamma \InftyBound)/2} \rme^{a\parenthese{\tau_{\gamma}(w)   +\gamma \InftyBound-2  g}} \varphibf_{\sigma^2\gamma}(g)   \rmd g\\
      &\qquad \qquad \qquad\qquad\qquad-\int_{-\infty}^{(\tau_{\gamma}(w)   +\gamma \InftyBound)/2} \rme^{a\parenthese{\tau_{\gamma}(w)   +\gamma \InftyBound-2  g}} \varphibf_{\sigma^2\gamma}\parenthese{ \tau_{\gamma}(w) + \gamma \InftyBound- g }    \rmd g\\
      &=\int_{-\infty}^{(\tau_{\gamma}(w)   +\gamma \InftyBound)/2} \rme^{a\parenthese{\tau_{\gamma}(w)   +\gamma \InftyBound-2  g}} \varphibf_{\sigma^2\gamma}(g)   \rmd g\\
      &\qquad \qquad \qquad\qquad\qquad-\int_{-\infty}^{-(\tau_{\gamma}(w)   +\gamma \InftyBound)/2} \rme^{a\parenthese{-\tau_{\gamma}(w)   -\gamma \InftyBound-2  g}} \varphibf_{\sigma^2\gamma}\parenthese{  g }    \rmd g\\
      &=\int_{-(\tau_{\gamma}(w)   +\gamma \InftyBound)/2}^{(\tau_{\gamma}(w)   +\gamma \InftyBound)/2} \rme^{a\parenthese{\tau_{\gamma}(w)   +\gamma \InftyBound-2  g}} \varphibf_{\sigma^2\gamma}(g)   \rmd g\\
      &\qquad \qquad \qquad\qquad\qquad+\int_{-\infty}^{-(\tau_{\gamma}(w)   +\gamma \InftyBound)/2} 2\sinh\parenthese{a(\tau_{\gamma}(w)   +\gamma \InftyBound)}\rme^{-2  ag} \varphibf_{\sigma^2\gamma}\parenthese{  g }    \rmd g \eqsp .
      % &= \rme^{a\parenthese{\tau_{\gamma}(w)   +\gamma \InftyBound}+2a^2\sigma^2\gamma}\bigg(\Phibf\parenthese{\frac{\tau_{\gamma}(w) +\gamma \InftyBound}{2\sigma\gamma^{1/2}}+2\sigma\gamma^{1/2}a}\\
      % &\qquad\qquad\qquad\qquad\qquad\qquad\qquad\qquad\qquad\qquad -\Phibf\parenthese{-\frac{\tau_{\gamma}(w) +\gamma \InftyBound}{2\sigma\gamma^{1/2}}+2\sigma\gamma^{1/2}a}\bigg)\\
      % &\qquad \qquad \qquad\qquad+ 2\sinh\parenthese{a(\tau_{\gamma}(w)   +\gamma \InftyBound)}\rme^{2a^2\sigma^2\gamma}\Phibf\parenthese{-\frac{\tau_{\gamma}(w) +\gamma \InftyBound}{2\sigma\gamma^{1/2}}+2\sigma\gamma^{1/2}a} \eqsp .
    \end{align}
This concludes the proof.
  \end{proof}
  \begin{lemma}
    \label{lem:big_fix_dev_Phi}
    Let $a >0$. Then, for any $\gamma >0$, and $w\in \rset_+$
    \begin{multline}
      \Phibf\parenthese{\frac{\tau_{\gamma}(w) +\gamma \InftyBound}{2\sigma\gamma^{1/2}}+2\sigma\gamma^{1/2}a}-\Phibf\parenthese{-\frac{\tau_{\gamma}(w) +\gamma \InftyBound}{2\sigma\gamma^{1/2}}+2\sigma\gamma^{1/2}a}\\
      \qquad\qquad \leq 1-2\Phibf\parenthese{-\frac{\tau_{\gamma}(w) +\gamma \InftyBound}{2\sigma\gamma^{1/2}}}+2\sigma^2\gamma a^2  \eqsp.
    \end{multline}
  \end{lemma}
  \begin{proof}
    We consider the decomposition
    \begin{multline}
      \Phibf\parenthese{\frac{\tau_{\gamma}(w) +\gamma \InftyBound}{2\sigma\gamma^{1/2}}+2\sigma\gamma^{1/2}a}-\Phibf\parenthese{-\frac{\tau_{\gamma}(w) +\gamma \InftyBound}{2\sigma\gamma^{1/2}}+2\sigma\gamma^{1/2}a} \\
 = 1-2\Phibf\parenthese{-\frac{\tau_{\gamma}(w) +\gamma \InftyBound}{2\sigma\gamma^{1/2}}}+ A_1 + A_2 \eqsp,
        \end{multline}
        where
        \begin{align}
          \label{eq:40}
          A_1 &  =       \Phibf\parenthese{\frac{\tau_{\gamma}(w) +\gamma \InftyBound}{2\sigma\gamma^{1/2}}+2\sigma\gamma^{1/2}a} -       \Phibf\parenthese{\frac{\tau_{\gamma}(w) +\gamma \InftyBound}{2\sigma\gamma^{1/2}}}\\
          A_2 & =  \Phibf\parenthese{-\frac{\tau_{\gamma}(w) +\gamma \InftyBound}{2\sigma\gamma^{1/2}}} - \Phibf\parenthese{-\frac{\tau_{\gamma}(w) +\gamma \InftyBound}{2\sigma\gamma^{1/2}}+2\sigma\gamma^{1/2}a}  \eqsp.
        \end{align}
   By Taylor's theorem, we have for any $\gamma >0$, and $w\in \rset_+$
    \begin{equation}
      A_1 \leq 2\sigma\gamma^{1/2}a(2\uppi)^{\half}\rme^{-2^{-1}\parenthese{\frac{\tau_{\gamma}(w) +\gamma \InftyBound}{2\sigma\gamma^{1/2}}}^2} \eqsp .
    \end{equation}
    Similarly,
    \begin{align}
      &
        \abs{A_2+2\sigma\gamma^{1/2}a(2\uppi)^{\half}\rme^{-2^{-1}\parenthese{\frac{\tau_{\gamma}(w) +\gamma \InftyBound}{2\sigma\gamma^{1/2}}}^2}}\\
       &\qquad\qquad\qquad\qquad\leq \abs{\int_{-\frac{\tau_{\gamma}(w) +\gamma \InftyBound}{2\sigma\gamma^{1/2}}}^{-\frac{\tau_{\gamma}(w) +\gamma \InftyBound}{2\sigma\gamma^{1/2}}+2\sigma\gamma^{1/2}a}-t\varphibf(t)\parenthese{-\frac{\tau_{\gamma}(w) +\gamma \InftyBound}{2\sigma\gamma^{1/2}}+2\sigma\gamma^{1/2}a-t}\rmd t}\\
      % &\phantom{\Phibf\parenthese{-\frac{\tau_{\gamma}(w) +\gamma \InftyBound}{2\sigma\gamma^{1/2}}+2\sigma\gamma^{1/2}a}}\leq \Phibf\parenthese{-\frac{\tau_{\gamma}(w) +\gamma \InftyBound}{2\sigma\gamma^{1/2}}}+2\sigma\gamma^{1/2}a(2\uppi)^{\half}\rme^{-2^{-1}\parenthese{\frac{\tau_{\gamma}(w) +\gamma \InftyBound}{2\sigma\gamma^{1/2}}}^2}\\
      & \qquad\qquad\qquad\qquad \leq \sup_{t \in \rset}\abs{t\varphibf(t)}\int_{0}^{2\sigma\gamma^{1/2}a} t \rmd t \eqsp .
    \end{align}
    The proof is complete since $\sup_{t \in \rset}\abs{t\varphibf(t)}\leq 1$.
  \end{proof}

  \begin{lemma} 
    \label{lem:big_fix_last_term}
    Let $a >0$. Then, for any $\gamma \in \ocint{0, 1/(4\sigma^2)}$, and $w\in \rset_+$
    \begin{align}
      &-2\Phibf\parenthese{-\frac{\tau_{\gamma}(w) +\gamma \InftyBound}{2\sigma\gamma^{1/2}}}\parenthese{\rme^{a(\tau_\gamma(w)+\gamma\InftyBound)+2a^2\sigma^2\gamma}-1}\\
      &\qquad\qquad\qquad\qquad+2\rme^{2a^2\sigma^2\gamma}\sinh\parenthese{a(\tau_{\gamma}(w) +\gamma \InftyBound)}\Phibf\parenthese{-\frac{\tau_{\gamma}(w) +\gamma \InftyBound}{2\sigma\gamma^{1/2}}+2\sigma\gamma^{1/2}a}\\
      &\leq 4\rme^{2a^2\sigma^2\gamma}\sigma^2 a \gamma (2\uppi)^{-1/2}  \rme^{(a+2\sigma\gamma^{\half}a)^2/2} \eqsp .
    \end{align}
  \end{lemma}
\begin{proof}
  Using that $\sinh(t)-\rme^{t}+1\leq 0$ for $t\geq 0$, we get
  \begin{align}
    &-2\Phibf\parenthese{-\frac{\tau_{\gamma}(w) +\gamma \InftyBound}{2\sigma\gamma^{1/2}}}\parenthese{\rme^{a(\tau_\gamma(w)+\gamma\InftyBound)+2a^2\sigma^2\gamma}-1}\\
      &\qquad+2\rme^{2a^2\sigma^2\gamma}\sinh\parenthese{a(\tau_{\gamma}(w) +\gamma \InftyBound)}\Phibf\parenthese{-\frac{\tau_{\gamma}(w) +\gamma \InftyBound}{2\sigma\gamma^{1/2}}+2\sigma\gamma^{1/2}a}\\
      &\leq 2\Phibf\parenthese{-\frac{\tau_{\gamma}(w) +\gamma \InftyBound}{2\sigma\gamma^{1/2}}}\rme^{2a^2\sigma^2\gamma}\parentheseDeux{\sinh\parenthese{a(\tau_{\gamma}(w) +\gamma \InftyBound)}-\rme^{a(\tau_\gamma(w)+\gamma\InftyBound)}+1} \\
    &\qquad +2\rme^{2a^2\sigma^2\gamma}\sinh\parenthese{a(\tau_{\gamma}(w) +\gamma \InftyBound)}A_1 \\
    & \qquad \leq 2\rme^{2a^2\sigma^2\gamma}\sinh\parenthese{a(\tau_{\gamma}(w) +\gamma \InftyBound)} A_1  \eqsp,\\
      & \text{ with } A_1 = \parentheseDeux{\Phibf\parenthese{-\frac{\tau_{\gamma}(w) +\gamma \InftyBound}{2\sigma\gamma^{1/2}}+2\sigma\gamma^{1/2}a}-\Phibf\parenthese{-\frac{\tau_{\gamma}(w) +\gamma \InftyBound}{2\sigma\gamma^{1/2}}}}\eqsp.
  \end{align}
  In addition for any $t\in \rset_+$, since $(\rme^{u}-1)/u \leq \rme^u$ for any $u\in \rset_+^*$, we have 
  \begin{align}
    \Phibf(-t+2\sigma\gamma^{1/2}a)-\Phibf(-t)=\int_{-t}^{-t+2\sigma\gamma^{1/2}a} \varphibf(x) \rmd x &\leq(2\uppi)^{-1/2}\rme^{-t^2/2}\int_{0}^{2\sigma\gamma^{1/2}a} \rme^{xt} \rmd x\\
    &\leq 2\gamma^{1/2}\sigma a(2\uppi)^{-1/2}\rme^{-t^2/2+2\sigma\gamma^{1/2}at} \eqsp .
  \end{align}
Note that $(2\sigma\gamma^{1/2})^{-1}\sinh\parenthese{a(\tau_{\gamma}(w) +\gamma \InftyBound)}\leq \sinh\parenthese{a(\tau_{\gamma}(w) +\gamma \InftyBound)(2\sigma\gamma^{1/2})^{-1}}$ using the condition $(2\sigma\gamma^{\half})^{-1}\geq 1$. It yields
\begin{align}
  &\sinh\parenthese{a(\tau_{\gamma}(w) +\gamma \InftyBound)}
   A_1 \leq 4\sigma^2 a \gamma (2\uppi)^{-1/2}  \sup_{t\in \rset_+}\parenthese{\sinh(at)\rme^{-t^2/2+2\sigma\gamma^{1/2}at}} \eqsp .
\end{align}
The proof is complete since for any $t\in \rset_+$, $\sup_{t\in \rset_+}\parentheseLigne{\sinh(at)\rme^{-t^2/2+2\sigma\gamma^{1/2}at}}\leq 2^{-1}\rme^{(a+2\sigma\gamma^{\half}a)^2/2}$.
\end{proof}

\begin{lemma}
  \label{lem:fix_drift_exp_0}
  Assume \Cref{ass:LipsAnd}-\ref{ass:LipsAnd_2}. Let $a >0$. Then, for any $\gamma \in \ocint{0,\bgamma}$,
  \begin{equation}
    Q_{\gamma} \VlyapDsexp_{a}(0)\leq \gamma \InftyBound A_a \eqsp ,
  \end{equation}
  where $A_a=4\rme^{a \bgamma (\InftyBound+2a\sigma^2)}\left.\parentheseDeux{\bgamma(2a\sigma^2+\InftyBound/2)^2+\sigma^2}\middle/ (2\sigma^2)\right.$.
\end{lemma}
\begin{proof}
  By definition \eqref{def_q_gamma}, we have
    \begin{align}
      Q_{\gamma} \VlyapDsexp_{a}(0) &= \int_{\rset} (\exp(a \{\gamma \InftyBound- 2 \sigma \gamma^{\half}g\})-1)\{1- \bpg\parenthese{\gamma \InftyBound, g} \} \varphibf(g) \rmd g \\
                            &= \int_{-\infty}^{\gamma \InftyBound/2}    (\exp(a \{\gamma \InftyBound- 2 g\})-1) \{ \varphibf_{\sigma^2\gamma}(g) - \varphibf_{\sigma^2\gamma}( \gamma \InftyBound - g)\} \eqsp. \\
    \end{align}
    In addition, by using that $1-\rme^{-t}\leq t$ for any $t\in \rset_+$, we have for any $g\in \ccint{-\infty,\gamma \InftyBound/2}$,
    \begin{align}
      \varphibf_{\sigma^2\gamma}(g) - \varphibf_{\sigma^2\gamma}( \gamma \InftyBound - g)
      &=\varphibf_{\sigma^2\gamma}(g)\parentheseDeux{1-\rme^{-\InftyBound(\gamma\InftyBound-2g)/(2\sigma^2)}}\\
      &\leq \varphibf_{\sigma^2\gamma}(g)\parentheseDeux{\InftyBound(\gamma\InftyBound-2g)/(2\sigma^2)} \eqsp .
    \end{align}
    % In the same way as proof of \Cref{lem:drift_exp_sticky_v0}, 
    % \begin{equation}
    %   \int_{-\infty}^{\gamma \InftyBound/2}    \exp(a \{\gamma \InftyBound- 2 g\})  \varphibf_{\sigma^2\gamma}(g)\rmd g \leq \rme^{a  \gamma \InftyBound +2 a^2 \sigma^2 \gamma} \eqsp .
    % \end{equation}
    Finally, using 
    \begin{align}
      &\int_{-\infty}^{\gamma \InftyBound/2}    (\gamma\InftyBound-2g)(\exp(a \{\gamma \InftyBound- 2 g\})-1)  \varphibf_{\sigma^2\gamma}(g)\rmd g \\
      &\qquad\qquad\qquad\qquad\leq \int_{-\infty}^{\gamma \InftyBound/2}    (\gamma\InftyBound-2g)^2\exp(a \{\gamma \InftyBound- 2 g\})  \varphibf_{\sigma^2\gamma}(g)\rmd g\\
      &\qquad\qquad\qquad\qquad= 4\int_{-\infty}^{0}    g^2\exp(-2ag )  \varphibf_{\sigma^2\gamma}(g+\gamma\InftyBound/2)\rmd g\\
      &\qquad\qquad\qquad\qquad= 4(\sqrt{2\uppi}\sigma\gamma^{\half})^{-1}\rme^{a \gamma \InftyBound+2a^2\sigma^2\gamma}\int_{-\infty}^{0}    g^2\rme^{-(g+\gamma\InftyBound/2+2a\sigma^2\gamma)^2/(2\sigma^2\gamma)}  \rmd g \\
      &\qquad\qquad\qquad\qquad \leq 4\rme^{a \gamma \InftyBound+2a^2\sigma^2\gamma}\parentheseDeux{\gamma^2(2a\sigma^2+\InftyBound/2)^2+\sigma^2\gamma} \eqsp ,
    \end{align}
    completes the proof.
\end{proof}

%%% Local Variables:
%%% mode: latex
%%% TeX-master: "main_imsart"
%%% End:

\end{document}